\numberwithin{equation}{section}
\theoremstyle{plain}
\newtheorem{thm}{Theorem}[section]
\newtheorem{prop}[thm]{Proposition}
\newtheorem{cor}[thm]{Corollary}
\newtheorem{lem}[thm]{Lemma}
\theoremstyle{definition}
\newtheorem{definition}[thm]{Definition}
\newtheorem{rem}[thm]{Remark}
\newtheorem*{convention}{Convention}
\newtheorem{question}[thm]{Question}
\newtheorem*{bprob*}{Bonus problem}
\def\R{\mathbb{R}}
\def\N{\mathbb{N}}
\def\e{\epsilon}
\def\g{\gamma}
\def\d{\delta}
\def\D{\Delta}
\def\V{\mathbb{V}}
\newcommand{\T}{\mathbb{T}}
\DeclareMathOperator{\dist}{dist}
\newcommand{\bx}{\mathbf x}
\DeclareMathOperator{\diam}{diam}
\DeclareMathOperator{\ba}{\bf{a}}
\DeclareMathOperator{\val}{Val}
\DeclareMathOperator{\tang}{Tan}
\newcommand{\card}{\operatorname{card}}
\newcommand{\interior}{\mathrm{int}}
\newcommand{\sm}{\setminus}
\title{Universal quasiconformal trees}
\author{Efstathios-K. Chrontsios-Garitsis}
\author{Fotis Ioannidis}
\author{Vyron Vellis}
\thanks{V.V. was partially supported by NSF DMS grant 2154918.}
\date{\today}
\subjclass[2020]{Primary 30L05; Secondary 30L10, 05C05, 28A80}
\keywords{Quasiconformal tree, geodesic tree, quasisymmetric embedding, universal space, Vicsek fractal, continuum self similar tree}
\address{Department of Mathematics\\ The University of Tennessee\\ Knoxville, TN 37966}
\email{echronts@utk.edu}
\address{Department of Mathematics\\ The University of Tennessee\\ Knoxville, TN 37966}
\email{fioannid@vols.utk.edu}
\address{Department of Mathematics\\ The University of Tennessee\\ Knoxville, TN 37966}
\email{vvellis@utk.edu}
\begin{document}
	
\begin{abstract}
A quasiconformal tree is a doubling (compact) metric tree in which the diameter of each arc is comparable to the distance of its endpoints. We show that for each integer $n\geq 2$, the class of all quasiconformal trees with uniform branch separation and valence at most $n$, contains a quasisymmetrically ``universal'' element, that is, an element of this class into which every other element can be embedded quasisymmetrically. We also show that every quasiconformal tree with uniform branch separation quasisymmetrically embeds into $\R^2$. Our results answer two questions of Bonk and Meyer in \cite{BM22}, in higher generality, and partially answer one question of Bonk and Meyer in \cite{BM20}.
\end{abstract}
	
\maketitle
	
\section{Introduction}
	
A collection $\mathscr{X}$ of metric spaces is \emph{topologically closed} if it has the property that if $X\in \mathscr{X}$ and $Y$ is a metric space homeomorphic to $X$, then $Y \in \mathscr{X}$. An element $X_0$ of a topologically closed collection $\mathscr{X}$ is \emph{universal}, if every element of $\mathscr{X}$ embeds into $X_0$. For example, the standard Cantor set is a universal set for the class of compact metric spaces with topological dimension 0, while the Menger sponge is a universal set for the class of compact metric spaces with topological dimension at most 1. More generally, it is known that for any $n\in \N_0$, there exists a universal set for the class of compact metric spaces with topological dimension at most $n$ \cite{Stanko71, Bestvina84}.
	
Another interesting and well studied topologically closed class is that of metric trees. A metric space $X$ is called a \emph{metric tree} (or \emph{dendrite}) if it does not contain simple closed curves, and it is a Peano continuum, i.e., compact, connected, and locally  connected. In other words, any two points $x,y$ can be joined by a unique arc that has $x,y$ as endpoints. On the one hand, trees are among the most simple 1-dimensional connected metric spaces, but on the other hand they have infinitely many topological equivalence classes. Nadler \cite{Nadler} proved that the class of metric trees contains a universal element.
	
Recently, there has been great interest in a subclass of metric trees, that of \emph{quasiconformal trees}, which plays an important role in analysis on metric spaces. See, for instance,  \cite{BM20,BM22,DV,DEBV,FreeGart1,FreeGart2} for a non-exhaustive list of references in the topic. A quasiconformal tree is a tree $T$ that satisfies two simple geometric conditions:
\begin{itemize}
\item $T$ is \emph{doubling}: there is a constant $N\geq 1$ such that any ball in $T$ can be covered by at most $N$ balls of half its radius, and
\item $T$ is \textit{bounded turning}: there is a constant $C\geq 1$ such that each pair of points $x,y \in T$ can be joined by a continuum whose diameter is at most $Cd(x,y)$.
\end{itemize}
The two conditions above are invariant under quasisymmetric maps, making the class of quasiconformal trees $\mathscr{QCT}$ \emph{quasisymmetrically closed}. Quasisymmetric maps are the natural generalization of conformal maps in the abstract metric setting. Roughly speaking, a homeomorphism $f$ between two connected and doubling metric spaces is quasisymmetric if for three points $x,y,z$ in the domain that have comparable mutual distances, the images $f(x),f(y),f(z)$ also have comparable mutual distances; see Section \ref{sec:prelim} for the precise definition.
	
Quasiconformal trees were introduced by Kinneberg \cite{Kinneberg}, and the term first appeared in the work of Bonk and Meyer \cite{BM20}. Quasiconformal trees have appeared in several fields of analysis and dynamics. First, if $T\subset \R^2$ is a tree, then $\R^2\setminus T$ is a \emph{John domain} if and only if $T$ is a quasiconformal tree \cite[Theorem 4.5]{NaVa}. Second, Julia sets of semihyperbolic polynomials (e.g., $z^2 +i$) are quasiconformal trees; see for example \cite{CJY} and \cite[p.95]{CG-dynamics}. In fact, a combination of \cite{CJY} and \cite{NaVa} gives that a point $c\in \mathbb{C}$ is a Misiurewicz point if and only if the Julia set of $P_c(z)=z^2+c$ is a quasiconformal tree, yielding a plethora of examples (see \cite{BM22} for a longer discussion on this direction). Third, quasiconformal trees $T$ in $\R^2$ (often called \emph{Gehring trees}) were recently characterized by Rohde and Lin \cite{RohdeLin} in terms of the laminations of the conformal map $f: \mathbb{C}\setminus\mathbb{D} \to \mathbb{C}\setminus T$. Fourth, planar quasiconformal trees can be classified in the setting of Kleinian groups \cite{McMullen}. Fifth, there exist planar quasiconformal trees (``\textit{antenna sets}''), whose conformal dimension is not attained by any quasisymmetric map \cite{BishopTyson}; see also \cite{Azzam}. Finally, some planar quasiconformal trees with finitely many branch points are \emph{conformally balanced}, that is, for every edge, each side has exactly the same harmonic measure with pole at infinity \cite{Bishop}.
	
Quasiconformal trees generalize two well-studied types of spaces. On the one hand, quasiconformal trees that are arcs (i.e., have no branch points) are called \emph{quasi-arcs}, and have been studied in complex analysis and analysis on metric spaces for decades \cite{GH}. On the other hand, quasiconformal trees generalize doubling geodesic trees. Geodesic trees are trees in which, for each pair of points $x, y$, the unique arc joining them has (finite) length equal to the distance of $x, y$. Thus, in geodesic trees all paths are ``straight'' (isometric to intervals in the real line), whereas paths in quasiconformal trees may be fractal, like the von Koch snowflake. Geodesic trees (and their non-compact counterparts, the $\R$-trees) are standard objects of study in geometric group theory \cite{Bestvinabook} and in computer science \cite{LeGall}. A theorem of Bonk and Meyer \cite{BM20} states that every quasiconformal tree is quasisymmetrically equivalent to a geodesic doubling tree.
	
Two of the most well-known and used quasiconformal trees are the \emph{continuum self similar tree} (CSST) and the \emph{Vicsek fractal}. See Figure \ref{fig:CSST}. Both are attractors of iterated function systems of similarities in $\R^2$, but we omit their formal definitions. The CSST is a trivalent quasiconformal tree that has been studied in \cite{BT_CSST, BM22} and is almost surely homeomorphic to the \emph{(Brownian) continuum random tree} introduced by Aldous \cite{Aldous1,Aldous2}. The \emph{Vicsek fractal}, is a 4-valent metric tree of great importance in analysis, probability, and physics; see for instance \cite{Vicsek,Metz,HamblyMetz,Zhou,CSW,BaudoinChen1,BaudoinChen2}.
	
	\begin{figure}[h]
		\centering
		\begin{minipage}{0.55\textwidth}
			\centering
			\includegraphics[width=0.9\textwidth]{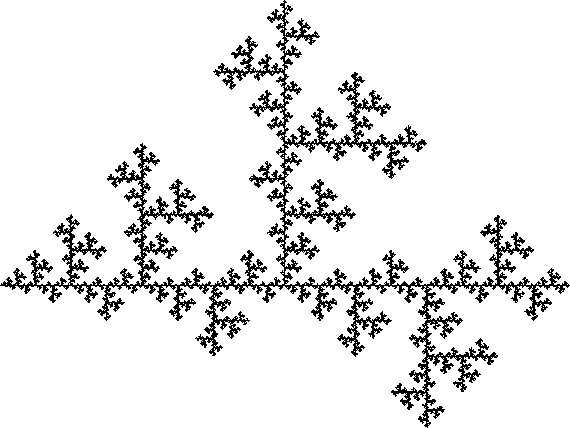}
		\end{minipage}\hfill
		\begin{minipage}{0.35\textwidth}
			\centering
			\includegraphics[width=0.9\textwidth]{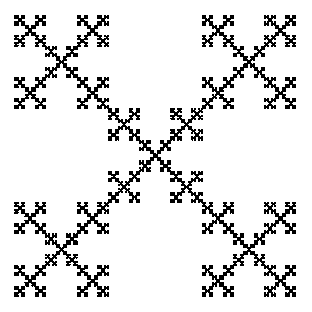}
		\end{minipage}
		\caption{The CSST (left) and the Vicsek fractal (right).}
		\label{fig:CSST}
	\end{figure}
	
Given that quasisymmetric maps ``quasi-preserve" many interesting geometric properties, it is a natural question to ask whether there exists a quasiconformal tree which is \emph{quasisymmetrically universal} for the class $\mathscr{QCT}$. In other words, does there exist a quasiconformal tree $T$ such that every element in $\mathscr{QCT}$ quasisymmetrically embeds into $T$? The answer can easily be seen to be negative. Indeed, if there was a bounded turning tree such that every element of $\mathscr{QCT}$ quasisymmetrically embeds in, then its valence would be infinite, which implies that it could not have been doubling \cite[Lemma 3.5]{BM22}. Consequently, it is natural to pose this question for the more restricted class $\mathscr{QCT}(n)$, the class of quasiconformal trees of valence at most $n$, for some integer $n\geq 3$. Unfortunately, even with this restriction, the answer is still negative already for the smallest class $\mathscr{QCT}(3)$.
	
\begin{thm}\label{prop:nouniv}
There exists no quasiconformal tree $T$ for which every element in $\mathscr{QCT}(3)$ quasisymmetrically embeds into $T$.
\end{thm}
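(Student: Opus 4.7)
The plan is to argue by contradiction via a rescaling/tangent-space limit. Suppose $T\in\mathscr{QCT}$ is a universal element of $\mathscr{QCT}(3)$, with doubling constant $K$ and bounded turning constant $C$. The strategy is to exhibit a sequence of trivalent quasiconformal trees $T_k\in\mathscr{QCT}(3)$ whose quasisymmetric images in $T$, after suitable rescaling, yield in the limit a non-doubling metric tree sitting inside a weak tangent of $T$, contradicting the fact that weak tangents of doubling spaces---and their subsets---are doubling.

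For the construction I take $T_k$ to be a geodesic ``comb tree'': a spine $[0,1]$ with $k$ perpendicular teeth of length $1$ attached at equally spaced points of $[0,1/k]$. Each $T_k$ is trivalent, bounded turning, and doubling (with constants depending on $k$), so $T_k\in\mathscr{QCT}(3)$; its key geometric feature is that it carries $k$ pairwise disjoint teeth of length $1$ emanating from the ball $B(0,1/k)$. By universality, there is a quasisymmetric embedding $f_k:T_k\to T$ with some distortion function $\eta_k$. The argument then proceeds in two steps. First, rescaling the source by a factor of $k^2$ so that the teeth become unit-spaced while their length tends to infinity, the pointed spaces $(T_k,0,k^2\cdot d_{T_k})$ converge in pointed Gromov--Hausdorff topology to an ``infinite comb'' $T_\infty$ consisting of a half-line $[0,\infty)$ carrying infinitely many unit-spaced teeth of infinite length, which is manifestly \emph{non-doubling}. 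Second, rescale $T$ about $q_k:=f_k(0)$ by a factor $\lambda_k$ matched to the image diameter of the reference ball in $T_k$, so that $(T,q_k,\lambda_k d_T)$ has a pointed Gromov--Hausdorff subsequential limit $T^{\mathrm{tan}}$, which is a weak tangent of $T$ and is therefore doubling with the doubling constant of $T$.

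The main obstacle is extracting from $\{f_k\}$ a limit map $\widetilde f_\infty:T_\infty\hookrightarrow T^{\mathrm{tan}}$ that is still quasisymmetric. If the distortion functions $\eta_k$ were uniformly bounded, an Arzel\`a--Ascoli argument would make this immediate and the limit would be $\eta_\infty$-quasisymmetric. In general $\eta_k$ may blow up with $k$, so the technical heart of the argument is choosing the rescaling factor $\lambda_k$ so that the rescaled maps are equicontinuous on every compact subset of $T_\infty$; this requires tracking $\eta_k$ only at the specific range of ratios relevant to the local geometry of $T_\infty$ and compensating for its growth through $\lambda_k$. Once the limit embedding $\widetilde f_\infty$ is established, the contradiction is immediate: $T^{\mathrm{tan}}$ is doubling, and quasisymmetric embeddings preserve the doubling property in the source (a subset of a doubling space is doubling, and a quasisymmetric homeomorphism transfers this back through $\widetilde f_\infty^{-1}$), so $T_\infty$ would be forced to be doubling, contrary to its construction.
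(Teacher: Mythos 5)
Your overall strategy (contradiction via weak tangents, using the fact that weak tangents of a doubling space are doubling) is in the spirit of the paper, but the proposal has a genuine gap at exactly the point you flag as its ``technical heart,'' and the proposed repair cannot work. Universality gives you embeddings $f_k:T_k\to T$ whose distortion functions $\eta_k$ depend on $k$ with no uniform control, and the scaling factors $\lambda_k$ are powerless to restore it: the quasisymmetry condition involves only \emph{ratios} of distances, so replacing $d_{T_k}$ by $k^2d_{T_k}$ and $d_T$ by $\lambda_k d_T$ leaves each $f_k$ exactly $\eta_k$-quasisymmetric. Concretely, after normalizing one reference distance (say the image diameter of the ball you rescale by), the displacement of any other pair of points is controlled only through $\eta_k$ evaluated at a fixed ratio, and if $\eta_k(t)\to\infty$ (or the lower bound $1/\eta_k(1/t)\to 0$) for fixed $t$, the rescaled family is not equicontinuous, limits need not be injective, and any pointwise limit need not be quasisymmetric. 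So no choice of $\lambda_k$ produces the limit embedding $\widetilde f_\infty:T_\infty\to T^{\mathrm{tan}}$, and without it the contradiction never materializes. A sequence of trees $T_k$ with embeddings of unbounded distortion simply does not transfer the non-doubling of the blown-up limit $T_\infty$ into a weak tangent of $T$.

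The paper avoids this uniformity problem entirely by working with a \emph{single} embedding of a \emph{single} tree. For $N$ chosen with $N+2$ larger than the doubling constant $C$ of $T$, it builds one trivalent doubling geodesic tree $T_N$ (a segment with clusters of $N$ short teeth accumulating at a point) whose weak tangent at that point is already an $(N+2)$-star $S$. Then Li's theorem (Lemma \ref{lem:Li}) says that under the fixed $\eta$-quasisymmetric embedding $T_N\to T$, the tangent $S$ embeds $\eta$-quasisymmetrically into some weak tangent $S'$ of $T$; by Lemmas \ref{lem:doubling of weak tangent} and \ref{lem:limit of trees is tree}, $S'$ is a $C$-doubling $\R$-tree, and the image of the star's center is a point of valence at least $N+2$, violating $C$-doubling. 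The high ``infinitesimal valence'' is thus encoded inside one source tree rather than across a sequence of embeddings, which is precisely what makes the distortion function irrelevant to the count. If you want to salvage your comb idea, you would similarly need to place all the combs (suitably scaled) inside one trivalent quasiconformal tree so that a single embedding and a single $\eta$ handle every scale — at which point you have essentially reconstructed the paper's $T_N$ (or rather its tangential star), and Li's theorem replaces your Arzel\`a--Ascoli step.
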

	
The reasoning behind Theorem \ref{prop:nouniv} is that, although trees in $\mathscr{QCT}(3)$ have valence 3, infinitesimally, they can have arbitrarily large valencies. As a result, similarly to the above observation, if there was a bounded turning tree $T$ in which every element of $\mathscr{QCT}(3)$ quasisymmetrically embeds, then, infinitesimally, $T$ would have infinite valence and it could not have been doubling. See Section \ref{sec:noQSembed} for the proof.
	
In \cite{BM22}, Bonk and Meyer studied quasiconformal trees where large branches can not be too close. To make this precise, we first define the height of a tree at a branch point. Given a metric tree $T$ and a  point $p\in T$, denote by $B_{T}^1(p), B_{T}^2(p),\dots$ the components of $T\setminus \{p\}$, which we call \emph{branches} of $T$ at $p$. If there are at least three branches of $T$ at $p$, we say that $p$ is a \textit{branch point}. By \cite[Section 3]{BT_CSST}, only finitely many of the branches of $p$ can have a diameter exceeding a given positive number. Hence, we can label the branches so that $\diam{B_{T}^1(p)} \geq \diam{B_{T}^2(p)} \geq \cdots$ and define the \emph{height} of $p$ in $T$ as 
\begin{equation}\label{eq:height}
H_{T}(p):=\diam{B_{T}^3(p)}.
\end{equation}
	
\begin{definition}[{Uniform relative separation of branch points}]\label{def:unifsep}
A metric tree $(T,d)$ is said to have \textit{uniformly (relatively) separated branch points} if there exists $C\geq 1$ such that for all distinct branch points $p,q\in T$,
$$d(p,q)\geq C^{-1}\min\{H_{T}(p),H_{T}(q)\}.$$ 
\end{definition}
	
We denote by $\mathscr{QCT}^*$ the collection of all quasiconformal trees of uniform branch separation, and by $\mathscr{QCT}^*(n)$ the collection of all elements in $\mathscr{QCT}^*$ that have valence at most $n$. The class $\mathscr{QCT}^*$ (and each class $\mathscr{QCT}^*(n)$) is quasisymmetrically closed by \cite[Lemma 4.3]{BM22}. Moreover, most examples of quasiconformal trees appearing in analysis \cite{CG-dynamics,CJY,BishopTyson,Bishop,RohdeLin} are of uniform branch separation, making $\mathscr{QCT}^*$ a very natural class.
	
Bonk and Meyer \cite[Problem 9.2]{BM20} asked whether the CSST is a quasisymmetrically universal element for $\mathscr{QCT}^*(3)$. More generally, they asked if $\mathscr{QCT}^*(n)$ contains a quasisymmetrically universal element and whether there exists a bounded turning tree in which we can quasisymmetrically embed every element of $\mathscr{QCT}^*$ \cite[Problem 9.3]{BM22}. In our main theorem below, we answer all these questions.
	
\begin{thm}\label{thm:main}
For each $\e>0$ there exists a nested family of geodesic, self-similar, and Ahlfors regular metric trees $\mathbb{T}^{2} \subset \mathbb{T}^{3} \subset \mathbb{T}^{4} \subset \cdots$ with the following properties.
\begin{enumerate}[(i)] 
\item Each $\mathbb{T}^n$ is quasisymmetrically equivalent to a quasiconvex tree in $\R^2$. In particular, $\T^2$ is isometric to the segment $[0,1]$, $\T^3$ is quasisymmetrically equivalent to the CSST, and $\T^4$ is quasisymmetrically equivalent to the Vicsek fractal.
\item The sequence of trees $(\mathbb{T}^n)_{n=2}^{\infty}$ converges in the Gromov-Hausdorff sense to a geodesic tree $\mathbb{T}^{\infty}$ whose Hausdorff dimension is at most $1+\e$.
\item For each $n\geq 3$, $\mathbb{T}^n$ is quasisymmetrically universal in $\mathscr{QCT}^*(n)$.
\end{enumerate}
\end{thm}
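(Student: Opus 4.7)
The plan has two main halves: explicit \emph{construction} of the family $(\T^n)$ satisfying (i) and (ii), and the \emph{universality} statement (iii), which is the heart of the theorem.

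For the construction, I would build each $\T^n$ as the attractor of an iterated function system of planar similitudes, where the IFS defining $\T^n$ extends that for $\T^{n-1}$ by adjoining similitudes that attach a new family of valence-$n$ branches at prescribed positions along the existing edges. The contraction ratios at level $n$ are chosen to shrink fast enough that the similarity dimension of each $\T^n$ stays below $1+\e$; via the open set condition this yields Ahlfors regularity of every $\T^n$, and controls the dimension of $\T^\infty = \overline{\bigcup_n \T^n}$, which is also the Gromov--Hausdorff limit by nestedness. Self-similarity and the inclusions $\T^{n-1}\subset \T^n$ are built into the IFS. The identifications $\T^2 \cong [0,1]$, $\T^3 \cong$ CSST, and $\T^4 \cong$ Vicsek in (i) are then checked by matching the IFS combinatorics to the standard ones, while quasisymmetric equivalence of each $\T^n$ to a quasiconvex tree in $\R^2$ follows from the planar placement of the similitudes and a straightforward verification that no cycles are created.

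The universality statement (iii) is where the real work lies. Given $T \in \mathscr{QCT}^*(n)$, I would first invoke the Bonk--Meyer theorem to reduce to the case where $T$ is geodesic. The embedding $f \colon T \to \T^n$ is then built by a hierarchical recursion: pick a diameter-realizing trunk arc $\alpha_0 \subset T$ and a trunk arc $\beta_0 \subset \T^n$, identify the finitely many branch points of $T$ on $\alpha_0$ whose height is comparable to $\diam T$, and define $f$ on $\alpha_0$ as a quasisymmetric parametrization $\alpha_0 \to \beta_0$ that routes these branch points to branch points of $\T^n$ on $\beta_0$ of comparable relative height. Such target branch points exist in abundance along any arc of $\T^n$ by self-similarity and the density of high-valence branches of $\T^n$. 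At each such branch point $p$, the at most $n-1$ non-trunk branches are themselves in $\mathscr{QCT}^*(n)$ with the same constants, and each is embedded recursively into a self-similar subcopy of $\T^n$ attached at $f(p)$. Iteration defines $f$ on a dense set, and continuity extends it to all of $T$.

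The main technical obstacle is verifying that $f$ is quasisymmetric rather than merely a topological embedding. Here the uniform branch separation of $T$ is essential: it supplies a definite \emph{scale gap} between consecutive levels of the recursion, so that for any three points $x,y,z \in T$ the ratio $d(x,y)/d(x,z)$ is controlled by the first level at which $x,y,z$ lie in distinct branches. Combined with the self-similar Ahlfors regularity of $\T^n$, a case analysis (three points in the same subbranch vs.\ in separated subbranches) should produce a single quasisymmetric modulus for $f$ depending only on the constants of $T$ and of $\T^n$ and not on the recursion depth. Packaging this uniform estimate -- in particular ensuring that branch points of $T$ at all scales get matched to branch points of $\T^n$ of suitably comparable heights without distortion blow-up across levels -- is the crux of the argument.
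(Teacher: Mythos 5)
Your construction half is broadly plausible (it mirrors what the paper does in Section \ref{sec:BLembed}, where planar IFS attractors appear), but note two points you gloss over: a planar attractor is only quasiconvex, never geodesic, so geodesicity must be obtained by passing to an abstract/path-metric model and proving bi-Lipschitz equivalence; and since your contraction ratios are shrunk to force dimension $\leq 1+\e$, the attractors are \emph{not} the CSST or the Vicsek set, so ``matching the IFS combinatorics'' does not by itself give the quasisymmetric identifications in (i) --- the paper needs a full uniformization theorem (Theorem \ref{thm:uniformization}, proved via Bonk--Meyer quasi-visual subdivisions) plus a separate verification that the Vicsek set is uniformly $4$-branching.

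The genuine gap is in (iii). Your recursion fixes a quasisymmetric parametrization of the trunk $\alpha_0\to\beta_0$ routing only the branch points of height comparable to $\diam T$, and defers everything else to later levels. But the branch points of $T$ lying \emph{on} $\alpha_0$ with small height are already mapped once $f|_{\alpha_0}$ is chosen, and their images are generically not branch points of $\T^n$; the branches of $T$ attached there then have nowhere to be sent, so the recursion as described does not even produce a topological embedding without revisiting and constraining the trunk map at all scales simultaneously. Moreover, the claimed ``definite scale gap between consecutive levels'' is not a consequence of uniform branch separation: that hypothesis only gives $d(p,q)\gtrsim\min\{H_T(p),H_T(q)\}$, and a member of $\mathscr{QCT}^*(n)$ can have branch points along a single arc whose heights decay arbitrarily slowly, with no uniform density and no uniform branch growth, so your ``levels'' interact on the same arc and the uniform quasisymmetry modulus does not follow from the sketched case analysis. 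This mismatch (arbitrary separation pattern in $T$ versus uniform branching in $\T^n$) is precisely what the paper's proof is organized around: instead of building $f$ directly, it \emph{enlarges} $T$ by geodesic gluing in three steps (Proposition \ref{prop:qsembed}: first uniform branch growth, then constant valence $n$, then uniform branch density, the last by gluing rescaled copies of $\T^{n}$ at a Bonk--Meyer vertex decomposition), obtaining an isometric embedding of $T$ into a uniformly $n$-branching tree, and only then applies the quasi-visual subdivision uniformization theorem to identify that tree quasisymmetrically with $\T^n$. Without either this fattening step or an argument replacing the subdivision machinery, your proposal does not close.
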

	
Recall that quasiconvexity is a notion slightly weaker than geodesicity: a metric space is \emph{quasiconvex} if any two points can be joined by a path with length at most a fixed multiple of the distance of the points. Since Euclidean spaces do not contain geodesic trees (other than line segments), quasiconvexity is the closest to geodesicity in the Euclidean setting.
	
A few remarks are in order. First, each $\T^n$ is topologically universal in the class of all metric trees that have valence at most $n$ \cite{Charatonik}. Second, $\T^{\infty}$ is homeomorphic to the universal tree of Nadler \cite{Nadler} and, hence, is itself a topologically universal tree in the class of all metric trees. 
	
Claims (i) and (iii) of Theorem \ref{thm:main} imply that the CSST is quasisymmetrically universal in $\mathscr{QCT}^*(3)$ and that the Vicsek fractal is quasisymmetrically universal in $\mathscr{QCT}^*(4)$. The former answers \cite[Problem 9.2]{BM22}. Claims (ii) and (iii) of Theorem \ref{thm:main} imply that every tree in $\mathscr{QCT}^*(n)$ quasisymmetrically embeds into $\T^{\infty}$, which answers \cite[Problem 9.3]{BM22}.
	
\begin{cor}
If $T$ is a quasiconformal tree with uniformly separated branch points, then $T$ quasisymmetrically embeds into $\mathbb{T}^{\infty}$.
\end{cor}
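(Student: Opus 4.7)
The plan is to deduce the corollary directly from Theorem \ref{thm:main} after bounding the valence of $T$. First I would observe that, since $T$ is doubling and bounded turning, its valence is bounded above by a finite constant $n = n(T) \geq 3$: indeed, a branch point $p$ of high valence would force many pairwise disjoint branches of comparable diameter to cluster near $p$, contradicting the doubling property. This is precisely the content of \cite[Lemma 3.5]{BM22}, already invoked in the discussion after Theorem \ref{prop:nouniv}. Therefore $T \in \mathscr{QCT}^*(n)$.

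Next, I would invoke part (iii) of Theorem \ref{thm:main}: since $\mathbb{T}^n$ is quasisymmetrically universal in $\mathscr{QCT}^*(n)$, there exists a quasisymmetric embedding $f\colon T \hookrightarrow \mathbb{T}^n$. Finally, I would use the nested structure $\mathbb{T}^2 \subset \mathbb{T}^3 \subset \cdots$ together with the Gromov--Hausdorff convergence to $\mathbb{T}^\infty$ to obtain an isometric inclusion $\iota\colon \mathbb{T}^n \hookrightarrow \mathbb{T}^\infty$; concretely, $\mathbb{T}^\infty$ can be identified with the completion of $\bigcup_{k\geq 2}\mathbb{T}^k$, inside which each $\mathbb{T}^n$ sits isometrically. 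The composition $\iota\circ f$ is then the desired quasisymmetric embedding of $T$ into $\mathbb{T}^\infty$.

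There is no genuine obstacle beyond these two routine steps. The valence bound is an external input from \cite{BM22}, and the isometric inclusion $\mathbb{T}^n \hookrightarrow \mathbb{T}^\infty$ is immediate from the nested construction; the entire analytic content of the corollary is absorbed into Theorem \ref{thm:main}.
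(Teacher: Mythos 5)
Your proposal is correct and follows essentially the same route as the paper: the paper deduces the corollary from claims (ii) and (iii) of Theorem \ref{thm:main}, using that doubling forces finite valence (the remark based on \cite[Lemma 3.5]{BM22}) so that $T\in\mathscr{QCT}^*(n)$ for some $n$, and then composing the quasisymmetric embedding $T\hookrightarrow\T^n$ with the isometric inclusion $\T^n\subset\T^\infty$ furnished by Lemma \ref{lem:isom-embed}. Your appeal to the nested construction for that inclusion is exactly the paper's convention, so there is nothing to add.
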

	
By the doubling property, every quasiconformal tree embeds quasisymmetrically into some $\R^n$ (see for instance \cite[Theorem 12.1]{Heinonen}). On the other hand, no quasiconformal tree (unless it is an arc) can be embedded into $\R$. Thus, a very natural question of Bonk and Meyer \cite[Section 10]{BM20} asks whether every quasiconformal tree admits a quasisymmetric embedding into $\R^2$, and whether the embedded image is quasiconvex. A positive answer to this question could be instrumental in the study of quasiconformal trees as Julia sets of holomorphic or uniformly quasiregular maps of $\R^2$. Theorem \ref{thm:main} gives a partial answer to this question.
	
\begin{cor}
If $T$ is a quasiconformal tree with uniformly separated branch points, then $T$ quasisymmetrically embeds into $\R^2$ and the image is quasiconvex.
\end{cor}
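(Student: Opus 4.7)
The plan is to deduce this corollary from Theorem \ref{thm:main} after a short reduction. First I would note that every quasiconformal tree $T$ has finite valence: being simultaneously doubling and of bounded turning, $T$ cannot have a point of infinite valence, by \cite[Lemma 3.5]{BM22} (the same ingredient invoked in the discussion after Theorem \ref{prop:nouniv}). Combined with the hypothesis of uniformly separated branch points, this places $T$ in $\mathscr{QCT}^*(n)$ for some integer $n\geq 3$.

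Next I would combine parts (i) and (iii) of Theorem \ref{thm:main}: part (iii) provides a quasisymmetric embedding $f\colon T\hookrightarrow \T^n$, and part (i) provides a quasisymmetric homeomorphism $g\colon \T^n\to T'_n$ onto a quasiconvex tree $T'_n\subset\R^2$. The composition $\varphi:=g\circ f$ is then the desired quasisymmetric embedding of $T$ into $\R^2$, with image $\varphi(T)\subset T'_n$.

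The remaining task is to verify that $\varphi(T)$ is quasiconvex, which is the only point not immediately packaged in Theorem \ref{thm:main}. Since $\varphi$ is a homeomorphism onto its image, $\varphi(T)$ is a subcontinuum of the metric tree $T'_n$, hence itself a metric tree. For any two points $x,y\in\varphi(T)$, the unique arc $\gamma$ in $\varphi(T)$ joining them is also an arc in $T'_n$ between the same endpoints, and by uniqueness of arcs in a tree, $\gamma$ coincides with the arc joining $x$ and $y$ in $T'_n$. Since $T'_n$ is a quasiconvex metric tree, $\gamma$ has length at most $C\,d(x,y)$ for some $C\geq 1$, and the same bound then holds inside $\varphi(T)$.

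Overall, the substantive content is entirely carried by Theorem \ref{thm:main}; the only mild obstacle is the last observation that quasiconvexity of a metric tree is automatically inherited by any of its subtrees, which is what allows the conclusion about the image to be drawn without any further geometric work in $\R^2$.
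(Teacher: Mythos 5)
Your argument is correct and is essentially the derivation the paper intends: the corollary is stated as an immediate consequence of Theorem \ref{thm:main}, using exactly your reduction — doubling plus bounded turning forces $\val(T)<\infty$ by \cite[Lemma 3.5]{BM22}, so $T\in\mathscr{QCT}^*(n)$ for some $n$, and then one composes the embeddings furnished by parts (iii) and (i). Your closing observation that quasiconvexity passes to the subtree $\varphi(T)$ (since the arc joining two of its points coincides with the arc in $T'_n$, and any path in a tree contains that arc) is the correct justification for the quasiconvexity of the image, a point the paper leaves implicit.
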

	
As mentioned in Theorem \ref{thm:main}, $\T^2$ is a line segment. The precise construction of trees $\T^n$ with $n\geq 3$ is given in Section \ref{sec:construction}, but we briefly describe an equivalent way of constructing them, motivated by the methods in \cite[Section 9]{BM22}. Fix an integer $n\geq 3$ and numbers $\frac12 \geq a_3 \geq \cdots \geq a_n>0$. Let $T_1^n$ be a line segment of length 1 and let $p$ be the midpoint. We geodesically glue  $n-2$ many segments at $p$ of lengths $a_3,\dots, a_n$ (see Definition \ref{def:geodesic gluing}), and equip the resulting space $T_2^n$ with the path metric. Note that $T_2^n$ is a geodesic tree with 1 branch point and $n$ segments of diameters $\frac12,\frac12,a_3,\dots,a_n$. We repeat the same process to each of the $n$ segments to obtain $T_3^n$, and we keep iterating the construction infinitely. It is easy to see that each $T_i^n$ is a geodesic tree that isometrically embeds into $T_{i+1}^n$. The Gromov-Hausdorff limit of the sequence $(T_i^n)_{i\in\N}$ is the space $\T^n$. Despite the simplicity of this process, many of the desired properties for $\T^n$ seem much harder to establish this way. On the other hand, the equivalent construction of $\T^n$ described in Section \ref{sec:construction} allows the use of various tools from \cite{DV}, significantly shortening many arguments (see, for instance, Lemma \ref{lem:metrictree}, Lemma \ref{lem:ss}, Section \ref{sec:geodesicity}, Section \ref{sec:dimension}).

\begin{figure}[h]
\centering
\includegraphics[width=0.7\linewidth]{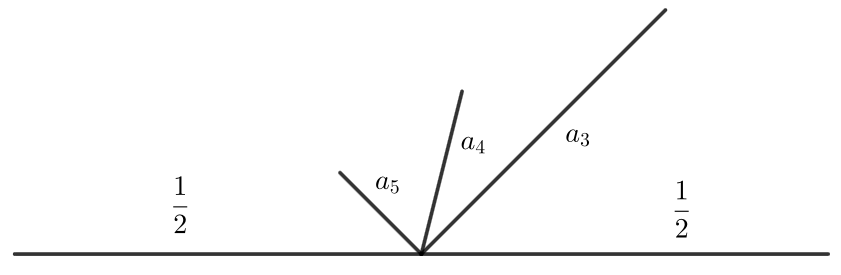}
\caption{Geodesic tree $T_2^n$ for $n=5$. To obtain $T_3^5$, replace all segments by rescaled copies of $T_2^5$.}
\label{fig:simpleT5}
\end{figure}
	
The proof of Theorem \ref{thm:main} comprises of two main steps. The first step is a quasisymmetric uniformization theorem for quasiconformal trees with uniform branching in the spirit of \cite[Theorem 1.4]{BM22}. Before we state it, we require certain definitions. 
	
\begin{definition}[{Uniform relative density of branch points}]\label{def:unifden}
A metric tree $(T,d)$ is said to have \textit{uniformly (relatively) dense branch points} if there exists $C\geq 1$ such that for all $x,y\in T$, $x\neq y$, there exists a branch point $p\in [x,y]$ with 
$$H_{T}(p)\geq C^{-1} d(x,y).$$
\end{definition}
	
\begin{definition}[{Uniform branch growth}]\label{def:unifgrowth} 
A metric tree $(T,d)$ is said to have \textit{uniform branch growth} if there exists $C\geq 1$ such that for all branch points $p\in T$ and all $i\geq 3$, we have 
\[ \diam{B^i_T(p)} \geq C^{-1}\diam{B^3_T(p)}.\]
\end{definition}
	
Definitions \ref{def:unifsep}, \ref{def:unifden}, and \ref{def:unifgrowth} combined lead to the concept of uniform branching.
	
\begin{definition}[{Uniformly $n$-branching trees}]
Given $n\in\{3,4,\dots\}$, a tree is called \textit{uniformly $n$-branching} if it has uniform branch growth, uniformly relatively separated branch points, uniformly relatively dense branch points, and all of its branch points have valence equal to $n$.
\end{definition}
	
For completeness, we say that a quasiconformal tree $T$ is \emph{uniformly $2$-branching} if it is an arc. It can be shown that for each $n\geq 2$, the quasiconformal tree $\T^n$ is uniformly $n$-branching. The main result of \cite{BM22} states that a quasiconformal tree is quasisymmetric to the CSST if, and only if, it is uniformly 3-branching. We also prove the analogue of this result for all valencies, where CSST is replaced by $\T^n$. Note that in the case $n=3$, the uniform branch growth condition is trivially true, while it is  essential for $n\geq 4$, as a quasisymmetric invariant property of $\T^n$.
	
\begin{thm}\label{thm:uniformization}
Let $n\in\{2,3,\dots\}$. A quasiconformal tree $T$ is uniformly $n$-branching if, and only if, it is quasisymmetrically equivalent to $\T^n$.
\end{thm}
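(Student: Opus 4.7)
The \emph{if} direction is the softer of the two. We would verify that each of the four properties defining uniformly $n$-branching is a quasisymmetric invariant for quasiconformal trees. Valence at a point is a purely topological notion and is preserved under any homeomorphism, in particular under a quasisymmetric map. The three metric conditions are formulated via ratios of diameters of branches and distances between branch points. Since quasisymmetric maps between doubling metric spaces quantitatively preserve such ratios (on both small and large scales), and since heights $H_T(p)$ are defined through diameters of branches, each of uniform branch separation, density, and growth is a quasisymmetric invariant. Granted that $\T^n$ is itself uniformly $n$-branching, the \emph{if} direction follows.

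The substance is the \emph{only if} direction. The case $n=2$ asks only that any quasi-arc is quasisymmetrically equivalent to $[0,1]$, which is classical. The case $n=3$ is the main result of Bonk--Meyer \cite{BM22}. For general $n\geq 4$ I propose to imitate, and quantitatively extend, their strategy: build compatible hierarchical decompositions of $T$ and of $\T^n$, match them combinatorially, and read off a quasisymmetry.

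Concretely, the plan is the following. Fix $\lambda\in(0,1)$ depending on the uniform-branching constants of $T$. \emph{Step 1 (Decomposition of $T$).} Starting from a root branch point $p_0$ chosen using uniform branch density, construct inductively, at each scale $k\geq 0$, a maximal set of branch points whose heights lie in $[\lambda^{k+1}\diam T,\lambda^k\diam T]$. Cutting $T$ at these points produces sub-trees $T_{k,j}$, each of diameter comparable to $\lambda^k\diam T$. Uniform branch separation guarantees that these cut points are well-spread; uniform branch density guarantees that we truly cut $T$ into small pieces; uniform branch growth guarantees that \emph{each} of the $n-1$ branches spawned at a cut point has diameter comparable to the height at that point, so that every $T_{k,j}$ is again uniformly $n$-branching with constants independent of $k$. \emph{Step 2 (Decomposition of $\T^n$).} The construction of $\T^n$ in Section~\ref{sec:construction} is self-similar, so there is a natural hierarchical partition at each level into $n$ geometric copies of $\T^n$, with piece diameters also comparable (up to the ratios $a_3,\dots,a_n$) to $\lambda^k$ for the appropriate $\lambda$.

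\emph{Step 3 (Combinatorial matching).} Because every branch point of $T$ has valence exactly $n$ and every branch point of $\T^n$ has valence exactly $n$, the adjacency trees of the two hierarchical decompositions are combinatorially isomorphic, level by level. We build a bijection $\phi$ from the net of cut points of $T$ to the net of cut points of $\T^n$ respecting this combinatorial isomorphism. \emph{Step 4 (Promotion to a quasisymmetry).} Since at each scale the diameters of matched pieces are comparable, $\phi$ is bi-H\"older (hence quasisymmetric) on a dense subset with constants depending only on the uniform-branching constants of $T$ and the construction constants of $\T^n$. The extension of $\phi$ to a quasisymmetric homeomorphism $T\to\T^n$ follows from standard extension results on doubling spaces, along the lines of \cite[Sect.~5]{BM22}.

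The main obstacle I anticipate is the simultaneous quantitative control in Steps 1 and 3: the selected cut points must both partition $T$ into pieces of geometrically decreasing diameter and produce an incidence tree that matches the rigid $n$-ary structure of $\T^n$. Uniform branch growth is exactly the new ingredient (vacuous for $n=3$, essential for $n\geq4$) that prevents some branches from being infinitesimally smaller than others, and hence allows the combinatorial isomorphism of Step 3 to be carried out with uniform comparability constants. Isolating this step and quantifying it carefully is where I expect most of the work to sit.
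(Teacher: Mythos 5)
Your overall route is the paper's route: quasisymmetric invariance of the defining properties for the easy direction, and a Bonk--Meyer style hierarchical decomposition matched combinatorially and promoted to a quasisymmetry (via quasi-visual subdivisions, Proposition \ref{prop: qs match}) for the hard direction. The decomposition of $T$ you describe in Step 1 is essentially the paper's vertex sets $\textbf{V}^n$ of branch points with height at least $\delta^n$ (Proposition \ref{prop: quasi-visual subdivision}). However, Step 3 contains a genuine gap. It is not true that, because all branch points of $T$ and of $\T^n$ have valence exactly $n$, the two decompositions are ``combinatorially isomorphic, level by level'' when $\T^n$ is given its rigid self-similar level-$k$ partition. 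Exact valence $n$ controls the local picture at a single cut point, but the number of cut points of height comparable to $\lambda^k$ that appear inside a given piece of $T$ is not fixed (it is only bounded above by some $N$ and below by $2$, as in Proposition \ref{prop: quasi-visual subdivision}(iv)--(v)), whereas each piece of the self-similar partition of $\T^n$ splits into exactly $n$ children at each level. Uniform branch growth does not repair this: it guarantees comparability of sibling branches at one branch point, not a fixed $n$-ary incidence pattern across scales. This is precisely why the paper does not use the level-$k$ partition of $\T^n$ at all; instead it builds the subdivision of $\T^n$ \emph{adaptively}, as images of tiles $\T^n_w$ with words $w$ of varying lengths, via tile-homeomorphisms (Lemma \ref{lem:tile homeo}, resting on the marked-leaf homeomorphism Lemma \ref{lem:homeo of leaves}), with the level-control conditions (each generation increases the level by at least $1$ and at most $N$, and by exactly $2$ at tiles meeting the parent's boundary) that make the resulting subdivision quasi-visual (Proposition \ref{prop:isomorphic subd}). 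Without this adaptive choice and the accompanying bookkeeping of boundary points and marked leaves of edge-like tiles, Step 3 as written would fail already for a tree $T$ whose branch points of a given height are unevenly distributed.

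Two smaller points. In Step 4, ``bi-H\"older on a dense subset, hence quasisymmetric'' is not a valid implication in general; the correct mechanism is exactly the isomorphism of quasi-visual subdivisions plus Proposition \ref{prop: qs match}, so the quantitative content must be loaded into the quasi-visuality of \emph{both} subdivisions rather than into a H\"older estimate for the net map. And in the easy direction, the preservation of uniform branch growth is not a one-line consequence of ``quasisymmetric maps preserve ratios'': the diameter ordering of the branches at a point can be permuted by the map, and one must argue as in Lemma \ref{lem:qs maps preserve comparability of branches} (a case analysis comparing third-largest branches on both sides). These are fixable, but the adaptive subdivision of $\T^n$ in Step 3 is the missing idea that carries the proof.
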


The second ingredient in the proof of Theorem \ref{thm:main} is the construction of three quasisymmetric embeddings in Section \ref{sec:QSembed}. First, we show that each quasiconformal tree with uniform branch separation quasisymmetrically embeds into a quasiconformal tree with uniform branch separation and uniform branch growth. Second, we show that each quasiconformal tree with uniform branch separation and uniform branch growth quasisymmetrically embeds into a quasiconformal tree with uniform branch separation, uniform branch growth, and all of its branch points have the same valence $n$. Finally, we employ a tiling of quasiconformal trees from \cite{BM20} to show that each quasiconformal tree with uniform branch separation, uniform branch growth, and whose branch points all have valence $n$, quasisymmetrically embeds into a uniformly $n$-branching tree.  
	
The work of this paper motivates a couple of questions. First, as mentioned earlier, the class $\mathscr{QCT}$ cannot have a quasisymmetrically universal element, due to such an element not being doubling. One can then drop the doubling requirement and study the more general class of bounded turning trees, which is quasisymmetrically closed, and also contains all geodesic trees. In this class, we ask whether there exists a universal element. It is easy to see that if such an element exists, it can not be $\T^{\infty}$, as $\T^{\infty}$ has uniform branch separation, and every metric tree that quasisymmetrically embeds therein must also have uniform branch separation.
	
\begin{question}\label{quest:1}
Does there exist a bounded turning metric tree in which all bounded turning trees quasisymmetrically embed? Does every bounded turning metric tree with uniform branch separation quasisymmetrically embed into $\T^{\infty}$? 
\end{question}
	
In the absence of the doubling property, Question \ref{quest:1} can also be studied for the larger class of weakly quasisymmetric maps (see Section \ref{sec:prelim}).
	
The second question is concerned with the \emph{bi-Lipschitz} version of universality. It was proved in \cite{DEBV} by David, Eriksson-Bique, and the third author that  every quasiconformal tree bi-Lipschitz embeds in some Euclidean space $\R^M$, where $M$ only depends on the doubling and bounded turning constants of the tree (see also \cite{BLtrees_Lee_Naor, BLtrees_Gupta}). It is natural to consider the same question, where the Euclidean target space is replaced by a suitable model tree. Note that, contrary to quasisymmetric mappings, bi-Lipschitz mappings preserve all notions of dimension (Hausdorff, Minkowski, Assouad). Therefore, one has to consider a class of spaces of uniformly bounded dimension (see Lemma \ref{lem:doubling}).
	
\begin{question}\label{quest:2}
Let $\mathscr{GT}(n,c)$ be the class of all geodesic trees that have valencies at most $n$ and uniform branch separation with constant $c$. Does there exist an element $T_0 \in \mathscr{GT}(n,c)$ in which every element of $\mathscr{GT}(n,c)$ bi-Lipschitz embeds? If such a $T_0$ exists, is it bi-Lipschitz homeomorphic to $\T^n$?
\end{question}
	
We note that the class $\mathscr{GT}(n,c)$ is not bi-Lipschitz closed, but one could instead consider the ``bi-Lipschitz closure'' of this class, i.e., all metric trees that are bi-Lipschitz equivalent to some tree in $\mathscr{GT}(n,c)$. We also note that, in addition to the question of the quasisymmetric embedability of all quasiconformal trees in $\R^2$, it is unknown and an interesting problem whether all quasiconformal trees with Assouad dimension less than 2 bi-Lipschitz embed into $\R^2$ \cite[Question 1]{Kinneberg}.
	
\subsection*{Outline of the paper}
In Section \ref{sec:prelim} we present preliminary definitions and results. In Section \ref{sec:noQSembed}, employing the notions of metric tangents by Gromov, we prove Theorem \ref{prop:nouniv}. 
	
In Section \ref{sec:construction}, following the language and techniques from \cite{DV}, we construct for each $n\in\N\cup\{\infty\}$ and for each choice of weights $\ba = (a_1,a_2,\dots)$ (with $\frac12 = a_1 = a_2 \geq a_3 \geq \cdots$) a self-similar metric tree $\T^{n,\ba}$. In Section \ref{sec:geodesicity} we show that the trees $\T^{n,\ba}$ are geodesic, and we show that the sequence $(\T^{n,\ba})_{n\geq 2}$ converges in the Gromov-Hausdorff sense to $\T^{\infty,\ba}$.
	
In Section \ref{sec:branching} we study the branch points of trees $\T^{n,\ba}$ and show that $\T^{n,\ba}$ are uniformly $n$-branching. In Section \ref{sec: Vicsek unif branching} we show that the Vicsek fractal is uniformly $4$-branching. In Section \ref{sec:dimension} we show that the trees $\T^{n,\ba}$ are Ahlfors regular (which implies doubling), and estimate the dimension of the limit tree $\T^{\infty,\ba}$. The proof of a quantitative version of Theorem \ref{thm:uniformization} is given in Section \ref{sec:unif}, by following the techniques from \cite{BT_CSST} and \cite{BM22}. In Section \ref{sec:BLembed} we show that trees $\T^{n,\ba}$ quasisymmetrically embed into $\R^2$, with the embedded image being a quasiconvex tree. 
	
In Section \ref{sec:glue} we establish some preliminary lemmas for geodesic trees and in  Section \ref{sec:QSembed} we prove that every quasiconformal tree with uniformly separated branch points and valence at most $n$ quasisymmetrically embeds into $\T^{n,\ba}$. Finally, in Section \ref{sec:Finale} we prove Theorem \ref{thm:main} and Theorem \ref{thm:uniformization}, by putting together all the results from the earlier sections.

\subsection*{Acknowledgments} The authors wish to kindly thank the anonymous referees for their helpful comments that improved the exposition of the paper.
	
\section{Preliminaries}\label{sec:prelim}
We write $\N_0=\N\cup \{0\}.$ Given two non-negative quantities $A$ and $B$, we write $A\lesssim B$ if there is a \textit{comparability constant}  $C=C(\lesssim)$ such that $A\leq C B$. Similarly,  we write $A\gtrsim B$ if there is $C=C(\gtrsim)$ such that $A\geq  B/C$. If $A\lesssim B$ and $A\gtrsim B$ we write $A\simeq B$.
	
Given a set $E$ in a metric space $X$ and $r>0$, we write
\[ N_r(E) = \{x\in X : \dist(x,E) < r\}.\]
	
\subsection{Convergence of metric spaces}
Recall that the \emph{Hausdorff distance} between two subsets $X_1,X_2$ of the same  space $X$ is defined by
\begin{equation*}
\dist_H(X_1,X_2):= \inf \{ r>0: X_2 \subset N_r(X_1)\text{ and }X_1 \subset N_r(X_2)\} 
\end{equation*}
The \emph{Gromov-Hausdorff distance} of two metric spaces $X$ and $Y$ is defined by
\begin{equation*} 
\dist_{GH}(X,Y) := \inf_{f,g}\{\dist_H(f(X),g(Y))\}
\end{equation*}
where $f : X \to Z$ and $g : Y \to Z$ are isometric embeddings into some ambient metric space $Z$. A sequence of compact metric spaces $(X_n)_{n=1}^{\infty}$ converges in the Gromov-Hausdorff sense to a compact metric space $X$ (and we write $X_n \xrightarrow{GH} X$) if $\dist_{GH}(X_n,X) \to 0$ as $n \to \infty$. Notice that the limit space $X$ is unique up to isometries. If the spaces $X_n$ and $X$  are in the same ambient space for all $n\in\N$, then $\dist_H(X_n,X) \to 0$ implies $X_n \xrightarrow{GH} X$.
	
A map $f:X\to Y$ is called an $\e$-isometry if 
\[ \dist(f):=\sup\{|d_X(x_1,x_2)-d_Y(f(x_1),f(x_2)| : x_1,x_2 \in X \} \leq \e.\]
A pointed metric space is a triple $(X, p, d)$, where $(X, d)$ is a metric space with a base point $p\in X$. The \emph{pointed Gromov-Hausdorff convergence} is an analog of Gromov-Hausdorff convergence appropriate for non-compact spaces. A sequence of pointed metric spaces $(X_n, p_n, d_n)$ converges in the pointed Gromov-Hausdorff sense to a complete pointed metric space $(X, p, d_X)$, and we write 
\[ (X_n, p_n, d_n) \xrightarrow{GH} (X, p, d),\] 
if for every $r > 0$ and every $\e\in(0,r)$ there exists $n_0\in\N$ such that for every integer $n > n_0$ there exists an $\e$-isometry $f : B(p_n, r) \to X$ with $f(p_n)=p$ and $B(p,r-\e) \subset N_{\e}(f(B(p_n,r)))$.
	
Recall that if a metric  space $X$ is doubling with a doubling constant $C\geq 1$, we say that $X$ is $C$-doubling. The next lemma gives a relation between the doubling constants of the metric spaces and their pointed Gromov-Hausdorff limit. For the proof see \cite[Theorem 8.1.10]{BBI}.
	
\begin{lem}\label{lem:doubling of weak tangent}
Let $(X_n,p_n,d_{X_n})$ be a sequence of $C$-doubling spaces. Then, the pointed Gromov-Hausdorff limit is also $C$-doubling. 
\end{lem}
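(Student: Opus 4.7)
The plan is to transfer a $C$-element cover of a ball in some $X_n$ through the $\e$-isometries provided by pointed Gromov--Hausdorff convergence, thereby producing an approximate $C$-cover of the corresponding ball in $X$, and then pass to a limit.

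Let $r > 0$ and fix a small auxiliary parameter $\eta > 0$. Choosing $\e$ small compared to $\eta$ (say $\e = \eta/5$) and applying the definition of pointed Gromov--Hausdorff convergence at radius $r + \eta$, for $n$ sufficiently large there is an $\e$-isometry $f_n \colon B(p_n, r+\eta) \to X$ with $f_n(p_n) = p$ and
\begin{equation*}
B(p, r) \,\subset\, B(p, r + \eta - \e) \,\subset\, N_\e\bigl(f_n(B(p_n, r+\eta))\bigr).
\end{equation*}
Using $C$-doubling of $X_n$, cover $B(p_n, r+\eta)$ by $C$ balls $B(z_i^n, (r+\eta)/2)$, $i=1,\dots,C$. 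Given $y \in B(p,r)$, pick $x \in B(p_n, r+\eta)$ with $d_X(y, f_n(x)) < \e$ and an index $i$ with $d_{X_n}(x, z_i^n) < (r+\eta)/2$; the $\e$-isometry inequality then yields
\begin{equation*}
d_X(y, f_n(z_i^n)) \,\leq\, 2\e + d_{X_n}(x, z_i^n) \,<\, (r+\eta)/2 + 2\e \,<\, r/2 + \eta.
\end{equation*}
Hence $B(p, r)$ is covered by $C$ open balls of radius $r/2 + \eta$ centered at points $y_i^n := f_n(z_i^n)$, which lie in $\overline{B}(p, 2r)$.

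Since $\eta > 0$ was arbitrary, the final step is to promote this family of $\eta$-approximate covers to a genuine cover of $B(p,r)$ by $C$ balls of radius $r/2$. I would do this by taking $\eta_k \downarrow 0$, viewing each resulting center tuple $(y_1^{(k)}, \ldots, y_C^{(k)}) \in \overline{B}(p,2r)^C \subset X^C$, and extracting a convergent subsequence whose limit $(y_1, \ldots, y_C)$ yields the desired cover of $B(p, r)$ by $C$ balls of radius $r/2$.

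The main technical obstacle is precisely this last limit extraction: a priori, nothing guarantees that the bounded set $\overline{B}(p, 2r) \subset X$ is sequentially compact. I would resolve this by a bootstrap: the $\eta$-approximate covers produced above already exhibit $\overline{B}(p, 2r)$ as totally bounded at every scale, and the completeness of $X$ (built into the pointed Gromov--Hausdorff framework) then makes it compact and legitimizes the diagonal extraction. A parallel, limit-free route would instead transfer any $(r/2)$-separated subset of $B(p,r)$ back through $f_n^{-1}$ to an almost-separated subset of $B(p_n, r+\eta)$, whose cardinality is controlled by the $C$-doubling of $X_n$; this circumvents the compactness issue but requires careful bookkeeping of the separation constants.
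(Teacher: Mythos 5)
Your argument is correct in substance, and it is worth noting that the paper does not actually prove this lemma: it simply cites \cite[Theorem 8.1.10]{BBI}. What you wrote is essentially the standard direct proof that the citation stands in for (push a $C$-ball cover of $B(p_n,\cdot)$ through the $\e$-isometry to get covers of $B(p,r)$ by $C$ balls of radius $r/2+\eta$ for every $\eta>0$, then let $\eta\to 0$), so the comparison is simply: you supply the self-contained argument, the paper outsources it.

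Two details in your write-up need tightening, though neither is a genuine gap. First, the centers $z_i^n$ of the doubling cover of $B(p_n,r+\eta)$ need not lie in $B(p_n,r+\eta)$, which is the domain of your $f_n$; after discarding cover balls that miss $B(p_n,r+\eta)$ the centers lie within $\tfrac32(r+\eta)$ of $p_n$, so you should invoke pointed GH convergence at a larger radius (say $2(r+\eta)$) so that $f_n(z_i^n)$ is defined. Second, your bootstrap to compactness of $\overline{B}(p,2r)$ is legitimate, but to get total boundedness at every scale you need the $\eta$-approximate $C$-covering for balls centered at \emph{arbitrary} points $q\in X$, not just at $p$; this follows from the same transfer argument applied at radius exceeding $d(p,q)+s$, and should be said, after which completeness of $X$ (which is built into the paper's definition of pointed GH limit) gives compactness and your subsequence extraction goes through. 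Note also that the limit of centers covers $B(p,r)$ by \emph{closed} balls of radius $r/2$, which is harmless for the paper's formulation of doubling. Finally, be careful with your proposed ``limit-free'' alternative: bounding the cardinality of separated sets via the doubling of $X_n$ and then converting back to a covering bound does not return the constant $C$ but only some $C'=C'(C)$, whereas the lemma (and its quantitative use in the proof of Theorem \ref{prop:nouniv}, where $N+2>C$ is chosen) asserts $C$-doubling on the nose; so the cover-transfer route you made primary is the right one to keep.
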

	
\subsection{Self-similarity}\label{sec:ss}
We say that a compact metric space $X$ is \emph{self-similar} if there exists a finite collection of similarity maps $\{\phi_{i}: X \to X\}_{i=1,\dots,k}$ such that $X = \bigcup_{i=1}^k\phi_{i}(X)$, and we call $X$  the \textit{attractor} of the collection. Moreover, we say that $X$ satisfies the \textit{open set condition} if there exists a nonempty open set $U \subset X$ such that $\phi_{i}(U) \subset U$ and $\phi_{i}(U)\cap \phi_{j}(U) = \emptyset$ for all distinct $i,j \in \{1,\dots,k\}$. Throughout the paper, all self-similar sets are assumed to satisfy the open set condition, even if not explicitly stated.
	
\subsection{Metric trees}\label{sec: metric trees} 
	
We say a metric space $T$ is a \textit{(metric) tree} if $T$ is  compact, connected, and locally connected, with at least two distinct points, and such that for any $x,y\in T$  there is a unique arc in $T$ with endpoints $x, y$. We denote this unique arc by $[x,y]$. We say a point $p\in T$ has \textit{valence} equal to $m\in \N\cup \{\infty\}$ and write $\val(p,T)=m$ if the complement $T\setminus\{p\}$ has $m$ connected components, called \textit{branches of $T$ at $p$}. If $\val(p,T)=1$ we say $p$ is a \textit{leaf} of $T$, if $\val(p,T)=2$ we say $p$ is a \textit{double point} of $T$, and if $\val(p,T)\geq 3$ we say $p$ is a \textit{branch point} of $T$. We define the \textit{valence} of a tree $T$ by
\[ \val(T):=\sup \{\val(p,T): \, p \in T\}.\] 
If $T$ has no branch points, we say that $T$ is a \textit{metric arc}. If $T$ has at least one branch point and there exists $m\in\{3,4,\dots\}$ such that $\val(p,T)=m$ for all branch points $p$ of $T$, then we say that $T$ is \textit{$m$-valent}. 
	
A subset $S$ of a tree $T$ is called \textit{subtree} of $T$ if $S$ equipped with the restricted metric of $T$ is also a tree. By \cite[Lemma 3.3]{BT_CSST}, $S$ is a subtree of $T$ if and only if $S$ contains at least 2 points and it is closed and connected.
	
The following lemma is used towards the proof of Theorem \ref{thm:uniformization}. It is an analogue of \cite[Theorem 5.4]{BT_CSST}.
	
\begin{lem}\label{lem:homeo of leaves}
Let $S$ and $S'$ be $m$-valent trees, $m\geq 3$, with dense branch points. Moreover, suppose that $p_1,p_2,p_3$ and $q_1,q_2,q_3$ are three distinct leaves of $S$ and $S'$ respectively. Then there exists a homeomorphism $f:S\to S'$ such that $f(p_k)=q_k$ for $k=1,2,3$. 
\end{lem}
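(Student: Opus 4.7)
The plan is a back-and-forth construction of a homeomorphism, adapted from the proof of \cite[Theorem 5.4]{BT_CSST} to the $m$-valent setting. First, I form initial matching tripods using medians. Let $c \in S$ be the unique common point of the arcs $[p_i,p_j]$ for $1 \le i < j \le 3$; since $p_1,p_2,p_3$ are distinct leaves, $c$ has at least three branches in $S$, so by $m$-valence $c$ has exactly $m$ branches. Define $c' \in S'$ analogously and set
\[\tau_0 = \bigcup_{i=1}^{3} [p_i, c] \subset S, \qquad \tau_0' = \bigcup_{i=1}^{3}[q_i, c'] \subset S',\]
choosing a homeomorphism $f_0 \colon \tau_0 \to \tau_0'$ with $f_0(p_i) = q_i$ and $f_0(c) = c'$. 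Fix a bijection between the $m$ branches of $S$ at $c$ and the $m$ branches of $S'$ at $c'$ matching the branch containing $p_i$ with that containing $q_i$ for $i=1,2,3$.

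Next, fix countable dense sequences $\{b_k\} \subset S$ and $\{b_k'\} \subset S'$ of branch points, available by the density of branch points. Inductively build nested finite subtrees $\tau_0 \subset \tau_1 \subset \cdots \subset S$ and $\tau_0' \subset \tau_1' \subset \cdots \subset S'$ together with homeomorphisms $f_n \colon \tau_n \to \tau_n'$ extending $f_{n-1}$, so that $b_1,\dots,b_k \in \tau_{2k-1}$, $b_1',\dots,b_k' \in \tau_{2k}'$, a consistent branch-bijection is maintained at every branch point of $S$ in $\tau_n$ and its image in $\tau_n'$, and the mesh (largest diameter of a component of $S \setminus \tau_n$, and analogously for $S'$) is at most $2^{-n}$. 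At an odd step $n = 2k-1$ with $b_k \notin \tau_{n-1}$, let $x \in \tau_{n-1}$ be the nearest-point retraction of $b_k$ onto $\tau_{n-1}$ (a standard continuous retraction in dendrites), adjoin $[x, b_k]$ to $\tau_{n-1}$; the added arc enters $S$ at $x$ along a specific branch, so by the bijection at $x$ one identifies the corresponding branch $B'$ of $S'$ at $f_{n-1}(x)$, selects any branch point $b' \in B' \setminus \tau_{n-1}'$ (available by density), adjoins $[f_{n-1}(x), b']$ to $\tau_{n-1}'$, and extends $f_{n-1}$ over the new arc with $b_k \mapsto b'$; one then fixes a new bijection at $b_k \leftrightarrow b'$ matching the branch towards $x$ with that towards $f_{n-1}(x)$. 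Even steps are symmetric, and at each step one adjoins auxiliary arcs via the same mechanism into any component of $S \setminus \tau_n$ or $S' \setminus \tau_n'$ of diameter exceeding $2^{-n}$, to enforce the mesh condition.

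The union $D = \bigcup_n \tau_n$ is dense in $S$ (it contains $\{b_k\}$), $D' = \bigcup_n \tau_n'$ is dense in $S'$, and the $f_n$ patch to a bijection $f \colon D \to D'$. Define $f \colon S \to S'$ by $f(p) = \lim_n f_n(R_n(p))$, where $R_n \colon S \to \tau_n$ is the nearest-point retraction; the mesh condition forces $f_n(R_n(p))$ to be Cauchy with a limit independent of the approximation, so $f$ is continuous, and the symmetric construction for the inverse yields a continuous inverse, proving that $f$ is a homeomorphism sending $p_k \mapsto q_k$ for $k=1,2,3$. The main obstacle is enforcing the mesh condition compatibly on both sides: this relies crucially on the density of branch points to supply subdivision points inside every component, while the $m$-valence hypothesis ensures the branch-matching bijection can be extended consistently at each newly introduced branch point, so that the iterative choice of ``corresponding'' branches in $S$ and $S'$ never runs into inconsistency.
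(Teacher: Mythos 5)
Your back-and-forth scheme has a genuine gap at the attachment step. When you adjoin $[x,b_k]$ at the nearest-point retraction $x$ of $b_k$ onto $\tau_{n-1}$, the point $x$ will typically lie in the interior of an edge of $\tau_{n-1}$: it is then a branch point of $S$ that was never a vertex of your construction, so no ``bijection at $x$'' has actually been fixed (your stated invariant would have to hold at \emph{all} branch points of $S$ lying on $\tau_n$, a countable dense set on every edge, which your arbitrarily chosen edge homeomorphisms do not respect). Worse, $f_{n-1}(x)$ is whatever point the arbitrary edge homeomorphism happens to assign, and since the branch points of $S'$ form a countable set, nothing guarantees $f_{n-1}(x)$ is a branch point of $S'$ at all; if it is a double point, there is no branch $B'$ of $S'$ at $f_{n-1}(x)$ disjoint from $\tau_{n-1}'$ to receive the image arc, and the step cannot be carried out. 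Repairing this forces you either to choose every edge homeomorphism so that branch points map to branch points (an infinite back-and-forth inside each edge, after which your convergence argument must be revisited), or to restructure the induction so that subdivision points are branch points chosen on both sides simultaneously --- which is what the paper does: it decomposes $S$ and $S'$ into nested families of subtrees by splitting each tile at a branch point of maximal height, tracks only the combinatorics together with one or two signed marked boundary leaves per tile, and invokes the decomposition criterion (Proposition \ref{prop: homeo}, i.e.\ \cite[Proposition 2.1]{BT_CSST}) to produce the homeomorphism; $f(p_k)=q_k$ then follows because each $p_k$ is the nested intersection of the tiles carrying its sign.

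A second, related gap is the limit argument. Continuity of $f$ and of $f^{-1}$ does not follow merely from ``components of $S\setminus\tau_n$ and $S'\setminus\tau_n'$ have diameter at most $2^{-n}$'': for a point of $\tau_n$ approached from infinitely many complementary components you need that the corresponding components on the other side form a null family with attachment points converging to the right place, and for points interior to edges you need the edges themselves (not only the complement) to be subdivided compatibly with mesh tending to zero on both sides; writing this out essentially reproves the decomposition criterion the paper cites. You also use implicitly, without justification, that only finitely many components of $S\setminus\tau_n$ exceed the mesh bound, so that each stage adds only finitely many arcs.
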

	
The following proposition is needed in the proof of Lemma \ref{lem:homeo of leaves}.
	
\begin{prop}\cite[Proposition 2.1]{BT_CSST}\label{prop: homeo}
Let $(X,d_X)$ and $(Y,d_Y)$ be compact metric spaces. Suppose that for each $n\in\N$, the spaces $X,Y$ admit decompositions $X=\bigcup_{i=1}^{M_n}X_{n,i},Y=\bigcup_{i=1}^{M_n}Y_{n,i}$ as finite union of non-empty compact subsets $X_{n,i},Y_{n,i}$, $i=1,\dots, M_n\in\N$, with the following properties for all $n,i$ and $j$:
\begin{enumerate}
\item Each set $X_{n+1,j}$ is the subset of some set $X_{n,i}$ and each $Y_{n+1,j}$ is the subset of some set $Y_{n,i}$.
\item Each set $X_{n,i}$ is equal to the union of some of the sets $X_{n+1,j}$ and each set $Y_{n,i}$ is equal to the union of some of the sets $Y_{n+1,j}$.
\item We have that $\max_{1\leq i\leq M_n}\diam(X_{n,i})\to 0$ and $\max_{1\leq i\leq M_n}\diam(Y_{n,i})\to 0$ as $n\to\infty$.
\end{enumerate}  
Moreover, assume that $X_{n+1,j}\subset X_{n,i}$ if and only if $Y_{n+1,j}\subset Y_{n,i}$ and $X_{n,i}\cap X_{n,j}\neq \emptyset$ if and only if $Y_{n,i}\cap Y_{n,j}\neq\emptyset$ for all $n,i,j$.
		
Then there exists a unique homeomorphism $f:X\to Y$ such that $f(X_{n,i})=Y_{n,i}$ for all $n$ and $i$. 
\end{prop}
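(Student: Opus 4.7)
The plan is to construct the homeomorphism $f:X\to Y$ directly as a limit of nested compact sets, then verify its properties using the two compatibility conditions. For $x\in X$, set $\mathcal{I}_n(x)=\{i: x\in X_{n,i}\}$ and $\mathcal{Y}_n(x)=\bigcup_{i\in \mathcal{I}_n(x)} Y_{n,i}$, and define $f(x)=\bigcap_{n\in\N}\mathcal{Y}_n(x)$. The essential facts to establish are: (a) each $\mathcal{Y}_n(x)$ is nonempty and compact; (b) the sequence $(\mathcal{Y}_n(x))_{n\in\N}$ is decreasing; (c) $\diam \mathcal{Y}_n(x)\to 0$.

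Facts (a) and (b) follow quickly from conditions (1) and (2) together with the nesting compatibility: if $x\in X_{n+1,j}\subset X_{n,i}$, then $i\in \mathcal{I}_n(x)$ and $Y_{n+1,j}\subset Y_{n,i}\subset \mathcal{Y}_n(x)$, giving $\mathcal{Y}_{n+1}(x)\subset \mathcal{Y}_n(x)$. For (c), I will use the intersection compatibility: if $i,i'\in \mathcal{I}_n(x)$ then $x\in X_{n,i}\cap X_{n,i'}$, so $Y_{n,i}\cap Y_{n,i'}\neq\emptyset$. With all pairs in the finite collection $\{Y_{n,i}\}_{i\in \mathcal{I}_n(x)}$ intersecting, fixing any $i_0\in \mathcal{I}_n(x)$ and applying the triangle inequality through a point of $Y_{n,i_0}\cap Y_{n,i}$ bounds $\diam \mathcal{Y}_n(x)\leq 3\max_i \diam Y_{n,i}$, which tends to $0$ by (3). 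Compactness then forces the intersection to be a single point, so $f$ is well-defined.

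Next I will verify $f(X_{n,i})=Y_{n,i}$. For $x\in X_{n,i}$, condition (2) iterated lets me build a nested chain $X_{n,i}\supset X_{n+1,i_{n+1}}\supset X_{n+2,i_{n+2}}\supset \cdots$ with $x\in X_{m,i_m}$ for every $m\geq n$, and the nesting compatibility yields $Y_{n,i}\supset Y_{n+1,i_{n+1}}\supset \cdots$. The diameter estimate above forces the intersection of this $Y$-chain to equal $\{f(x)\}$ (any two chains through $x$ yield points within $2\max_j \diam Y_{n,j}$ at every level, so the resulting points coincide), hence $f(x)\in Y_{n,i}$. Applying the entire construction with the roles of $X$ and $Y$ exchanged produces a map $g:Y\to X$ with $g(Y_{n,i})\subset X_{n,i}$; the composition $g\circ f$ then sends every $x$ into $X_{n,i}$ for every $i\in \mathcal{I}_n(x)$, and the vanishing diameters force $g\circ f=\id_X$, and symmetrically $f\circ g=\id_Y$. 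In particular $f$ is a bijection with $f(X_{n,i})=Y_{n,i}$.

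Continuity of $f$ will follow from a subsequence argument: if $x_k\to x$ and $d(f(x_k),f(x))\geq \e$ along a subsequence, fix $n$ large enough that $\max_i \diam Y_{n,i}<\e$; by finiteness of $\{1,\ldots,M_n\}$, a further subsequence $(x_{k_l})$ lies in a fixed $X_{n,j}$, and closedness of $X_{n,j}$ forces $x\in X_{n,j}$, so $f(x),f(x_{k_l})\in Y_{n,j}$, contradicting the diameter bound. The same argument applied to $g$ gives continuity of $f^{-1}$. Uniqueness is immediate: any $f'$ satisfying $f'(X_{n,i})=Y_{n,i}$ sends each $x$ into $\bigcap_n \mathcal{Y}_n(x)=\{f(x)\}$. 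The main technical point is the estimate $\diam \mathcal{Y}_n(x)\to 0$, which is precisely where the pairwise intersection compatibility across the two decompositions is indispensable; the rest is careful bookkeeping on the nested structure.
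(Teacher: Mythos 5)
Your proof is correct and complete: the nested compact sets $\mathcal{Y}_n(x)$ with vanishing diameters give a well-defined map, the chain argument yields $f(X_{n,i})\subset Y_{n,i}$, the symmetric map $g$ together with the diameter bounds gives bijectivity and $f(X_{n,i})=Y_{n,i}$, and the subsequence argument gives continuity, with uniqueness immediate. Note that the paper itself offers no proof of this statement — it is quoted from Bonk--Tran \cite[Proposition 2.1]{BT_CSST} — and your argument is essentially the standard one given there, so there is nothing in the paper to diverge from.
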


\begin{proof}[Proof of Lemma \ref{lem:homeo of leaves}]
We need the terminology on \textit{marked leaves} from \cite{BT_CSST}. Fix a set $A=\{1,\dots, m\}$ and let $T$ be an arbitrary metric tree. Suppose we have defined subtrees $T_u$ of $T$ for all levels $n\in\N$ and all $u\in A^n$. The boundary $\partial T_u$ of $T_u$ in $T$ will consist of one or two points that are leaves of $T_u$ and branch points of $T$. We consider each point in $\partial T_u$ as a \textit{marked leaf} in $T_u$ and will assign to it an appropriate sign $-$ or $+$ so that if there are two marked leaves in $T_u$, then they carry different signs. Accordingly, we refer to the points in $\partial T_u$ as the \textit{signed marked leaves} of $T_u$. The same point may carry different signs in different subtrees. We write $p^{-}$ if a marked leaf $p$ of $T_u$ carries the sign $-$, and $p^{+}$ if it carries the sign $+$. If $T_u$ has exactly one marked leaf, we call $T_u$ a \textit{leaf-tile}, and if there are two marked leaves we call $T_u$ an \textit{arc-tile}.
		
We now describe how to define subtrees of $S$ of level $1$. Let $p_1$ carry the sign $-$, while $p_2$ and $p_3$ carry the sign $+$. We create a decomposition of $S$ and a decomposition of $S'$ satisfying the conditions of Proposition \ref{prop: homeo}, which provides the desired homeomorphism. In particular, choose a branch point $c\in S$ so that the leaves $p_1,p_2,p_3$ lie in distinct branches $S_1,S_2,S_3$ of $S$ respectively. To find such a branch point, one travels from $p_1$ along $[p_1,p_2]$ until one first meets $[p_2,p_3]$ in a point $c$. Then the sets $[p_1,c),\ [p_2,c),\ [p_3, c)$ are pairwise disjoint. 
		
For $k,l\in \{1,2,3\}$ with $k\neq l$ the set $[p_k,c)\cup \{c\}\cup (c,p_l]$ is an arc with endpoints $p_k$ and $p_l$, and so it must agree with $[p_k,p_l]$. In particular, $c\in [p_k,p_l]$. Since each point $p_k$ is a leaf, it easily follows from \cite[Lemma 3.2(iii)]{BT_CSST} that $c\neq p_1,p_2,p_3$. We conclude that the connected sets $[p_1,c),\ [p_2,c),\ [p_3,c)$ are non-empty and must lie in different branches $S_1,S_2,S_3$ of $S$ at $c$. Hence, the point $c$ has at least 3 branches, and since $S$ is an $m$-valent tree, $c$ has to be a branch point of $S$ with a total of $m$ branches. We can choose the labels so that $p_k\in S_k$ for $k\in \{1,2,3\}$. We then have the set of marked leaves $\{p_1^{-}, c^{+}\}$ in $S_1$, $\{c^{-}, p_2^{+}\}$ in $S_2$, and $\{c^{-}, p_3^{+}\}$ in $S_3$.
		
We now continue inductively as in \cite[Section 5, pp. 178-182]{BT_CSST}. If we already constructed a subtree $S_u$ for some $n\in\N$ and $u\in A^n$ with one or two signed marked leaves, then we decompose $S_u$ into $m$ branches labeled $S_{u1},\dots, S_{um}$ by using a suitable branch point $c\in S_u$. Namely, if $S_u$ is a leaf-tile and has one marked leaf $a\in S_u$, we choose a branch point $c\in S_u\setminus\{a\}$ with maximal height $H_S(c)$. If $S_u$ is an arc-tile with two marked leaves $\{a,b\}\subset S_u$ we choose a branch point $c\in S_u$ of maximal height on $(a,b)\subset S_u$.
		
A marked leaf $x^{-}$ of $S_u$ is passed to $S_{u1}$ with the same sign. Similarly, a marked leaf $x^{+}$ of $S_u$ is passed to $S_{u2}$ with the same sign. If we continue in this manner, we obtain a  subtree $S_u$ with one or two signed marked leaves for all levels $n\in\N$ and $u\in A^n$.
		
We apply the same procedure for the tree $S'$ and its leaves $q_1,q_2,q_3$. Then we apply \cite[Lemmas 5.1, 5.2, 5.3]{BT_CSST} to the decompositions $S$ and $S'$ obtained this way. Hence, by Proposition \ref{prop: homeo} there exists a homeomorphism $f:S\to S'$ such that
\begin{equation}\label{f(S_u)=S'_u}
f(S_u)=S_u' \qquad \text{for all $n\in\N$ and $u\in A^n$.} 
\end{equation}
Note that $p_1$ carries the sign $-$ to $S_{1^{(n)}}$ for all $n\in\N$, and since by \cite[Lemma 5.3]{BT_CSST} the diameters of our subtrees $S_u$, $u\in A^n$, approach 0 uniformly as $n\to\infty$, we have $\{p_1\}=\bigcap_{n=1}^{\infty}S_{1^{(n)}}$. The same argument shows that $\{q_1\}=\bigcap_{n=1}^{\infty}S_{1^{(n)}}'$. Hence, \eqref{f(S_u)=S'_u} implies that $f(p_1)=q_1$. 
		
Similarly, the points $p_2,p_3,q_2,q_3$ carry the sign $+$ in their respective trees. Therefore, by applying the same arguments we have that $f(p_2)=q_2$ and $f(p_3)=q_3$. The proof is complete.
\end{proof}
	
\begin{rem}
\cite[Theorem 5.4]{BT_CSST} is stated only for $3$-valent trees and the difference in their inductive decomposition (\cite[Section 5, pp. 178-182]{BT_CSST}) is only in the basis step. In our case, the same techniques apply for $m$-valent trees, as long as we map three leaves of the tree $S$ to three leaves of the tree $S'$.
\end{rem}
	
The next lemma, which is extensively used in Section \ref{sec:QSembed}, demonstrates how geodesicity and uniform separation on trees relate to the doubling condition.
\begin{lem}\label{lem:doubling}
Given $N\in \{2,3,\dots\}$, if $T$ is a geodesic metric tree with $\val(T) \leq N$ and uniform branch separation with constant $C\geq 1$, then there is $D=D(N,C)>0$ such that $T$ is $D$-doubling.
\end{lem}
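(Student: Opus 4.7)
The plan is to establish doubling by exhibiting, for every $x \in T$ and $r > 0$, a covering of $B(x, r)$ by at most $D(N, C)$ balls of radius $r/2$. Since $T$ is geodesic, any $y \in B(x, r)$ with $d(x, y) \geq r/2$ lies in $B(z, r/2)$ for $z$ the point on $[x, y]$ at distance $r/2$ from $x$ (because $d(z, y) = d(x, y) - r/2 < r/2$). Letting $Z \subset T$ collect one such $z$ per connected component of $T \setminus \overline{B}(x, r/2)$ that meets $B(x, r)$ (with a minor adjustment if $y$ itself lies on the sphere), we obtain
\[
    B(x, r) \;\subseteq\; B(x, r/2) \;\cup\; \bigcup_{z \in Z} B(z, r/2),
\]
so it suffices to bound $|Z| \leq D(N, C) - 1$.

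To do so, I consider the hull $\cS \subset \overline{B}(x, r/2)$, the minimal subtree of $T$ containing $\{x\} \cup Z$. Each $z \in Z$ is a leaf of $\cS$ at distance $r/2$ from $x$, because every branch of $T$ at $z$ not pointing toward $x$ escapes $\overline{B}(x, r/2)$. The interior branch points of $\cS$ are branch points of $T$ of valence at most $N$. At any interior branch point $p$ of $\cS$ of depth $t := d(x, p) \in (0, r/2)$, at least three branches of $T$ at $p$ meet $\cS$ nontrivially: the parent branch contains $x$ at distance $t$, and at least two child branches each contain some $z \in Z$ at distance $r/2 - t$. Sorting the diameters of these three branches gives
\[
    H_T(p) \;\geq\; \min(t,\, r/2 - t).
\]

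Combining this height estimate with the uniform branch separation inequality $d(p, q) \geq C^{-1}\min(H_T(p), H_T(q))$ for distinct interior branch points $p, q$ of $\cS$, the depths of successive interior branch points along any root-to-leaf geodesic in $\cS$ must progress geometrically: for $t < r/4$, depths multiply by a factor $\geq 1 + C^{-1}$ per step, and symmetrically for $t > r/4$ the coheights $r/2 - t$ contract by the same factor. Together with the valence cap $c(p) \leq N - 1$ on the number of children at each interior vertex of $\cS$, a dyadic counting over the scales $2^{-k} r$ bounds $|Z|$ by a quantity $L(N, C)$ depending only on $N$ and $C$, giving $D \leq 1 + L(N, C)$.

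The main obstacle is this dyadic counting. The lower bound $\min(t, r/2 - t)$ degenerates to zero as $p$ approaches either $x$ or the sphere, so the branch separation axiom does not directly constrain interior branch points of $\cS$ in those extreme regions. The remedy is to stratify the interior branch points by the dyadic scale of $\min(t,\, r/2 - t)$ and carry out the count on each scale separately, using separation at scale $2^{-k}r$ together with the valence constraint to bound scale-$k$ branch points, then summing over $k$; this bookkeeping is where both $N$ and $C$ enter the final constant $D$.
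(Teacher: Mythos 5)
Your covering inclusion is fine, but the reduction ``it suffices to bound $|Z|$'' is where the argument breaks: the number of components of $T\setminus\overline{B}(x,r/2)$ that meet $B(x,r)$ is \emph{not} bounded in terms of $N$ and $C$ — it can be infinite for trees satisfying the hypotheses. Take $r=2$ and let $T$ be a geodesic ``comb'': a segment $[x,e]$ of length $1$, together with, for each $i\geq 1$, a tooth of length $\tfrac{3}{2}\,4^{-i}$ attached at the point $p_i\in[x,e]$ with $d(x,p_i)=1-4^{-i}$. This is a compact geodesic tree with $\val(T)=3$; at each $p_i$ the three branches have diameters roughly $1$, $\tfrac{3}{2}\,4^{-i}$ and about $4^{-i}$, so $H_T(p_i)\simeq 4^{-i}$, while $d(p_i,p_j)=|4^{-i}-4^{-j}|\gtrsim \min\{H_T(p_i),H_T(p_j)\}$, so $T$ has uniform branch separation with an absolute constant. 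Yet every tooth pokes past the sphere $\{d(x,\cdot)=1\}$ by $\tfrac12 4^{-i}$, so $T\setminus\overline{B}(x,1)$ has infinitely many components meeting $B(x,2)$, each with its own gate: your $Z$ is infinite. This also shows that the obstacle you flag is fatal rather than a bookkeeping issue: in the comb, the interior branch points of your hull realize exactly the degeneration $\min(t,r/2-t)\to 0$ near the sphere, one per scale, so even if each dyadic scale contributed $O_C(1)$ points (it need not: at a fixed scale, branch points on different strands are far apart and the separation hypothesis does not limit their number), the sum over scales diverges — as it must, since the quantity you are counting is infinite. Bounding $|Z|$ is strictly stronger than doubling, and it is false.

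The paper avoids this by choosing the centers differently: it marks points at the three fixed radii $3r/4$, $r/2$, $r/4$, but only on geodesics $[x_0,y]$ whose endpoint lies at least $r/4$ farther out ($d(x_0,y)\geq r$, $\geq 3r/4$, $\geq r/2$ respectively). That overshoot forces every branch point where two such geodesics separate to have three branches of diameter at least $r/4$, hence $H_T\geq r/4$ \emph{uniformly in the location of the branch point}; uniform separation then caps the number of such branch points on the reference geodesic by about $6C$, and $\val(T)\leq N$ converts this into $\card\,\mathcal{S}_i\leq (N-2)(6C+1)+2$. Shallow protrusions just beyond radius $r/2$ (the comb's teeth) contribute no centers of their own; they are swallowed by the ball centered at radius $r/4$ on their geodesic. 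To salvage your scheme you would have to do the same thing: discard gates whose component does not extend a definite fraction of $r$ beyond the sphere, and add an intermediate shell of centers to cover what remains — at which point you have essentially reproduced the paper's proof rather than given an alternative to it.
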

	
\begin{proof}
Let $x_0\in T$ and let $r>0$. We construct 3 finite sets $\mathcal{S}_1, \mathcal{S}_2, \mathcal{S}_3 \subset \overline{B}(x_0,r)$. 
		
For each $x\in T$ with $d(x,x_0)\geq r$, let $p_x$ be the unique point of $[x,x_0]$ such that $d(p_x,x_0)=3r/4$. Let $\mathcal{S}_1=\{p_x : x\in T, \, d(x,x_0)\geq r\}$. Let $q_1,\dots,q_l \in \mathcal{S}_1$ be distinct points and let $K$ be the  convex hull of $\{x_0,q_1,\dots,q_l\}$ in $T$ (in the geodesic sense). Then $x_0\in K$ and $\diam{K}$ is either $3r/4$ or $3r/2$; the former being true exactly when $x_0$ is a leaf of $K$. Without loss of generality, we assume for the rest of the proof that $\diam{K}=3r/2$ and, consequently, that $d(q_1,q_2) = 3r/2$. For each $i\in\{3,\dots,l\}$, let $z_i \in [q_1,q_2]$ be the point with $[q_i,x_0]\cap [q_1,q_2] = [z_i,x]$. Note that each $z_i$ is a branch point with $H_{T}(z_i) \geq r/4$. Since all $z_i$ are on the geodesic $[q_1,q_2]$ of length $3r/2$, by uniform branch separation we have that $\card\{z_3,\dots,z_l\}\leq 6C+1$. Note that for $i, j\in \{3, \dots, l\}$ with $i\neq j$, we do not necessarily have that $z_i\neq z_j$, hence $\card\{z_3,\dots,z_l\}\leq l$. However, each $z_i$ can have at most $N$ branches, so it follows that 
$$l=\card\{q_1,\dots, q_l\} \leq (N-2)\card\{z_3,\dots,z_l\} + 2\leq (N-2)(6C+1) + 2.$$
Since $l$ was the cardinality of an arbitrary subset of distinct points in $\mathcal{S}_1$, the above proves that 
\[ \card{\mathcal{S}_1} \leq (N-2)(6C+1) + 2.\]
		
For each $x\in T$ with $d(x,x_0) \geq \frac34 r$, let $p_x'$ be the unique point in $[x_0,x]$ with $d(p_x',x_0) = r/2$. Let also $\mathcal{S}_2 =\{p_x': x\in T ,\, d(x,x_0)\geq 3r/4\}$. Working as in the preceding paragraph, we can show that
\[ \card{\mathcal{S}_2} \leq (N-2)(6C+1) + 2. \]
		
Finally, for each $x\in T$ with $d(x,x_0) \geq r/2$, let $p_x''$ be the unique point in $[x_0,x]$ with $d(p_x'',x_0) = r/4$. Let also $\mathcal{S}_3 =\{p_x'': x\in T, \, d(x,x_0)\geq r/2\}$. As before,
\[ \card{\mathcal{S}_3} \leq (N-2)(6C+1) + 2. \]
		
Define now $\mathcal{S} = \{x_0\}\cup \mathcal{S}_1\cup \mathcal{S}_2\cup \mathcal{S}_3$. We claim that for all $x\in \overline{B}(x_0,r)$, there exists $p\in \mathcal{S}$ such that $d(x,p) \leq r/2$. To this end, fix $x\in \overline{B}(x_0,r)$. If $d(x,x_0) \leq r/2$, then we can choose $p=x_0$. If $r/2 < d(x,x_0) < 3r/4$, then $d(x,p_{x}'') \leq r/2$. If $3r/4 \leq d(x,x_0) < r$, then $d(x,p_{x}') \leq r/2$. Finally, if $d(x,x_0) =r$, then $d(x,p_{x}) \leq r/4$. Therefore,
\[ \overline{B}(x,r) \subset \bigcup_{p\in \mathcal{S}}\overline{B}(p,r/2)\]
with $\mathcal{S} \subset \overline{B}(x,r)$ and $\card{\mathcal{S}} \leq D:=3(N-2)(6C+1)+7$.
\end{proof}
	
\subsection{Quasisymmetric maps}
	
A homeomorphism $f:(X,d_X) \to (Y,d_Y)$ between metric spaces is said to be \emph{quasisymmetric} (or $\eta$-quasisymmetric) if there exists a homeomorphism $\eta \colon [0,\infty) \to [0,\infty)$ such that for all $x,a,b \in X$ with $x\neq b$ 
\[ \frac{d_Y(f(x),f(a))}{d_Y(f(x),f(b))} \leq \eta \left ( \frac{d_X(x,a)}{d_X(x,b)} \right ). \]
The composition of two quasisymmetric maps and the inverse of a quasisymmetric map are quasisymmetric. Two spaces $X,Y$ are quasisymmetrically equivalent if there exists a quasisymmetric map between them. 
	
For doubling connected metric spaces it is known that the quasisymmetric condition is equivalent to a weaker (but simpler) condition known in literature as \emph{weak quasisymmetry} \cite[Theorem 10.19]{Heinonen}. A homeomorphism $f:(X,d_X) \to (Y,d_Y)$ between metric spaces is said to be \emph{weakly quasisymmetric} if there exists $H\geq 1$ such that for all $x,a,b \in X$,
\begin{equation}\label{eq:weakQS3}
d_X(x,a) \leq  d_X(x,b) \qquad\text{implies}\qquad d_Y(f(x),f(a)) \leq H d_Y(f(x),f(b)).
\end{equation}
	
Quasisymmetric maps were introduced by Tukia and V\"ais\"al\"a \cite{TV80} as the analogue of quasiconformal maps in the abstract metric setting. For more background see \cite[Chapters 10-12]{Heinonen}.
	
It is well known that the properties ``doubling'' and ``bounded turning'' are preserved under quasisymmetric maps quantitatively. Bonk and Meyer showed that the properties ``uniformly branch separation'' and ``uniform branch density'' are also preserved \cite[Lemmas 4.3 and 4.5]{BM22}. Below we show that the ``uniform branch growth'' property, which we introduced in the Introduction, is also preserved.
	
\begin{lem}\label{lem:qs maps preserve comparability of branches}
Let $T$ be a metric tree with uniform branch growth. If $T'$ is quasisymmetrically homeomorphic to $T$, then $T'$ has uniform branch growth.
\end{lem}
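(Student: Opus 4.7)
The plan is to show that branch-diameter ratios at a common branch point are preserved up to a constant under quasisymmetric maps, and then track the branch reordering carefully. Let $f \colon T \to T'$ be $\eta$-quasisymmetric, and let $C \geq 1$ be the uniform branch growth constant of $T$. Fix a branch point $p' \in T'$ and set $p := f^{-1}(p')$, which is itself a branch point since $f$ is a homeomorphism. Enumerate the branches of $T$ at $p$ as $B_1, B_2, \dots$ with $\diam B_1 \geq \diam B_2 \geq \cdots$; the images $\{f(B_i)\}$ are exactly the branches of $T'$ at $p'$, and I must control the reordering of their diameters.

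The key estimate is: \emph{if $B, B'$ are branches of $T$ at $p$ with $\diam B \leq L \diam B'$ for some $L \geq 1$, then $\diam f(B) \leq 2\eta(3L) \diam f(B')$.} Since $p$ lies in the closure of any branch $B$, one has $\tfrac{1}{2}\diam B \leq r_B := \sup_{b \in B} d(b,p) \leq \diam B$. I pick $b_* \in B'$ with $d(b_*,p) \geq \tfrac{1}{3}\diam B'$; then every $b \in B$ satisfies $d(b,p) \leq \diam B \leq L \diam B' \leq 3L\,d(b_*,p)$, so quasisymmetry yields $d(f(b),p') \leq \eta(3L) d(f(b_*),p') \leq \eta(3L) \diam f(B')$. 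Taking the supremum over $b \in B$ and using $\diam f(B) \leq 2 r_{f(B)}$ proves the estimate.

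Applying this estimate I obtain three consequences: (a) for $j \geq 3$, the uniform branch growth inequality $\diam B_3 \leq C \diam B_j$ gives $\diam f(B_3) \leq 2\eta(3C)\diam f(B_j)$; (b) for $j \geq 3$, the ordering $\diam B_j \leq \diam B_3$ gives $\diam f(B_j) \leq 2\eta(3)\diam f(B_3)$; (c) for $k \in \{1,2\}$, the ordering $\diam B_3 \leq \diam B_k$ gives $\diam f(B_3) \leq 2\eta(3)\diam f(B_k)$. Let $e'_1 \geq e'_2 \geq \cdots$ denote the sorted branch diameters at $p'$ and write $e'_k = \diam f(B_{\sigma(k)})$ for the sorting bijection $\sigma$. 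By (b), the set of indices $j$ with $\diam f(B_j) > 2\eta(3)\diam f(B_3)$ is contained in $\{1,2\}$, so $e'_3 \leq 2\eta(3)\diam f(B_3)$. Now for any $k \geq 3$: if $\sigma(k) \geq 3$ then (a) gives $e'_k \geq (2\eta(3C))^{-1}\diam f(B_3)$, while if $\sigma(k) \in \{1,2\}$ then (c) gives $e'_k \geq (2\eta(3))^{-1}\diam f(B_3)$. In either case $e'_k \geq (4\eta(3)\eta(3C))^{-1} e'_3$, which establishes uniform branch growth for $T'$ with constant $C' = 4\eta(3)\eta(3C)$.

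The main technical obstacle is the possibility that $f$ permutes the diameter-ordering of branches at a common point, so that a branch which is third-largest in $T$ need not be third-largest in $T'$. The case analysis via (a)--(c) resolves this by showing that whatever reshuffling occurs, the third-largest $T'$-diameter at $p'$ remains comparable to $\diam f(B_3)$, and every subsequent branch is controlled either by the uniform branch growth of $T$ (when its preimage index is $\geq 3$) or by the trivial diameter ordering $\diam B_3 \leq \diam B_k$ (when it came from one of the two largest branches of $T$).
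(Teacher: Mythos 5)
Your proof is correct and follows essentially the same route as the paper: both use quasisymmetry at the common branch point to transfer the diameter comparisons coming from the ordering and from uniform branch growth of $T$ (your key estimate with $L=1$ and $L=C$ corresponds to the paper's two inequalities \eqref{eq:heights-comp-1} and \eqref{eq:heights-comp-2}), and then handle the possible reordering of branches by a case analysis. Your bookkeeping, anchoring everything to $\diam f(B_3)$ rather than comparing the sorted indices directly, is only a cosmetic reorganization of the paper's argument.
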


\begin{proof}
Let $f:T \to T'$ be an $\eta$-quasisymmetric homeomorphism. Let $p\in T'$ be a branch point. If $p$ has only three branches, then there is nothing to prove. We assume for the rest that $p$ has at least 4 branches. We have that $f^{-1}(p)$ is a branch point of $T$ with at least 4 branches, which we enumerate $B_1, B_2, \dots$ so that $\diam{B_i} \geq \diam{B_{i+1}}$ for all $i$.
		
Fix indices $i,j$ with $i>j$. Let $x_i \in B_i$ such that $d_{T'}(f(x_i),p) \geq \frac12\diam{f(B_i)}$ and let $x_j \in B_j$ such that $d_{T}(f^{-1}(p),x_j) \geq \frac12\diam{B_j}$. Then, 
\begin{align}\label{eq:heights-comp-1}
\frac{\diam{f(B_i)}}{\diam{f(B_j)}} \leq \frac{2d_{T'}(f(x_i),p)}{d_{T'}(f(x_j),p)} \leq 2\eta\left( \frac{d_T(x_i,f^{-1}(p))}{d_T(x_j,f^{-1}(p))} \right) \leq 2\eta\left(2\frac{\diam{B_i}}{\diam{B_j}}\right) \leq 2\eta(2).
\end{align}
		
Similarly, if $i>j \geq 3$, then 
\begin{equation}\label{eq:heights-comp-2}
\frac{\diam{f(B_j)}}{\diam{f(B_i)}} \leq 2\eta(2C),
\end{equation}
where $C$ is the uniform branch growth constant of $T$.
		
Suppose now that $i_1,i_2,i_3,i_4$ are distinct indices such that 
\[ \diam{f(B_{i_1})} \geq \diam{f(B_{i_2})} \geq \diam{f(B_{i_3})} \geq \diam{f(B_{i_4})}. \]
We proceed with a case study.
		
\emph{Case 1.} Assume that $i_3,i_4 \geq 3$. Then, uniform branch growth of $T'$ follows immediately from either \eqref{eq:heights-comp-1} (if $i_3>i_4$) or \eqref{eq:heights-comp-2} (if $i_3<i_4$).
		
\emph{Case 2.} Assume that $i_3 \in \{1,2\}$. Then, at least one of $i_1,i_2$, say $i_1$, is in $\{3,4,\dots\}$ and we have 
\[ \frac{\diam{f(B_{i_3})}}{\diam{f(B_{i_4})}} \leq \frac{\diam{f(B_{i_1})}}{\diam{f(B_{i_4})}} \leq 2\eta(2C),\]
by either \eqref{eq:heights-comp-1} (if $i_1>i_4$), or \eqref{eq:heights-comp-2} (if $i_1<i_4$). 
		
\emph{Case 3.} Assume that $i_4 \in \{1,2\}$. Then, at least one of $i_1,i_2$, say $i_1$, is in $\{3,4,\dots\}$ and we have 
\[ \frac{\diam{f(B_{i_3})}}{\diam{f(B_{i_4})}} \leq \frac{\diam{f(B_{i_1})}}{\diam{f(B_{i_4})}} \leq 2\eta(2)\]
by \eqref{eq:heights-comp-1} since $i_1>i_4$.
\end{proof}

\subsection{Quasi-visual subdivisions}
The following three definitions and proposition from \cite{BM22} are the key factors for the proof of Theorem \ref{thm:uniformization}.
	
\begin{definition}[{Quasi-visual approximations}]\label{def: qv approx} Let $S$ be a bounded metric space. A \textit{quasi-visual approximation} of $S$ is a sequence $(\textbf{X}^n)_{n\in\N_0}$, where $\textbf{X}^0=\{S\}$, and $\textbf{X}^n$ is a finite cover of $S$, for any $n\in\N$,  with the following properties. Note that the implicit constants in what follows are independent of $n, X, Y$:
\begin{enumerate}[(i)]
\item $\diam(X)\simeq\diam(Y)$ for all $X,Y\in \textbf{X}^n$ with $X\cap Y\neq \emptyset$.
\item $\dist(X,Y)\gtrsim\diam(X)$ for all $X,Y\in\textbf{X}^n$ with $X\cap Y=\emptyset$.
\item $\diam(X)\simeq\diam(Y)$ for all $X\in\textbf{X}^n,Y\in\textbf{X}^{n+1}$ with $X\cap Y\neq \emptyset$.
\item For some constants $k_0\in\N$ and $\lambda\in (0,1)$ independent of $n$ we have $\diam(Y)\leq \lambda \diam(X)$ for all $X\in\textbf{X}^n$ and $Y\in\textbf{X}^{n+k_0}$ with $X\cap Y\neq \emptyset$.
\end{enumerate}
We write $(\textbf{X}^n)$ instead of $(\textbf{X}^n)_{n\in\N_0}$ with the index set $\N_0$ for $n$ understood. We call the elements of $\textbf{X}^n$ the \textit{tiles of level $n$}, or simply the \textit{$n$-tiles} of the sequence $(\textbf{X}^n)$.
\end{definition}
	
\begin{definition}[{Subdivisions}]\label{Subdivison} A \textit{subdivision} of a compact metric space $S$ is a sequence $(\textbf{X}^n)_{n\in\N_0}$ with the following properties:
\begin{enumerate}[(i)]
\item $\textbf{X}^0=\{S\}$, and $\textbf{X}^n$ is a finite collection of compact subsets of $S$ for each $n\in\N$.
\item For each $n\in\N_0$ and $Y\in\textbf{X}^{n+1}$, there exists $X\in\textbf{X}^n$ with $Y\subset X$.
\item For each $n\in\N_0$ and $X\in\textbf{X}^n$, we have 
$$X=\bigcup\{Y\in \textbf{X}^{n+1}:Y\subset X\}.$$
\end{enumerate}
\end{definition}
	
\begin{definition}[{Quasi-visual subdivisions}]\label{qv subd} 
A subdivision $(\textbf{X}^n)$ of a compact metric space $S$ is called \textit{quasi-visual subdivision} of $S$ if it is a quasi-visual approximation of $S$.
\end{definition}
	
Let $(\textbf{X}^n)$ and $(\textbf{Y}^n)$ be subdivisions of compact metric spaces $S$ and $T$, respectively. We say $(\textbf{X}^n)$ and $(\textbf{Y}^n)$ are \textit{isomorphic subdivisions} if there exist bijections $F^n:\textbf{X}^n\to \textbf{Y}^n$ , $n\in\N_0$, such that for all $n\in\N_0$, $X,Y\in\textbf{X}^n$, and $X'\in\textbf{X}^{n+1}$ we have 
\begin{equation*}
X\cap Y\neq\emptyset\  \text{if and only if}\ F^n(X)\cap F^n(Y)\neq \emptyset
\end{equation*}
and
\begin{equation*}
X'\subset X\ \text{if and only if}\ F^{n+1}(X')\subset F^n(X).
\end{equation*}
We say that the isomorphism between $(\textbf{X}^n)$ and $(\textbf{Y}^n)$ is \textit{given} by the family $(F^n)$. We say that an isomorphism $(F^n)$ is induced by a homeomorphism $F:S\to T$ if $F^n(X)=F(X)$ for all $n\in\N_0$ and $X\in\textbf{X}^n$.
	
\begin{prop}\cite[Proposition 2.13]{BM22}\label{prop: qs match}
Let $S$ and $T$ be compact metric spaces with quasi-visual subdivisions $(\textbf{X}^n)$ and $(\textbf{Y}^n)$ that are isomorphic. Then there exists a unique quasisymmetric homeomorphism $F:S\to T$ that induces the isomorphism between $(\textbf{X}^n)$ and $(\textbf{Y}^n)$.
\end{prop}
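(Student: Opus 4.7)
The plan is to construct $F$ pointwise from the tile data, verify it is a well-defined homeomorphism, and then read off the quasisymmetry estimate from the four defining properties of quasi-visual approximations. For each $x \in S$, I would select a nested sequence of tiles $X_0 \supset X_1 \supset X_2 \supset \cdots$ with $X_n \in \mathbf{X}^n$ and $x \in X_n$. Property (iv) iterated gives $\diam(X_n) \leq \lambda^{\lfloor n/k_0 \rfloor}\diam(S) \to 0$, so $\bigcap_n X_n = \{x\}$ by compactness. The isomorphism sends this to a nested sequence $F^n(X_n) \in \mathbf{Y}^n$, and applying (iii) successively shows $\diam(F^n(X_n))$ is within a uniform multiplicative factor of $\diam(X_n)$ at each level, hence also shrinks to zero, so $\bigcap_n F^n(X_n)$ is a single point which I define to be $F(x)$.

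Well-definedness of $F(x)$: if $X_n, X_n'$ both contain $x$ at level $n$, they intersect, so by the isomorphism $F^n(X_n) \cap F^n(X_n') \neq \emptyset$; combined with (i) this forces the two candidate intersection points to coincide. Bijectivity then follows by constructing the inverse map using the reverse isomorphism $((F^n)^{-1})$. Continuity follows from the uniform shrinking of tile diameters: given $\e>0$, pick $n$ so large that every $n$-tile of $\mathbf{Y}^n$ has diameter less than $\e$; if two points of $S$ lie in the same or in adjacent $n$-tiles (a condition that is open enough by the gap property (ii) applied to the finite cover), their images lie in a set of diameter at most a bounded multiple of $\e$. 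That $F$ induces the isomorphism, i.e.\ $F(X) = F^n(X)$ for $X \in \mathbf{X}^n$, is immediate from the construction, and uniqueness follows because any map inducing the isomorphism must send $x$ into $\bigcap_n F^n(X_n)$, which is a singleton.

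The heart of the proof is the quasisymmetry estimate. Given $x,a,b \in S$ with $x \neq b$, set $r = d(x,b)$ and $s = d(x,a)$, and let $t = s/r$. I would choose integers $n_r, n_s$ characterized (up to a bounded additive error determined by $k_0$ and $\lambda$) by the condition that some tile of level $n_r$ containing $x$ has diameter comparable to $r$, and likewise $n_s$ for $s$. Using (iv) iteratively, the difference $n_s - n_r$ is controlled by $\log(1/t)$ up to a universal additive constant. The separation condition (ii) guarantees that $b$ must lie in a tile $X_b \in \mathbf{X}^{n_r}$ that meets the tile of $x$, and $a$ lies in a tile $X_a \in \mathbf{X}^{n_s}$ meeting the tile of $x$. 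Applying the isomorphism and translating diameters via (i) and (iii) gives $d(F(x),F(a)) \lesssim \diam(F^{n_s}(X_a))$ and $d(F(x),F(b)) \gtrsim \diam(F^{n_r}(X_b))$ (the lower bound on $d(F(x),F(b))$ needs more care — one uses (ii) at level $n_r+k_0$ or so, where $F(b)$ and the tile of $F(x)$ become separated). The ratio is therefore bounded by a function of $n_s - n_r$, hence by a function $\eta(t)$.

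The main obstacle is the quasisymmetry step, specifically extracting an honest control function $\eta$ on all of $[0,\infty)$ rather than just a weak-quasisymmetry bound, and managing the boundary cases where $t$ is very small or very large so that $n_s - n_r$ becomes negative or of order $\log(1/t)$. The geometric-decay property (iv) is exactly what is needed to iterate: for each $j \geq 0$ one can compound (iv) to replace $\lambda$ by $\lambda^j$, which produces an $\eta$ that grows like a power of $t$ for small $t$ and stays bounded for large $t$, matching the required form of a homeomorphism $\eta\colon [0,\infty)\to[0,\infty)$.
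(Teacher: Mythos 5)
The paper does not actually prove this proposition: it is imported verbatim from Bonk--Meyer (\cite[Proposition 2.13]{BM22}) and used as a black box, so there is no in-paper argument to compare yours against. Judged on its own, your outline follows the standard (and, as far as the structure goes, the expected) argument: define $F(x)$ as the intersection point of the images of a nested tile chain at $x$, check well-definedness, continuity and uniqueness from shrinking diameters and the combinatorial isomorphism, and extract quasisymmetry by converting metric ratios into level gaps via properties (i)--(iv). The quasisymmetry sketch names the right mechanism: property (i) (not (ii)) is what forces two tiles containing points at distance much smaller than their diameters to intersect, disjointness transfers through the isomorphism, property (ii) in the target gives the lower bound on $d(F(x),F(b))$, and (iii)/(iv) turn the level gap into a power of $t$, with the case $t\geq 1$ handled by the symmetric estimate; this yields an honest control function $\eta(t)\simeq t^{\alpha}+t^{\beta}$ rather than merely weak quasisymmetry, so the ``main obstacle'' you flag is genuinely surmountable along the route you describe.

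One step is wrong as written, though easily repaired: you claim that applying (iii) ``successively'' shows $\diam(F^n(X_n))$ stays within a uniform multiplicative factor of $\diam(X_n)$, hence tends to $0$. Compounding (iii) over $n$ levels only gives $\diam(F^n(X_n))\leq C^n\diam(X_n)$, which need not tend to zero. The correct (and simpler) argument is that $(\textbf{Y}^n)$ is itself a quasi-visual approximation of $T$, so property (iv) applied directly to the nested chain $F^0(X_0)\supset F^{k_0}(X_{k_0})\supset\cdots$ gives $\diam(F^n(X_n))\leq\lambda^{\lfloor n/k_0\rfloor}\diam(T)\to 0$. Two smaller remarks: well-definedness of $F(x)$ does not need (i) at all (two intersecting sets of vanishing diameter already force the limit points to coincide), and the inclusion $F^n(X)\subset F(X)$ in ``$F$ induces the isomorphism'' is not purely immediate from the construction --- it is obtained by running the same construction for the inverse family $((F^n)^{-1})$ and using $F\circ G=\mathrm{id}$.
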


\subsection{Combinatorial graphs and trees}\label{sec:graphprelim}
	
An \emph{alphabet} $A$ is an at most countable set that is either equal to $\N$ or of the form $\{1, \dots, M\}$ for some integer $M\geq 2$. For each integer $k\geq 0$ denote by $A^k$ the set of words formed by $k$ letters of $A$, with the convention $A^0 = \{\e\}$, where $\e$ denotes the empty word. We set  $A^* = \bigcup_{k\geq 0} A^k$ to be the collection of words of finite length and $A^{\N}$ to be the collection of words of infinite length. If $w\in A^*$, then we denote by $|w|$ the \textit{length} of $w$, i.e., the number of letters that $w$ consists of, with the convention $|\e| =0$. Given two finite words $w=i_1\cdots i_n$ and $v=j_1\cdots j_m$ in $A^*$, we denote by $wu = i_1\cdots i_n j_1\cdots j_m$ their concatenation.
	
Given $w\in A^*$ and integer $k\geq |w|$, denote
\[ A^k_{w} = \{wu : u\in A^{k-|w|}\}, \quad A^*_w = \{wu : u\in A^*\}, \quad A^{\N}_w = \{wu: u\in A^{\N}\}.\]
Given $n\in\N$ and $w\in A^{\N}$ denote by $w(n)$ the unique word $u\in A^n$ such that $w=uw'$ for some $w'\in A^{\N}$. Similarly, if $n\in\mathbb{N}$ and $w\in A^{*}$, $w(n)$ denotes the initial subword of $w$ of length $n$, and we set $w(n)=w$ if $n\geq |w|$. Given $i\in A$ and $k\in\N$, we denote by $i^{(k)}$ the word $i\cdots i \in A^k$ and by $i^{(\infty)}$ the infinite word $ii\cdots \in A^{\N}$.
	
A \emph{combinatorial graph} is a pair $G=(V,E)$ of a finite or countable vertex set $V$ and an edge set 
$$E \subset \left\{ \{v,v'\} : v,v' \in V\text{ and }v\neq v' \right\}.$$
If $\{v,v'\}\in E$, we say that the vertices $v$ and $v'$ are \textit{adjacent} in $G$.
	
A combinatorial graph $G' = (V',E')$ is a \textit{subgraph} of $G=(V,E)$, and we write $G\subset G'$, if $V'\subset V$ and $E'\subset E$. We commonly generate subgraphs of $G=(V,E)$ by starting with a vertex set $V'\subset V$ and considering the \textit{subgraph of $G$ induced by $V'$}, i.e., the graph $G'=(V',E')$ where $E'$ is the set of all edges between two vertices of $V'$.
	
A \emph{combinatorial path} in $G$ is an ordered set $ \g = (v_1,v_2,\dots,v_n) \in V^n$ such that $v_i$ is adjacent to $v_{i+1}$ for all $i\in\{1,\dots,n-1\}$; in this case we say that $\g$ joins $v_1$ to $v_n$. The path $ \g = (v_1,v_2,\dots,v_n)$ is a \emph{combinatorial arc} or \emph{simple path} if for all $i, j \in \{1,\dots,n\}$, $v_i = v_j$ if and only if $i=j$; in this case we say that the endpoints of the arc $\g$ are the points $v_1,v_n$. The \emph{length} of a path $\gamma$ is the number of vertices that it contains. If $\g = (v_1,\dots, v_k)$ and $\g'=(u_1,\dots,u_m)$ are two paths in $G$ with $v_k=u_1$, then we denote by $\g\g'$ the concatenation path $(v_1, \dots, v_k, u_2,\dots, u_m)$. If $\g = (v_1,\dots, v_k)$ we denote by $\g \setminus(v_1):= (v_2,\dots,v_k)$ and similarly we define $\g\setminus (v_k)$. 
	
A combinatorial graph $G = (V,E)$ is connected, if for any distinct $v,v' \in V$ there exists a path $\g$ in $G$ that joins $v$ with $v'$. Given $v\in V$, we write $G\setminus\{v\}$  to be the subgraph of $G$ induced by $V\setminus\{v\}$, and a \emph{component} of $G\setminus\{v\}$ is a maximal connected subgraph of $G\setminus\{v\}$. Note that, if $T$ is a combinatorial tree, then every component of $T\setminus\{v\}$ is a combinatorial tree.
	
A graph $T = (V,E)$ is a \emph{combinatorial tree} if for any distinct $v,v'$ there exists unique combinatorial arc $\g$ whose endpoints are $v$ and $v'$. The unique arc in a combinatorial tree $T$ that starts from $v$ and ends in $v'$ is denoted by $P_T(v,v')$. Given a combinatorial tree $T = (V,E)$ and a point $v\in V$, define the valencies 
\[ \text{Val}(T,v) := \card\{e \in E : v\in e\} \qquad\text{and}\qquad  \text{Val}(T) := \sup_{v\in V} \text{Val}(T,v)\]
and the set of leaves $L(T) := \{v\in V : \text{Val}(T,v) = 1\}$. Here $\card$ denotes the cardinality of a finite or countable set, taking values in $\mathbb{N} \cup \{\infty\}$.
	
\section{Weak tangents and the proof of Theorem \ref{prop:nouniv}}\label{sec:noQSembed}
	
Here we prove Theorem \ref{prop:nouniv}. The proof uses the notion of \emph{weak tangents} from \cite{Gromov1, Gromov2}, which we recall here.
	
Let $(X,d)$ be a metric space and let $p\in X$. We call a metric space $(Y,d')$ a \emph{weak tangent of $X$ at $p$} if there exists $q\in Y$, a sequence $(r_n)_{n\in\N}$ of positive numbers with $r_n \to 0$, and a sequence $(p_n)_{n\in\N}$ of points in $X$ with $p_n\to p$ such that $(X, p_n, r_n^{-1}d) \xrightarrow{GH} (Y,q,d')$. The existence of weak tangents for any doubling metric space at any point is guaranteed (see \cite[Theorem 8.1.10]{BBI}). We denote by $\tang(X,p)$ the collection of all weak tangents of $X$ at $p$. 
	
Recall that a metric space $(X, d)$ is an $\R$-tree if for any $x,y\in X$ there exists a unique arc from $x$ to $y$, and this arc is a geodesic. In our first lemma we show that pointed Gromov-Hausdorff limits of $\R$-trees are $\R$-trees.
	
\begin{lem}\label{lem:limit of trees is tree}
Let $(T_m,p_m,d_m)$ be a sequence of $\R$-trees that converges to a complete metric space $(T,q,d)$ in the pointed Gromov-Haudorff sense. Then, $T$ is an $\R$-tree.   
\end{lem}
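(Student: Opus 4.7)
My plan is to establish in turn that $T$ is $0$-hyperbolic in Gromov's sense, that $T$ has the exact midpoint property, and hence that $T$ is a geodesic space; the conclusion then follows from the standard fact that a complete, geodesic, $0$-hyperbolic metric space is an $\R$-tree.

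For the four-point condition, I recall that every $\R$-tree $T_m$ satisfies
\[ d_m(w,x) + d_m(y,z) \leq \max\{d_m(w,y) + d_m(x,z),\ d_m(w,z) + d_m(x,y)\} \]
for all $w,x,y,z \in T_m$. Given $w,x,y,z \in T$, I will pick $r$ large enough so they all lie in $B(q, r-1)$, and for each small $\e > 0$ use the pointed Gromov-Hausdorff convergence to produce, for $m$ large, an $\e$-isometry $f_m : B(p_m, r) \to T$ with $f_m(p_m) = q$ whose image is $\e$-dense in $B(q, r-\e)$. Picking preimages $w_m, x_m, y_m, z_m \in B(p_m, r)$ whose $f_m$-images approximate $w, x, y, z$ to within $\e$, applying the four-point inequality in $T_m$, and letting $\e \to 0$ (using $\dis(f_m) \leq \e$) gives the four-point condition for $w,x,y,z$ in $T$.

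To construct midpoints, I fix $x, y \in T$ and $\delta > 0$, choose $r$ large enough to comfortably contain $x, y$ and any potential midpoint (for instance $r > d(q,x) + d(q,y) + d(x,y) + 1$), and for $\e$ sufficiently small in terms of $\delta$ produce approximations $x_m, y_m \in B(p_m, r)$ via an $\e$-isometry $f_m$. The midpoint $z_m$ of $x_m, y_m$ in the $\R$-tree $T_m$ satisfies
\[ d_m(p_m, z_m) \leq d_m(p_m, x_m) + \tfrac{1}{2}d_m(x_m, y_m) < r, \]
so $f_m(z_m)$ is defined, and transferring the midpoint equalities through $f_m$ shows that $f_m(z_m)$ is a $\delta$-midpoint of $x, y$ in $T$. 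The four-point condition established above now forces rigidity: applied to the quadruple $(z_1, z_2, x, y)$ for any two $\e$-midpoints $z_1, z_2$ of $x, y$ it yields $d(z_1, z_2) + d(x, y) \leq d(x,y) + 2\e$, i.e., $d(z_1, z_2) \leq 2\e$. Consequently a sequence of $\e_k$-midpoints with $\e_k \to 0$ is Cauchy, its limit exists in $T$ by completeness, and by continuity of the distance this limit is an exact midpoint. The standard dyadic iteration of midpoints combined with completeness then upgrades to an isometric embedding of $[0, d(x,y)]$ into $T$, so $T$ is geodesic.

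A complete, geodesic, $0$-hyperbolic metric space is an $\R$-tree: in such a space geodesic triangles degenerate to tripods, forcing uniqueness of arcs between any two points. The principal technical obstacle I expect is the midpoint step: without a uniform properness hypothesis on the $T_m$, one cannot pass directly to a subsequential limit of the midpoints $z_m$. What unlocks the argument is the rigidity afforded by the four-point condition, whereby $0$-hyperbolicity alone forces approximate midpoints to be automatically Cauchy in the error, so that only completeness of $T$ (and not any compactness) is needed to produce an exact midpoint.
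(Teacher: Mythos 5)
Your argument is correct, and it takes a genuinely different route from the paper. The paper first invokes \cite[Theorem 8.1.9]{BBI} to see that $T$ is a (complete) length space, and then argues that $T$ contains no simple closed curve: assuming a curve $\Gamma\subset T$, it discretizes $\Gamma$ into two well-separated subarcs, transfers the sample points into some $T_m$ through an $\e$-isometry, and shows that the geodesics connecting the transferred points would assemble into a simple closed curve inside the tree $T_m$, a contradiction. You instead work purely metrically: the four-point ($0$-hyperbolicity) inequality, being a closed condition on quadruples, passes to the pointed Gromov--Hausdorff limit via $\e$-isometries; approximate midpoints are imported from the geodesic trees $T_m$, and the four-point condition applied to two $\e$-midpoints of $x,y$ gives the rigidity estimate $d(z_1,z_2)\leq 2\e$, so completeness alone (no properness of the $T_m$, no subsequential compactness) yields exact midpoints and hence geodesicity; the standard characterization of $\R$-trees as geodesic $0$-hyperbolic spaces finishes the proof. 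Your route buys a shorter, more robust argument that bypasses the point-set topology of simple closed curves and makes explicit why no uniform compactness hypotheses are needed, at the cost of quoting two standard facts (four-point characterization of $\R$-trees, and midpoints plus completeness implies geodesic); the paper's route is more self-contained relative to the definition of a metric tree used there, verifying absence of simple closed curves directly. The only places needing the care you already took are the radius bookkeeping ensuring the midpoints $z_m$ stay in $B(p_m,r)$, which your choice $r>d(q,x)+d(q,y)+d(x,y)+1$ handles.
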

	
\begin{proof}
By \cite[Theorem 8.1.9]{BBI} $T$ is a length space. To show that $T$ is an $\R$-tree it suffices to show that $T$ does not have any simple closed curve. 
		
In this proof we denote by $B_T(x,r)$ (resp. $B_{T_m}(x,r)$) the open ball in $T$ (resp. in $T_m$) centered at $x$ and of radius $r$.
		
Assume for a contradiction that $\Gamma\subset T$ is a simple closed curve. Let $x,y\in T$ such that $d(x,y)=\diam \Gamma$. Write $\Gamma\setminus\{x,y\}=\{\alpha,\alpha'\}$ where $\alpha,\alpha'$ are arcs that connect $x,y$. Fix $\e<10^{-1}\diam \Gamma$ and let $x_1,x'_1$ be the last points such that $x_1\in \alpha\cap \partial B_T(x,\e)$  and $x_1'\in\alpha' \cap\partial B_T(x,\e)$, and similarly define $y_1\in \alpha\cap B_T(y,\e)$ and $y_1'\in\alpha' \cap\partial B_T(y,\e)$. Let $\beta\subset \alpha$ and $\beta'\subset\alpha'$ be the subarcs of $\Gamma$ that connect $x_1,x_1'$ and $y_1,y_1'$ respectively. Then, $\dist(\beta,\beta')>0$ since $\Gamma$ is a simple closed curve. 
		
Fix $\e'<10^{-1}\dist(\beta,\beta')$. There exist points $\{z_i\}_{i=1}^N$ in $\beta$, and points $\{z_j'\}_{j=1}^{N'}$ in $\beta'$ (both sets enumerated according to the orientations of $\beta$ and $\beta'$) such that $z_1=x_1$, $z_N=y_1$, $z_1'=x_1'$, $z_{N'}'=y_1'$, and for all $i\in\{1,\dots,N-1\}$ and $j\in\{1,\dots,N'-1\}$
\[ \e' \leq d(x_i,x_{i+1}) \leq 2\e' \quad\text{and}\quad \e' \leq d(x_j',x_{j+1}') \leq 2\e'.  \]
Let also $x=z_0=z_0'$ and $y=z_{N+1}=z_{N'+1}'$. 
		
Set $\e''<\frac19\min\{\dist(\beta,\beta')-4\e',\diam\Gamma-4\e\}$. Choose $r>0$ big enough such that $\Gamma\subset B(q,r-\e'')$. Then there exists $M\in\N$ such that for all $m\geq M$ we can find $f:B(p_m,r)\to T$ and points $\{w_{z_i}\}_{i=0}^{N+1}$, $\{w_{z_j'}\}_{j=0}^{N'+1}$ in $B(p_m,r)$ such that 
\[ d(f(w_{z_i}),z_i)<\e'' \quad\text{and}\quad d(f(w_{z_j'}),z_j')<\e''\] 
for all $i\in\{0,\dots,N+1\}$ and $j\in\{0,\dots,N'+1\}$. Let 
\[ \gamma=[w_{z_1},w_{z_2}]\cup\dots\cup[w_{z_{N-1}},w_{z_N}]\quad \text{and}\quad \gamma'=[w_{z_1'},w_{z_2'}]\cup\dots\cup[w_{z_{N'-1}'},w_{z_{N'}'}].\] 
We claim that $\dist(\gamma,\gamma')>0$. To see that, fix $a_1\in\gamma$, $a_2\in\gamma'$ and let $i\in\{1,\dots,N-1\}$ and $j\in\{1,\dots,N'-1\}$ such that $a_1\in[w_{z_i},w_{z_{i+1}}]$ and $a_2\in[w_{z_j'},w_{z_{j+1}'}]$. Then
\begin{align*}
d_m(a_1,a_2)&\geq d_m(w_{z_i},w_{z_j'})-d_m(a_1,w_{z_i})-d_m(a_2,w_{z_j'})\\
&\geq d(f(w_{z_i}),f(w_{z_j'}))-\e'' - d_m(w_{z_i},w_{z_{i+1}}) - d_m(w_{z_j'},w_{z_{j+1}'})\\
&\geq d(z_i,z_j')-3\e'' - (d(f(w_{z_i}),f(w_{z_{i+1}}))+\e'') - (d(f(w_{z_i'}),f(w_{z_{i+1}'}))+\e'')\\
&> \dist(\beta,\beta') -3\e'' - (d(z_i,z_{i+1})+3\e'') - (d(z_j',z_{j+1}')+3\e'')\\
&\geq \dist(\beta,\beta') - 9\e'' - 4\e'\\
&>0.
\end{align*}
		
Let $\zeta=[w_{z_1},w_x]\cup(w_x,w_{z_1'}]$ and $\zeta'=[w_{z_N},w_y]\cup(w_y,w_{z_{N'}'}]$. Repeating the calculations above, we get that $\dist(\zeta,\zeta')>0$. To obtain a contradiction, we claim that the curve $\gamma\cup\zeta'\cup\gamma'\cup\zeta$ (which is in the tree $T_m$) contains a simple closed curve. Note that 
\[ [w_{z_1},w_{z_N}]\cup\zeta'\cup[w_{z_{N'}'},w_{z_1'}]\cup\zeta\subset \gamma\cup\zeta'\cup\gamma'\cup\zeta.\] 
Denote with $\sigma$ (resp. $\sigma'$) the arc in $\zeta$ (resp. $\zeta')$ that joins $[w_{z_1},w_{z_N}]$ with $[w_{z'_1},w_{z'_{N'}}]$ at the points $w_1,w_2$ respectively (resp. $w'_1,w'_2$). From the above discussion, points $w_1,w_2,w'_1,w'_2$ are all different from each other. It follows that
$$[w_1,w_2]\cup(w_2,w'_2]\cup(w'_2,w'_1]\cup(w'_1,w_1]$$
is a simple closed curve.
\end{proof}
	
For the remainder of this section, for each $N\in\N$ define the planar set
\[ T_N = \ell_0 \cup \bigcup_{n=2}^{\infty}\bigcup_{k=1}^N \ell_{n,k}\]
where
\[\ell_0 = [0,1]\times\{0\} \quad\text{and}\quad \ell_{n,k} = \left\{2^{-n+1}+\tfrac{k-1}{N}4^{-n}\right\} \times [0,3^{-n}].\]
	
It is easy to see that $T_N$ is compact (segments $\ell_{n,k}$ accumulate at $(0,0)$), connected, locally connected (the length of $\ell_{n,k}$ goes to 0 as $n$ goes to infinity) and contains no simple closed curves. Therefore $T_N$ is a metric tree. Moreover, every branch point has valence 3 (since segments $\ell_{n,k}$ are mutually disjoint). Finally, the total Hausdorff $1$-measure of $T_N$ is
\[ \mathcal{H}^1(T_N) = 1+ N\sum_{n=2}^{\infty}3^{-n} < \infty.\]
Therefore, we can equip $T_N$ with the geodesic metric $d$.
	
\begin{lem}\label{lem:T_N doubling}
For each $N\in\N$, $(T_N,d)$ is a doubling geodesic tree of valence 3.
\end{lem}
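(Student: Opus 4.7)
The tree structure of $T_N$ and the fact that every branch point has valence exactly $3$ have already been established in the paragraph preceding the lemma. Geodesicity is automatic from the construction: $T_N$ is a compact, path-connected subset of $\R^2$ with $\mathcal{H}^1(T_N)<\infty$, and it is equipped with its intrinsic length metric, for which shortest paths between any two points exist and are unique thanks to the tree structure. So the substantive task is proving the doubling property.

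The plan is to bypass Lemma \ref{lem:doubling}, which does not apply here: two adjacent branch points $b_{n,k}=(2^{-n+1}+\tfrac{k-1}{N}4^{-n},0)$ and $b_{n,k+1}$ lie at distance $\tfrac{1}{N}4^{-n}$, while $H_{T_N}(b_{n,k})=3^{-n}$, so the separation ratio $\tfrac{1}{N}(3/4)^n$ tends to zero. Instead, I would prove the Ahlfors-type upper bound
\[ \mathcal{H}^1(B_d(x,r)) \leq C_N\, r \qquad \text{for all } x\in T_N \text{ and } 0<r\leq 1. \]
Using the decomposition $B_d(x,r) = (B_d(x,r)\cap \ell_0) \cup \bigcup_{n,k} (B_d(x,r)\cap \ell_{n,k})$, the contribution from $\ell_0$ is at most $2r$, and for each generation $n$, at most $N$ spikes $\ell_{n,k}$ meet the ball, each contributing at most $\min(3^{-n},r)$. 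The key step is to show that the generations that actually contribute sum to $O_N(r)$. Split into cases based on the projection $\pi(x)$ of $x$ onto $\ell_0$: if $\pi(x)>r$, then only a bounded number of clusters $\{2^{-n+1}\}_n$ lie within distance $r$ of $x$ and a crude estimate suffices; if $\pi(x)\leq r$, then all generations $n\geq n^{*}\sim \log_2(1/r)$ have their cluster inside the ball, but the geometric tail $\sum_{n\geq n^{*}} N\cdot 3^{-n} \lesssim N\,r^{\log_2 3}$ is bounded by $C_N r$ precisely because $\log_2 3>1$.

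From this measure estimate, doubling follows by a packing argument. Let $\{p_1,\dots,p_m\}$ be a maximal $(r/2)$-separated subset of $B_d(x,r)$, and let $T'$ be the minimal subtree of $T_N$ containing $\{x,p_1,\dots,p_m\}$. Then $T'\subset B_d(x,r)$, so $\mathcal{H}^1(T') \leq C_N r$. On the other hand, a short combinatorial argument on finite trees (using that $T_N$ has valence at most $3$) shows that $m$ pairwise $(r/2)$-separated points force $\mathcal{H}^1(T') \geq mr/4$. Combining, $m \leq 4C_N$, and since every maximal $(r/2)$-packing is an $(r/2)$-cover, this yields the doubling property with constant depending only on $N$. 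The main obstacle is the bookkeeping in the measure estimate; the crucial quantitative input is the geometric gap between the cluster-scale $4^{-n}$, the spike length $3^{-n}$, and the inter-cluster spacing $2^{-n+1}$, arranged so that the spikes shrink strictly faster than the clusters separate and the overall tail series is dominated by a multiple of $r$.
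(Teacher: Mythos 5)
Your proposal is correct, and it takes a genuinely different route from the paper. The paper proves doubling by a direct covering argument: it first reduces to a center $x\in\ell_0$ (if $x$ lies on a spike, either $B(x,r)$ is a segment or it is contained in $B(x',2r)$ for the attachment point $x'$), then notes that at most $4N$ spikes of length at least $r/4$ can meet the ball, that $\ell_0$ together with boundedly many segments is doubling with constant depending only on $N$, and that the shorter spikes are absorbed at half the scale. You instead establish the upper Ahlfors bound $\mathcal{H}^1(B_d(x,r))\leq C_N r$ and convert it into doubling by packing: a maximal $(r/2)$-separated set $\{p_1,\dots,p_m\}$ in $B_d(x,r)$ spans a subtree $T'\subset B_d(x,r)$ of length at least $mr/4$, forcing $m\leq 4C_N$. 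Both arguments are sound; you are also right that Lemma \ref{lem:doubling} cannot be invoked here, since $T_N$ fails uniform branch separation. What your route buys is the slightly stronger conclusion that $T_N$ is Ahlfors $1$-regular (the matching lower bound $\mathcal{H}^1(B_d(x,r))\gtrsim r$ is automatic for a geodesic continuum), at the cost of the measure bookkeeping; the paper's argument is more elementary and needs no measure estimates. Two small points in your sketch: the case split at $\pi(x)>r$ needs care in the regime $r<\pi(x)<2r$, where the number of clusters meeting the ball need not be bounded — but those clusters satisfy $2^{-n+1}\lesssim r$, so their spikes belong to the geometric tail $\sum_n N\,3^{-n}\lesssim N r^{\log_2 3}\leq Nr$, exactly as in your other case; and the valence bound is not actually needed for the packing lower bound, since the balls $B(p_i,r/4)$ are pairwise disjoint and each contains, inside $T'$, an initial segment of length $r/4$ of the geodesic from $p_i$ toward some other $p_j$.
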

	
\begin{proof}
We only need to show the doubling property. Towards this end, fix $x\in T_N$ and $r>0$. Since $T_N$ is compact, we may assume that $r<1/2$.
		
Suppose that $x\in \ell_{n,k}$ for some $n,k$ and let $x'$ be the unique point in $\ell_{n,k} \cap \ell_0$. If $r\leq d(x,x')$, then $B(x,r)$ is a line segment. If $r\geq d(x,x')$, then $B(x,r) \subset B(x',2r)$. Therefore, we may assume for the rest of the proof that $x\in \ell_0$. 
		
Let $m\in \N$ be the unique integer with $2^{-m-1} < r \leq 2^{-m}$. Let also $0 \leq y < z \leq 1$ be such that $B(x,r) \cap \ell_0 = [y,z]\times\{0\}$. It is easy to see that there exist at most $4N$ many segments $\ell_{n,k}$ that intersect with $[y,z]\times\{0\}$ and have length greater or equal to $r/4$. Label these segments by $\ell^{1},\dots,\ell^l$. 
		
On one hand, $\ell_0\cup \ell^1\cup\cdots \cup \ell^l$ equipped with the geodesic metric is $C$-doubling with $C$ depending only on $N$ and not on the segments $\ell^j$. On the other hand, if $\ell_{n,k}$ intersects with $[y,z]\times\{0\}$ and has length less than $r/4$, then, by design of $T_N$, it is contained in $B(y,r/2)$. This completes the proof of the doubling property.
\end{proof}
	
The final ingredient in the proof of Theorem \ref{prop:nouniv} is a result of Li \cite{Li2021} which states that quasisymmetric embeddability is hereditary: if $X$ quasisymmetrically embeds into $Y$, then every weak tangent of $X$ quasisymmetrically embeds into some weak tangent of $Y$.
	
\begin{lem}[{\cite[Theorem 1.1]{Li2021}}]\label{lem:Li}
Let $X,Y$ be proper, doubling metric spaces and $f: (X,p,d_X) \to (Y,q,d_Y)$ be an $\eta$-quasisymmetric map. For any weak tangent $T \in \tang(X,p)$, there exists a weak tangent $T' \in \tang(Y,q)$ such that $T$ is $\eta$-quasisymmetric equivalent to $T'$.
\end{lem}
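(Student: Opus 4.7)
The plan is to realize $T'$ as a pointed Gromov--Hausdorff (GH) limit of rescaled copies of $Y$ based at $f(p_n)$, and to extract the desired $\eta$-quasisymmetric homeomorphism $F\colon T\to T'$ as a limit of rescaled copies of $f$ itself. Write $T$ as the pointed GH-limit of $(X,p_n,r_n^{-1}d_X)$ with base point $q$, $p_n\to p$, $r_n\to 0$. Put $q_n:=f(p_n)\to f(p)$. If $T$ is a single point, the statement is trivial (take $T'$ to be any point-tangent of $Y$ at $f(p)$); so assume $T$ is nontrivial, in which case for large $n$ there is $x_n\in X$ with $d_X(p_n,x_n)\simeq r_n$. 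Set $s_n:=d_Y(f(p_n),f(x_n))$. By scale-invariance of the quasisymmetric condition, the map $f$, viewed as $f_n\colon (X,p_n,r_n^{-1}d_X)\to (Y,q_n,s_n^{-1}d_Y)$, is $\eta$-quasisymmetric and sends a unit-distance pair to a unit-distance pair.

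Since the doubling constant is scale-invariant, $(Y,q_n,s_n^{-1}d_Y)$ is uniformly doubling; Gromov's precompactness theorem together with Lemma \ref{lem:doubling of weak tangent} produces, on a subsequence, a doubling limit $(T',q',d')\in\tang(Y,f(p))$. On each rescaled ball $B(p_n,R)$, the $\eta$-quasisymmetric inequality with reference point $x_n$ yields $s_n^{-1}d_Y(f_n(u),q_n)\lesssim\eta(R)$, so the $f_n$ have images in a bounded subset of the rescaled target; combining the doubling property with the quasisymmetric inequality (namely, the standard fact that quasisymmetric maps between bounded doubling spaces are uniformly H\"older) gives equicontinuity of $\{f_n\}$ on each rescaled ball, uniformly in $n$. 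Using $\e_n$-isometries $\phi_n\colon (X,p_n,r_n^{-1}d_X)\to T$ and $\psi_n\colon (Y,q_n,s_n^{-1}d_Y)\to T'$ with $\e_n\to 0$, a diagonal Arzela--Ascoli argument on a countable dense subset of $T$ extracts a pointwise limit of $\psi_n\circ f_n\circ\phi_n^{-1}$, which extends by equicontinuity to a continuous map $F\colon T\to T'$. Passing the $\eta$-quasisymmetric inequality to the limit (distances converge along the $\e_n$-isometries) shows that $F$ is itself $\eta$-quasisymmetric and, in particular, injective.

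Running the same construction with $f^{-1}$ in place of $f$, and with the roles of the scales $r_n,s_n$ swapped, produces a continuous $G\colon T'\to T$ along a further subsequence. Because $f_n\circ f_n^{-1}=\id$ and $f_n^{-1}\circ f_n=\id$ at every scale, taking GH-limits via the same families of approximate isometries yields $F\circ G=\id_{T'}$ and $G\circ F=\id_{T}$. Hence $F$ is the desired $\eta$-quasisymmetric homeomorphism from $T$ onto a weak tangent $T'\in\tang(Y,f(p))$.

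The principal technical obstacle is the middle step: rigorously passing a sequence of maps between \emph{varying} pointed GH spaces to a limit. This requires coordinating two families of approximate isometries simultaneously, running a diagonal argument on a countable dense subset of $T$, and invoking equicontinuity (through the H\"older bound for quasisymmetric maps on bounded doubling spaces) to extend to a continuous map on all of $T$. A secondary subtlety is the choice of $s_n$: it must track the true scale of $f$ at $p_n$, and the choice $s_n=d_Y(f(p_n),f(x_n))$ with $d_X(p_n,x_n)\simeq r_n$ is what guarantees that $F$ is nondegenerate and that the reverse construction yields a genuine inverse rather than an inverse up to multiplicative rescaling.
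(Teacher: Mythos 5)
The paper does not actually prove this lemma; it is imported verbatim from Li \cite{Li2021}, so there is no in-paper argument to compare against. Your sketch is the standard blow-up proof and is essentially Li's original argument: rescale source and target, normalize the target scale by $s_n=d_Y(f(p_n),f(x_n))$ for a point $x_n$ at the scale $r_n$, use scale-invariance of quasisymmetry plus uniform doubling and Gromov precompactness to extract $T'\in\tang(Y,q)$, and pass the maps to the limit via approximate isometries and a diagonal Arzel\`a--Ascoli argument; the same construction for $f^{-1}$ with the identical rescalings gives the inverse, and the unit-distance normalization prevents collapse. Two small points to tighten: the ``quasisymmetric maps between bounded doubling spaces are uniformly H\"older'' fact requires uniform perfectness, but you do not need it --- equicontinuity follows directly from the diameter-distortion estimate for $\eta$-quasisymmetric maps (images of rescaled $R$-balls are bounded by $\eta(CR)$, and pairs of points are controlled by $\eta$ of their rescaled distance); and surjectivity of the limit map onto $T'$ (equivalently, that $G$ is a genuine two-sided inverse) uses that $f$ is a homeomorphism, via the lower quasisymmetric bound showing preimages of rescaled $\rho$-balls in $Y$ lie in rescaled balls of radius comparable to $1/\eta^{-1}(1/\rho)$ in $X$; with these made explicit the proof is complete.
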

	
\begin{proof}[Proof of Theorem \ref{prop:nouniv}]
Assume for a contradiction, that there exists a quasiconformal tree $T$ such that every element of $\mathscr{QCT}(3)$ quasisymmetrically embeds in $T$. Since every quasiconformal tree is quasisymmetrically equivalent to a geodesic tree \cite{BM20}, we may assume that $T$ is geodesic and $C$-doubling.
		
Fix $N\in\N$ such that $N+2 >C$, let $p=(0,0)\in T_N$, and let $\omega_1,\dots,\omega_{N+2}$ be the $(N+2)$-roots of unity, and let $S$ be the $(N+2)$-star
\[ \bigcup_{j=1}^{N+2}\{t\omega_j : t\geq 0\} \subset \R^2\]
equipped with the geodesic metric $d_S$ and let $q=(0,0)$.
		
We claim that $S \in \tang(T_N,p)$. Assuming the claim, by Lemma \ref{lem:T_N doubling}, $T_N$ is in $\mathscr{QCT}(3)$, and by Lemma \ref{lem:Li} $S$ quasisymmetrically embeds into a weak tangent $S'$ of $T$. By Lemma \ref{lem:doubling of weak tangent} and Lemma \ref{lem:limit of trees is tree} $S'$ is a $C$-doubling $\R$-tree. However, if $B$ is a ball with center the embedded image of the branch point $q$ and radius $\delta$, then any $\delta/2$-separated set in $B$ has $N+2$ elements (simply by taking the points on the boundary of the ball of radius $\delta/2$). It follows that $N+2\leq C$ which is contradiction.
		
To prove the claim, set $p_n=2^{-n+1}$ and $r_n=(7/2)^{-n}$ for each $n\in \N$. Fix $\e>0$ and $R>0$. We may assume that $R\geq 1$ since otherwise for large $n$ the balls $B(p_n,R)$ will collapse on the point $q$. There exists $n_0\in\N$ such that for all $n\geq n_0$
\[ R< (7/6)^n, \quad \text{and} \quad \tfrac{N-1}{N}(7/8)^n<\e. \]
Note that $r_n < 2^{-n-2}$ holds for all $n\geq n_0$. Thus, for all $n>n_0$, if $y\in \partial B(p_n,r_nR)$, then $d(p_n,y)=r_n R< 3^{-n}$. Moreover, 
\[ B(p_n,r_nR) = h_{n}^1 \cup h_n^2 \cup h_n^3 \cup \ell_{n,1}'\cup\cdots\cup \ell_{n,N}'\]
where $h_{n}^1 = (p_n-r_nR,p_n]\times\{0\}$, $h_{n}^2 = [p_n,p_n+ \frac{N-1}{N}4^{-n}]\times\{0\}$, 
\[ h_{n}^3 = [p_n+ \tfrac{N-1}{N}4^{-n},p_n+r_nR)\times\{0\},\] 
and 
\[ \ell_{n,j}' = \{a_{n,j}\}\times [0,t_{n,j}) = B(p_n,r_nR)\cap \ell_{n,j}.\]
		
Define a map $f:\overline{B}(p_n,r_n R)\to S$ so that 
\begin{enumerate}
\item $f(a_{n,j},t)=t\omega_j$ for $t\in [0,t_{n,j})$ and $j\in \{1,\dots,N\}$,
\item $f(p_n-t,0) = t\omega_{N+1}$ for $t\in [0,r_nR)$,
\item $f(p_n+\frac{N-1}{N}4^{-n} + t,0) = t\omega_{N+2}$ for $t\in [0,r_nR-\frac{N-1}{N}4^{-n})$, and
\item $f([p_n,p_n+\frac{N-1}{N}4^{-n}]) = (0,0)$. 
\end{enumerate}
		
Let $x_1,x_2\in B(p_n,r_nR)$. If $x_1,x_2\in \ell_{n,j}'$ for some $j\in\{1,\dots,N\}$, then 
\[ |d_S(f(x_1),f(x_2))-r_n^{-1}d(x_1,x_2)|=0.\] 
Similarly, if $x_1,x_2\in h_n^1$ or if $x_1,x_2\in h^3_n$. If $x_1,x_2\in h_n^2$, then 
\begin{align*}
|d_S(f(x_1),f(x_2))-r_n^{-1}d(x_1,x_2)|=|r_n^{-1}d(x_1,x_2)|&\leq \tfrac{N-1}{N}(\tfrac27)^n4^{-n}=\tfrac{N-1}{N}(\tfrac78)^n <\e.
\end{align*}
If $x_1\in \ell_{n,j}'$ for some $j\in\{1,\dots,N\}$  and $x_2\in h_n^1$, then by geodesicity of $T_N$ and $S$ we have 
\begin{align*}
|&d_S(f(x_1),f(x_2))-r_n^{-1}d(x_1,x_2)|\\
&= |d_S(f(x_1),q)+d_S(q,f(x_2)) - r_n^{-1}(d(x_1,(a_{n,j},0))+d((a_{n,j},0),p_n)+d(p_n,x_2))| \\
&= |r_n^{-1}d((a_{n,j},0),p_n)| \\
&\leq \tfrac{N-1}{N}(\tfrac78)^n \\
&< \epsilon.
\end{align*}
All the other cases are similar. 
		
Lastly, we need to verify that $B(q,R-\e)\subset N_{\e}(f(B(p_n,r_nR)))$. Let $x = t\omega_j \in B(q,R-\e)$ for some $j\in\{1,\dots, N\}$ (the cases where $j\in\{N+1,N+2\}$ are similar). Let also $(a_{n,j},y)$ be the unique point in $\partial B(p_n,r_n R)\cap\ell_{n,j}$. From the construction of $f$ and geodesicity of $T_N$,  
\begin{align*}
d_S(q,f(a_{n,j},y)) &= r_n^{-1}(d(p_n, (a_{n,j},y)) - d(p_n, (a_{n,j},0))) \\
&\geq r_n^{-1}(d(p_n, (a_{n,j},y)) - \tfrac{N-1}{N}(\tfrac78)^n) \\
& > R - \epsilon.
\end{align*}
Due to geodesicity of $S$, $x$ is on the arc that joins $q$ with $f(y)$. Thus, there exists $x'\in B(p_n,r_nR)$ such that $x=f(x')$. It follows that 
\[ B(q,R-\e)\subset f(B(p_n,r_nR))\subset N_{\e}(f(B(p_n,r_nR))).\qedhere\]
\end{proof}
	
\section{Construction of trees $\T^{A,\ba}$}\label{sec:construction}
For each alphabet $A$ (finite or infinite), and for each choice of weights $\ba = (a_1,a_2,\dots)$ with $\frac12 =a_1=a_2 \geq a_3\geq \cdots$ and $\lim a_n =0$, we construct an associated metric tree $\T^{A,\ba}$. The construction is done in several steps. Given an alphabet $A$, in \textsection\ref{sec:graphs} we define a sequence of combinatorial graphs $(G_n^A)_{n\in\N}$ so that each graph $G^A_n$ has the set of words $A^n$ as the set of vertices. Using these graphs, following \cite[(1.1)]{DV} we define in \textsection\ref{sec:combinter} a notion of ``intersection'' between sets $A^{\N}_w$ and $A^{\N}_u$ for $w,u\in A^*$. This notion of intersection (which is not a set intersection), along with the weights $(a_n)_{n\in\N}$, gives rise in \textsection\ref{sec:metric} to a pseudometric on the set $A^{\N}$. The pseudometric yields an equivalence relation $\sim$ on $A^{\N}$ and a metric $d_{A,\ba}$ on $A^{\N}/\sim$. We then proceed to define $\T^{A,\ba} = (A^{\N}/\sim,d_{A,\ba})$. Finally, in \textsection\ref{sec:selsim}, we show that each metric $\T^{A,\ba}$ is a self-similar 1-bounded turning tree. These trees are shown to be universal later on.
	
\subsection{Graphs}\label{sec:graphs}
Given an alphabet $A$, for each $k\in \N$ we construct a graph $G_k^A = (A^k,E_k^A)$ inductively. 
\begin{enumerate}[(i)]
\item Let $E_1^A = \{\{1,i\}:i\in A\setminus\{1\}\}$. 
\item Assume that for some $k\in \N$ we have defined a graph $G_k^A = (A^k, E_k^A)$. 
Set $G_{k+1}^A = (A^{k+1},E_{k+1}^A)$ where
\[ E_{k+1}^A = \left\{\{12^{(k)},i1^{(k)}\} : i\in A\setminus\{1\}\right\} \cup \bigcup_{i\in A}\left\{\{iw, iu\} : w,u\in A^k \text{ and }\{w,u\}\in E_k^A\right\}.\]
\end{enumerate}
Intuitively, the graph $G_{k+1}$ consists of $\card A$ copies of graphs isomorphic to $G_k$, which are connected through edges meeting at $12^{(k)}$ and $i1^{(k)}$ for $i\in A\setminus\{1\}$.
	
\begin{lem}\label{lem:prop.of.graphs}
Let $k\geq 0$, $w,u \in A^k$.
\begin{enumerate}[(i)]
\item For distinct $i,j \in A$ we have $\{iw,ju\} \in E_{k+1}^A$ if and only if one of the words is of the form $12^{(k)}$ and the other of the form $l1^{(k)}$ with $l\in A\setminus \{1\}$.
\item For any $v\in A^*$ we have that $\{w,u\}\in E_k^A$ if and only if $\{vw,vu\}\in E_{k+|v|}^A$.
\item The subgraph of $G_{k+1}^A$ generated by $\{wi : i \in A\}$ is connected.
\item We have that $\{w,u\} \in E_k^A$ if and only if there exist $l\in\{0,\dots,k-1\}$, $i\in A\setminus\{1\}$, and $v\in A^{k-l-1}$ such that one of the words is of the form $vi1^{(l)}$ and the other of the form $v12^{(l)}$.
\item If $\{w,u\}\in E_k$, then there exist $i,j\in\{1,2\}$ such that $\{wi,uj\}\in E_{k+1}$.
\end{enumerate}
\end{lem}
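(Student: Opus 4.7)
The proof will be driven by the recursive construction: $G_{k+1}^A$ is obtained by taking $|A|$ disjoint isomorphic copies of $G_k^A$ (indexed by the first letter) and attaching the bridge edges $\{12^{(k)}, l1^{(k)}\}$ for $l \in A \setminus \{1\}$. Every part of the lemma amounts to a careful bookkeeping of this structure, with part (iv) giving the cleanest reusable description of an edge.

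For (i), the intra-copy edges preserve the first letter by construction, so any edge with endpoints starting with distinct letters must be a bridge edge, which has exactly the asserted form. For (ii), I would induct on $|v|$, using the fact that intra-copy edges $\{iw, iu\}$ of $G_{k+1}^A$ correspond bijectively to edges $\{w,u\}$ of $G_k^A$; stripping one letter at a time reduces to the base case $|v|=0$. For (iii), I would apply (ii) to identify the edges in the induced subgraph on $\{wi : i \in A\}$ with the edges of $G_1^A$, and then rule out the presence of any bridge edge inside this set: its endpoints would need both a common prefix $w$ of length $k$ and first letters $1$ and $l \neq 1$ respectively, which forces $k=0$. Thus for $k \geq 1$ the induced subgraph is isomorphic to $G_1^A$, and for $k=0$ it is $G_1^A$ itself; either way it is a star at $1$, hence connected.

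Part (iv) is the main technical step and I would prove it by induction on $k$. In the base case $k=1$, the edges of $G_1^A$ are precisely $\{1, l\} = \{12^{(0)}, l1^{(0)}\}$ with $l \neq 1$. For the inductive step, every edge in $E_{k+1}^A$ is either a bridge edge (fitting the template with empty prefix and level $l = k$), or an intra-copy edge $\{iw', iu'\}$ whose underlying pair $\{w', u'\} \in E_k^A$ satisfies the inductive hypothesis; prepending $i$ to the prefix then produces the required form. Once (iv) is in hand, part (v) is essentially immediate: writing $\{w, u\} = \{vi1^{(l)}, v12^{(l)}\}$ via (iv), the pair $\{vi1^{(l+1)}, v12^{(l+1)}\}$ lies in $E_{k+1}^A$ by another application of (iv), and this is exactly the pair obtained by appending a single letter in $\{1, 2\}$ to each of $w$ and $u$. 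The only genuine challenge is the induction in (iv), where the dichotomy between bridge and intra-copy edges must be tracked explicitly; the other parts are formal consequences of the construction and of (iv).
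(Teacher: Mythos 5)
Your proposal is correct and matches the paper's argument in all essentials: (i) and (ii) are handled identically (inspection of the construction, induction on $|v|$), (iii) follows from (ii) together with the connectedness of $G_1^A$, and (v) is obtained from (iv) by appending the letters $1$ and $2$ exactly as in the paper. The only divergence is (iv), where the paper deduces the claim directly from (i) and (ii) by splitting off the longest common initial word of $w$ and $u$, whereas you run a separate induction on $k$ through the bridge/intra-copy dichotomy; both routes work, provided your induction also records the (immediate) converse direction, namely that every pair of the form $\{vi1^{(l)},v12^{(l)}\}$ is an edge, which follows from (ii) and the definition of the bridge edges.
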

	
	\begin{figure}\label{fig: G_k for n=3}
		\centering
		\includegraphics[width=0.85\textwidth]{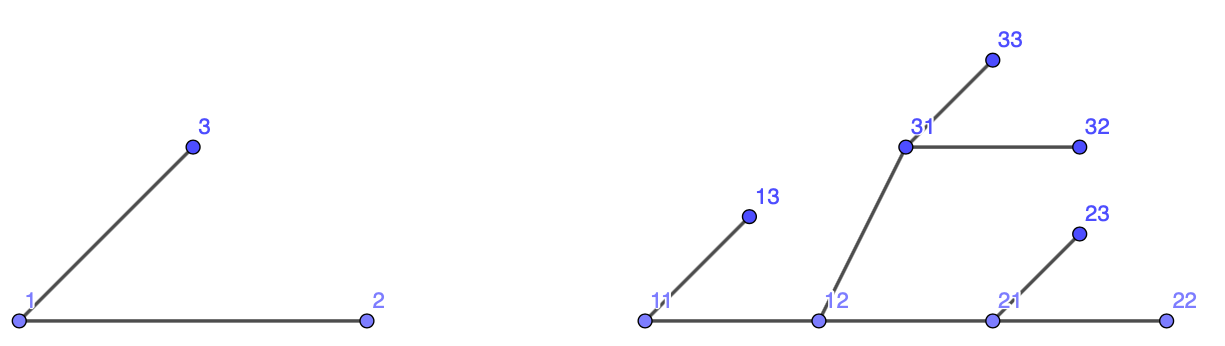}\\
		\includegraphics[width=0.6\textwidth]{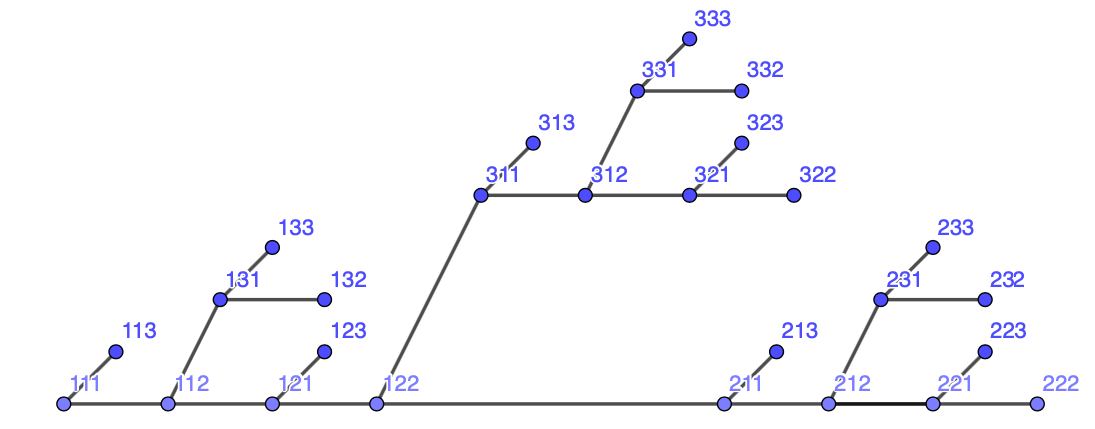}
		\caption{Graphs $G_1^A,G_2^A,G_3^A$ in the case that $A=\{1,2,3\}$.}
	\end{figure}
	\begin{figure}\label{fig: G_k for n=4}
		\centering
		\includegraphics[width=0.8\textwidth]{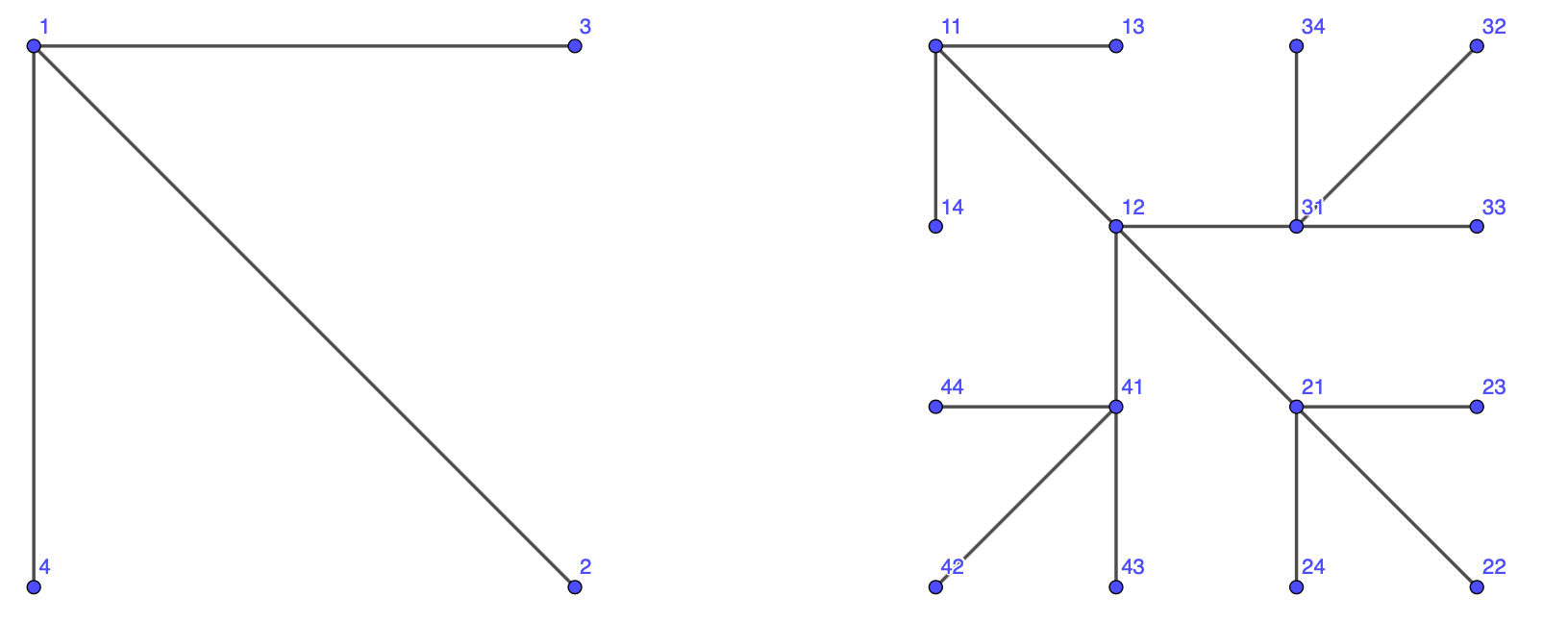}\\
		\includegraphics[width=0.5\textwidth]{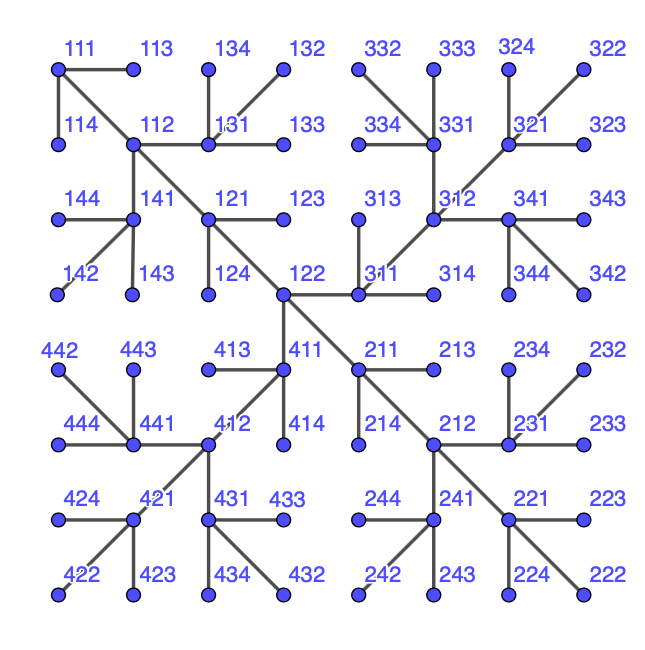}
		\caption{Graphs $G_1^A,G_2^A,G_3^A$ in the case that $A=\{1,2,3,4\}$.}
	\end{figure}
	
\begin{proof}
The first claim is clear from the design of graphs $G_k^A$.
		
We prove property (2) by induction on the length of $v$. The claim is trivially true for $v=\e$. Inductively, suppose that for some $m\geq 0$ the claim is true for all words $v\in A^m$. Let now $v=iv' \in A^{m+1}$ with $v'\in A^m$ and $i\in A$. Assume first that $\{w,u\}\in E_{k}^A$. By the inductive hypothesis, $\{v'w,v'u\}\in E_{k+m}^A$, and by design of $E_{k+m+1}^A$ we have that $\{iv'w,iv'u\}\in E_{k+m+1}^A$. Conversely, assume that $\{iv'w,iv'u\}\in E_{k+m+1}^A$. By design of $E_{k+m+1}^A$ we have that $\{v'w,v'u\}\in E_{k+m}^A$ and by the inductive hypothesis, $\{w,u\}\in E_{k}^A$.
		
Property (3) follows immediately from property (2) and the fact that $G_1$ is connected. 
		
For the fourth property, write $w=viw'$ and $u=vju'$ where $v\in A^l$ with $l\in\{0,\dots,k-1\}$, $u,u' \in A^{k-l-1}$, and $i,j\in A$ with $i< j$. By property (2) we have that $\{iw',ju'\} \in E_{k-l}^A$ if and only if $\{w,u\} \in E_k^A$. The claim now follows from property (1).
		
Assume that $\{w,u\} \in E_k^A$. By property (4), there exist $l\in\{0,\dots,k-1\}$, $i\in A\setminus\{1\}$, and $v\in A^{k-l-1}$ such that one of the words (say $w$) is of the form $vi1^{(l)}$ and the other (say $u$) of the form $v12^{(l)}$. Then $\{w1,u2\} \in E_{k+1}^A$.  
\end{proof}
	
\begin{rem}
Properties (3) and (5) from Lemma \ref{lem:prop.of.graphs} imply that the collection of graphs $(G_k^A)_{k\in\N}$ are \textbf{combinatorial data} in the sense of \cite[Definition 1.1]{DV}.
\end{rem}
	
\begin{lem}
For each $k\in\N$, the graph $G_k^A$ is a combinatorial tree.
\end{lem}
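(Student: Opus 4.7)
My plan is to proceed by induction on $k$, exploiting the fact that $G_{k+1}^A$ is built from $|A|$ isomorphic copies of $G_k^A$ glued together by a small set of ``cross-edges''. For the base case $k=1$, the definition gives $E_1^A = \{\{1,i\} : i \in A\setminus\{1\}\}$, so $G_1^A$ is a star centered at the vertex $1$, which is a combinatorial tree.

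For the inductive step I would assume $G_k^A$ is a combinatorial tree. By Lemma \ref{lem:prop.of.graphs}(ii), for each $i\in A$ the map $w \mapsto iw$ is a graph isomorphism from $G_k^A$ onto the subgraph $T_i$ of $G_{k+1}^A$ induced by $\{iw : w\in A^k\}$, so each $T_i$ is itself a combinatorial tree, and the vertex sets of the $T_i$ partition $A^{k+1}$. By the definition of $E_{k+1}^A$, every edge of $G_{k+1}^A$ either lies inside some $T_i$ or is a cross-edge of the form $\{12^{(k)}, i1^{(k)}\}$ with $i\in A\setminus\{1\}$; these cross-edges form a star with center $12^{(k)}\in T_1$ and one peripheral vertex $i1^{(k)}\in T_i$ for each $i\neq 1$. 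Thus $G_{k+1}^A$ is the gluing of the trees $T_i$ along this star.

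Connectedness is then immediate: any vertex in $T_i$ (for $i\neq 1$) is joined in $T_i$ to $i1^{(k)}$, crosses to $12^{(k)}\in T_1$, and from $T_1$ may reach any other $T_j$ through its cross-edge. For acyclicity I would argue by contradiction. Suppose $\gamma$ is a simple cycle in $G_{k+1}^A$. Since each $T_i$ is acyclic, $\gamma$ must use at least one cross-edge, and since all cross-edges share the vertex $12^{(k)}$ which $\gamma$ visits at most once, $\gamma$ uses at most two cross-edges. If it uses exactly one, say $\{12^{(k)}, i1^{(k)}\}$, then removing it yields a path inside the disjoint union of the $T_j$ between vertices in two different components, which is impossible. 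If it uses two cross-edges, they must be consecutive in $\gamma$, and removing them similarly produces a path between $i1^{(k)}\in T_i$ and $j1^{(k)}\in T_j$ (with $i\neq j$) inside the disjoint union of the $T_j$, again impossible. Hence $G_{k+1}^A$ is connected and acyclic, completing the induction.

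The only substantive step is identifying the recursive decomposition of $G_{k+1}^A$ into copies of $G_k^A$ glued by the star of cross-edges; once Lemma \ref{lem:prop.of.graphs}(ii) is invoked, the tree property propagates by a routine argument and I do not anticipate any genuine obstacle.
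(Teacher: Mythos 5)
Your proof is correct and follows essentially the same route as the paper: induction on $k$, viewing $G_{k+1}^A$ as the copies of $G_k^A$ on the vertex sets $A^{k+1}_i$ (via Lemma \ref{lem:prop.of.graphs}(ii)) glued along the star of cross-edges at $12^{(k)}$, with connectedness by routing through $12^{(k)}$ and acyclicity by a contradiction argument. Your acyclicity step (counting that a simple cycle can use at most two cross-edges and removing them) is only a cosmetic variant of the paper's maximal-subarc argument, so there is nothing substantive to flag.
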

	
\begin{proof}
The proof is by induction on $k$. For $k=1$, the claim is clear. Assume the claim to be true for some $k\in\N$. Note that for each $i\in A$ the subgraph $G_{k+1,i}^A$ of $G_{k+1}^A$ generated by the vertices $A_i^{k+1}$ is isomorphic to $G_k^A$; hence it is a tree.
		
We first show that $G_{k+1}^A$ is connected. Let $v_1,v_2 \in A^{k+1}$. If there exists $i\in A$ such that $v_1,v_2 \in A^{k+1}_i$, then $v_1,v_2$ can be joined by path in $G_{k+1,i}^A$ by the inductive assumption. Suppose now that $v_1\in A^{k+1}_i$ and $v_2\in A^{k+1}_j$ with $i,j\in A$ distinct. Without loss of generality, assume that $i,j \neq 1$. By the inductive assumption, there exists a path $\gamma_1$ from $v_1$ to $i1^{(k)}$ and a path $\gamma_2$ from $j1^{(k)}$ to $v_2$. Then the concatenation of $\g_1$, the edges $\{i1^{(k)},12^{(k)}\}$, $\{j1^{(k)},12^{(k)}\}$, and the path $\gamma_2$ is a path joining $v_1$ to $v_2$. Therefore, $G_{k+1}^A$ is connected.
		
To show that $G_{k+1}^A$ is a tree, assume for a contradiction, that there exists a simple closed path  $\gamma$. Since each subgraph $G_{k+1,i}^A$ is a tree, there exist distinct $i,j\in A$ such that the path $\gamma$ intersects $G_{k+1,i}^A, G_{k+1,j}^A$. Without loss of generality, assume that $i\neq 1$. Let $\sigma$ be a maximal subarc in $\gamma \cap G_{k+1,i}^A$. But then the two endpoints of $\sigma$ are both $i1^{(k)}$ which implies that $\gamma$ is not simple.
\end{proof}
	
\begin{lem}\label{lem:graphembed}
If $A \subset A'$ and $n\in\N$, then $G_n^A$ is a subgraph of $G_n^{A'}$. 
\end{lem}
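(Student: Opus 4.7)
The plan is a straightforward induction on $n$ that mirrors the inductive construction of the graphs $G_n^A$ in the preceding subsection. The claim amounts to showing that the vertex set of $G_n^A$ is contained in that of $G_n^{A'}$ (which is immediate, since $A \subset A'$ gives $A^n \subset (A')^n$) together with the edge inclusion $E_n^A \subset E_n^{A'}$. The entire work is therefore the edge inclusion.

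For the base case $n=1$, note that $1 \in A \subset A'$ (both alphabets contain $1$ by the convention in Section 2.5), so $A \setminus \{1\} \subset A' \setminus \{1\}$, which immediately yields
\[ E_1^A = \{\{1,i\} : i \in A \setminus \{1\}\} \subset \{\{1,i\} : i \in A' \setminus \{1\}\} = E_1^{A'}. \]

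For the inductive step, assume $E_k^A \subset E_k^{A'}$. By the definition of $E_{k+1}^A$, every edge of $G_{k+1}^A$ falls into one of two families. Edges of the first family have the form $\{12^{(k)}, i1^{(k)}\}$ with $i \in A \setminus \{1\}$; since $A \setminus \{1\} \subset A' \setminus \{1\}$, each such edge lies in $E_{k+1}^{A'}$. Edges of the second family have the form $\{iw, iu\}$ with $i \in A$ and $\{w,u\} \in E_k^A$; by the inductive hypothesis $\{w,u\} \in E_k^{A'}$, and since $i \in A \subset A'$, the edge $\{iw, iu\}$ belongs to $E_{k+1}^{A'}$ as well. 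This gives $E_{k+1}^A \subset E_{k+1}^{A'}$, completing the induction.

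Since the argument is a direct unwinding of the recursive definition together with a single observation about how $A \setminus \{1\}$ sits inside $A' \setminus \{1\}$, I do not expect any real obstacle; no auxiliary properties from Lemma 4.1 are needed.
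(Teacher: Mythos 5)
Your proof is correct and follows essentially the same route as the paper: induction on $n$, with the base case from $A\setminus\{1\}\subset A'\setminus\{1\}$ and the inductive step splitting edges of $E_{n+1}^A$ into the two families given by the recursive definition. No issues.
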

	
\begin{proof}
Fix two alphabets $A,A'$ with $A\subset A'$. For each $n\in\N$, it is clear that the vertices of $G^A_n$ are also vertices of $G^{A'}_n$. It suffices to show that for each $n\in\N$, $E^A_n \subset E^{A'}_n$. The proof is by induction on $n$. 
		
For $n=1$, $E_1^A = \{\{1,i\}:i\in A\setminus\{1\}\}\subset $ $ \{\{1,i\}:i\in A'\setminus\{1\}\}=E_1^{A'}$ since $A\subset A'$. 
		
Assume now that $E_n^A \subset E_n^{A'}$ for some $n\in\N$. We show that $E_{n+1}^A \subset E_{n+1}^{A'}$. Let $\{w,u\}\in E_{n+1}^A$. If $\{w,u\}=\{12^{(n)},i1^{(n)}\}$, where $i\in A\setminus\{1\}$ then $\{w,u\}\in E_{n+1}^{A'}$ since $A\setminus \{1\}\subset A'\setminus \{1\}$. Assume now that $\{w,u\}=\{iw',iu'\}$ for some $i\in A$ with $w',u'\in A^n$ and $\{w',u'\}\in E_n^A$. That means that there exists an $i\in A'$ such that $\{w,u\}=\{iw',iu'\}$ with $w',u'\in A^{'n}$ since $A^n\subset A^{'n}$. Also, since $\{w',u'\}\in E_n^{A'}$ it follows by inductive hypothesis that $\{w,u\}\in E_{n+1}^{A'}$. 
\end{proof}
	
\subsection{Combinatorial intersection}\label{sec:combinter} Given $u_1,u_2 \in A^*$, not necessarily of the same length, define
\begin{align*}
&A^{\N}_{u_1} \wedge A^{\N}_{u_2}\\ 
&:= \{w \in A^{\N}_{u_1} : \forall n > \max\{|u_1|,|u_2|\} \text{ there exists  $u\in A^n_{u_2}$ with $\{w(n),u\}\in E_n^A\}$}\\
&\quad \cup \{w \in A^{\N}_{u_2} : \forall n >\max\{|u_1|,|u_2|\} \text{ there exists  $u\in A^n_{u_1}$ with $\{w(n),u\}\in E_n^A\}$}.\nonumber   
\end{align*} 
The set $A^{\N}_{u_1} \wedge A^{\N}_{u_2}$ is called the \textit{combinatorial intersection} of $A^{\N}_{u_1}$ and $A^{\N}_{u_2}$.
	
\begin{rem}\label{rem:1}
It is easy to see that $A^{\N}_{u_1} \wedge A^{\N}_{u_2} = A^{\N}_{u_2} \wedge A^{\N}_{u_1}$. Also, if $u\in A_w^*$, then $A_u^{\N}\subset A_{w}^{\N}\wedge A^{\N}_u$
\end{rem}
	
With this notion of combinatorial intersection, we can describe how to move between two infinite words. Given two words $w,w' \in A^{\N}$ we say that $\{A^{\N}_{w_1},\dots, A^{\N}_{w_N}\}$ is a \emph{chain joining $w$ with $w'$} if $w\in A^{\N}_{w_1}$, $w'\in A^{\N}_{w_N}$ and for every $i=1,\dots,N-1$, we have $A^{\N}_{w_i}\wedge A^{\N}_{w_{i+1}} \neq \emptyset$. 
	
\subsection{A metric}\label{sec:metric}
A \emph{weight} is a non-increasing function $\ba:\N \to (0,1/2]$ such that $\ba(1)=\ba(2)=1/2$ and $\lim_{i\to\infty}\ba(i) = 0$. Given a weight $\ba$ and the alphabet $\N$, define the associated ``diameter function'' $\D_{\ba}:\N^* \to (0,1]$ by $\D_{\ba}(\e) =1$ and 
\[ \D_{\ba}(i_1\cdots i_k) = \ba(i_1)\cdots \ba(i_k).\]
	
\begin{rem}
For any weight $\ba$,
\[ \lim_{n\to \infty}\max\{\D_{\ba}(w):w\in \N^n\} \leq \lim_{n\to \infty}2^{-n} =0.\]
\end{rem}
	
Fix now an alphabet $A$ and a weight $\ba$. We define a function $\rho_{A,\ba} :A^\mathbb{N}\times A^\mathbb{N} \to \R$ by:
\begin{equation}\label{eq:pseudometric}
\rho_{A,\ba}(w, u) = \inf \left\{ \sum_{i=1}^N \D_{\ba}(v_i) : \{A^\N_{v_0},\dots, A^\N_{v_N}\}\text{ is a chain joining $w$ with $u$} \right\}.
\end{equation}
Following the arguments in \cite[Lemma 3.8]{DV}, we obtain that $\rho_{A,\ba}$ is a pseudometric.
	
Taking the quotient space $A^\N/\sim$ under the equivalence relation $w \sim w'$ whenever $\rho(w,w')=0$, we obtain a metric space
$$ \mathbb{T}^{A,\ba} = (A^\N/\sim, d_{A,\ba}), \qquad d_{A,\ba}([w],[u]) = \rho_{A,\ba}(w, u).$$
For any $w\in A^*$, write $\mathbb{T}^{A,\ba}_w := A^{\N}_w/\sim$. Under this metric we have that $\diam{\mathbb{T}^A_w}\leq \D_{\ba}(w)$ \cite[Lemma 3.9]{DV}. 
	
\begin{rem}\label{rem:equivalent classes of 12^oo}
Note that for $v\in A^*$, $ \{ A_{v12^{(n)}}^\N, A_{vj1^{(n)}}^\N \}$ is a chain joining $v12^{(\infty)}$ with $vj1^{(\infty)}$ for $j\in\{2,\dots,m\}$. Hence, $\rho_{A,\ba}(v12^{(\infty)}, vj1^{(\infty)})\leq \Delta(v)(2^{-(n+1)}+\ba(j)2^{-n})$ for all $n\in \N$, which yields that $[v12^{(\infty)}]=[vj1^{(\infty)}]$ for all $j\in \{2,\dots,m\}$.
\end{rem}

\begin{rem}
It is worth commenting on the relationship of combinatorial intersection and set intersection. Given $w,u\in A^*$, it is not hard to see that if $v \in A^{\N}_w \wedge A^{\N}_u$, then $[v] \in \mathbb{T}^{A,\ba}_w\cap \mathbb{T}^{A,\ba}_u$. The converse, however, is false as for $A=\{1,2,3\}$, we have that $[31^{(\infty)}] \in \mathbb{T}^{A,\ba}_2\cap \mathbb{T}^{A,\ba}_3$ but the combinatorial intersection $A^{\N}_2 \wedge A^{\N}_3$ is empty.
\end{rem}
    
\begin{lem}\label{lem:metrictree}
The space $\mathbb{T}^{A,\ba}$ is a 1-bounded turning metric tree.
\end{lem}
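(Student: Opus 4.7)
The plan is to mirror the corresponding arguments in \cite{DV}, since the construction of $\T^{A,\ba}$ is designed to fit their general framework for producing self-similar metric trees from combinatorial data and a weight. The remark following Lemma \ref{lem:prop.of.graphs} already records that parts (3) and (5) of that lemma verify $(G_k^A)_{k\in\N}$ is combinatorial data in the sense of \cite[Definition 1.1]{DV}; combined with the assumption that $\ba$ is non-increasing with $\ba(1)=\ba(2)=1/2$ and $\ba(i)\to 0$, the diameter function $\D_\ba$ satisfies the required multiplicative/decay properties, and so the machinery of \cite{DV} applies more or less verbatim. Thus the proof reduces to invoking (or redoing, if preferred) the analogues of \cite[Lemmas 3.8--3.9, Proposition 3.10]{DV}.

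Concretely, I would proceed in four steps. First, show that each cylinder $\T^{A,\ba}_w$ is compact, connected, and has $\diam \T^{A,\ba}_w \leq \D_\ba(w)$; connectedness is obtained by the standard chain-gluing argument using Lemma \ref{lem:prop.of.graphs}(iii) and (v), while the diameter bound follows directly from the definition of $\rho_{A,\ba}$ by choosing the one-element chain $\{A^\N_w\}$ between any two $w',w''\in A^\N_w$. Second, since $\max_{w\in A^n}\D_\ba(w)\to 0$ and the family of cylinders at each level covers $\T^{A,\ba}$, the space is totally bounded (with some care at the accumulation points when $A=\N$, using $\ba(i)\to 0$), hence compact, and connectedness and local connectedness follow from the compatibility of the cylinder cover across levels.

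Third, for 1-bounded turning, given $[w],[u]\in\T^{A,\ba}$ and $\e>0$, pick a chain $\{A^\N_{v_0},\dots,A^\N_{v_N}\}$ joining $w$ to $u$ with $\sum_{i=0}^N \D_\ba(v_i) < d_{A,\ba}([w],[u]) + \e$. Then $K=\bigcup_{i=0}^N \T^{A,\ba}_{v_i}$ is a continuum (compact by step one, connected because consecutive cylinders meet by the definition of combinatorial intersection together with step one's connectedness), contains $[w]$ and $[u]$, and has diameter at most $\sum_i \diam \T^{A,\ba}_{v_i} \leq \sum_i \D_\ba(v_i) < d_{A,\ba}([w],[u])+\e$. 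Letting $\e\to 0$ yields the 1-bounded turning condition.

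Finally, to rule out simple closed curves (which is what upgrades ``bounded-turning Peano continuum'' to ``metric tree''), I would argue by contradiction: if $\Gamma\subset\T^{A,\ba}$ were a simple closed curve, then at a sufficiently large level $n$ the images under the quotient of cylinders $\{\T^{A,\ba}_w\}_{w\in A^n}$ meeting $\Gamma$ would force a combinatorial loop in the graph $G_n^A$, contradicting the fact, proved earlier, that each $G_n^A$ is a combinatorial tree. The only real obstacle is the bookkeeping at this last step—ensuring that the topological loop in $\T^{A,\ba}$ really does produce a graph-theoretic cycle in $G_n^A$ and is not ``smeared across'' the identifications $[v12^{(\infty)}]=[vj1^{(\infty)}]$ of Remark \ref{rem:equivalent classes of 12^oo}. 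This is handled by choosing $n$ so large that no cylinder of level $n$ meeting $\Gamma$ contains both endpoints of an identified pair, using the diameter bound $\diam\T^{A,\ba}_w\leq \D_\ba(w)$ and the lower bound on distances between disjoint subtrees that one obtains from $\rho_{A,\ba}$.
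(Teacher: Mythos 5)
Your overall route---compactness, connectedness, $1$-bounded turning via chains of tiles, and exclusion of simple closed curves via the combinatorial trees $G_n^A$---is the same DV-based strategy the paper follows (the paper essentially cites \cite[Lemmas 3.12, 3.14, 3.15, 3.17]{DV} after checking that the standing hypothesis \eqref{eq:CD} is not needed). But there are genuine gaps. First, compactness: ``totally bounded, hence compact'' is not valid here, because completeness of the quotient $(A^{\N}/\sim,\,d_{A,\ba})$ is not known in advance; what must be proved is sequential compactness, and the delicate case is exactly the one your ``with some care'' clause waves at. When $A=\N$ and infinitely many $\ba(i)>0$, condition \eqref{eq:CD} of \cite{DV} fails, and this is precisely where the paper cannot quote \cite{DV} verbatim: it gives a two-case argument in which a sequence whose terms visit infinitely many distinct children cylinders $A^{\N}_{vi_m}$ of a fixed $v\in A^*$ is shown to converge to $[v12^{(\infty)}]$, using $\diam \T^{A,\ba}_{vi_m}\leq \D_{\ba}(v)\ba(i_m)\to 0$ together with Remark \ref{rem:equivalent classes of 12^oo}. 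Your proposal contains no substitute for this step, and it is the actual content of the lemma beyond \cite{DV}.

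Second, the no-simple-closed-curve step is the technically hard part, and your proposed bookkeeping does not parse: the identified words $v12^{(\infty)}$ and $vj1^{(\infty)}$ represent a \emph{single} point of $\T^{A,\ba}$, so the requirement that no level-$n$ cylinder meeting $\Gamma$ ``contains both endpoints of an identified pair'' imposes no condition, and it does not rule out the loop being absorbed by one tile or re-entering tiles in a way that produces no cycle in $G_n^A$. Converting a topological loop into a combinatorial cycle requires the analysis of tile intersections and boundaries carried out in \cite[Lemma 3.17]{DV}, which the paper invokes after observing that \eqref{eq:CD} enters that proof only through an inessential compactness claim. A smaller point: your bounded-turning argument yields, for each $\e>0$, a continuum of diameter at most $d_{A,\ba}([w],[u])+\e$; to obtain the constant exactly $1$ you still need a limiting argument (e.g.\ a Hausdorff-convergent subsequence of these continua in the compact space), not merely ``let $\e\to 0$.''
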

	
Lemma \ref{lem:metrictree} is essentially a variant of \cite[Proposition 3.10]{DV}. The only difference is that in \cite{DV} the following condition is assumed when $A=\N$. 
\begin{equation}\label{eq:CD}
\text{For any $w\in A^*$, $\Delta_{\ba}(wi)=0$ for all but finitely many $i\in A$.}
\end{equation}
In our setting, property \eqref{eq:CD} is equivalent to the following property.
\begin{equation}\label{eq:CD2}
\text{For all but finitely many $i\in A$ we have $\ba(i)=0$.}
\end{equation}
	
Note that \eqref{eq:CD2} is trivial if $A$ is finite. However, none of \eqref{eq:CD} and \eqref{eq:CD2} are assumed in Lemma \ref{lem:metrictree}.
	
\begin{proof}[{Proof of Lemma \ref{lem:metrictree}}]
We start by proving that $\mathbb{T}^{A,\ba}$ is compact. It is enough to focus on the case $\card A=\infty$, in view of \eqref{eq:CD2} and \cite[Lemma 3.12]{DV}. Let $([w_k])$ be a sequence in $\mathbb{T}^{A,\ba}$. For each $k\in\N$ write $w_k=i^k_1i^k_2\cdots$. There are two cases to consider.

\textit{Case 1}: Suppose that for each $j\in\N$, $(i^k_j)_k$ is a bounded sequence. In this case we can work as in \cite[Lemma 3.12]{DV}. There exists a subsequence $([w_{1,k}])_k$ of $([w_k])_k$ such that the first letter of all $w_{1,k}$ is some fixed $l_1\in A$. Inductively, assume that we have defined a subsequence $([w_{n,k}])_k$ of $([w_k])_k$. There exists a subsequence $([w_{n+1,k}])_k$ of $([w_{n,k}])_k$ such that the $n+1$ letter of all $w_{n+1,k}$ is some fixed $l_{n+1}\in A$. Define now the subsequence $([w_{k,k}])_k$ of $([w_k])_k$ and note that for all $k\in\N$, $[w_{k,k}]\in \mathbb{T}^{A,\ba}_{l_1\cdots l_k}$. Therefore, setting $v=l_1l_2\cdots \in \mathbb{T}^{A,\ba}$ we have
\[ d_{A,\ba}([w_{k,k}],[v]) \leq \diam{\mathbb{T}^{A,\ba}_{l_1\cdots l_k}} \leq 2^{-k}\xrightarrow{k\to \infty} 0. \]
Therefore, $[w_{k,k}] \to [v]$.

\textit{Case 2}: There exists smallest $j\in \N$ such that the sequence $(i^k_j)_k$ is unbounded. Passing to a subsequence, we may further assume that $\lim_{k\to \infty}i^k_j = \infty$. Since sequences $(i^k_1)_k,\dots,(i^k_{j-1})_k$ are bounded, passing to a subsequence again, we may assume that sequences $(i^k_1)_k,\dots,(i^k_{j-1})_k$ are constant and we write $v=i_1^1\cdots i^1_{j-1}$. By Remark \ref{rem:equivalent classes of 12^oo}, $[w_k], [v12^{(\infty)}] \in \mathbb{T}^{A,\ba}_{v i_k}$ for all $k\in\N$, so
\[ d_{A,\ba}([w_{k}],[v12^{(\infty)}]) \leq \diam{\mathbb{T}^{A,\ba}_{vi_k}} \leq \D_{\ba}(v)\ba(i_k) \xrightarrow{k\to \infty} 0\]
which gives that $[w_k] \to [v12^{(\infty)}]$.
		
		
The fact that $\mathbb{T}^{A,\ba}$ is connected, locally connected, and path-connected follows by \cite[Lemma 3.14]{DV}; note that \eqref{eq:CD} is not used in the proof of that part. 
		
The fact that $\mathbb{T}^{A,\ba}$ is 1-bounded turning follows from \cite[Lemma 3.15]{DV}; note that \eqref{eq:CD} is not used in the proof of that part. 
		
Finally, the fact that $\mathbb{T}^{A,\ba}$ is a metric tree follows from \cite[Lemma 3.17]{DV}. While property \eqref{eq:CD} is mentioned in \cite[Claim 3.20]{DV} to prove the compactness of sets $X_j$ therein, the compactness of these sets does not play a role in the proof of the said claim. 
\end{proof}
	
\subsection{Self-similarity}\label{sec:selsim}
	
In the next lemma we define similarity maps on the trees $\T^{A,\ba}$, which are essential for various arguments in following sections.
	
\begin{lem}\label{lem:ss}
For each alphabet $A$, each weight $\ba$, and each $i\in A$, the map
\[ \phi^{A,\ba}_i : \mathbb{T}^{A,\ba} \to \mathbb{T}^{A,\ba}_i,\qquad \phi^{A,\ba}_i([w]) = [iw]\] 
is a similarity with scaling factor $\ba(i)$.
\end{lem}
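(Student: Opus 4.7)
The plan is to prove the metric identity
\[ \rho_{A,\ba}(iw, iu) = \ba(i)\, \rho_{A,\ba}(w, u) \qquad \text{for all } w,u \in A^{\N}, \]
from which all required properties of $\phi^{A,\ba}_i$ follow. Indeed, well-definedness on equivalence classes and injectivity both reduce to the vanishing case $\rho = 0$ since $\ba(i)>0$, while surjectivity onto $\mathbb{T}^{A,\ba}_i$ is immediate because every class in $A^{\N}_i/\sim$ has a representative $iv$ with $v \in A^{\N}$, and $\phi^{A,\ba}_i([v])=[iv]$.

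For the inequality $\rho_{A,\ba}(iw, iu) \leq \ba(i)\, \rho_{A,\ba}(w, u)$, I would take a near-optimal chain $(A^{\N}_{v_0}, \ldots, A^{\N}_{v_N})$ joining $w$ to $u$ and prepend $i$ to each word. Lemma \ref{lem:prop.of.graphs}(ii) guarantees that each adjacency in $G_n^A$ witnessing $A^{\N}_{v_j}\wedge A^{\N}_{v_{j+1}} \neq \emptyset$ lifts to an adjacency in $G_{n+1}^A$ between the prepended words, so the prepended family is a legitimate chain joining $iw$ to $iu$. Its total weight is exactly $\ba(i)$ times the original, and letting $\e\to 0$ gives the desired inequality.

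The reverse inequality is the crux. Given a near-optimal chain $(A^{\N}_{u_0},\ldots, A^{\N}_{u_N})$ joining $iw$ to $iu$, I would first tighten $u_0$ and $u_N$ to be long enough prefixes of $iw$ and $iu$ respectively (which only decreases weights and places them in $A^{*}_i$), and then modify the intermediate $u_j$'s so that all lie in $A^{*}_i$. The critical observation, obtained from the edge classification in Lemma \ref{lem:prop.of.graphs}(iv), is that if $u_j \in A^{*}_i$ and $u_{j+1} \in A^{*}\setminus A^{*}_i$, then any witness $z \in A^{\N}_{u_j}\wedge A^{\N}_{u_{j+1}}$ must have $n$-truncations $z(n)$ adjacent, for all large $n$, to words whose first letter differs from that of $z(n)$. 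This forces the combinatorial split described in Lemma \ref{lem:prop.of.graphs}(iv) to occur at the initial position, pinning $z$ down to $12^{(\infty)}$ or $i1^{(\infty)}$, and consequently forcing $u_j$ to have the specific form $12^{(|u_j|-1)}$ or $i1^{(|u_j|-1)}$. Therefore any maximal run of indices $j_0,\ldots,j_1$ with $u_j \notin A^{*}_i$ is bracketed by two such boundary words $u_{j_0-1}, u_{j_1+1}$ in $A^{*}_i$ whose $A^{\N}$-cylinders are nested; Remark \ref{rem:1} then yields $A^{\N}_{u_{j_0-1}} \wedge A^{\N}_{u_{j_1+1}} \neq \emptyset$, allowing the entire run to be excised without increasing the total weight.

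After this modification, every $u_j$ lies in $A^{*}_i$ and I can write $u_j = i u_j'$. Stripping the leading $i$ and reapplying Lemma \ref{lem:prop.of.graphs}(ii) in reverse produces a chain joining $w$ to $u$ whose total weight is $\ba(i)^{-1}$ times the modified weight, so letting $\e\to 0$ closes the argument. I expect the hardest step to be the classification of cross-boundary adjacencies in Step 2, in particular the bookkeeping distinguishing the $i=1$ case from $i\neq 1$ when applying Lemma \ref{lem:prop.of.graphs}(iv), and verifying that the excision of ``outside'' runs is genuinely valid (i.e., produces a well-formed chain) uniformly across all such runs.
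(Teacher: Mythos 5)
Your proposal follows essentially the same route as the paper: the easy inequality by prepending $i$ to a chain via Lemma \ref{lem:prop.of.graphs}(ii), and the reverse inequality by showing that any chain joining $iw$ to $iu$ can be replaced by a chain of cylinders indexed by words in $A^*_i$ of no larger total weight, using the edge classification of Lemma \ref{lem:prop.of.graphs} to pin down the boundary words (prefixes of $12^{(\infty)}$ when $i=1$, of $i1^{(\infty)}$ when $i\neq 1$), nestedness of those cylinders, Remark \ref{rem:1}, and excision of the runs lying outside $A^*_i$; excising each maximal run versus excising from the first to the last outside index, as the paper does, is an immaterial difference. The one step that does not work as stated is the preliminary ``tightening'' of $u_0$ and $u_N$: replacing $u_0$ by a longer prefix of $iw$ need not preserve the chain property, because combinatorial intersection is not inherited by subcylinders (e.g.\ $A^{\N}_{1}\wedge A^{\N}_{3}\neq\emptyset$ while $A^{\N}_{11}\wedge A^{\N}_{3}=\emptyset$). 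Fortunately the step is also unnecessary: since $iw\in A^{\N}_{u_0}$, the word $u_0$ is automatically an initial subword of $iw$, hence already lies in $A^*_i$ unless $u_0=\e$; and if some word of the chain has length at most $1$, its weight (or the structure of the chain) lets one fall back on the trivial one-cylinder chain $\{A^{\N}_i\}$, which is how the paper disposes of this degenerate case. With that adjustment your argument is the paper's proof.
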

	
\begin{proof}
By Lemma \ref{lem:prop.of.graphs}(ii), for any $i\in A$, any $k\in\N$, and any two words $w,u\in A^k$ we have that $w$ is adjacent to $u$ in $G_k$ if and only if $iw$ is adjacent to $iu$ in $G_{k+1}$. Therefore, $A^{\N}_{w}\wedge A^{\N}_u \neq \emptyset$ if and only if $A^{\N}_{iw}\wedge A^{\N}_{iu} \neq \emptyset$.
		
Fix now $w,u \in A^{\N}$ and $i\in A$. We show that
\[ \rho_{A,\ba}(iw,iu) = \ba(i)\rho_{A,\ba}(w,u).\]
To see that, let first $A^{\N}_{v_1},\dots, A^{\N}_{v_N}$ be a chain joining $w$ with $u$. Then $A^{\N}_{iv_1},\dots, A^{\N}_{iv_N}$ is a chain joining $iw$ with $iu$. Therefore, $\rho_{A,\ba}(iw,iu) \leq \ba(i)\rho_{A,\ba}(w,u)$.
		
For the reverse inequality, we show that for any chain $A^{\N}_{v_1},\dots,A^{\N}_{v_N}$ joining $iw$ with $iu$, there exists another chain $A^{\N}_{iu_1},\dots A^{\N}_{iu_k}$ joining them that satisfies
\[ \D_{\ba}(iu_1)+\cdots+\D_{\ba}(iu_k) \leq \D_{\ba}(v_1)+\cdots+\D_{\ba}(v_N).\]
Noting that $A^{\N}_{u_1},\dots,A^{\N}_{u_k}$ is also a chain joining $w$, with $u$, assuming the claim, we readily have that $\rho_{A,\ba}(iw,iu) \geq \ba(i)\rho_{A,\ba}(w,u)$.
		
To prove the claim, assume without loss of generality that $i=1$ and fix a chain $A^{\N}_{v_1},\dots,A^{\N}_{v_N}$ joining $1w$ with $1u$. If there exists $j \in\{1,\dots,N\}$ such that $|v_j|\leq 1$, then $v_j\in \{1,\e\}$ and it is clear that the chain consisting only of $A^{\N}_1$ satisfies the conclusions of the claim. Assume now that $|v_j|\geq 2$ for all $j\in\{1,\dots,N\}$. Let $l,s$ be the first and last, respectively, indices $j$ such that $v_j \not\in A^{*}_1$. We know that $2\leq l \leq s \leq N-1$. Write $v_j = 1u_j$ for $j \in\{1,\dots,l-1,s+1,\dots,N\}$. Since $A^{\N}_{1u_{l-1}}\wedge A^{\N}_{v_l} \neq \emptyset$, there exist $i'\in A\setminus \{1\}$, integer $m\geq \max\{1+|u_{l-1}|,|v_l|\}$, $u'\in A^{m-|u_{l-1}|}$, and $u''\in A^{m-|v_{l}|}$ such that $\{1u_{l-1}u',i'v_{l}u''\} \in E_{m+1}^A$. By Lemma \ref{lem:prop.of.graphs}(1), $1u_{l-1}u' = 12^{(m)}$ which implies that $1u_{l-1}=12^{(m_1)}$ for some integer $m_1\geq 0$. Similarly, $1u_{s+1} = 12^{(m_2)}$ for some integer $m_2\geq 0$. Therefore, by Remark \ref{rem:1}, we have that $A^{\N}_{1u_{l-1}}\wedge A^{\N}_{1u_{s+1}} \neq \emptyset$ and the collection $A^{\N}_{1u_1},\dots,A^{\N}_{1u_{l-1}},A^{\N}_{1u_{s+1}},\dots,A^{\N}_{1u_N}$ is a chain satisfying the conclusions of the claim.
\end{proof}

Note that if $A=\{1,\dots, m\}$ is a finite alphabet, then Lemma \ref{lem:ss} is enough to show that all $\T^{A,\ba}$ are self-similar in the sense of \textsection\ref{sec:ss}. Indeed, $\T^{A,\ba}$ is the attractor of $\{ \phi^{A,\ba}_i: \mathbb{T}^{A,\ba} \to \mathbb{T}^{A,\ba} \}_{i\in A}$, and  $U:=\T^{A,\ba}\setminus\{ [1^{(\infty)}, 2^{(\infty)}] \}$ is the open set required for the open set condition.

\section{Geodesicity of trees $\T^{A,\ba}$}\label{sec:geodesicity}
	
The focus of this section is to show that the metric trees $\T^{A,\ba}$ defined above are all geodesic.
	
\begin{prop}\label{prop:geodesic}
For each alphabet $A$ and each weight $\ba$ the metric tree $\T^{A,\ba}$ is geodesic.  
\end{prop}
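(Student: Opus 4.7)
The plan is to prove geodesicity by showing that the ``backbone'' arc $B := [1^{(\infty)}, 2^{(\infty)}]$ is isometric to a Euclidean interval and then extending the result to all arcs via self-similarity. Let $L := d_{A,\ba}([1^{(\infty)}], [2^{(\infty)}])$ and $p_\e := [12^{(\infty)}]$. By Remark \ref{rem:equivalent classes of 12^oo}, $p_\e = [j1^{(\infty)}]$ for every $j \in A \setminus \{1\}$ and, in particular, $p_\e \in \T^{A,\ba}_1 \cap \T^{A,\ba}_2$. Combined with Lemma \ref{lem:ss}, this yields $B = \phi^{A,\ba}_1(B) \cup \phi^{A,\ba}_2(B)$ with $\phi^{A,\ba}_1(B) \cap \phi^{A,\ba}_2(B) = \{p_\e\}$, where both $\phi^{A,\ba}_1, \phi^{A,\ba}_2$ have ratio $1/2$. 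The triangle inequality together with the diameter bound $\diam \phi^{A,\ba}_i(B) \leq L/2$ forces $d_{A,\ba}([1^{(\infty)}], p_\e) = d_{A,\ba}(p_\e, [2^{(\infty)}]) = L/2$, so $p_\e$ is an exact midpoint of the two leaves.

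Iterating the decomposition, for each $u \in \{1,2\}^n$ the piece $\phi^{A,\ba}_u(B)$ has diameter $L/2^n$, and consecutive pieces meet at single ``dyadic'' branch points. By induction on $n$ I would verify that the distance between any two such dyadic points on $B$ equals the sum of the intermediate subarc diameters: the upper bound follows from the triangle inequality along the chain of subarc endpoints, while the lower bound is the delicate part (see below). Using density of the dyadic points in $B$ (subarc diameters shrink to zero) and compactness of $\T^{A,\ba}$ (Lemma \ref{lem:metrictree}), this distance-preserving assignment extends uniquely to an isometry $F: [0, L] \to B$.

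For general $[w], [u] \in \T^{A,\ba}$ with longest common prefix $v$ (writing $w = viw'$ and $u = vju'$ with $i \neq j$), the unique arc $[w, u]$ passes through $p_v := \phi^{A,\ba}_v(p_\e)$ and splits as $[w, p_v] \cup [p_v, u]$. Each half lies in a single subtile and, by Lemma \ref{lem:ss}, is a rescaled copy of an arc in $\T^{A,\ba}$ from a point to one of the leaves $[1^{(\infty)}], [2^{(\infty)}]$. Iterating the splitting recursively within each subtile realizes $[w, u]$ as a countable concatenation of rescaled sub-backbones, each isometric to a Euclidean interval by the backbone analysis. Since the scaling factors decay geometrically down the recursion, the concatenation has finite total length equal to $d_{A,\ba}([w], [u])$, producing the desired geodesic parametrization.

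The main obstacle is the lower bound in the dyadic isometry claim for $B$: while the upper bound follows from the triangle inequality, ruling out a ``fractal shortcut'' in the chain-based definition of $d_{A,\ba}$ requires a combinatorial argument. I would rely on Lemma \ref{lem:prop.of.graphs}(iv) to observe that any chain transitioning between tiles indexed by words with distinct first letters in $\{1,2\}$ must pass through the ``central branch'' of words $12^{(l)}$ and $j1^{(l)}$ for some $j \geq 2$; carefully summing the weights across an entire chain then yields the desired lower bound.
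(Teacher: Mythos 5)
Your strategy (an explicit isometric parametrization of the backbone $B=[1^{(\infty)},2^{(\infty)}]$, followed by a self-similar decomposition of an arbitrary arc into countably many rescaled backbones) is genuinely different from the paper's route, which instead shows that the metric is the limit of the lengths of the canonical combinatorial chains, $\rho_{A,\ba}(w,u)=\lim_n\ell_{\ba}(\mathcal{C}_{A,n}(w,u))$ (Lemma \ref{lem: geodesicity}, resting on Lemma \ref{lem: maximal path} and Corollary \ref{cor:shortchains}), and then reads off additivity of the distance at every point of an arc. Your route could be completed, but as written there is a real gap, and it is not located where you place it. For the backbone the ``delicate'' lower bound is in fact cheap: by Lemma \ref{lem:ss} the endpoints of each piece $\phi^{A,\ba}_u(B)$, $u\in\{1,2\}^n$, are at distance exactly $2^{-n}L$, so for dyadic points $x,y$ with parameters $t_x<t_y$ the triangle inequality alone gives $d(x,y)\le(t_y-t_x)L$ (chain through consecutive dyadic points) and $d(x,y)\ge L-d([1^{(\infty)}],x)-d(y,[2^{(\infty)}])\ge(t_y-t_x)L$; no chain combinatorics is needed. (Also, your bound $\diam\phi^{A,\ba}_i(B)\le L/2$ is not free: it amounts to $\diam B=L$, i.e.\ the $1$-bounded turning property of Lemma \ref{lem:metrictree}; but it is not needed, since $d([1^{(\infty)}],[12^{(\infty)}])=L/2$ follows directly from the similarity.)

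The genuine gap is in the passage to general arcs, where you assert that the concatenation of rescaled backbones ``has finite total length equal to $d_{A,\ba}([w],[u])$''. That sentence presupposes the additivity $d_{A,\ba}([w],[u])=d_{A,\ba}([w],p_v)+d_{A,\ba}(p_v,[u])$ at the splitting point $p_v=[v12^{(\infty)}]$, and its analogues at every deeper breakpoint of the recursion, i.e.\ that no chain can shortcut past a gluing point. This is precisely the nontrivial content of the proposition (it is what the paper's Lemma \ref{lem: geodesicity} delivers), and the bootstrap available on the backbone does not apply here because the endpoints $[w],[u]$ are arbitrary, so there are no reference points with known exact distances beyond them. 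The statement can be proved by an argument in the spirit of the reverse inequality in the proof of Lemma \ref{lem:ss}: after disposing of chains containing a cylinder indexed by a word no longer than $v$, split a chain joining $w\in A^{\N}_{vi}$ to $u\in A^{\N}_{vj}$ at a transition between cylinders whose $(|v|+1)$-st letters differ, where Lemma \ref{lem:prop.of.graphs}(i),(iv) forces the two adjacent words to be prefixes of $v12^{(\infty)}$ and $vj'1^{(\infty)}$, so that the two halves are chains joining $w$ to $p_v$ and $p_v$ to $u$. But this argument, together with a limiting argument for the countably many pieces accumulating at $[w]$ and $[u]$, is the heart of the proof and is missing: the combinatorial sketch you do give addresses only the backbone transitions between first letters $1$ and $2$, not the splitting points of a general arc.
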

	
Fix for the rest of this section an alphabet $A$ and a weight $\ba$. Given $k\in\N$ and $w,u\in A^k$, we denote by $P_{A,k}(w,u)$ the unique combinatorial arc in $G_k^A$ with endpoints $w$, $u$.
	
\begin{rem}\label{rem:shiftpath}
By design of the graphs $(G_k^A)_{k\in\N}$ we have that for every $i\in A$, $k\in \N$, and $w,u\in A^k$, if $P_{A,k}(w,u) = (v_1,\dots,v_n)$, then $(iv_1,\dots,iv_n)$ is also an arc which we denote by $iP_{A,k}(w,u)$. Therefore,
\[ P_{A,k+1}(iw,iu) = iP_{A,k}(w,u). \]
\end{rem}
	
Recall the definition of length $\ell(\gamma)$ of a combinatorial path $\g$, and the concatenation $\g\cdot\g'$ of paths $\g,\g'$ in a graph $G$ from \textsection\ref{sec:graphprelim}. 
	
\begin{lem}\label{lem: maximal path}
For each $n\in\N$, 
\[ P_{A,n}(1^{(n-1)}3,2^{(n)}) = P_{A,n}(1^{(n-1)}3,12^{(n-1)})\cdot P_{A,n}(12^{(n-1)},2^{(n)}).\] 
Moreover, for any arc $\g$ in $G_n^A$,
\[ \ell(\gamma) \leq \ell(P_{A,n}(1^{(n-1)}3,2^{(n)})) = 2^n+1.\]
\end{lem}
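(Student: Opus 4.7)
I will first prove the decomposition identity using the bridge structure, then compute the length inductively alongside an auxiliary length, and finally prove the maximality bound via a refined triple induction.

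\textbf{Decomposition.} For each $i\in A$, let $G_{n,i}^A$ denote the subgraph of $G_n^A$ induced by $A^n_i$; by Lemma \ref{lem:prop.of.graphs}(ii), $G_{n,i}^A$ is isomorphic to $G_{n-1}^A$ via the prefix map $w\mapsto iw$. By Lemma \ref{lem:prop.of.graphs}(iv), the only edges of $G_n^A$ not lying in some $G_{n,i}^A$ are the bridges $\{12^{(n-1)}, k1^{(n-1)}\}$ for $k\in A\setminus\{1\}$. Since $1^{(n-1)}3\in G_{n,1}^A$, $2^{(n)}\in G_{n,2}^A$, and $G_n^A$ is a tree, the arc $P_{A,n}(1^{(n-1)}3, 2^{(n)})$ must cross the unique bridge $\{12^{(n-1)}, 21^{(n-1)}\}$ joining these two blocks; in particular, $12^{(n-1)}$ lies on the arc, yielding the claimed concatenation.

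\textbf{Length by induction.} I establish by simultaneous induction on $n$ that
\[ \ell(P_{A,n}(1^{(n-1)}3, 2^{(n)})) = 2^n+1 \quad\text{and}\quad \ell(P_{A,n}(1^{(n)}, 2^{(n)})) = 2^n,\]
with base case $n=1$ immediate from $(3,1,2)$ and $(1,2)$. For the inductive step I apply the decomposition argument to both paths using $12^{(n-1)}$ as hinge. By Remark \ref{rem:shiftpath}, the initial sub-arcs in $G_{n,1}^A$ correspond to the two paths of the induction hypothesis at level $n-1$, while the common tail $P_{A,n}(12^{(n-1)}, 2^{(n)})$ consists of the single bridge edge followed by a copy of $P_{A,n-1}(1^{(n-1)}, 2^{(n-1)})$ inside $G_{n,2}^A$, of length $1+2^{n-1}$. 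The concatenation formula then yields the two claimed lengths simultaneously.

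\textbf{Maximality via triple induction.} I prove by induction on $n$ the stronger triple: (a) every arc in $G_n^A$ has length $\leq 2^n+1$; (b) every arc ending at $1^{(n)}$ has length $\leq 2^n$; and (c) every arc ending at $2^{(n)}$ has length $\leq 2^n+1$. The base case $n=1$ is immediate since $G_1^A$ is a star centered at $1$. For the inductive step, classify an arc $\gamma$ in $G_n^A$ by how many bridges it uses. Since all bridges share the vertex $12^{(n-1)}$ and $\gamma$ is simple, $\gamma$ uses at most two. If $\gamma$ lies in one block, (a) follows from IH (a). If $\gamma$ uses exactly one bridge, it splits as a piece in $G_{n,1}^A$ ending at $12^{(n-1)}\cong 2^{(n-1)}$ (length $\leq 2^{n-1}+1$ by IH (c)) plus a piece in some $G_{n,j}^A$ starting at $j1^{(n-1)}\cong 1^{(n-1)}$ (length $\leq 2^{n-1}$ by IH (b)), totaling $\leq 2^n+1$. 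If $\gamma$ uses exactly two bridges, then $12^{(n-1)}$ is interior to $\gamma$ and both outer pieces terminate at some $k1^{(n-1)}\cong 1^{(n-1)}$, so IH (b) bounds each by $2^{n-1}$, giving $\leq 2^{n-1}+1+2^{n-1} = 2^n+1$. Parts (b) and (c) follow from analogous case analyses (noting that an arc ending at $1^{(n)}\in G_{n,1}^A$ can use at most one bridge, since two bridges would force the terminal block to be some $G_{n,k}^A$ with $k\neq 1$). The main obstacle is that the naive induction using only (a) produces $(2^{n-1}+1)+(2^{n-1}+1) = 2^n+2$, off by exactly one; the fix is to track the asymmetry, enforced by the bridge structure, that every bridge pairs a ``$2^{(n-1)}$-type'' vertex $12^{(n-1)}$ with a ``$1^{(n-1)}$-type'' vertex $k1^{(n-1)}$, and arcs ending at the latter are strictly one shorter than arcs ending at the former.
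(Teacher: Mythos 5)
Your proof is correct, and while the decomposition and length computation track the paper's approach closely (both exploit the bridge structure at $12^{(n-1)}$ and run a coupled induction on $\ell(P_{A,n}(1^{(n-1)}3,2^{(n)}))$ together with $\ell(P_{A,n}(1^{(n)},2^{(n)}))$), your maximality argument takes a genuinely different route. You carry the auxiliary invariants (b) and (c) through the induction; the paper instead decomposes an arc $P$ crossing bridges into $P=(jP_1)\cdot((12^{(n)}))\cdot(kP_2)$ and observes that $P_1,P_2$ both end at $1^{(n)}$, which has at least two neighbors $1^{(n-1)}2$ and $1^{(n-1)}3$ in $G_n^A$ and hence is not a leaf, so neither $P_i$ can be a maximal-length arc, giving $\ell(P_i)\leq 2^n$. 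Your version makes the off-by-one bookkeeping explicit at the cost of tracking more invariants; the paper's version is shorter but leans on the standard fact that maximal arcs in a tree terminate at leaves. Both correctly exploit the asymmetry you identify, that a bridge always pairs $12^{(n-1)}$ with some $k1^{(n-1)}$. One small remark: your statement (c) is redundant, since it gives exactly the bound already asserted in (a); a double induction on (a) and (b) alone would close the argument.
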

	
\begin{proof}
For the first claim, fix $n\in\N$. By Lemma \ref{lem:prop.of.graphs}(i) we have that if $1w$ is adjacent to $2u$ in some graph $G^A_n$ (here $w,u\in A^{n-1}$), then $w=2^{(n-1)}$ and $u=1^{(n-1)}$. Since $G_n^A$ is a combinatorial tree, every path in $G^A_{n}$ that goes from $1^{(n-1)}3$ to $2^{(n)}$ must contain the (directed) edge $(12^{(n-1)},21^{(n-1)})$. This is by construction, due to the ``gluing'' between the isometric copies of $G_{n-1}^A$ with edges meeting at $12^{(n-1)}$ (see \textsection4.1). Therefore, 
\[ P_{A,n}(1^{(n-1)}3,2^{(n)}) = P_{A,n}(1^{(n-1)}3,12^{(n-1)})\cdot P_{A,n}(12^{(n-1)},2^{(n)}).\]
		
The proof of the second claim is by induction on $n$. For $n=1$, $P_{A,1}(3,2)=(3,1,2)$ is the arc of longest length in $G_{1}^A$ and $\ell(P_{A,1}(1,2))=3$. Assume now that the second claim is true for some $n\in\N$. First note that the only point adjacent to $1^{(n)}3$ in $G^A_{n+1}$ is $1^{(n+1)}$. Therefore, by the first claim and by Remark \ref{rem:shiftpath},
\begin{align*}
P_{A,n+1}(&1^{(n)}3,2^{(n+1)})\\ 
&= P_{A,n+1}(1^{(n)}3,12^{(n)}) \cdot (12^{(n)},21^{(n)})\cdot P_{A,n+1}(21^{(n)},2^{(n+1)})\\
&= (1P_{A,n}(1^{(n-1)}3,2^{(n)})) \cdot (12^{(n)},21^{(n)})\cdot (2P_{A,n}(1^{(n)},2^{(n)}))\\
&= (1P_{A,n}(1^{(n-1)}3,2^{(n)})) \cdot (12^{(n)},21^{(n)})\cdot (2P_{A,n}(1^{(n-1)}3,2^{(n)}) \setminus\{1^{(n-1)}3\})
\end{align*}
which by the inductive assumption implies that $\ell(P_{A,n+1}(1^{(n)}3,2^{(n+1)})) = 2^{n+1}+1$. 
		
Let now $P$ be an arc in $G_{n+1}$. If $P$ crosses vertices contained entirely in the vertices of $jG_n$ for some $j\in A$, then $P=jP'$ for some arc $P'$ in $G_n$. By the inductive hypothesis,
$$\ell(P)= \ell(P') \leq \ell(P_{A,n}(1^{(n-1)}3,2^{(n)}))= 2^n+1.$$ 
		
Assume for the rest of the proof that $P$ crosses vertices of at least two sub-graphs $jG_n$, $kG_n$ of $G_{n+1}$ for distinct $j, k\in A$. Note that $P$ does not cross vertices lying on three or more distinct sub-graphs, unless one of them is $1G_n$, which it only crosses at $12^{(n)}$. Indeed, suppose it contains vertices from the sub-graphs $jG_n$, $kG_n$, $lG_n$ of $G_{n+1}$ with distinct $j, k, l\in A$. By the inductive definition of $G_{n+1}$, any path joining vertices of two distinct sub-graphs has to contain $12^{(n)}$. This implies that one of the sub-graphs is $1G_n$. Without loss of generality, assume that $l=1$. Moreover, if $v_{i_1}$, $v_{i_2}$, $v_{i_3}$ are vertices of $jG_n$, $kG_n$, $1G_n\setminus\{ 12^{(n)}\}$, respectively, then the sub-paths of $P$ joining $v_{i_1}$ with $v_{i_2}$ and joining $v_{i_2}$ with $v_{i_3}$ both contain $12^{(n)}$, since $G_{n+1}$ is a combinatorial tree. Therefore, 
$$P=(v_1,\dots, v_{i_1},\dots, 12^{(n)}, \dots, v_{i_2}, \dots, 12^{(n)}, \dots, v_{i_3}, \dots v_M),$$ 
which is a contradiction. As a result, $P$ crosses vertices of either exactly two sub-graphs $jG_n$, $1G_n$ of $G_{n+1}$ for $j\in A\setminus\{1\}$, or exactly three sub-graphs $jG_n$, $kG_n$, $1G_n$ of $G_{n+1}$ for distinct $j, k\in A\setminus\{1\}$ and the only vertex of $1G_n$ it contains is $12^{(n)}$. In the latter case, note that $12^{(n)}$ cannot be an end-point of $P$, since then there is a vertex $q1^{(n)}$ of $G_{n+1}$ with $q\in A\setminus\{1, j, k\}$, i.e. which is not a vertex of $jG_n$, $kG_n$, and $\{12^{(n)}, q1^{(n)}\}\in E_{n+1}$, implying that the length of $P$ is not maximal in $G_{n+1}$.
		
Suppose $P$ intersects exactly three sub-graphs (the case where it intersects exactly two is similar). We decompose $P=(jP_1)\cdot((12^{(n)}))\cdot (kP_2)$, where $j, k\in A\setminus\{1\}$ are distinct and $P_1,P_2$ are paths in $G_n$. If none of $P_1$, $P_2$ is maximal in $G_n$, then by the inductive hypothesis
$$\ell(P)=\ell(jP_1)+1+\ell(kP_2)=\ell(P_1)+1+\ell(P_2)\leq 2^n+1+2^n=2^{n+1}+1.$$ 
Note that $\{v,12^{(n)}\}\in E_{n+1}$ if, and only if, $v=i1^{(n)}$ for some $i\in A\setminus \{1\}$. As a result, both $P_1$ and $P_2$ have $1^{(n)}$ as an end-point. However, we have $\{1,2\}, \{1,3\}\in E_{1}$, which implies $\{1^{(n)},1^{(n-1)}2\}, \{1^{(n)},1^{(n-1)}3\}\in E_n$ by Lemma \ref{lem:prop.of.graphs}(ii). Therefore, not all endpoints of $P_1$ and $P_2$ are leaves of $G_n$, implying that they are both not maximal in $G_n$. Hence, for every path $P$ in $G_{n+1}$, $\ell(P)\leq 2^{n+1}+1$.
\end{proof}
	
The choice of $3$ in the above Lemma was arbitrary, and all that was essentially needed was a letter distinct from $1$ and $2$, to demonstrate the ``in-between" combinatorial length between $1^{(n)}$ and $2^{(n)}$. This is further emphasized in the following corollary.
	
\begin{cor}\label{cor:1^nto2^n}
For every $n\in \N$, $\ell(P_{A,n}(1^{(n)},2^{(n)}))=2^n$.
\end{cor}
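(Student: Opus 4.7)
The plan is to prove the corollary by induction on $n$. The base case $n=1$ is immediate: since $\{1,2\}\in E_1^A$ by construction, $P_{A,1}(1,2)=(1,2)$, which has length $2=2^1$.

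For the inductive step, suppose $\ell(P_{A,n}(1^{(n)},2^{(n)}))=2^n$. The key observation, already established in the proof of Lemma \ref{lem: maximal path} via Lemma \ref{lem:prop.of.graphs}(i), is that the only edge in $G_{n+1}^A$ joining a word prefixed by $1$ to a word prefixed by a different letter is $\{12^{(n)},21^{(n)}\}$. Since $G_{n+1}^A$ is a combinatorial tree, the unique arc $P_{A,n+1}(1^{(n+1)},2^{(n+1)})$ must traverse this edge. Hence it decomposes as
\[
P_{A,n+1}(1^{(n+1)},2^{(n+1)}) = P_{A,n+1}(1^{(n+1)},12^{(n)})\cdot (12^{(n)},21^{(n)})\cdot P_{A,n+1}(21^{(n)},2^{(n+1)}).
\]
The outer two subarcs stay within the subgraphs $1G_n^A$ and $2G_n^A$ respectively (again, because they cannot cross the bridging edge without leaving the subgraph), so by Remark \ref{rem:shiftpath} they equal $1\cdot P_{A,n}(1^{(n)},2^{(n)})$ and $2\cdot P_{A,n}(1^{(n)},2^{(n)})$, each of length $2^n$ by the inductive hypothesis. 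Accounting for the two shared vertices at the concatenation points, the total length is $2^n+2+2^n-2=2^{n+1}$, completing the induction.

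I do not expect a genuine obstacle here; the heavy lifting was already performed in Lemma \ref{lem: maximal path}, whose inductive proof produced exactly the decomposition above. In fact, as an alternative route one could derive the corollary directly from Lemma \ref{lem: maximal path}: the identity $\ell(P_{A,n+1}(1^{(n)}3,2^{(n+1)}))=2^{n+1}+1$ combined with its decomposition $P_{A,n+1}(1^{(n)}3,12^{(n)})\cdot P_{A,n+1}(12^{(n)},2^{(n+1)})$, where the first factor is a shift of $P_{A,n}(1^{(n-1)}3,2^{(n)})$ (length $2^n+1$) and the second factor is the edge $(12^{(n)},21^{(n)})$ followed by a shift of $P_{A,n}(1^{(n)},2^{(n)})$, forces $\ell(P_{A,n}(1^{(n)},2^{(n)}))=2^n$ after a one-line accounting. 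The only minor care needed in either approach is tracking shared endpoints when summing the lengths of concatenated paths.
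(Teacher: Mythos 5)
Your proof is correct, but it follows a different route from the paper's. The paper obtains Corollary \ref{cor:1^nto2^n} in one line from Lemma \ref{lem: maximal path}: since $\{1^{(n-1)}3,1^{(n)}\}\in E_n^A$ and $1^{(n)}$ is the only neighbour of $1^{(n-1)}3$, the maximal arc decomposes as $(1^{(n-1)}3,1^{(n)})\cdot P_{A,n}(1^{(n)},2^{(n)})$, and subtracting one vertex from $2^n+1$ gives $2^n$. You instead run an independent induction that uses only the structural ingredients (Remark \ref{rem:shiftpath}, uniqueness of arcs in the combinatorial tree $G_{n+1}^A$, and the location of the inter-block edges from Lemma \ref{lem:prop.of.graphs}(i)) and never invokes the length formula of Lemma \ref{lem: maximal path}; your count $2^n+2+2^n-2=2^{n+1}$ is right under the paper's convention that the length of a path is its number of vertices. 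What your version buys is self-containedness: it repeats bookkeeping already carried out inside Lemma \ref{lem: maximal path}, but it avoids the auxiliary vertex $1^{(n-1)}3$ altogether, so it applies verbatim when $A=\{1,2\}$, where the paper's formulation tacitly assumes $3\in A$. One imprecision to fix: $\{12^{(n)},21^{(n)}\}$ is not the only edge of $G_{n+1}^A$ joining a $1$-prefixed word to a word with a different first letter; for $\card A\geq 3$ the edges $\{12^{(n)},l1^{(n)}\}$ with $l\geq 3$ also do. What you need, and what Lemma \ref{lem:prop.of.graphs}(i) actually gives, is that every inter-block edge is incident to $12^{(n)}$ and the only one meeting the $2$-block is $\{12^{(n)},21^{(n)}\}$; this still forces the unique arc from $1^{(n+1)}$ to $2^{(n+1)}$ through that edge and keeps the two outer subarcs inside $1G_n^A$ and $2G_n^A$, so your decomposition and the induction stand. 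Your alternative derivation from Lemma \ref{lem: maximal path} is essentially the paper's argument in spirit, except that the paper peels off the pendant vertex $1^{(n-1)}3$ at level $n$ rather than splitting at the bridge at level $n+1$.
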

	
\begin{proof}
By Lemma \ref{lem:prop.of.graphs}(ii), for each $n\in\N$ we have $\{1^{(n-1)}3,1^{(n)}\}\in E_n^A$ and 
\[ P_{A,n}(1^{(n-1)}3,2^{(n)}) = (1^{(n-1)}3,1^{(n)})\cdot P_{A,n}(1^{(n)},2^{(n)}). \]
Hence, $\ell(P_{A,n}(1^{(n)},2^{(n)}))=2^n$.
\end{proof}
	
Given $w,u\in A^{\N}$, let $P_{A,n}(w(n),u(n)) = (v_1,\cdots,v_k)$ and denote by $\mathcal{C}_{A,n}(w,u)$ the chain $\{A^{\N}_{v_1},\dots,A^{\N}_{v_k}\}$. Moreover, given a chain $C = \{A^{\N}_{w_1},\dots,A^{\N}_{w_m}\}$ we set
\[ \ell_{\ba}(C) = \D_{\ba}(w_1) + \cdots + \D_{\ba}(w_m).\]
	
\begin{cor}\label{cor:shortchains}
Let $w\in A^*$, $n>|w|$, and $u, u'\in A^\N_w$. Then,
\[ \ell_{\ba}(\mathcal{C}_{A,n}(u,u')) \leq \D_{\ba}(w)(1+2^{|w|-n}).\]
\end{cor}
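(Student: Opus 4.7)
The plan is to reduce the bound to an application of Lemma \ref{lem: maximal path} after factoring out the common prefix $w$.

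First I would set $m = n - |w|$ and write $u(n) = wv$, $u'(n) = wv'$ for some $v, v' \in A^m$. By Remark \ref{rem:shiftpath}, the shift invariance of the edge relation (Lemma \ref{lem:prop.of.graphs}(ii)) gives
\[
P_{A,n}(u(n), u'(n)) = w \cdot P_{A,m}(v, v'),
\]
so if $P_{A,m}(v, v') = (v_1, \dots, v_k)$, then the chain $\mathcal{C}_{A,n}(u, u')$ consists exactly of the cylinders $A^{\N}_{wv_1}, \dots, A^{\N}_{wv_k}$. Since $\D_{\ba}$ is multiplicative on concatenations,
\[
\ell_{\ba}(\mathcal{C}_{A,n}(u, u')) = \sum_{i=1}^k \D_{\ba}(wv_i) = \D_{\ba}(w) \sum_{i=1}^k \D_{\ba}(v_i).
\]

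Next I would bound the inner sum. Every letter of $v_i$ contributes a factor of at most $1/2$ to $\D_{\ba}(v_i)$, so $\D_{\ba}(v_i) \leq 2^{-m}$ for each $i$. By Lemma \ref{lem: maximal path}, the combinatorial length $k$ of any arc in $G_m^A$ satisfies $k \leq 2^m + 1$. Combining these,
\[
\sum_{i=1}^k \D_{\ba}(v_i) \leq (2^m + 1) \cdot 2^{-m} = 1 + 2^{-m} = 1 + 2^{|w| - n}.
\]
Multiplying through by $\D_{\ba}(w)$ yields the desired estimate.

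There is no real obstacle here: the only subtlety is recognizing that one must apply the shift invariance of graphs (Remark \ref{rem:shiftpath}) to reduce the statement to the ``base case'' bound on the longest arc in $G_m^A$, after which the estimate is just the product of the worst-case edge length $2^{-m}$ and the worst-case path length $2^m + 1$.
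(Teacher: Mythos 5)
Your proof is correct and follows essentially the same route as the paper: factor the common prefix $w$ out of the arc, bound each term by $2^{|w|-n}$, and bound the number of vertices by $2^{n-|w|}+1$ via Lemma \ref{lem: maximal path}. The only cosmetic difference is that you justify the factorization $P_{A,n}(u(n),u'(n))=wP_{A,n-|w|}(v,v')$ by shift invariance (Remark \ref{rem:shiftpath}), whereas the paper notes that the subgraph induced by $A^n_w$ is connected so the arc stays inside it; both rest on Lemma \ref{lem:prop.of.graphs} and the uniqueness of arcs in a combinatorial tree, and the estimates thereafter are identical.
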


\begin{proof}
Since the subgraph of $G_n^A$ induced by vertices $A^{n}_w$ is connected, the combinatorial arc $P_{A,n}(u(n),u'(n))$ has all its vertices in $A^{n}_w$. Therefore, we can write $\mathcal{C}_{A,n}(u,u') = \{A^{\N}_{wv_1},\dots,A^{\N}_{wv_k}\}$ where $(v_1,\dots,v_k)$ is a combinatorial arc in $A^{n-|w|}$. By Lemma \ref{lem: maximal path}, $k\leq 2^{n-|w|}+1$. Since $\D_{\ba}(v_i) \leq 2^{|w|-n}$ for all $i$
\[ \ell_{\ba} (\mathcal{C}_{A,n}(u,u')) = \D_{\ba}(w)\left( \D_{\ba}(v_1)+\cdots+\D_{\ba}(v_k) \right) \leq \D_{\ba}(w)(1+ 2^{|w|-n}). \qedhere \]
\end{proof}
	
\begin{lem}\label{lem: geodesicity}
For all $w,u\in A^\N$, $\rho_{A,\ba}(w, u)=\lim_{n\to\infty}\ell_{\ba }(\mathcal{C}_{A,n}(w,u))$.
\end{lem}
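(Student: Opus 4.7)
The plan is to prove two matching inequalities. The upper bound $\rho_{A,\ba}(w,u) \leq \liminf_{n\to\infty}\ell_\ba(\mathcal{C}_{A,n}(w,u))$ is immediate: by construction, $\mathcal{C}_{A,n}(w,u)$ is a chain joining $w$ with $u$ (it corresponds to the unique combinatorial arc $P_{A,n}(w(n),u(n))$ in the combinatorial tree $G_n^A$), so the infimum in \eqref{eq:pseudometric} is bounded above by $\ell_\ba(\mathcal{C}_{A,n}(w,u))$ for every $n\in\N$.

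For the reverse inequality $\limsup_{n\to\infty}\ell_\ba(\mathcal{C}_{A,n}(w,u)) \leq \rho_{A,\ba}(w,u)$, I would first note that $\mathcal{C}_{A,n}(w,u)$ is $\ell_\ba$-minimal among all chains at level $n$ joining $w$ with $u$: such a chain corresponds to a walk in the combinatorial tree $G_n^A$ from $w(n)$ to $u(n)$, which must contain the unique arc $P_{A,n}(w(n),u(n))$, and any revisited vertex contributes a non-negative extra term. It therefore suffices to show that an arbitrary chain $C=\{A^{\N}_{u_1},\dots,A^{\N}_{u_N}\}$ joining $w$ with $u$ can be refined to a chain at level $n$ whose total length is at most $\ell_\ba(C)+o(1)$ as $n\to\infty$.

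To carry out the refinement, fix $n>\max_i|u_i|$. For each $i\in\{1,\dots,N-1\}$, the nonempty wedge $A^{\N}_{u_i}\wedge A^{\N}_{u_{i+1}}$ supplies, by its definition, a witness infinite word whose level-$n$ truncation yields a pair $a_i\in A^n_{u_i}$ and $b_i\in A^n_{u_{i+1}}$ with $\{a_i,b_i\}\in E_n^A$; setting $b_0:=w(n)$ and $a_N:=u(n)$, one then joins $b_{i-1}$ to $a_i$ inside each cylinder $A^n_{u_i}$ by the unique combinatorial arc (the induced subgraph of $G_n^A$ on $A^n_{u_i}$ is isomorphic to $G_{n-|u_i|}^A$ by iterating Lemma \ref{lem:prop.of.graphs}(ii), hence a combinatorial tree), and concatenates across the cylinders via the bridge edges $\{a_i,b_i\}$. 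Corollary \ref{cor:shortchains} bounds the contribution of each internal arc by $\D_\ba(u_i)(1+2^{|u_i|-n})$, so the refined chain has total length at most
$$\sum_{i=1}^N \D_\ba(u_i)\bigl(1+2^{|u_i|-n}\bigr) \leq \ell_\ba(C) + N\cdot 2^{-n},$$
using the trivial bound $\D_\ba(u_i)\leq 2^{-|u_i|}$. Combining with the minimality of $\mathcal{C}_{A,n}(w,u)$ at level $n$ gives $\ell_\ba(\mathcal{C}_{A,n}(w,u))\leq \ell_\ba(C)+N\cdot 2^{-n}$; letting $n\to\infty$ and then taking the infimum over $C$ concludes the proof. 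The one delicate point to handle carefully is extracting the level-$n$ adjacencies $\{a_i,b_i\}\in E_n^A$ from the wedge condition, since the wedge is a union of two symmetric sets and one must split into cases depending on which of $A^{\N}_{u_i}$, $A^{\N}_{u_{i+1}}$ contains the witness, using the explicit description of edges in Lemma \ref{lem:prop.of.graphs}(iv)-(v) to promote the given witness to the required adjacency at level $n$.
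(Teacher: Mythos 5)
Your proposal is correct and follows essentially the same route as the paper: the trivial direction is noted, and for the reverse inequality both you and the authors refine an arbitrary finite chain to a level-$n$ chain by extracting edges from the nonempty wedges, applying Corollary~\ref{cor:shortchains} to bound each intra-cylinder arc by $\D_{\ba}(u_i)(1+2^{|u_i|-n})$, and observing that the resulting walk in the tree $G_n^A$ must contain the arc $P_{A,n}(w(n),u(n))$, giving $\ell_{\ba}(\mathcal{C}_{A,n}(w,u))\leq\ell_{\ba}(C)+o(1)$. One small remark: the ``delicate point'' you flag at the end is not actually delicate --- the wedge definition already gives an adjacency $\{a_i,b_i\}\in E_n^A$ directly at every level $n>\max\{|u_i|,|u_{i+1}|\}$ regardless of which cylinder contains the witness, so no promotion via Lemma~\ref{lem:prop.of.graphs}(iv)--(v) is needed, and indeed the paper invokes the wedge condition directly without that detour.
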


\begin{proof}
Fix $w,u\in A^{\N}$. It suffices to show that for any chain $C$ connecting $w$ and $u$ and for any $\e \in (0,1)$ there exists $N\in\N$ such that for all $n\geq N$,
\begin{equation}\label{eq:formulaofmetric}
\ell_{\ba}(\mathcal{C}_{A,n}(w,u)) \leq \ell_{\ba}(C) + \e. 
\end{equation}
		
To this end, fix a chain $C=\{A^\N_{w_1},\dots,A^\N_{w_k}\}$ connecting $w$ and $u$ and $\e\in (0,1)$. Choose $N\in\N$ large enough such that 
\[ N > \frac{\log(k/\e)}{\log{2}} + \max_{i=1,\dots,k}|w_i|.\]
		
For each $i\in\{2,\dots,k\}$ there exists $v_{i-1}' \in A^{N}_{w_{i-1}}$ and $v_i \in A^{N}_{w_i}$ such that $v_{i-1}'$ is adjacent to $v_i$ in $G^A_N$. Set also $v_1=w(N)$ and $v_k'=u(N)$. Note that 
\[ \mathcal{C}_{A,N}(v_1,v_1') \cup \cdots \cup \mathcal{C}_{A,N}(v_k,v_k')\]
is a chain joining $w$ with $u$ and it contains $\mathcal{C}_{A,N}(w,u)$. By Corollary \ref{cor:shortchains} and the choice of $N$,
\begin{align*} 
\ell_{\ba}(\mathcal{C}_{A,N}(w,u)) &\leq \ell_{\ba}(\mathcal{C}_{A,N}(v_1,v_1')) + \cdots + \ell_{\ba}(\mathcal{C}_{A,N}(v_k,v_k'))\\
&\leq \D_{\ba}(w_1) (1+2^{|w_1|-N}) + \cdots + \D_{\ba}(w_k) (1+2^{|w_k|-N})\\
&\leq \D_{\ba}(w_1) (1+\e/k) + \cdots + \D_{\ba}(w_k) (1+\e/k)\\
&\leq \ell_{\ba}(C) + \e.
\end{align*}

By \eqref{eq:formulaofmetric}, we have that $\limsup_{n\to\infty}\ell_{\ba }(\mathcal{C}_{A,n}(w,u)) \leq \rho_{A,\ba}(w, u)$. On the other hand, $\ell_{\ba}(\mathcal{C}_{A,n}(w,u)) \geq \rho_{A,\ba}(w, u)$ for all $n$ so 
\[ \liminf_{n\to\infty}\ell_{\ba }(\mathcal{C}_{A,n}(w,u)) \geq \rho_{A,\ba}(w, u). \qedhere\]
\end{proof}
	
We are now ready to prove Proposition \ref{prop:geodesic}.
	
\begin{proof}[{Proof of Proposition \ref{prop:geodesic}}]
Fix $[w],[u] \in \T^{A,\ba}$ and let $v\in A^{\N}$ so that $[v]$ lies on the unique arc that connects $[w]$ with $[u]$.
Following the proof of \cite[Lemma 3.17]{DV}, we may assume that $v(n)\in P_{A,n}(w(n),u(n))$ for all $n\in \N$. Recall that $P_{A,n}(w(n),u(n)) = P_{A,n}(w(n),v(n))\cdot P_{A,n}(v(n),u(n))$. By Lemma \ref{lem: geodesicity},
\begin{align*}
d_{A,\ba}([w],[v]) + d_{A,\ba}([v],[u]) &= \lim_{n\to \infty} (\ell_{\ba}(\mathcal{C}_{A,n}(w,v)) + \ell_{\ba}(\mathcal{C}_{A,n}(v,u)))\\ 
&= \lim_{n\to \infty} (\ell_{\ba}(\mathcal{C}_{A,n}(w,u)) - \D_{\ba}(v(n)))\\
&= d_{A,\ba}([w],[u])
\end{align*}
which proves the geodesicity of $\T^{A,\ba}$. 
\end{proof}
	
\begin{cor}\label{cor:dist of 1^oo to 2^oo}
We have that $d_{A,\ba}([1^{(\infty)}],[2^{(\infty)}])=1$.
\end{cor}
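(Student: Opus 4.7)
The plan is to apply Lemma \ref{lem: geodesicity} with $w=1^{(\infty)}$ and $u=2^{(\infty)}$, reducing the claim to computing
\[ \lim_{n\to\infty} \ell_{\ba}(\mathcal{C}_{A,n}(1^{(\infty)}, 2^{(\infty)})). \]
Here $\mathcal{C}_{A,n}(1^{(\infty)}, 2^{(\infty)})$ is the chain corresponding to the combinatorial arc $P_{A,n}(1^{(n)},2^{(n)})$ in $G_n^A$. The entire argument then reduces to identifying the vertices along this arc.

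First I would show by induction on $n$ that every vertex of $P_{A,n}(1^{(n)},2^{(n)})$ lies in $\{1,2\}^n$. The base case $n=1$ is immediate since $P_{A,1}(1,2)=(1,2)$. For the inductive step, by Lemma \ref{lem:prop.of.graphs}(i) the only edges of $G_{n+1}^A$ joining a vertex of $1 A^n$ to a vertex of $j A^n$ with $j\neq 1$ are $\{12^{(n)}, j1^{(n)}\}$; in particular the only edge joining $1A^n$ to $2A^n$ is $\{12^{(n)},21^{(n)}\}$. Hence any arc from $1^{(n+1)}$ to $2^{(n+1)}$ must pass through this edge, and by Remark \ref{rem:shiftpath} it decomposes as
\[ P_{A,n+1}(1^{(n+1)},2^{(n+1)}) = \bigl(1\,P_{A,n}(1^{(n)},2^{(n)})\bigr)\cdot \bigl(2\,P_{A,n}(1^{(n)},2^{(n)})\bigr). \]
The inductive hypothesis then gives that every vertex lies in $\{1,2\}^{n+1}$.

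Since $\ba(1)=\ba(2)=\tfrac12$, every vertex $v\in\{1,2\}^n$ satisfies $\D_{\ba}(v)=2^{-n}$. Combining this with Corollary \ref{cor:1^nto2^n}, which gives $\ell(P_{A,n}(1^{(n)},2^{(n)}))=2^n$, yields
\[ \ell_{\ba}(\mathcal{C}_{A,n}(1^{(\infty)}, 2^{(\infty)})) = 2^n \cdot 2^{-n} = 1 \]
for every $n\in\N$. Passing to the limit and applying Lemma \ref{lem: geodesicity} gives $d_{A,\ba}([1^{(\infty)}],[2^{(\infty)}])=1$.

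I expect no genuine obstacle here: the only substantive ingredient is the stability of the arc $P_{A,n}(1^{(n)},2^{(n)})$ under the inductive doubling structure, and this is essentially forced by the uniqueness of the cross-edge between $1A^n$ and $2A^n$ in $G_{n+1}^A$.
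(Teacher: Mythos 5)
Your proof is correct and follows essentially the same route as the paper: both arguments reduce the claim to Lemma \ref{lem: geodesicity} together with the vertex count $\ell(P_{A,n}(1^{(n)},2^{(n)}))=2^n$ from Corollary \ref{cor:1^nto2^n} and the observation that every vertex of this arc has weight $2^{-n}$. The only difference is in how the inclusion of the arc's vertices in $\{1,2\}^n$ is justified: the paper gets it at once from Lemma \ref{lem:graphembed} (the arc in the subtree $G_n^{\{1,2\}}$ is the arc in $G_n^A$ by uniqueness of arcs in a combinatorial tree), whereas you reprove it by induction via the unique cross-edge $\{12^{(n)},21^{(n)}\}$ --- which is fine, with the minor caveat that your displayed ``concatenation'' joins two paths whose endpoints $12^{(n)}$ and $21^{(n)}$ are adjacent rather than equal, so it is not literally the concatenation operation defined in Section \ref{sec:graphprelim}.
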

	
\begin{proof}
Note that for each $n\in\N$, every vertex in $P_{A,n}(1^{(n)},2^{(n)})$ is in $\{1,2\}^n$ due to Lemma \ref{lem:graphembed}. Therefore, every vertex $v$ in $P_{A,n}(1^{(n)},2^{(n)})$ satisfies $\D_{\ba}(v) = 2^{-n}$. By Lemma \ref{lem: geodesicity} and Corollary \ref{cor:1^nto2^n}, 
\[ d_{A,\ba}([1^{(\infty)}],[2^{(\infty)}]) = \lim_{n\to \infty}\ell_{\ba}(\mathcal{C}_{A,n}(1^{(n)},2^{(n)})) = 1. \qedhere \]
\end{proof}
	
\begin{cor}\label{cor: diameters of tiles}
For any $w\in A^*$, $\diam\mathbb{T}_w^{A,\ba}=\Delta_{\ba}(w)$.
\end{cor}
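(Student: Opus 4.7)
The plan is to use the self-similarity structure from Lemma \ref{lem:ss} to reduce the corollary to the special case $w = \e$, and then invoke the bounds already established in the excerpt.

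First I would iterate Lemma \ref{lem:ss}: for any word $w = i_1 \cdots i_k \in A^*$, the composition
\[ \phi^{A,\ba}_w := \phi^{A,\ba}_{i_1} \circ \cdots \circ \phi^{A,\ba}_{i_k} \colon \T^{A,\ba} \to \T^{A,\ba}_w, \qquad [u] \mapsto [wu], \]
is a surjection onto $\T^{A,\ba}_w$ and, as a composition of similarities with scaling factors $\ba(i_1), \dots, \ba(i_k)$, it is itself a similarity with scaling factor $\ba(i_1)\cdots\ba(i_k) = \D_{\ba}(w)$. In particular,
\[ \diam \T^{A,\ba}_w = \D_{\ba}(w) \cdot \diam \T^{A,\ba}, \]
so it suffices to prove $\diam \T^{A,\ba} = 1$.

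For the upper bound, the diameter estimate $\diam \T^{A,\ba}_v \leq \D_{\ba}(v)$ noted immediately after \eqref{eq:pseudometric} (citing \cite[Lemma 3.9]{DV}), applied to $v = \e$, gives $\diam \T^{A,\ba} \leq \D_{\ba}(\e) = 1$. For the matching lower bound, Corollary \ref{cor:dist of 1^oo to 2^oo} yields
\[ \diam \T^{A,\ba} \geq d_{A,\ba}([1^{(\infty)}], [2^{(\infty)}]) = 1. \]
Combining, $\diam \T^{A,\ba} = 1$, and the self-similar scaling relation above completes the proof.

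There is essentially no obstacle: the real content has already been deposited in Lemma \ref{lem:ss} and Corollary \ref{cor:dist of 1^oo to 2^oo}, the latter of which encapsulated the geodesic length computation via Lemma \ref{lem: geodesicity} and Corollary \ref{cor:1^nto2^n}. The only minor subtlety is to verify that the iterated map $\phi^{A,\ba}_w$ is surjective onto $\T^{A,\ba}_w$, which is immediate from the description $[u] \mapsto [wu]$ since every equivalence class of $A^{\N}_w/{\sim}$ has a representative of the form $wu$ for some $u \in A^{\N}$.
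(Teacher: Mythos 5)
Your proof is correct and follows essentially the same route as the paper: the upper bound comes from the estimate $\diam\T^{A,\ba}_v\leq\D_{\ba}(v)$ of \cite[Lemma 3.9]{DV}, and the lower bound from Corollary \ref{cor:dist of 1^oo to 2^oo} transported by the similarity of Lemma \ref{lem:ss}. The paper merely phrases the scaling step pointwise, bounding $\diam\T^{A,\ba}_w$ below by $d_{A,\ba}([w1^{(\infty)}],[w2^{(\infty)}])=\D_{\ba}(w)$, rather than reducing to the case $w=\e$ as you do, but the content is identical.
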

	
\begin{proof}
Fix $w\in A^*$. On the one hand, $\diam{\mathbb{T}^A_w}\leq \D_{\ba}(w)$ \cite[Lemma 3.9]{DV}. On the other hand, by Lemma \ref{lem:ss} and Corollary \ref{cor:dist of 1^oo to 2^oo},
\[\diam\mathbb{T}_w^{A,\ba} \geq d_{A,\ba}([w1^{(\infty)}],[w2^{(\infty)}])=\Delta_{\ba}(w)\rho_{A,\ba}(1^{(\infty)},2^{(\infty)})=\Delta_{\ba}(w). \qedhere\]
\end{proof}
	
\begin{rem}\label{rem:leaves}
Corollary \ref{cor:dist of 1^oo to 2^oo} and Corollary \ref{cor: diameters of tiles}, along with the fact that $\T^{A,\ba}$ is geodesic, imply that both $[1^{(\infty)}]$ and $[2^{(\infty)}]$ are leaves of $\T^{A,\ba}$. This might at first seem surprising for $[1^{(\infty)}]$  based on Figures 3 and 4. However, the points $1^{(n)}j$, $j\in A$, are essentially identified  due to \eqref{eq:pseudometric} and $a_n\rightarrow 0$.
\end{rem}
	
\subsection{Isometric embeddings}\label{sec:isomembed}
	
Here we show that if $A\subset A'$ and $\ba$ is a weight, then $\T^{A,\ba}$ is essentially a subset of $\T^{A',\ba}$.
	
\begin{lem}\label{lem:isom-embed}
Let $A,A'$ be two alphabets with $A\subset A'$, and let $\ba,\ba'$ be two weights with $\ba'|_A = \ba|_A$. Then the map
\[ \Phi : \mathbb{T}^{A,\ba} \to \mathbb{T}^{A',\ba'}, \qquad  \Phi([w]) = [w]\]
is an isometric embedding. Moreover, if $A\subsetneq A'$ then
\[ \dist_H(\Phi(\mathbb{T}^{A,\ba}),\mathbb{T}^{A',\ba'}) \leq \max_{j\in A'\setminus A}\ba'(j).\]
\end{lem}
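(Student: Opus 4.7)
The plan is to reduce everything to two facts: that the chain structure used to compute $\rho_{A,\ba}$ on $A^\N \times A^\N$ is inherited from $\rho_{A',\ba'}$, and that the ``missing'' words in $A'^\N \setminus A^\N$ can each be pushed back to an explicit equivalence class in $A^\N$ at controlled cost.

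\textbf{Isometry.} First I would check that $\Phi$ is well defined and an isometry by establishing the stronger equality
\[ \rho_{A',\ba'}(w,u) = \rho_{A,\ba}(w,u) \qquad \text{for all } w,u \in A^\N.\]
By Lemma \ref{lem:graphembed}, $G_n^A$ is a subgraph of $G_n^{A'}$ for every $n$. Since $G_n^{A'}$ is a combinatorial tree, paths between any two of its vertices are unique, so for $w(n),u(n)\in A^n$ the unique arc $P_{A',n}(w(n),u(n))$ must coincide with $P_{A,n}(w(n),u(n))$. Therefore $\mathcal{C}_{A,n}(w,u) = \mathcal{C}_{A',n}(w,u)$ as a collection of words in $A^n$. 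Since $\ba'|_A = \ba|_A$, we have $\D_{\ba'}(v) = \D_{\ba}(v)$ for every $v \in A^*$, hence $\ell_{\ba'}(\mathcal{C}_{A',n}(w,u)) = \ell_{\ba}(\mathcal{C}_{A,n}(w,u))$. Taking $n\to\infty$ and invoking Lemma \ref{lem: geodesicity} on both sides gives the equality. In particular $\Phi$ is a well-defined isometric embedding.

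\textbf{Hausdorff bound.} For the second claim, set $\delta := \max_{j\in A'\setminus A}\ba'(j)$ and fix an arbitrary class $[w] \in \mathbb{T}^{A',\ba'}$. If $w \in A^\N$ the distance to $\Phi(\mathbb{T}^{A,\ba})$ is $0$. Otherwise let $n$ be the smallest index with $w_n \in A'\setminus A$ and write $v := w(n-1) \in A^{n-1}$ (with $v=\e$ if $n=1$). I would then select the candidate point $[u] := \Phi([v12^{(\infty)}])$, which lies in $\Phi(\mathbb{T}^{A,\ba})$ since $v12^{(\infty)} \in A^\N$. By Remark \ref{rem:equivalent classes of 12^oo} applied inside $\mathbb{T}^{A',\ba'}$, we have $[v12^{(\infty)}] = [vw_n 1^{(\infty)}]$. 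Both $[w]$ and $[vw_n 1^{(\infty)}]$ belong to the subtree $\mathbb{T}^{A',\ba'}_{vw_n}$, whose diameter equals $\D_{\ba'}(vw_n) = \D_{\ba'}(v)\,\ba'(w_n)$ by Corollary \ref{cor: diameters of tiles}. Since $\D_{\ba'}(v) \leq 1$, this yields
\[ d_{A',\ba'}([w],[u]) \leq \ba'(w_n) \leq \delta,\]
which is the required Hausdorff estimate.

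\textbf{Expected difficulty.} There is no substantive obstacle here; the content of the lemma is already encoded in the graph embedding and in the explicit identification of equivalence classes at the points $[v12^{(\infty)}]$. The only point one must be careful with is the uniqueness-of-paths argument that identifies $P_{A,n}$ with $P_{A',n}$ on common vertices, as this is what prevents chains in the larger space from being ``shorter'' than chains in the smaller one.
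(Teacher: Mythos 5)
Your proof is correct and follows essentially the same two-step approach as the paper: for the isometry you identify $P_{A,n}$ with $P_{A',n}$ via the subgraph relation $G_n^A \subset G_n^{A'}$ and uniqueness of arcs in combinatorial trees, then pass to the limit using Lemma \ref{lem: geodesicity}; for the Hausdorff bound you pick out the first letter $w_n \in A' \setminus A$, identify $[v12^{(\infty)}]$ with $[vw_n 1^{(\infty)}]$ via Remark \ref{rem:equivalent classes of 12^oo}, and bound by the tile diameter. The only cosmetic difference is that the paper pulls out the factor $\Delta_{\ba'}(u_1)$ via the similarity $\phi_{u_1}$ before bounding, whereas you bound directly by $\diam \T^{A',\ba'}_{vw_n}$; these are the same estimate.
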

	
\begin{proof}
For the first claim of the lemma, fix $w,u\in A^\N \subset (A')^{\N}$. We show that $\rho_{A',\ba'}(w,u)=\rho_{A,\ba}(w,u)$. To this end, fix $n\in \N$ and write $P_{A,n}(w(n),u(n))=\{v_0,\dots,v_k\}$ and $P_{A',n}(w(n),u(n))=\{v'_0,\dots,v'_{k'}\}$. By Lemma \ref{lem:graphembed}, $G_n^A$ is a subgraph of $G_n^{A'}$, so implies that $P_{A,n}(w(n),u(n))$ is a combinatorial arc in $G_n^{A'}$. Since $G_n^{A'}$ is a combinatorial tree, it follows that $P_{A,n}(w(n),u(n)) = P_{A',n}(w(n),u(n))$; that is, $k=k'$ and $v_i=v'_i$ for all $i\in \{1,\dots,k\}$. Since, $v'_i\in A^{\N}$, we have $\Delta_{\ba}(v_i)=\Delta_{\ba'}(v_i)$. By Lemma \ref{lem: geodesicity}, 
$$\rho_{A',\ba'}(w,u)=\lim_{n\to\infty}\ell_{\ba'}(\mathcal{C}_{A',n}(w,u))=\lim_{n\to\infty}\ell_{\ba}(\mathcal{C}_{A,n}(w,u))=\rho_{A,\ba}(w,u).$$
		
For the second claim, assume that $A\subsetneq A'$. Since $\Phi(\mathbb{T}^{A,\ba})\subset \mathbb{T}^{A',\ba'}$,
$$\dist_H(\Phi(\mathbb{T}^{A,\ba}),\mathbb{T}^{A',\ba'})= \sup_{v\in (A')^{\N}\setminus A^{\N}}\inf_{w\in A^{\N}}\rho_{A',\ba'}(w,v).$$
Let $v\in (A')^{\N}\setminus A^{\N}$ and write $v=u_1ju$ where $u_1\in A^*$, $j\in A'\setminus A$ and $u\in (A')^{\N}$. By Lemma \ref{lem:ss}, Remark \ref{rem:equivalent classes of 12^oo}, and Corollary \ref{cor: diameters of tiles},
\begin{align*}
\inf_{w\in A^{\N}}\rho_{A',\ba'}(w,v) &\leq \rho_{A',\ba'}(u_112^{(\infty)}, u_1ju)\\
&=\Delta_{\ba'}(u_1) \rho_{A',\ba'}(12^{(\infty)}, ju)\\ 
&=\Delta_{\ba'}(u_1) \rho_{A',\ba'}(j1^{(\infty)}, ju)\\
&\leq \Delta_{\ba'}(u_1)\ba'(j) \\
&\leq \max_{j\in A'\setminus A}\ba'(j). \qedhere
\end{align*}
\end{proof}
	
Based on Lemma \ref{lem:isom-embed}, we make the following convention for the rest of the paper.
	
\begin{convention}
If $A\subset A'$ are two alphabets and $\ba$ is a weight as in Section \ref{sec:construction}, we write from now on $\T^{A,\ba} \subset \T^{A',\ba}$ identifying $\T^{A,\ba}$ with its isometric embedded image in $\T^{A',\ba}$.
		
If $A=\{1,\dots,n\}$ for some $n\in\{2,3,\dots\}$ and $\ba$ is a weight, we write $\T^{n,\ba} := \T^{A,\ba}$. If $A=\N$, then we write $\T^{\infty,\ba} := \T^{\N,\ba}$. Under the above notation and convention, we have for a weight $\ba$,
\[ \T^{2,\ba} \subset \T^{3,\ba} \subset \cdots \subset\T^{\infty,\ba}\]
and $\mathbb{T}^{n,\ba}$ converge in the Hausdorff sense to $\mathbb{T}^{\infty,\ba}$.
\end{convention}

\section{Branch points and valence of trees $\T^{m,\ba}$}\label{sec:branching}
	
Recall the notation $\T^{m,\ba}$ from the end of Section \ref{sec:geodesicity}. In this section we study the valence of trees $\T^{m,\ba}$. 
	
\begin{prop}\label{prop:valence}
Let $\ba$ be a weight. The space $\T^{2,\ba}$ is a metric arc. Moreover, if $m\in \{2,3,\dots\}$ and $\ba(m)>0$, then $\T^{m,\ba}$ is uniformly $m$-branching.
\end{prop}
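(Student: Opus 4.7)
The plan is to identify the branch points of $\T^{m,\ba}$ explicitly, compute their heights from $\Delta_\ba$, and then read off the four uniform branching conditions from the self-similar structure. Set $A = \{1,\dots,m\}$ and define $\mathcal{B} := \{p_w := [w 12^{(\infty)}] : w \in A^*\}$. The core topological claim is that the branch points of $\T^{m,\ba}$ are exactly the elements of $\mathcal{B}$ and that each $p_w$ has valence $m$. For this, Lemma \ref{lem:ss} presents each sub-tile $\T_{wi}^{m,\ba}$ ($i \in A$) as a rescaled isometric copy of $\T^{m,\ba}$ of diameter $\Delta_\ba(w)\ba(i)$; Lemma \ref{lem:prop.of.graphs}(i) together with the definition of chains shows that $\T_{wi} \cap \T_{wj} = \{p_w\}$ for distinct $i,j$; and by Remark \ref{rem:equivalent classes of 12^oo} combined with Remark \ref{rem:leaves}, $p_w$ corresponds to a leaf of each sub-tile (to $[1^{(\infty)}]$ if $i \neq 1$ and to $[2^{(\infty)}]$ if $i = 1$). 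Hence $\T_w \setminus \{p_w\}$ has exactly $m$ components; for $w \neq \epsilon$ the tile $\T_w$ attaches to $\T^{m,\ba} \setminus \T_w$ through the single leaf $p_{w(|w|-1)} \neq p_w$ of $\T_w$, which lies in one of the $m$ sub-tile branches, so the exterior merges into that branch and $\T^{m,\ba} \setminus \{p_w\}$ still has exactly $m$ components. Conversely, if $[u] \notin \mathcal{B}$ had valence $\geq 3$, choose $n$ large so that $\diam \T_{u(n)}^{m,\ba}$ is smaller than the distance from $[u]$ to three points chosen in three disjoint branches; then each of the three arcs from $[u]$ would have to exit $\T_{u(n)}^{m,\ba}$ through its unique gluing point $p_{u(n-1)} \neq [u]$, forcing a common initial sub-arc in contradiction with disjointness.

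This immediately settles the first part: when $m = 2$ each $p_w$ has valence $|A|=2$, not $\geq 3$, so $\T^{2,\ba}$ has no branch points and is a metric arc. For $m \geq 3$ all branch points have valence exactly $m$, and by Corollary \ref{cor: diameters of tiles} the branches at $p_w$ have diameters $\Delta_\ba(w)\ba(i)$. Since $\ba(1) = \ba(2) = \tfrac12 \geq \ba(3) \geq \cdots \geq \ba(m)$, the third largest is $\Delta_\ba(w)\ba(3)$, giving $H(p_w) = \Delta_\ba(w)\ba(3)$ and uniform branch growth with constant $\ba(3)/\ba(m)$. For uniform relative density, given distinct $x,y \in \T^{m,\ba}$ take $k$ maximal with $x, y$ in a common level-$k$ tile $\T_w$ (finite since $\Delta_\ba(w) \leq 2^{-|w|}$); then $x, y$ lie in distinct sub-tiles of $\T_w$, so geodesicity (Proposition \ref{prop:geodesic}) forces $p_w \in [x,y]$, and $H(p_w) = \ba(3)\Delta_\ba(w) \geq \ba(3)\,d(x,y)$.

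The most delicate property is uniform relative separation, which I reduce to the leaf-to-branch-point estimate
\[ d_{A,\ba}\bigl(s,[v12^{(\infty)}]\bigr) \,\geq\, \Delta_\ba(v)\,\ba(3) \qquad \text{for all } v \in A^*,\ s \in \{[1^{(\infty)}],[2^{(\infty)}]\}, \]
proved by induction on $|v|$. The base $v=\epsilon$ follows from $d(s,[12^{(\infty)}]) = \tfrac12 \geq \ba(3)$ via Corollary \ref{cor:dist of 1^oo to 2^oo} and Lemma \ref{lem:ss}. For the inductive step, write $v = jv'$: if $[v12^{(\infty)}]$ and $s$ lie in the same sub-tile $\T_j$, then Lemma \ref{lem:ss} rescales the distance to $\ba(j)\, d(s,[v'12^{(\infty)}])$ and the inductive hypothesis gives the bound; otherwise the geodesic passes through $[12^{(\infty)}]$ and $d \geq \tfrac12 \geq \Delta_\ba(v)\ba(3)$ (since $\Delta_\ba(v) \leq 2^{-|v|}$ and $\ba(3) \leq \tfrac12$). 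Given this, for distinct $p_w, p_{w'}$ with maximal common prefix $u$, the sub-tiles containing them intersect only at $p_u$, so by geodesicity the arc from $p_w$ to $p_{w'}$ passes through $p_u$; rescaling the leaf estimate into each relevant sub-tile yields $d(p_w, p_u) \geq H(p_w)$ and $d(p_u, p_{w'}) \geq H(p_{w'})$ (when those points differ from $p_u$), so summing gives $d(p_w,p_{w'}) \geq \min\{H(p_w), H(p_{w'})\}$. The main obstacle is a bookkeeping one: correctly matching the representative of $p_u$ (as $[1^{(\infty)}]$ or $[2^{(\infty)}]$) with the relevant side of each sub-tile, depending on whether the first letter of $w$ (respectively $w'$) past $u$ equals $1$, which is what drives the inductive argument.
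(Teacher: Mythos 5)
Your overall strategy is the same as the paper's (classify the branch points as the points $[w12^{(\infty)}]$, compute their heights via self-similarity, then verify the three uniform conditions), but there is a genuine flaw in the topological half. You assert twice that for $w\neq\e$ the tile $\T^{m,\ba}_w$ attaches to the rest of the tree ``through the single leaf $p_{w(|w|-1)}$''. This is false: the relative boundary $\partial\T^{m,\ba}_w$ consists of one \emph{or two} points, each of the form $[w1^{(\infty)}]$ or $[w2^{(\infty)}]$ (this is the content of Lemma \ref{lem: bdry Tu}); for instance $\partial\T^{m,\ba}_{12}=\{[112^{(\infty)}],[12^{(\infty)}]\}$ has two points, only one of which is $p_{w(1)}$. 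The claim is load-bearing at both places you use it. First, in concluding that $\T^{m,\ba}\setminus\{p_w\}$ has exactly $m$ components: with two attachment points you must additionally rule out that an exterior component joins two different sub-tile branches of $p_w$ (it cannot, by the unique-arc property, since both boundary points lie in $\T^{m,\ba}_{w1}\cup\T^{m,\ba}_{w2}$; the paper instead runs a careful induction in Lemma \ref{lem: images of 12^oo are m-val}). Second, in excluding other branch points: the three arcs from $[u]$ need only exit $\T^{m,\ba}_{u(n)}$ through \emph{at most two} boundary points, and the contradiction is that two of the three exit points coincide while lying in distinct components of $\T^{m,\ba}\setminus\{[u]\}$ — which is exactly how Lemma \ref{lem:branch points are 12oo} argues — not that there is a unique exit point. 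Both repairs are routine, but as written your argument rests on an incorrect premise. A related minor imprecision: the branches $B_1,B_2$ at $p_w$ strictly contain $\T^{m,\ba}_{w1},\T^{m,\ba}_{w2}$ (they absorb the exterior through $[w1^{(\infty)}],[w2^{(\infty)}]$), so ``the branches have diameters $\D_{\ba}(w)\ba(i)$'' is only correct for $i\geq 3$; the conclusion $H(p_w)=\ba(3)\D_{\ba}(w)$ still holds because the two largest branches have diameter at least $\tfrac12\D_{\ba}(w)\geq\ba(3)\D_{\ba}(w)$ (cf.\ Lemma \ref{lem:height formula}).

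Granting the classification of branch points, the metric half of your proposal is sound. The density and growth arguments coincide with the paper's, and your uniform separation argument is a genuinely different, valid route: the inductive leaf-to-branch-point bound $d_{A,\ba}(s,[v12^{(\infty)}])\geq\ba(3)\D_{\ba}(v)$ for $s\in\{[1^{(\infty)}],[2^{(\infty)}]\}$, rescaled into the sub-tiles at the last common prefix and added along the geodesic through $p_u$, replaces the telescoping triangle-inequality estimate of Lemma \ref{lem:dist of branch points} and yields the separation constant $1$ directly, which is slightly cleaner than the paper's $\tfrac12\min\{\D_{\ba}(v),\D_{\ba}(w)\}$ bound.
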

	
Fix for the rest of this section an integer $m\in\{2,3,\dots\}$ and a weight $\ba$, and let $A=\{1,\dots,m\}$. The case where $m=2$ is given in \cite[Lemma 6.1]{DV}. Therefore, we may assume for the rest of this section that $m\geq 3$.
	
Recall the similarity maps $\phi^{A,\ba}_i$ from Lemma \ref{lem:ss}. To simplify the notation below, we drop the superindexes $A,\ba$ and simply write $\phi_i$. Given $w=i_1\cdots i_n \in A^*$ and $u\in A^{\N}$, we write
\[ \phi_w([u]) := (\phi_{i_1}\circ\cdots \circ \phi_{i_n})([u]) = [wu] \]
with the convention that $\phi_{\e}$ is the identity map. We also write for each $w\in A^*$
\[ \T^{m,\ba}_w = \phi_w(\T^{m,\ba}). \]
Note that $\T^{m,\ba}_w$ are subtrees of $\T^{m,\ba}$.
	
We split the proof of Proposition \ref{prop:valence} into several steps. First we show how different $\T^{m,\ba}_i$, $\T^{m,\ba}_j$ intersect in the following Lemma.
	
\begin{lem}\label{lem: preimage 12^00} 
\begin{enumerate}[(i)]
\item[(i)] For distinct $i, j\in A$, we have $\T^{m,\ba}_i\cap \T^{m,\ba}_j=\{ [12^{(\infty)}] \}$.
\item[(ii)] We have $[12^{(\infty)}]= \{12^{(\infty)},21^{(\infty)}, 31^{(\infty)}, \dots, m1^{(\infty)}\}$.
\item[(iii)] We have that $[1^{(\infty)}]\in \T^{m,\ba}_1\setminus(\bigcup_{j\geq 2}\T^{m,\ba}_j)$ and $[2^{(\infty)}]\in \T^{m,\ba}_2\setminus(\bigcup_{j\neq 2}\T^{m,\ba}_j)$.
\end{enumerate}
\end{lem}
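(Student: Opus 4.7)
The plan hinges on first establishing a \emph{singleton} sub-lemma: for every $v \in A^{\N}$, $[v] = [1^{(\infty)}]$ forces $v = 1^{(\infty)}$, and analogously $[v] = [2^{(\infty)}]$ forces $v = 2^{(\infty)}$. To prove this I would use Lemma~\ref{lem: geodesicity} to write $\rho_{A,\ba}(v, 1^{(\infty)}) = \lim_n \ell_{\ba}(\mathcal{C}_{A,n}(v, 1^{(\infty)}))$ and inspect the unique combinatorial arc $P_{A,n}(v(n), 1^{(n)})$. If $v \neq 1^{(\infty)}$, let $k$ be the first position at which $v$ deviates, so $v(k) = 1^{(k-1)}j$ with $j \in \{2,\dots,m\}$. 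By Lemma~\ref{lem:prop.of.graphs}(i), the only way for an arc to pass between two subgraphs $iG_{n-1}$, $i'G_{n-1}$ with $i\neq i'$ is through the bridge vertex $12^{(n-1)}$ via an edge of the form $\{l1^{(n-1)}, 12^{(n-1)}\}$. Using Lemma~\ref{lem:ss} to strip the initial $1^{(k-1)}$ and splitting the resulting arc at the bridge, I would bound
\[
\rho_{A,\ba}(v, 1^{(\infty)}) \geq \ba(1)^{k-1}\cdot\tfrac12\rho_{A,\ba}(1^{(\infty)}, 2^{(\infty)}) = 2^{-k},
\]
where the last equality comes from Corollary~\ref{cor:dist of 1^oo to 2^oo}. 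The $2^{(\infty)}$ statement follows by the same case split, noting via Lemma~\ref{lem:prop.of.graphs}(iv) that $2^{(n)}$ is a leaf of $G_n^A$ with unique neighbour $2^{(n-1)}1$.

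Part (ii) is then immediate. The backward direction is Remark~\ref{rem:equivalent classes of 12^oo} with $v = \e$. For the forward direction, suppose $[w] = [12^{(\infty)}]$. If $w = 1w'$, Lemma~\ref{lem:ss} gives $0 = \rho_{A,\ba}(w, 12^{(\infty)}) = \tfrac12\rho_{A,\ba}(w', 2^{(\infty)})$, so the singleton sub-lemma forces $w' = 2^{(\infty)}$. If instead $w = jw'$ with $j \in \{2,\dots,m\}$, then $[w] = [j1^{(\infty)}]$ by Remark~\ref{rem:equivalent classes of 12^oo}, and Lemma~\ref{lem:ss} yields $\ba(j)\rho_{A,\ba}(w', 1^{(\infty)}) = 0$, so $w' = 1^{(\infty)}$. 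Part (iii) also follows: $[1^{(\infty)}] \in \T^{m,\ba}_1$ is trivial from the representative $1^{(\infty)}$, and if $[1^{(\infty)}] \in \T^{m,\ba}_j$ for some $j \geq 2$ then $[1^{(\infty)}] = [jw]$ with $jw \neq 1^{(\infty)}$, contradicting the singleton sub-lemma. The argument for $[2^{(\infty)}]$ is identical.

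For part (i), the inclusion $[12^{(\infty)}] \in \T^{m,\ba}_i \cap \T^{m,\ba}_j$ for distinct $i, j$ is immediate from (ii): $[12^{(\infty)}] = [l1^{(\infty)}] \in \T^{m,\ba}_l$ for each $l \in \{2,\dots,m\}$, and of course $[12^{(\infty)}] \in \T^{m,\ba}_1$ directly. For the reverse inclusion, suppose $[u] = [iu_1] = [ju_2]$ with $i \neq j$. By Lemma~\ref{lem:prop.of.graphs}(i), the unique combinatorial arc $P_{A,n}(iu_1(n{-}1), ju_2(n{-}1))$ visits the single bridge vertex $12^{(n-1)}$ exactly once, with the portion in $iG_{n-1}$ of the form $iP_{A,n-1}(u_1(n{-}1), 1^{(n-1)})$ when $i \neq 1$, and analogously for the $j$-side. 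Splitting and taking $n\to\infty$ via Lemma~\ref{lem: geodesicity}, the contribution of the middle vertex $12^{(n-1)}$ is $\Delta_\ba(12^{(n-1)}) = 2^{-n} \to 0$, and in the case $i, j \neq 1$ one obtains
\[
0 = \rho_{A,\ba}(iu_1, ju_2) = \ba(i)\rho_{A,\ba}(u_1, 1^{(\infty)}) + \ba(j)\rho_{A,\ba}(u_2, 1^{(\infty)}),
\]
forcing $u_1 = u_2 = 1^{(\infty)}$ by the singleton sub-lemma, so $[u] = [12^{(\infty)}]$. If $i = 1$, the $i$-side contributes $\tfrac12\rho_{A,\ba}(u_1, 2^{(\infty)})$ instead, giving $u_1 = 2^{(\infty)}$ and $u_2 = 1^{(\infty)}$, and the same conclusion. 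The main technical obstacle is the careful decomposition of the combinatorial arc at the bridge vertex and the verification that the contribution of $12^{(n-1)}$ itself vanishes in the limit; once that bookkeeping is in place, everything reduces cleanly to the singleton sub-lemma via Lemma~\ref{lem:ss}.
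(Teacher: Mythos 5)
Your proposal is correct, but it organizes the proof differently from the paper. The paper proves (i) first and most cheaply: if $[u]=[v]$ with $u\in A^{\N}_i$, $v\in A^{\N}_j$ and $i\neq j$, then every arc $P_{A,n}(u(n),v(n))$ must cross $12^{(n-1)}$, so the truncated chain is exactly $\mathcal{C}_{A,n}(u,12^{(\infty)})$ and Lemma \ref{lem: geodesicity} gives $\rho_{A,\ba}(u,12^{(\infty)})\leq\lim_n\ell_{\ba}(\mathcal{C}_{A,n}(u,v))=0$; no splitting identity and no analysis of the classes of $1^{(\infty)},2^{(\infty)}$ is needed. It then deduces (ii) from (i) together with the geodesicity of the space (Proposition \ref{prop:geodesic}), via $d_{A,\ba}([w],[2^{(\infty)}])\geq d_{A,\ba}([12^{(\infty)}],[2^{(\infty)}])=\tfrac12$ for $w\notin A^{\N}_2$, and gets (iii) from (i) and (ii). You instead first prove the stronger ``singleton'' statement that $[v]=[1^{(\infty)}]$ or $[v]=[2^{(\infty)}]$ pins down $v$, by the bridge-vertex decomposition of the combinatorial arcs and the quantitative bound $\rho_{A,\ba}(v,1^{(\infty)})\geq 2^{-k}$, and then obtain (ii) and (iii) by one application of Lemma \ref{lem:ss} and (i) by splitting $\rho_{A,\ba}(iu_1,ju_2)$ at $12^{(n-1)}$, whose own contribution $2^{-n}$ vanishes (for (i) the inequality $\ell_{\ba}(\mathcal{C}_{A,n}(iu_1,ju_2))\geq\ba(i)\ell_{\ba}(\mathcal{C}_{A,n-1}(u_1,1^{(\infty)}))+\ba(j)\ell_{\ba}(\mathcal{C}_{A,n-1}(u_2,1^{(\infty)}))$ already suffices; the exact identity is not needed). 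Both routes rest on the same ingredients, namely Lemma \ref{lem:prop.of.graphs}(i), Lemma \ref{lem: geodesicity}, Lemma \ref{lem:ss}, Corollary \ref{cor:dist of 1^oo to 2^oo} and Remark \ref{rem:equivalent classes of 12^oo}, all available before the statement, so there is no circularity; what your version buys is that it stays entirely at the level of combinatorial chains (never invoking the metric-tree or geodesic structure of $\T^{m,\ba}$) and yields along the way the uniqueness of the representatives of $[1^{(\infty)}]$ and $[2^{(\infty)}]$, a fact the paper only records later in Lemma \ref{lem: branch points belong in the same class}; the price is noticeably more bookkeeping in the arc decompositions than in the paper's truncation argument.
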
 
	
\begin{proof}
For (i), note that by Remark \ref{rem:equivalent classes of 12^oo} $[12^{(\infty)}]=[j1^{(\infty)}]\in\T^{m,\ba}_j$. Assume for a contradiction that there is $p\in \T^{m,\ba}_i\cap \T^{m,\ba}_j\setminus \{[12^{(\infty)}]\}$ for some distinct $i, j\in A$. Suppose $p=[u]=[v]$ where $u\in A_i^\N$ and $v\in A_j^\N$. By Lemma \ref{lem: geodesicity},
$$ d_{A,\ba}([u],[v])=\rho_{A,\ba}(u,v)=\lim_{n\rightarrow \infty} \ell_{\ba}(\mathcal{C}_{A,n}(u,v))=0.$$ 
However, $u(n)\in A_i^n$ and $v(n)\in A_j^n$, so the combinatorial arc $P_{A,n}(u(n),v(n))$ needs to cross the vertex $12^{(n-1)}$, implying that
$$P_{A,n}(u(n),v(n))= P_{A,n}(u(n), 12^{(n-1)})\cdot P_{A,n}(12^{(n-1)}, v(n))).$$
Hence,
$$\rho(u,12^{(\infty)})=\lim_{n\rightarrow \infty} \ell_{\ba}(\mathcal{C}_{A,n}(u,12^{(\infty)}))\leq \lim_{n\rightarrow \infty} \ell_{\ba}(\mathcal{C}_{A,n}(u,v))=0,$$ 
which contradicts $p=[u]\neq [12^{(\infty)}]$.
		
For (ii), the left inclusion follows by Remark \ref{rem:equivalent classes of 12^oo}. For the right inclusion let $\tilde{w}=i_1 i_2 \cdots\in A^\N \setminus\{12^{(\infty)},21^{(\infty)}, 31^{(\infty)}, \dots, m1^{(\infty)}\} $ with $[\tilde{w}]=[12^{(\infty)}]$. Assume that $i_1=1$; the case $i_1\neq 1$ can be treated similarly, by replacing $12^{(\infty)}$ with $j1^{(\infty)}$ in the following arguments. We show that $\rho_{A,\ba}(\tilde{w}, 12^{(\infty)})>0$. Let $n\geq2$ be the smallest integer such that $i_n\neq 2$. Then 
$$\rho_{A,\ba}(\tilde{w}, 12^{(\infty)})= \rho_{A,\ba}(1i_2 i_3 \cdots, 12^{(\infty)})=\ba(1)\ba(i_2)\cdots\ba(i_{n-1})  \rho_{A,\ba}(w, 2^{(\infty)}),$$ 
where $w=i_n i_{n+1} \cdots\notin A_2^\N$. Since $w\in A^{\N}\setminus A^{\N}_2$, by (i) and the geodesicity of $\T^{m,\ba}$,
\[ d_{A,\ba}([w],2^{(\infty)}) \geq d_{A,\ba}(12^{(\infty)},2^{(\infty)}) = \ba(2) \rho_{A,\ba}(1^{(\infty)}, 2^{(\infty)}) = \tfrac12.\]
Therefore, $\rho_{A,\ba}(\tilde{w}, 12^{(\infty)}) >0$.
		
Claim (iii) follows by (i) and (ii).
\end{proof} 
	
We need the following general result about connected components of subsets of a metric space. 
	
\begin{lem}[{\cite[Theorem 11.5.3]{Sear07}}]\label{Prop: metric compon}
Let $X$ be a metric space and $E$ a non-empty subset of $X$. If $A_1, \dots, A_n\subset E$, $n\in \N$, are non-empty, pairwise disjoint, relatively closed, and connected sets with $E=A_1\cup \dots \cup A_n$, then these are the components of $E$.  
\end{lem}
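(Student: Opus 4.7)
The plan is to reduce the statement to the standard fact that a non-empty connected set which is clopen in the ambient space must be a connected component. The key observation is that, because $E$ is the finite disjoint union of the relatively closed sets $A_1,\dots,A_n$, each $A_i$ is automatically relatively \emph{open} in $E$ as well.

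First, I would verify that each $A_i$ is clopen in $E$ with the subspace topology. The hypothesis gives that $A_i$ is relatively closed. For openness, I would write
\[ E\setminus A_i = \bigcup_{j\neq i} A_j,\]
which is a finite union of relatively closed sets in $E$, hence relatively closed; therefore $A_i$ is relatively open in $E$.

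Second, I would fix an index $i$ and identify $A_i$ with a component of $E$. Since $A_i$ is non-empty and connected, there is a (unique) connected component $C$ of $E$ with $A_i \subset C$. The set $A_i \cap C$ is clopen in $C$ (as the trace on $C$ of a clopen subset of $E$) and non-empty (it contains $A_i$); connectedness of $C$ forces $A_i \cap C = C$, i.e.\ $C \subset A_i$. Combined with $A_i\subset C$, this yields $A_i = C$, so $A_i$ is a component of $E$.

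Finally, I would check these are all the components. Since $E = A_1 \cup \cdots \cup A_n$ and every point of $E$ lies in some $A_i$, every component of $E$ meets some $A_i$; as distinct components are disjoint and each $A_i$ is itself a component, the components of $E$ are exactly $A_1,\dots,A_n$. There is no real obstacle here beyond the initial clopen observation; the argument is a standard exercise in point-set topology, which is why we invoke it by reference to \cite{Sear07}.
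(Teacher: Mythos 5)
Your proof is correct: the observation that each $A_i$ is relatively open in $E$ (its complement being a finite union of relatively closed sets), hence clopen, and that a non-empty clopen trace on a connected component must be the whole component, is exactly the standard argument. The paper itself offers no proof of this lemma — it simply cites \cite[Theorem 11.5.3]{Sear07} — so your self-contained argument is a perfectly adequate substitute for the reference, with no gaps.
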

	
The next lemma shows that all points $\{[u12^{(\infty)}] : u\in A^*\}$, have valence $m$.
	
\begin{lem}\label{lem: images of 12^oo are m-val}
Suppose $\ba(m)>0$.
\begin{enumerate}[(i)]
\item[{(i)}] The  components of $\T^{m,\ba}\setminus\{[12^{(\infty)}]\}$ are exactly the sets $\T^{m,\ba}_i\setminus\{[12^{(\infty)}]\}$, $i\in A$. 
\item[{(ii)}] \ If $u\in A^*$, then $\T^{m,\ba}\setminus\{[u12^{(\infty)}]\}$ has exactly $m$ components. Each set $\T^{m,\ba}_{ui}\setminus\{[u12^{(\infty)}]\}$, for $i\in A$, lies in a different component of $\T^{m,\ba}\setminus\{[u12^{(\infty)}]\}$. 
\end{enumerate} 
\end{lem}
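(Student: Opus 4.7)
The plan is to establish part (i) directly by applying the ``metric components'' criterion of Lemma \ref{Prop: metric compon} to the sets $\T^{m,\ba}_i\setminus\{[12^{(\infty)}]\}$, and then to derive part (ii) by induction on $|u|$ using the similarities $\phi_i$ from Lemma \ref{lem:ss}. The key geometric input in both parts is already packaged by Lemma \ref{lem: preimage 12^00}: distinct subtrees $\T^{m,\ba}_i$ meet only at $[12^{(\infty)}]$.

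For part (i), I would verify that the sets $\T^{m,\ba}_i\setminus\{[12^{(\infty)}]\}$, $i\in A$, satisfy the hypotheses of Lemma \ref{Prop: metric compon} inside $\T^{m,\ba}\setminus\{[12^{(\infty)}]\}$: pairwise disjointness is Lemma \ref{lem: preimage 12^00}(i); their union is the whole space since $\T^{m,\ba}=\bigcup_i\T^{m,\ba}_i$; non-emptiness follows from Lemma \ref{lem: preimage 12^00}(iii) for $i=1$ (which gives $[1^{(\infty)}]$) and from Lemma \ref{lem: preimage 12^00}(ii) for $i\neq 1$ (the point $[i2^{(\infty)}]$ is not equivalent to $[12^{(\infty)}]$); and relative closedness holds because each $\T^{m,\ba}_i$ is compact. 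For connectedness, I would note via Remark \ref{rem:equivalent classes of 12^oo} that $[12^{(\infty)}]=\phi_1([2^{(\infty)}])$ when $i=1$ and $[12^{(\infty)}]=\phi_i([1^{(\infty)}])$ when $i\neq 1$; by Remark \ref{rem:leaves}, $[1^{(\infty)}]$ and $[2^{(\infty)}]$ are leaves of $\T^{m,\ba}$, so their images under the homeomorphism $\phi_i$ are leaves of the subtree $\T^{m,\ba}_i$, and removing a leaf from a metric tree leaves a connected set.

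For part (ii), I would induct on $|u|$, with $u=\e$ reducing to (i). For $|u|\geq 1$, write $u=iv$. A preliminary observation is that $p:=[u12^{(\infty)}]\neq[12^{(\infty)}]$: by Lemma \ref{lem: preimage 12^00}(ii), equality would force $u12^{(\infty)}\in\{12^{(\infty)},j1^{(\infty)}:j\neq 1\}$, but $u12^{(\infty)}$ carries a $2$ at position $|u|+2$, which no such representative does. Since $\phi_i([v12^{(\infty)}])=p$, applying the inductive hypothesis to $v$ and transporting by the homeomorphism $\phi_i:\T^{m,\ba}\to\T^{m,\ba}_i$ yields $m$ components $D_1,\dots,D_m$ of $\T^{m,\ba}_i\setminus\{p\}$ with $\T^{m,\ba}_{uj}\setminus\{p\}\subset D_j$. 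Using Lemma \ref{lem: preimage 12^00}(i) and $p\neq[12^{(\infty)}]$, one has $p\notin\T^{m,\ba}_k$ for $k\neq i$, so
\[ \T^{m,\ba}\setminus\{p\}=(\T^{m,\ba}_i\setminus\{p\})\cup\bigcup_{k\neq i}\T^{m,\ba}_k. \]
The extra set $\bigcup_{k\neq i}\T^{m,\ba}_k$ is connected (all terms contain $[12^{(\infty)}]$) and meets $\T^{m,\ba}_i\setminus\{p\}$ only at $[12^{(\infty)}]$; hence it is absorbed into whichever $D_{j_0}$ contains $[12^{(\infty)}]$, producing exactly $m$ components of $\T^{m,\ba}\setminus\{p\}$ and keeping the sets $\T^{m,\ba}_{uj}\setminus\{p\}$ in distinct ones.

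The main obstacle I anticipate is controlling the attachment of the external piece $\bigcup_{k\neq i}\T^{m,\ba}_k$ in the inductive step: if this piece were attached to $\T^{m,\ba}_i$ at the removed point $p$ itself, extra components could arise and the count would fail. The verification $p\neq[12^{(\infty)}]$ whenever $|u|\geq 1$ is precisely what forces the attachment to occur at the ``safe'' point $[12^{(\infty)}]\in\T^{m,\ba}_i\setminus\{p\}$, and this is why the base case must be separated from the inductive step.
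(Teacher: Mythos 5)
Your proposal is correct and follows essentially the same route as the paper: part (i) via the components criterion of Lemma \ref{Prop: metric compon} combined with Lemma \ref{lem: preimage 12^00} and the leaf observation of Remark \ref{rem:leaves}, and part (ii) by induction on $|u|$, transporting the inductive hypothesis by the similarity $\phi_i$ and merging the external piece $\bigcup_{k\neq i}\T^{m,\ba}_k$ into the transported component containing $[12^{(\infty)}]$ --- exactly the paper's set $U_1'=U_1\cup\bigcup_{j\geq 2}\T^{m,\ba}_j$.

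Two small points to tighten. First, in the inductive step the phrase ``absorbed \dots producing exactly $m$ components'' glosses over the one verification the paper spends real effort on: to conclude that the $D_j$ ($j\neq j_0$) and the merged set are \emph{exactly} the components of $\T^{m,\ba}\sm\{p\}$, you must check that each $D_j$ is relatively closed in $\T^{m,\ba}\sm\{p\}$, not merely in $\T^{m,\ba}_i\sm\{p\}$ (a space can be partitioned into pairwise disjoint connected sets that are not its components, e.g.\ $[0,1]=[0,\tfrac12)\cup[\tfrac12,1]$). This follows as in the paper from compactness of $\T^{m,\ba}_i$: any limit point of $D_j$ other than $p$ lies in $\T^{m,\ba}_i\sm\{p\}$ and hence in $D_j$; then Lemma \ref{Prop: metric compon} applies, as in your part (i). Second, your letter comparison for $p\neq[12^{(\infty)}]$ only rules out the representatives $j1^{(\infty)}$: the representative $12^{(\infty)}$ \emph{does} have a $2$ at position $|u|+2$. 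Compare position $|u|+1$ (which is $1$ in $u12^{(\infty)}$ but $2$ in $12^{(\infty)}$ since $|u|\geq 1$) to exclude that case. Both fixes are routine and do not affect the overall argument.
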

	
\begin{proof}
For (i), by Remark \ref{rem:leaves}, the sets $\T^{m,\ba}\setminus\{[1^{(\infty)}]\}$ and $\T^{m,\ba}\setminus\{[2^{(\infty)}]\}$ are  non-empty,  and connected.  Note that $\T^{m,\ba}_1\setminus\{[12^{(\infty)}]\}= \phi_1(\T^{m,\ba}\setminus\{[2^{(\infty)}]\})$ and $\T^{m,\ba}_j\setminus\{[12^{(\infty)}]\}=\phi_j(\T\setminus\{[1^{(\infty)}]\})$ for all $j\in A\setminus\{1\}$ by Lemma \ref{lem: preimage 12^00}(ii). Hence, the sets $\T^{m,\ba}_i\setminus\{[12^{(\infty)}]\}$, $i\in A$ are non-empty and connected. They are also relatively closed in $\T^{m,\ba}\setminus\{[12^{(\infty)}]\}$ and  pairwise disjoint by Lemma \ref{lem: preimage 12^00}(i). Then
$$\T^{m,\ba}\sm\{[12^{(\infty)}]\} = \bigcup_{j\in A} \T^{m,\ba}_j\sm\{[12^{(\infty)}]\}.$$ 
and by Lemma \ref{Prop: metric compon}, the sets $\T_j^{m,\ba}\sm\{[12^{(\infty)}]\}$, $j\in A$, are the components of $\T^{m,\ba}\setminus\{[12^{(\infty)}]\}$.
		
For (ii), we apply induction on the length $n\geq 0$ of $u\in A^*$. If $n=0$, then $u=\e$ and the statement follows by (i).
		
Suppose the statement is true for all words of length $n-1$ and let $u=iu'\in A^n$ for some $u'\in A^{n-1}$. Assume as we may that $i=1$; the cases $i\in A\sm\{1\}$ follow similarly. Note that $1u'12^{(\infty)} \not\in \{ 12^{(\infty)}, 21^{(\infty)}, \dots, m1^{(\infty)} \}$ so by Lemma \ref{lem: preimage 12^00}(ii) $[u12^{(\infty)}]\ne [12^{(\infty)}]$. Since the first letter of $u$ is $1$, $[u12^{(\infty)}]\in \T^{m,\ba}_1\sm\{[12^{(\infty)}]\}$. 
		
By induction hypothesis, $\T^{m,\ba}\sm \{[u'12^{(\infty)}]\}$ has exactly $m$ connected components $V_1, \dots, V_m$. After re-ordering the components, we may assume that $ \T^{m,\ba}_{u'k}\sm  \{[u'12^{(\infty)}]\} \subset V_k$ for all $k\in A$. It follows that 
$$\phi_1(\T^{m,\ba}\sm \{[u'12^{(\infty)}]\})= \T^{m,\ba}_1\sm \{[u12^{(\infty)}]\}$$ 
has exactly $m$ connected components $U_k=\phi_1(V_k)\subset \T^{m,\ba}_1$, $k\in A$, with 
$$\T^{m,\ba}_{uk}\sm\{[u12^{(\infty)}]\} = \phi_1(\T^{m,\ba}_{u'k}\sm  \{[u'12^{(\infty)}]\}) \subset \phi_1( V_k)=U_k.$$ 
		
Fix  $k\in A$. Since  $V_k$ is a component of $\T^{m,\ba}\sm \{\phi_{u'}([12^{(\infty)}])\}$, it is relatively closed in $\T^{m,\ba}\sm \{[u'12^{(\infty)}]\}$, which implies $V_k=\overline {V_k}\cap (\T^{m,\ba}\sm \{[u'12^{(\infty)}]\})$. As a result,
\begin{align*}
U_k&=\phi_1(V_k) =\overline {\phi_1(V_k)}\cap (\T_1^{m,\ba}\sm \{[u12^{(\infty)}]\})= \overline {U_k} \cap  (\T_1^{m,\ba}\sm \{[u12^{(\infty)}]\}).
\end{align*}   
Since $\T_1^{m,\ba}\subset \T^{m,\ba}$ is compact, $U_k\subset \T_1^{m,\ba}$ implies $\overline {U_k} \subset \T_1^{m,\ba}$.  As a result, all limit points of $U_k$ other than $[u12^{(\infty)}]$ lie in $U_k$, implying that $U_k$ is relatively closed in $\T^{m,\ba}\sm  \{[u12^{(\infty)}]\}$. 
		
Exactly one of the components of $\T_1^{m,\ba}\sm \{[u12^{(\infty)}]\}$ needs to contain $[12^{(\infty)}]\in \T_1^{m,\ba}\sm \{[u12^{(\infty)}]\}$. Suppose $[12^{(\infty)}]\in U_1$ and the proof is analogous otherwise. Then $U'_1\coloneqq U_1\cup \bigcup_{j= 2}^m \T_j^{m,\ba}$ is a relatively closed subset of $\T^{m,\ba} \sm\{[u12^{(\infty)}]\}$. This set is also connected, because $U_1$, $\T_2^{m,\ba}, \dots, \T_m^{m,\ba}$  are connected and intersect at $[12^{(\infty)}]$.  Hence  the connected sets $U'_1$, $U_2,\dots, U_m$  are pairwise disjoint, relatively closed in $ \T^{m,\ba} \sm  \{[u12^{(\infty)}]\}$, and 
$$ \T^{m,\ba} \sm  \{[u12^{(\infty)}]\}=  (\T_1^{m,\ba} \sm \ \{[u12^{(\infty)}]\})\cup \left(\bigcup_{j= 2}^m\T_j^{m,\ba}\right)=U'_1\cup \left(\bigcup_{j=2}^m U_j\right). $$ 
This implies by Lemma \ref{Prop: metric compon} that $\T^{m,\ba} \sm  \{[u12^{(\infty)}]\}$ has exactly $m$ connected components that are $U'_1$, $U_2, \dots, U_m$. Moreover,  the sets $\T_{uj}^{m,\ba}\sm\{[u12^{(\infty)}]\}$, $j\in A$, lie in the different components $U_1', U_2, \dots, U_m$ of $\T^{m,\ba} \sm  \{[u12^{(\infty)}]\}$, respectively. This completes the proof.  
\end{proof}
	
We need the following property of relative boundaries in $\T^{m,\ba}$, in order to show that all branch points of $\T$ have a specific form.

\begin{lem}\label{lem: bdry Tu}
For all $n\in \N$ and $u\in A^n$, $\partial \T^{m,\ba}_u\subset \{[u1^{(\infty)}], [u2^{(\infty)}]\}$. Moreover, $\partial \T^{m,\ba}_u\neq \emptyset$ and if  $p\in \partial \T^{m,\ba}_u$, there exists $w\in \bigcup_{k=0}^{n-1}A^{k}$ such that $p=[w12^{(\infty)}]$.
\end{lem}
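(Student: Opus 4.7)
The plan is to induct on $n \geq 1$, with the base case extracted directly from Lemma \ref{lem: preimage 12^00}, and the inductive step handled via the self-similarity map $\phi_i$.

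For the base case $n=1$, fix $u = i \in A$. By Lemma \ref{lem: preimage 12^00}(i), $\T^{m,\ba}_i \cap \T^{m,\ba}_j = \{[12^{(\infty)}]\}$ for all $j \neq i$, and $\T^{m,\ba} = \bigcup_{j\in A} \T^{m,\ba}_j$. Since $m$ is finite and each $\T^{m,\ba}_j$ is compact, any $p \in \T^{m,\ba}_i \setminus \{[12^{(\infty)}]\}$ has positive distance from all $\T^{m,\ba}_j$ with $j \neq i$, hence is not a limit of points in $\T^{m,\ba}\setminus\T^{m,\ba}_i$; meanwhile $[12^{(\infty)}] \in \T^{m,\ba}_i$ is obviously a limit of points in $\T^{m,\ba}_j \setminus \{[12^{(\infty)}]\}$ for any $j \neq i$. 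Thus $\partial \T^{m,\ba}_i = \{[12^{(\infty)}]\}$, which by Lemma \ref{lem: preimage 12^00}(ii) equals $[u2^{(\infty)}]$ when $i=1$ and equals $[u1^{(\infty)}]$ when $i \neq 1$; in either case it has the form $[w12^{(\infty)}]$ with $w = \e \in A^0$.

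For the inductive step, write $u = iu'$ with $u' \in A^{n-1}$, so $\T^{m,\ba}_u = \phi_i(\T^{m,\ba}_{u'})$. The key identity is
\[
\partial \T^{m,\ba}_u = \phi_i(\partial \T^{m,\ba}_{u'}) \cup \bigl(\{[12^{(\infty)}]\} \cap \T^{m,\ba}_u\bigr).
\]
For $\supset$: points of $\phi_i(\partial \T^{m,\ba}_{u'})$ are limits of sequences in $\T^{m,\ba}_i \setminus \T^{m,\ba}_u \subset \T^{m,\ba}\setminus \T^{m,\ba}_u$, while $[12^{(\infty)}]$, when it lies in $\T^{m,\ba}_u$, is approximated by points in $\T^{m,\ba}_j$ ($j \neq i$) that, by Lemma \ref{lem: preimage 12^00}(i), avoid $\T^{m,\ba}_u \subset \T^{m,\ba}_i$ away from $[12^{(\infty)}]$ itself. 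For $\subset$: decompose $\T^{m,\ba}\setminus \T^{m,\ba}_u = (\T^{m,\ba}_i\setminus \T^{m,\ba}_u) \cup (\T^{m,\ba}\setminus \T^{m,\ba}_i)$ and apply the base case to the second piece, noting that $\phi_i$ is a homeomorphism from $\T^{m,\ba}$ onto $\T^{m,\ba}_i$ carrying $\partial \T^{m,\ba}_{u'}$ onto the boundary of $\T^{m,\ba}_u$ relative to $\T^{m,\ba}_i$.

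Once the identity is in hand, all three conclusions follow. By the inductive hypothesis, $\phi_i(\partial \T^{m,\ba}_{u'}) \subset \{[u1^{(\infty)}], [u2^{(\infty)}]\}$, and each such point has the form $[iw12^{(\infty)}]$ with $|iw| \leq n-1$. If $[12^{(\infty)}] \in \T^{m,\ba}_u$, then Lemma \ref{lem: preimage 12^00}(ii) forces $u$ to be a prefix of either $12^{(\infty)}$ or some $j1^{(\infty)}$ with $j\neq 1$; in the first case $[12^{(\infty)}] = [u2^{(\infty)}]$, in the second $[12^{(\infty)}] = [u1^{(\infty)}]$, and trivially $[12^{(\infty)}] = [\e \cdot 12^{(\infty)}]$ with $\e \in A^0$. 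Non-emptiness is inherited from the inductive hypothesis via $\phi_i$. The main delicate point, which I expect to be the principal obstacle, is the careful verification of the boundary identity above—specifically showing that $[12^{(\infty)}]$ is indeed approximated by points \emph{outside} $\T^{m,\ba}_u$ (not merely by points in $\T^{m,\ba}_i\setminus\T^{m,\ba}_u$), which uses that sequences such as $[j 2^{(k)} 1^{(\infty)}]$ with $j \neq i$ converge to $[12^{(\infty)}]$ and lie in $\T^{m,\ba}_j \setminus \{[12^{(\infty)}]\} \subset \T^{m,\ba}\setminus\T^{m,\ba}_u$.
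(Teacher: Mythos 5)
Your proof is correct, and it takes a genuinely different route from the paper's. The paper peels off the \emph{last} letter, writing $u=vj$, uses the relative openness of $\T^{m,\ba}_j\setminus\{[12^{(\infty)}]\}$ pushed forward by $\phi_v$ to get $\partial \T^{m,\ba}_u\subset\{[v12^{(\infty)}]\}\cup\partial\T^{m,\ba}_v$, then eliminates one of the three resulting candidate points (via $[2^{(\infty)}]\notin\T^{m,\ba}_1$) and proves $\partial\T^{m,\ba}_u\neq\emptyset$ by a separate contradiction argument pulling back with $\phi_{v_1}^{-1}$. You instead peel off the \emph{first} letter and prove the exact identity $\partial\T^{m,\ba}_u=\phi_i(\partial\T^{m,\ba}_{u'})\cup(\{[12^{(\infty)}]\}\cap\T^{m,\ba}_u)$, justified by the decomposition $\T^{m,\ba}\setminus\T^{m,\ba}_u=(\T^{m,\ba}_i\setminus\T^{m,\ba}_u)\cup(\T^{m,\ba}\setminus\T^{m,\ba}_i)$, the case $n=1$, and the fact that $\phi_i$ is a homeomorphism onto $\T^{m,\ba}_i$; from this identity the inclusion in $\{[u1^{(\infty)}],[u2^{(\infty)}]\}$, the form $[w12^{(\infty)}]$, and non-emptiness all follow at once — in particular non-emptiness is immediate from $\phi_i(\partial\T^{m,\ba}_{u'})\subset\partial\T^{m,\ba}_u$, so you avoid the paper's contradiction step. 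Your base case (positive distance from the finitely many compacta $\T^{m,\ba}_j$, $j\neq i$) is also more elementary than the paper's appeal to Lemma \ref{lem: images of 12^oo are m-val}, and your treatment of the case $[12^{(\infty)}]\in\T^{m,\ba}_u$ via Lemma \ref{lem: preimage 12^00}(ii) (forcing $u=12^{(n-1)}$ or $u=j1^{(n-1)}$) is correct.

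One small slip in the final sentence: the sequence $[j2^{(k)}1^{(\infty)}]$ does \emph{not} converge to $[12^{(\infty)}]$. These points lie in $\T^{m,\ba}_{j2^{(k)}}$, which contains $[j2^{(\infty)}]$ and has diameter $\ba(j)2^{-k}\to 0$, so they converge to $[j2^{(\infty)}]$. Use instead $[j1^{(k)}2^{(\infty)}]$, which lies in $\T^{m,\ba}_{j1^{(k)}}\ni[j1^{(\infty)}]=[12^{(\infty)}]$ and hence converges to $[12^{(\infty)}]$, or simply observe that $\T^{m,\ba}_j$ is a nondegenerate continuum and therefore has no isolated points, so $[12^{(\infty)}]$ is automatically a limit of points of $\T^{m,\ba}_j\setminus\{[12^{(\infty)}]\}$, all of which avoid $\T^{m,\ba}_u\subset\T^{m,\ba}_i$ by Lemma \ref{lem: preimage 12^00}(i). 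With that one-line correction the argument is complete.
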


\begin{proof} 
The proof is by induction on $n$. For $n=1$, let $u\in A$. Then $\T^{m,\ba}_u\sm\{[12^{(\infty)}]\}$ is a  component of $\T^{m,\ba}\sm\{[12^{(\infty)}]\}$  by Lemma~\ref{lem: images of 12^oo are m-val}(i). Since $\T^{m,\ba}$ is a metric tree, this implies that $\T^{m,\ba}_u\sm\{[12^{(\infty)}]\}$ is a relatively  open set, its points lie in the relative interior of  $\T_u^{m,\ba}$, do not lie in $\partial \T_u^{m,\ba}$, and  $\partial\T_u^{m,\ba}=\{ [12^{(\infty)}] \}$ (see, for instance, \cite[Lemma 3.2]{BT_CSST}).  Since $A^0=\{\e\}$ and
$$[12^{(\infty)}]= \phi_\e([12^{(\infty)}])=\phi_1([2^{(\infty)}])=\phi_2([1^{(\infty)}])=\dots =\phi_m([1^{(\infty)}]), $$
the lemma is true for $n=1$. 
		
Assume the statement of the lemma to be true for all $v\in A^n$ for fixed $n\in \N$. Let  $u\in A^{n+1}$ with $u=vj$ for $v\in A^n$ and $j\in A$. Without loss of generality, we assume that $j=1$. Since  $\T_1^{m,\ba}\sm \{[12^{(\infty)}]\}$ is open in $\T^{m,\ba}$, the set
$$\phi_v( \T_1^{m,\ba}\sm \{[12^{(\infty)}]\})=\T_u^{m,\ba}\sm\{[v12^{(\infty)}]\}$$ 
is a relatively open subset  of $\T_v^{m,\ba}$. Suppose $p\in \T_u^{m,\ba}$ is not an interior point of $\T_u^{m,\ba}$ in $\T^{m,\ba}$. Then either $p=[v12^{(\infty)}]$ or $p$ lies on the boundary of $\T_v^{m,\ba}$. By the inductive hypothesis for $n$, 
\begin{equation}\label{eq: bdry points}
\partial \T_u^{m,\ba}\subset \{[v12^{(\infty)}]\}\cup \partial \T_v^{m,\ba}\subset \{[v12^{(\infty)}], [v1^{(\infty)}], [v2^{(\infty)}]\}. 
\end{equation}
By the inductive hypothesis, every element of $\partial \T_v^{m,\ba}$ can be written in the form $[w12^{(\infty)}]$ with $w\in \bigcup_{k=1}^{n-1}A^k$. Thus, every element of $\partial \T_u^{m,\ba}$ can be written in the form $[w12^{(\infty)}]$ with $w\in \bigcup_{k=1}^{n}A^k$.

Since $\T_u^{m,\ba}$ is compact, it is closed in $\T^{m,\ba}$, implying $\partial \T_u^{m,\ba}\subset \T_u^{m,\ba}$. As a result, by \eqref{eq: bdry points} it is enough to show that $\T_u^{m,\ba}$ contains at most two of  $[v12^{(\infty)}]$, $[v1^{(\infty)}]$, $[v2^{(\infty)}]$. Since  $[2^{(\infty)}]\not\in \T_1^{m,\ba}$ by Lemma \ref{lem: preimage 12^00}(iii), we have $[v2^{(\infty)}]\not\in \phi_v(\T_1^{m,\ba})=\T_u^{m,\ba}$. It follows that $ \partial \T_u^{m,\ba}\subset \{ [v12^{(\infty)}], [v1^{(\infty)}]\}$.
		
Assume towards contradiction that $[v12^{(\infty)}]$, $[v1^{(\infty)}]$ lie in the interior of $\T_u^{m,\ba}$, i.e., $\partial\T_u^{m,\ba}=\emptyset$. Then there are relatively open neighborhoods $N_{v12^{(\infty)}}, N_{v1^{(\infty)}}\subset \T_u^{m,\ba}$ of $[v12^{(\infty)}], [v1^{(\infty)}]$, respectively. If $v=v_1\dots v_n$, this implies that 
\[ [v_2\dots v_n12^{(\infty)}]\in \phi_{v_1}^{-1}(N_{v12^{(\infty)}})\subset \T_{v_2\dots v_n1}^{m,\ba}\] 
and $[v_2\dots v_n1^{(\infty)}]\in \phi_{v_1}^{-1}(N_{v1^{(\infty)}})\subset \T_{v_2\dots v_n1}^{m,\ba}$. Since $\phi_{v_1}$ is continuous, the two points $[v_2\dots v_n12^{(\infty)}], [v_2\dots v_n1^{(\infty)}]$ lie in the interior of $\T_{v_2\dots v_n1}^{m,\ba}$, which contradicts the inductive hypothesis. Therefore, $\partial\T_u^{m,\ba}\neq\emptyset$ and the inductive step is complete.
\end{proof} 
	
The next lemma gives us all the branch points of $\T^{m,\ba}$.
	
\begin{lem}\label{lem:branch points are 12oo}
Every branch point of $\T^{m,\ba}$ is of the form $[u12^{(\infty)}]$ for some $u\in A^*$.
\end{lem}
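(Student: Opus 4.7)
The plan is to argue by contradiction. Suppose $p$ is a branch point of $\T^{m,\ba}$ that is \emph{not} of the form $[u12^{(\infty)}]$ for any $u\in A^*$. The goal is to show that $p$ is trapped inside a nested sequence of tiles of diameter tending to $0$, and then use that a branch point must have a ``third branch'' with nowhere to go, producing a contradiction.

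First, I will show that for each $n\geq 1$ there is a unique word $w_n\in A^n$ with $p\in\T^{m,\ba}_{w_n}$, and that these words are nested, so $w_n = w(n)$ for some $w\in A^{\N}$. Uniqueness follows from a scaled version of Lemma \ref{lem: preimage 12^00}(i): if $p\in\T^{m,\ba}_{u_1}\cap\T^{m,\ba}_{u_2}$ with $u_1\neq u_2$ of length $n$, then taking the longest common prefix $v$ of $u_1,u_2$ and applying $\phi_v$ to Lemma \ref{lem: preimage 12^00}(i) shows $p=[v12^{(\infty)}]$, contradicting our assumption on $p$. Nestedness is immediate from $\T^{m,\ba}_{w_{n+1}}\subset\T^{m,\ba}_{u}$ for $u=w_{n+1}(n)$. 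By Lemma \ref{lem: bdry Tu}, every point of $\partial\T^{m,\ba}_{w(n)}$ is of the form $[u12^{(\infty)}]$, so $p\notin\partial\T^{m,\ba}_{w(n)}$; hence $p$ lies in the relative interior of $\T^{m,\ba}_{w(n)}$ in $\T^{m,\ba}$ for every $n\geq 1$.

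Next, since $p$ is a branch point, $\T^{m,\ba}\setminus\{p\}$ has at least three connected components $C_1,C_2,C_3$. Fix $n\geq 1$. By Lemma \ref{lem: bdry Tu}, $\partial\T^{m,\ba}_{w(n)}$ consists of at most two points, so at least one component, say $C_1$, satisfies $C_1\cap\partial\T^{m,\ba}_{w(n)}=\emptyset$. Now I claim $C_1\subset\T^{m,\ba}_{w(n)}$. To see this, note that $\T^{m,\ba}$ is a Peano continuum, so it is locally connected, and $p$ lies in the closure of each branch at $p$; since $p$ is in the interior of $\T^{m,\ba}_{w(n)}$, any open neighborhood of $p$ contained in $\T^{m,\ba}_{w(n)}$ must meet $C_1$, giving $C_1\cap\T^{m,\ba}_{w(n)}\neq\emptyset$. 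Then the partition
\[ C_1 = \bigl(C_1\cap\operatorname{int}\T^{m,\ba}_{w(n)}\bigr) \sqcup \bigl(C_1\cap(\T^{m,\ba}\setminus\T^{m,\ba}_{w(n)})\bigr) \]
is a decomposition of the connected set $C_1$ into two relatively open sets (using $C_1\cap\partial\T^{m,\ba}_{w(n)}=\emptyset$), of which the first is nonempty, forcing the second to be empty; thus $C_1\subset\T^{m,\ba}_{w(n)}$.

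Finally, since this holds for every $n\geq 1$, we get $C_1\subset\bigcap_{n\geq 1}\T^{m,\ba}_{w(n)}$. As $\diam\T^{m,\ba}_{w(n)}=\Delta_{\ba}(w(n))\leq 2^{-n}\to 0$ by Corollary \ref{cor: diameters of tiles}, this intersection is the single point $\{p\}$. But $p\notin C_1$ (since $C_1$ is a component of $\T^{m,\ba}\setminus\{p\}$), so $C_1=\emptyset$, contradicting the fact that $C_1$ is a nonempty component. Hence no such $p$ exists, and every branch point of $\T^{m,\ba}$ has the form $[u12^{(\infty)}]$. The main subtlety here is the topological step that $C_1\subset\T^{m,\ba}_{w(n)}$, which requires combining the boundary bound from Lemma \ref{lem: bdry Tu} with the limit-point property of branches in a metric tree.
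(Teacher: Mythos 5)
Your overall plan is sound and close in spirit to the paper's proof, but there is a genuine gap in the final step. The component that misses $\partial\T^{m,\ba}_{w(n)}$ is chosen \emph{after} fixing $n$ (``at least one component, say $C_1$''), and this choice can genuinely vary with $n$: the two boundary points of $\T^{m,\ba}_{w(n)}$ and of $\T^{m,\ba}_{w(n+1)}$ need not lie in the same pair of branches at $p$, so nothing guarantees a single component avoids all the boundaries. Consequently the sentence ``since this holds for every $n\geq 1$, we get $C_1\subset\bigcap_{n\geq 1}\T^{m,\ba}_{w(n)}$'' does not follow from what precedes it; what you have actually shown is that for each $n$ \emph{some} component $C_{i(n)}$ is contained in $\T^{m,\ba}_{w(n)}$.

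The fix is a pigeonhole argument: you fixed three components $C_1,C_2,C_3$, and for each $n$ at least one of these three is contained in $\T^{m,\ba}_{w(n)}$, hence has diameter at most $\diam\T^{m,\ba}_{w(n)}\leq 2^{-n}$. Thus some fixed $C_i$ has diameter at most $2^{-n}$ for infinitely many $n$, forcing $\diam C_i=0$; but $C_i$ is a nonempty open subset of a nondegenerate connected space and so has positive diameter, a contradiction. (Equivalently, for that $C_i$ and those $n$, nestedness of the tiles still gives $C_i\subset\bigcap_n\T^{m,\ba}_{w(n)}=\{p\}$.) For comparison, the paper sidesteps the varying-component issue by choosing one $n$ so large that three points $x_1,x_2,x_3$ picked in distinct components all lie outside $\T^{m,\ba}_{w(n)}$; the arcs $[x_i,p]$ then hit $\partial\T^{m,\ba}_{w(n)}$ at three distinct points $y_i$ (distinct because they lie in disjoint components), contradicting $\card\partial\T^{m,\ba}_{w(n)}\leq 2$ from Lemma~\ref{lem: bdry Tu}. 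Both proofs turn on the same boundary bound, and your ``trap a component in a shrinking tile'' idea works once the pigeonhole step is added; the paper's route simply avoids having to track choices across levels.
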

	
\begin{proof}
Assume towards a contradiction that $[w]$ is a branch point of $\T^{m,\ba}$ with $[w]\not\in \{[u12^{(\infty)}]: u\in A^*\}$. Let $U_1$, $U_2$, $U_3$ be three distinct components of $\T^{m,\ba}\sm\{[w]\}$ and fix $x_i\in U_i$ for all $i\in \{1,2,3\}$. Since the diameter $\diam{\T_{w(n)}^{m,\ba}}$ decreases to 0 as $n$ increases, we can find some $n\in \N$ such that
$$\min \{ d_{A,\ba}(x_i,[w])|: \, i=1, 2, 3 \}> \diam \T_{w(n)}^{m,\ba}.$$
Hence, $x_i\notin \T_{w(n)}^{m,\ba}$ for all $i\in \{1,2,3\}$. Since $[w]\not\in \{[u12^{(\infty)}]: u\in A^*\}$ Lemma \ref{lem: bdry Tu} implies that $[w]$ lies in the relative interior of $\T_{w(n)}^{m,\ba}$ in $\T^{m,\ba}$. Let $\g_i$ be the unique arc in $\T^{m,\ba}$ joining $x_i$ with $[w]$ and let $y_i \in \partial\T_{w(n)}^{m,\ba}$ be the first boundary point of $\T_{w(n)}^{m,\ba}$ on $\g_i$ going from $x_i$ towards $[w]$. Since $[w]$ lies in the interior of $\T_{w(n)}^{m,\ba}$, we have $y_i\neq [w]$ for all $i$. Let $\g_i'$ be the subarc of $\g_i$  with endpoints  $x_i$, $y_i$. Since $\g_i'\cap (\T^{m,\ba}\sm\{[w]\})$ is connected and since $x_i\in \g_i'$, we have that $\g_i'\subset U_i$, and, hence,  $y_i\in U_i$ for all $i\in \{1,2,3\}$. However, the points $y_1, y_2, y_3 \in \partial \T_{w(n)}^{m,\ba}$ cannot be distinct, since by  Lemma \ref{lem: bdry Tu} the set $\partial \T_{w(n)}^{m,\ba}$ consists of at most two points. This contradiction completes the proof.
\end{proof}
	
In the next lemma we calculate the height of the branch points of $\T^{m,\ba}$. Recall that the height of a branch point is the third largest branch (in terms of diameter) at that branch point, see \eqref{eq:height}.
	
\begin{lem}\label{lem:height formula}
If $w\in A^*$ and $B_1,\dots, B_m $ are the branches of the branch point $p=[w12^{(\infty)}]$ of $\T^{m,\ba}$, then with suitable labeling we have
$$\T_{w1}^{m,\ba} \subset B_1, \ \T_{w2}^{m,\ba} \subset B_2, \  \T_{wj}^{m,\ba}=B_j,$$ 
for all $j\in\{3,\dots,m\}$. In particular, $H_{\T^{m,\ba}}(p)=\ba(3)\Delta(w)$. 
\end{lem}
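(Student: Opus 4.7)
The plan is to first reduce to the combinatorics of how the subtiles of $\T^{m,\ba}_w$ fit together at $p=[w12^{(\infty)}]$, and then carefully show that no ``foreign'' points from $\T^{m,\ba}\setminus \T^{m,\ba}_w$ can leak into the branches of $p$ other than the two big ones. The starting point is Lemma \ref{lem: images of 12^oo are m-val}(ii), which already gives that $\T^{m,\ba}\setminus\{p\}$ has exactly $m$ connected components $B_1,\dots,B_m$ and that the $m$ sets $\T_{wj}^{m,\ba}\setminus\{p\}$, $j\in A$, are distributed among these components one per component. Relabeling, we arrange $\T_{wj}^{m,\ba}\setminus\{p\}\subset B_j$ for every $j\in A$.

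The key step is the equality $B_j = \T_{wj}^{m,\ba}\setminus\{p\}$ for $j\geq 3$. Suppose for contradiction there is $x \in B_j\setminus \T_{wj}^{m,\ba}$, and let $\gamma$ be the unique arc in $\T^{m,\ba}$ joining $x$ to $p$, so that $\gamma\setminus\{p\}\subset B_j$. If $x \in \T^{m,\ba}_w$, the decomposition $\T^{m,\ba}_w=\bigcup_{i\in A}\T^{m,\ba}_{wi}$ (obtained by applying $\phi_w$ to $\T^{m,\ba}=\bigcup_{i\in A}\T^{m,\ba}_i$, with pairwise intersections $\{p\}$ by Lemma \ref{lem: preimage 12^00}(i)) puts $x$ into some $\T^{m,\ba}_{wk}$ with $k\neq j$, hence in the wrong branch $B_k$, a contradiction. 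If $x\notin \T^{m,\ba}_w$, then $\gamma$ must cross $\partial\T^{m,\ba}_w$, and by Lemma \ref{lem: bdry Tu} this boundary is contained in $\{[w1^{(\infty)}],[w2^{(\infty)}]\}\subset \T^{m,\ba}_{w1}\cup\T^{m,\ba}_{w2}$; therefore $\gamma$ would meet $B_1\cup B_2$, contradicting $\gamma\setminus\{p\}\subset B_j$ for $j\geq 3$.

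To finish, I will compute diameters. By Corollary \ref{cor: diameters of tiles}, $\diam \T^{m,\ba}_{wj}=\ba(j)\Delta_{\ba}(w)$. Since $p$ is not isolated in the connected set $\T^{m,\ba}_{wj}$, the branch $B_j=\T^{m,\ba}_{wj}\setminus\{p\}$ is dense in $\T^{m,\ba}_{wj}$, so for $j\geq 3$,
\[
\diam B_j=\diam \T^{m,\ba}_{wj}=\ba(j)\Delta_{\ba}(w).
\]
For $j\in\{1,2\}$, the inclusion $\T^{m,\ba}_{wj}\setminus\{p\}\subset B_j$ together with $\ba(1)=\ba(2)=\tfrac12$ gives $\diam B_j\geq \tfrac12 \Delta_{\ba}(w)\geq \ba(3)\Delta_{\ba}(w)$. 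Since $\ba$ is non-increasing, $\diam B_3\geq \diam B_j$ for all $j\geq 3$. Consequently, after reordering so that $\diam B_1\geq \diam B_2\geq \cdots\geq \diam B_m$, the two largest branches are $B_1,B_2$, and $H_{\T^{m,\ba}}(p)=\diam B_3=\ba(3)\Delta_{\ba}(w)$.

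The main obstacle is the second paragraph: verifying that for $j\geq 3$ no mass outside of $\T^{m,\ba}_{wj}$ lies in $B_j$. This is where the geometry of the construction is essential, and it relies critically on Lemma \ref{lem: bdry Tu}, which pins down $\partial \T^{m,\ba}_w$ to the two distinguished leaves $[w1^{(\infty)}], [w2^{(\infty)}]$ belonging to the two ``large'' subtiles $\T^{m,\ba}_{w1}, \T^{m,\ba}_{w2}$.
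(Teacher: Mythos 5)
Your argument is correct and follows essentially the same strategy as the paper: both hinge on Lemma \ref{lem: images of 12^oo are m-val}(ii) to sort the subtiles $\T_{wj}^{m,\ba}$ into distinct branches and on Lemma \ref{lem: bdry Tu} to locate $\partial \T_w^{m,\ba}$ inside $\T_{w1}^{m,\ba}\cup\T_{w2}^{m,\ba}$. The only difference is cosmetic: where you run the arc-crossing argument inline to show that for $j\geq 3$ no point of $\T^{m,\ba}\setminus\T_{wj}^{m,\ba}$ can lie in $B_j$, the paper simply invokes \cite[Lemma 3.2(iii)]{BM22}, which packages that same topological fact (a subtree missing the boundary of an enclosing subtree is itself a full branch); the diameter bookkeeping at the end is identical.
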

	
\begin{proof}
Let $w \in A^*$, then $p = [w12^{(\infty)}]$ is a branch point of $\T_w^{m,\ba}$ with $m$ distinct branches $\T_{w1}^{m,\ba}, \dots, \T_{wm}^{m,\ba}$. Hence there exist unique distinct branches $B_1,\dots, B_m$ of $p$ in $\T^{m,\ba}$ such that $\T_{wk}^{m,\ba} \subset B_k$ for $k\in\{1,\dots,m\}$ by \cite[Lemma 3.2(i)]{BM22}. By Lemma \ref{lem: bdry Tu} we have $\partial \T_w^{m,\ba}\subset \{[w1^{(\infty)}], [w2^{(\infty)}]\}$. Moreover, $[w1^{(\infty)}], [w2^{(\infty)}]\notin \T_{w3}^{m,\ba}$, since $[1^{(\infty)}], [2^{(\infty)}]\notin \T_j^{m,\ba}$ for all $j\in\{3,\dots,m\}$. Thus $\T_{w3}\cap \partial\T_w^{m,\ba}=\emptyset$. Hence it follows by \cite[Lemma 3.2 (iii)]{BM22} that $\T_{wj}^{m,\ba}$ is actually a branch of $p$ in $\T^{m,\ba}$ for all $j\in\{3,\dots,m\}$, and so $\T_{wj}^{m,\ba} = B_j$. 
		
For the second part of the statement, we have $\diam B_k\geq\diam \T_{wk}^{m,\ba}=2^{-1}\Delta(w)$ for $k=1,2$. But for $j\in\{3,\dots,m\}$, $\diam B_j=\diam \T_{wj}^{m,\ba}=\ba(j)\Delta(w)$. Hence, $H_{\T^{m,\ba}}(p)=\diam B_3=\ba(3)\Delta(w)$. 
\end{proof}
	
The next lemma is a useful characterization of the words that belong in the same class of $\mathbb{T}^{m,\ba}$.
	
\begin{lem}\label{lem: branch points belong in the same class}
Let $v,w\in A^{\N}$ with $v\neq w$. Then $[v]=[w]$ if and only if there exists $u\in A^*$ such that $v,w\in\{u12^{(\infty)}\}\cup\{uj1^{(\infty)} : j\geq 2\}$. In this case, $[v]=[w]=[u12^{(\infty)}]$.
\end{lem}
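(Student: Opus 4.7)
The plan is to treat the two directions separately, reducing the forward implication to the case $u = \varepsilon$ that was already handled in Lemma~\ref{lem: preimage 12^00}. The reverse implication is essentially immediate: if $v$ and $w$ both belong to $\{u12^{(\infty)}\}\cup\{uj1^{(\infty)}:j\in\{2,\dots,m\}\}$ for a common $u$, then Remark~\ref{rem:equivalent classes of 12^oo} gives $[u12^{(\infty)}]=[uj1^{(\infty)}]$ for every $j\in\{2,\dots,m\}$, so every element of that set represents the same class and $[v]=[w]=[u12^{(\infty)}]$.

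For the forward implication, the strategy is to peel off a common prefix using the similarity maps $\phi_i$ introduced in Lemma~\ref{lem:ss}. Assuming $v\neq w$ and $[v]=[w]$, I would let $u\in A^*$ be the longest common prefix of $v$ and $w$. Since $v\neq w$, the word $u$ is finite, and one can write $v=uav'$, $w=ubw'$ with $a,b\in A$ distinct and $v',w'\in A^{\N}$. By Lemma~\ref{lem:ss}, the map $\phi_u:\mathbb{T}^{m,\ba}\to\mathbb{T}^{m,\ba}_u$ is a similarity, hence a bijection, satisfying $\phi_u([av'])=[v]$ and $\phi_u([bw'])=[w]$; applying its inverse yields $[av']=[bw']$.

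Now, because $a\neq b$, the points $[av']$ and $[bw']$ lie in distinct subtrees $\mathbb{T}^{m,\ba}_a$ and $\mathbb{T}^{m,\ba}_b$, respectively. By Lemma~\ref{lem: preimage 12^00}(i), these subtrees meet precisely at $\{[12^{(\infty)}]\}$, so $[av']=[bw']=[12^{(\infty)}]$. Lemma~\ref{lem: preimage 12^00}(ii) then enumerates the representatives of this class, giving $av',bw'\in\{12^{(\infty)},21^{(\infty)},31^{(\infty)},\dots,m1^{(\infty)}\}$. Prepending $u$ yields
$$v,w\in\{u12^{(\infty)}\}\cup\{uj1^{(\infty)}:j\in\{2,\dots,m\}\},$$
and the ``in this case'' conclusion $[v]=[w]=[u12^{(\infty)}]$ follows from Remark~\ref{rem:equivalent classes of 12^oo}.

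I do not expect any serious obstacle here. The only step requiring brief justification is that $\phi_u$ acts on representative words by prepending $u$, which is immediate from the definition $\phi_i([w])=[iw]$ iterated $|u|$ times. All the structural content has already been established: Lemma~\ref{lem:ss} provides the bijectivity needed to strip off the common prefix, and Lemma~\ref{lem: preimage 12^00} characterizes exactly which words collapse to $[12^{(\infty)}]$. The present lemma is essentially a corollary obtained by transporting that characterization along $\phi_u$.
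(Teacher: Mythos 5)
Your proof is correct and follows essentially the same route as the paper: strip the longest common prefix (the paper does this via the scaling identity $\rho_{A,\ba}(uv',uw')=\Delta_{\ba}(u)\rho_{A,\ba}(v',w')$, which is the same content as your use of the similarity $\phi_u$ and its injectivity), then apply Lemma \ref{lem: preimage 12^00}(i)--(ii) to identify the common point as $[12^{(\infty)}]$ and enumerate its representatives, with Remark \ref{rem:equivalent classes of 12^oo} handling the converse.
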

	
\begin{proof}
Suppose that $[v]=[w]$ for some $v,w\in A^{\N}$ with $v\neq w$. Let $u\in A^*$ be the longest common initial word of $v$ and $w$ i.e. $v=uv'$ and $w=uw'$ with $v',w'\in A^{\N}$ and $v'(1)\neq w'(1)\in A$. We have 
$$0=\rho_{A,\ba}(v,w)=\rho_{A,\ba}(uv',uw')=\Delta(u)\rho_{A,\ba}(v',w'),$$
which means that $\rho_{A,\ba}(v',w')=0$. Hence, $[v']=[w']$. But $[v']\in\mathbb{T}_{v'(1)}^{m,\ba}$ and $[w']\in\mathbb{T}_{w'(1)}^{m,\ba}$. So, due to $v'(1)\neq w'(1)$ and Lemma \ref{lem: preimage 12^00}(i), $[v']=[12^{(\infty)}]=[w']$. Therefore, $[v]=[u12^{(\infty)}]=[w]$, which by Lemma \ref{lem:branch points are 12oo} are also branch points of $\mathbb{T}^{m,\ba}$. The reverse implication follows from \ref{lem: preimage 12^00}(ii) and the above shrinking property of  $\rho_{A,\ba}$.
\end{proof}
	
We now estimate the distance between branch points.
	
\begin{lem}\label{lem:dist of branch points}
If $v,w\in A^*$, $v\neq w$ then 
$$d_{A,\ba}([v12^{(\infty)}],[w12^{(\infty)}])\geq \frac{1}{2}\min\{\Delta_{\ba}(v),\Delta_{\ba}(w)\}.$$
\end{lem}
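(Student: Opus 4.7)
My plan is to first establish an auxiliary ``leaf-to-branch-point'' lower bound and then split the proof of the lemma into two cases according to whether $v$ and $w$ are prefix-comparable.

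The auxiliary step I would prove, by simultaneous induction on $|s|$, is the estimate
\begin{equation*}
d_{A,\ba}([1^{(\infty)}],[s12^{(\infty)}]) \geq \tfrac{1}{2}\Delta_\ba(s) \quad\text{and}\quad d_{A,\ba}([2^{(\infty)}],[s12^{(\infty)}]) \geq \tfrac{1}{2}\Delta_\ba(s)
\end{equation*}
for every $s\in A^*$. The base case $s=\e$ reduces, via $\phi_1$ and $\phi_2$ together with the identifications $[12^{(\infty)}]=\phi_1([2^{(\infty)}])=\phi_2([1^{(\infty)}])$ of Lemma \ref{lem: preimage 12^00}(ii), to Corollary \ref{cor:dist of 1^oo to 2^oo}. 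For the inductive step with $s=is'$, Lemma \ref{lem:ss} provides a similarity scaling: if $i$ agrees with the leaf under consideration (the first estimate with $i=1$, or the second with $i=2$), the factor $\ba(i)$ combined with the inductive hypothesis gives exactly the desired bound; otherwise Lemma \ref{lem: preimage 12^00}(i) forces the geodesic to cross $[12^{(\infty)}]$, yielding a lower bound of $\tfrac12$ plus an induction-controlled tail.

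Next, for Case A, in which $v$ is a proper prefix of $w=vu$ with $u\neq\e$, I would apply Lemma \ref{lem:ss} to factor out $\Delta_\ba(v)$ and reduce the claim to showing $d_{A,\ba}([12^{(\infty)}],[u12^{(\infty)}]) \geq \tfrac12\Delta_\ba(u)$. Splitting on the first letter $i$ of $u$: when $i\in\{3,\dots,m\}$, Lemma \ref{lem:height formula} ensures that $\T^{m,\ba}_i$ is a full branch of $[12^{(\infty)}]$, so applying $\phi_i$ together with the first form of the auxiliary estimate yields the bound; when $i=1$ I apply $\phi_1$ and the second form; when $i=2$ I apply $\phi_2$ and the first form.

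Finally, for Case B, in which $v$ and $w$ are incomparable with longest common prefix $v_0$, I would write $v=v_0v_1$, $w=v_0w_1$ with $v_1(1)\neq w_1(1)$. Applying $\phi_{v_0}$ to Lemma \ref{lem: preimage 12^00}(i) shows that $\T^{m,\ba}_{v_0v_1(1)}\cap \T^{m,\ba}_{v_0w_1(1)}=\{[v_012^{(\infty)}]\}$, so by geodesicity (Proposition \ref{prop:geodesic}),
\begin{equation*}
d_{A,\ba}(p,q) = d_{A,\ba}(p, [v_012^{(\infty)}]) + d_{A,\ba}([v_012^{(\infty)}], q).
\end{equation*}
Applying Case A to each summand (once with the pair $(v_0,v)$ and once with $(v_0,w)$) gives respective lower bounds of $\tfrac12\Delta_\ba(v)$ and $\tfrac12\Delta_\ba(w)$, which combined exceed $\tfrac12\min\{\Delta_\ba(v),\Delta_\ba(w)\}$. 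The only delicate point is the degenerate scenario in which $p$ (or $q$) coincides with $[v_012^{(\infty)}]$; by Lemma \ref{lem: branch points belong in the same class} this happens precisely when $v_1$ (respectively $w_1$) has the form $12^{(k)}$. In that case one of the two summands above vanishes, but since $p\neq q$ the other one still gives a lower bound of $\tfrac12\Delta_\ba(w)$ (or $\tfrac12\Delta_\ba(v)$), which continues to dominate $\tfrac12\min\{\Delta_\ba(v),\Delta_\ba(w)\}$, so the conclusion still holds.
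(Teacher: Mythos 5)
Your proposal is correct, and its overall skeleton (prefix-comparable pair versus incomparable pair, with the incomparable case resolved by additivity of the geodesic through the common branch point $[v_012^{(\infty)}]$, justified by Lemma \ref{lem: preimage 12^00}(i) and Proposition \ref{prop:geodesic}) coincides with the paper's Case 2. Where you genuinely diverge is the prefix case: the paper telescopes along the intermediate branch points $[v'(j)12^{(\infty)}]$, using the exact identity $d_{A,\ba}([u12^{(\infty)}],[ui12^{(\infty)}])=\tfrac12\Delta_\ba(ui)$ together with a geometric-series estimate, whereas you first prove by simultaneous induction the leaf-to-branch-point bounds $d_{A,\ba}([1^{(\infty)}],[s12^{(\infty)}])\geq\tfrac12\Delta_\ba(s)$ and $d_{A,\ba}([2^{(\infty)}],[s12^{(\infty)}])\geq\tfrac12\Delta_\ba(s)$ and then invoke the similarities of Lemma \ref{lem:ss}. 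Your route trades the series computation for an extra induction; both are short, and your auxiliary estimate is a reusable statement in its own right. Two small inaccuracies, neither of which creates a gap: first, the ``degenerate scenario'' in your Case B is in fact vacuous --- since $v12^{(\infty)}$ and $v_012^{(\infty)}$ are distinct words both ending in $2^{(\infty)}$, Lemma \ref{lem: branch points belong in the same class} shows they represent distinct points of $\T^{m,\ba}$ (so your claim that degeneracy occurs ``precisely when $v_1=12^{(k)}$'' is wrong; e.g.\ $d_{A,\ba}([v_012^{(\infty)}],[v_012^{(k)}12^{(\infty)}])=\Delta_\ba(v_0)2^{-k-2}>0$); your conservative fallback bound is nevertheless valid, so the argument stands. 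Second, in Case A with first letter $i\in\{3,\dots,m\}$ the fact you need is only the identification $[12^{(\infty)}]=[i1^{(\infty)}]=\phi_i([1^{(\infty)}])$ from Remark \ref{rem:equivalent classes of 12^oo} (or Lemma \ref{lem: preimage 12^00}(ii)), not Lemma \ref{lem:height formula}; the citation is off-target but harmless.
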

	
\begin{proof}
We start the proof by making a useful observation. By Lemma \ref{lem:ss}, Corollary \ref{cor:dist of 1^oo to 2^oo}, and Lemma \ref{lem: preimage 12^00}, if $i\in A$ and $u\in A^*$, then 
\begin{align}\label{eq:branch dist}
d_{A,\ba}([u12^{(\infty)}],[ui12^{(\infty)}]) =\Delta_{\ba}(u)\rho_{A,\ba}(12^{(\infty)},i12^{(\infty)})
&= \Delta_{\ba}(u)\rho_{A,\ba}(i1^{(\infty)},i12^{(\infty)})\\ 
&= \frac12\Delta_{\ba}(ui).\notag
\end{align}
		
Let $v,w\in A^*$ with $v\neq w$, and let $u\in A^*$ be the longest common initial word of $v$ and $w$. Then $v=uv'$ and $w=uw'$ with $v',w'\in A^*$, where either $v'$ or $w'$ is the empty word, or both $v'$ and $w'$ are non-empty, but have different initial letter. Accordingly, we consider two cases.
		
\emph{Case 1}: $v'=\e$ or $w'=\e$. We may assume that $w'=\e$. Then $v'\neq \e$, and so $|v|\geq |w|$. Write $v'=i_1\cdots i_n$. By triangle inequality and \eqref{eq:branch dist}
\begin{align*}
&d_{A,\ba}([v12^{(\infty)}],[w12^{(\infty)}])\\ 
&= \Delta_{\ba}(u)d_{A,\ba}([v'12^{(\infty)}],[12^{(\infty)}])\\
&\geq \Delta_{\ba}(u)\left( d_{A,\ba}([v'(1)12^{(\infty)}]),[12^{(\infty)}]) - \sum_{j=1}^{n-1}d_{A,\ba}([v'(j)12^{(\infty)}],[v'(j+1)12^{(\infty)}])\right)\\
&= \frac12\Delta_{\ba}(u)\D_{\ba}(v'(1)) - \Delta_{\ba}(u)\frac12\sum_{j=1}^{n-1}\D_{\ba}(v'(j+1))\\
&\geq \frac{1}{2}\ba(i_1)\Delta_{\ba}(u) - \Delta_{\ba}(u)\frac12\ba(i_1)\sum_{j=1}^{n-1}2^{-j}\\
&\geq \frac{1}{2}\ba(i_1)\Delta_{\ba}(u)2^{1-n}\\
&\geq \frac{1}{2}\Delta_{\ba}(u)\Delta_{\ba}(v')\\
&= \frac{1}{2}\Delta_{\ba}(v).
\end{align*}
		
\emph{Case 2}: $v',w'\neq \e$. Then $v'$ and $w'$ necessarily start with different letters, that is, $v'=iv''$ and $w'=jw''$ with $i,j\in A$, $i\neq j$, and $v'',w''\in A^*$. Then $[v'12^{(\infty)}]\in \mathbb{T}_i^{m,\ba}$ and $[w'12^{(\infty)}]\in \mathbb{T}_j^{m,\ba}$. By Proposition \ref{prop:geodesic} and Case 1 we have that
\begin{align*}
d_{A,\ba}([v12^{(\infty)}],&[w12^{(\infty)}])\\
&= \Delta_{\ba}(u)d_{A,\ba}([v'12^{(\infty)}],[w'12^{(\infty)}]) \\
&=\Delta_{\ba}(u)\left( d_{A,\ba}([v'12^{(\infty)}],[12^{(\infty)}]) + d_{A,\ba}([12^{(\infty)}],[w'12^{(\infty)}]) \right) \\
&\geq \frac{1}{2}\Delta_{\ba}(v) + \frac{1}{2}\Delta_{\ba}(w).\qedhere
\end{align*}
\end{proof}
	
\begin{proof}[Proof of Proposition \ref{prop:valence}]
By Lemma \ref{lem: images of 12^oo are m-val} and Lemma \ref{lem:branch points are 12oo}, $\mathbb{T}^{m,\ba}$ is an $m$-valent metric tree. 
		
Fix a branch point $p\in \T^{m,\ba}$. By Lemma \ref{lem:branch points are 12oo}, $p=[w12^{(\infty)}]$ for some $w\in A^*$ and $\T^{m,\ba}\setminus \{p\}$ has exactly $m$ branches. Therefore, $\mathbb{T}^{m,\ba}$ is an $m$-valent metric tree and by Lemma \ref{lem:height formula} for all $j,j' \in \{3,\dots,m\}$
\[  \frac{\diam{B^j_{\mathbb{T}^{m,\ba}}(p)}}{\diam{B^{j'}_{\mathbb{T}^{m,\ba}}(p)}} \leq \frac{\ba(3)}{\ba(m)} \]
which shows the uniform branch growth of $\T^{m,\ba}$. 
		
Let $p,q\in\mathbb{T}^{m,\ba}$ be two distinct branch points of $\mathbb{T}^{m,\ba}$. Then $p=[v12^{(\infty)}]$ and $q=[w12^{(\infty)}]$ with $v,w\in A^*$, $v\neq w$. By Lemmas \ref{lem:height formula},\ref{lem:dist of branch points},
\begin{align*}
d_{A,\ba}(p,q) \geq \frac{1}{2}\min\{\Delta_{\ba}(v),\Delta_{\ba}(w)\} &\geq \ba(3)\min\{\Delta_{\ba}(v),\Delta_{\ba}(w)\}\\
&= \min\{H_{\mathbb{T}^{m,\ba}}(p),H_{\mathbb{T}^{m,\ba}}(q)\}.
\end{align*}
This shows that $\mathbb{T}^{m,\ba}$ has branch points that are uniformly relatively separated. 
		
To establish uniform density, let $[w],[v]\in \mathbb{T}^{m,\ba}$ with $[w]\neq [v]$. Let $u\in A^*$ be the longest common initial word of $v$ and $w$, that is, $w=uw'$ and $v=uv'$ with $w',v'\in A^{\N}$ and $w'(1)\neq v'(1)$. Hence $[w]\in \mathbb{T}_{uw'(1)}^{m,\ba}$ and $[v]\in \mathbb{T}_{uv'(1)}^{m,\ba}$. Note that by Remark \ref{rem:equivalent classes of 12^oo} and Lemma \ref{lem: preimage 12^00}(i) the unique arc in $\T^{m,\ba}$ that joins $[w]$ with $[v]$ goes through the branch point $p=[u12^{(\infty)}]$ which satisfies
\begin{align*}
H_{\mathbb{T}^{m,\ba}}(p) =\ba(3)\Delta_{\ba}(u) &\geq \ba(3)\Delta_{\ba}(u)d_{A,\ba}([w'],[v'])\\
&=\ba(3)d_{A,\ba}([w],[v]).
\end{align*}
This completes the proof of the proposition.
\end{proof}
	
\begin{rem}\label{rem:constants of T}
Note that $\T^{\infty,\ba}$ has also uniformly separated and uniformly dense branch points (but $\T^{\infty,\ba}$ doesn't have uniform branch growth). If $x$ is a branch point of $\T^{\infty,\ba}$ then it is a branch point of $\T^{m,\ba}$ for some $m\in\N$ and $H_{\T^{\infty,\ba}}(x)=H_{\T^{m,\ba}}(x)$ by Lemma \ref{lem:isom-embed}. Observe that the uniform separation and uniform density constant of $\T^{m,\ba}$ is $\ba(3)^{-1}$ and the uniform branch growth constant is $\ba(3)/\ba(m)$.
\end{rem}
	
For $w\in A^n$, we call the sets of the form $\mathbb{T}_w^{m,\ba}=\phi_w^{A,\ba}(\mathbb{T}^{m,\ba})$ the $n$-\textit{tiles} of $\mathbb{T}^{m,\ba}$. We finish this section with two lemmas that are useful in later sections.
	
\begin{lem}\label{lem: tiles}
The tile $\mathbb{T}_{11}^{m,\ba}$ is the only tile $X$ of $\mathbb{T}^{m,\ba}$ with $[1^{(\infty)}]\in X$ and $[112^{(\infty)}]\in\partial X$. The tile $\mathbb{T}_{22}^{m,\ba}$ is the only tile $X$ of $\mathbb{T}^{m,\ba}$ with $[2^{(\infty)}]\in X$ and $[212^{(\infty)}]\in\partial X$. 
\end{lem}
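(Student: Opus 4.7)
The plan is to first classify all tiles containing $[1^{(\infty)}]$, then restrict to those whose boundary contains $[112^{(\infty)}]$, and finally verify that $[112^{(\infty)}]$ does indeed lie in $\partial \T_{11}^{m,\ba}$. The statement about $[2^{(\infty)}]$ and $\T_{22}^{m,\ba}$ will follow by a symmetric argument.

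The first step hinges on the observation that the class $[1^{(\infty)}]$ has a unique representative in $A^{\N}$, namely $1^{(\infty)}$ itself. Indeed, by Lemma \ref{lem: branch points belong in the same class}, any other representative $v\neq 1^{(\infty)}$ would satisfy $\{v,1^{(\infty)}\} = \{u12^{(\infty)}, uj1^{(\infty)}\}$ for some $u\in A^*$ and $j\geq 2$, which is impossible since $1^{(\infty)}$ contains no letter $\geq 2$. Hence, if $[1^{(\infty)}] \in \T_w^{m,\ba} = \{[wu] : u\in A^{\N}\}$, uniqueness forces $wu = 1^{(\infty)}$, so $w = 1^{(|w|)}$. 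Thus the tiles containing $[1^{(\infty)}]$ are precisely $\T^{m,\ba}$ itself and the tiles $\T_{1^{(n)}}^{m,\ba}$ for $n\geq 1$.

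For the second step, Lemma \ref{lem: bdry Tu} yields $\partial \T_{1^{(n)}}^{m,\ba} \subseteq \{[1^{(\infty)}], [1^{(n)}2^{(\infty)}]\}$ for $n\geq 1$, while $\partial \T^{m,\ba}=\emptyset$ disqualifies the $0$-tile. Since $[1^{(\infty)}]$ is a leaf by Remark \ref{rem:leaves} while $[112^{(\infty)}]$ is a branch point by Lemma \ref{lem:branch points are 12oo}, these two classes differ, so the condition reduces to $[112^{(\infty)}] = [1^{(n)}2^{(\infty)}]$. Both words end in $2^{(\infty)}$, so neither can be of the form $uj1^{(\infty)}$; hence Lemma \ref{lem: branch points belong in the same class} forces the two words to coincide literally, which happens exactly when $n=2$.

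For the third step, I would confirm that $[112^{(\infty)}]$ genuinely belongs to $\partial \T_{11}^{m,\ba}$: applying the similarity $\phi_1$ to the decomposition of Lemma \ref{lem: images of 12^oo are m-val}(i), the tiles $\T_{1j}^{m,\ba}$ for $j\geq 2$ meet $\T_{11}^{m,\ba}$ only at $[112^{(\infty)}]$, exhibiting this point as a limit of points of $\T^{m,\ba}\setminus \T_{11}^{m,\ba}$. The same reasoning handles $\T_{22}^{m,\ba}$ after swapping $\phi_1$ with $\phi_2$; the analogue of the identification $[112^{(\infty)}] = [1^{(2)}2^{(\infty)}]$ is $[212^{(\infty)}] = [221^{(\infty)}]$, which is the instance $u=2$, $j=2$ of Lemma \ref{lem: branch points belong in the same class}. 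I do not anticipate a substantive obstacle: the whole argument is powered by the unique-representative principle from Lemma \ref{lem: branch points belong in the same class}, after which everything reduces to careful bookkeeping on word prefixes.
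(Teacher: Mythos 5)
Your proof is correct and follows essentially the same architecture as the paper's: classify the tiles containing $[1^{(\infty)}]$ as the sets $\T_{1^{(n)}}^{m,\ba}$, invoke Lemma \ref{lem: bdry Tu} to narrow the boundary of $\T_{1^{(n)}}^{m,\ba}$ to $\{[1^{(\infty)}],[1^{(n)}2^{(\infty)}]\}$, and use Lemma \ref{lem: branch points belong in the same class} to pin down $n=2$. Two small differences worth noting: where the paper rules out $n>2$ by computing $\rho_{A,\ba}(112^{(\infty)},1^{(n)}2^{(\infty)})=\ba(1)^2\rho_{A,\ba}(2^{(\infty)},1^{(n-2)}2^{(\infty)})$ and appealing separately to the empty-boundary case, you dispatch all $n\geq 1$ at once with the ``both words end in $2^{(\infty)}$, hence neither can be $uj1^{(\infty)}$'' observation, which is a touch cleaner; and in your third step you verify $[112^{(\infty)}]\in\partial\T_{11}^{m,\ba}$ by pushing Lemma \ref{lem: images of 12^oo are m-val}(i) through $\phi_1$, whereas the paper gets it directly from the nonemptiness and branch-point characterization in Lemma \ref{lem: bdry Tu}. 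Both routes are valid and roughly equal in effort.
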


\begin{proof}
First, by Lemma \ref{lem: bdry Tu} $[1^{(\infty)}]\in\mathbb{T}_{11}^{m,\ba}$ and  $\partial\mathbb{T}_{11}^{m,\ba}\subset\{[1^{(\infty)}],[112^{(\infty)}]\}$. By Remark \ref{rem:leaves} $[1^{(\infty)}]$ is a leaf hence $\partial\mathbb{T}_{11}^{m,\ba}=\{[112^{(\infty)}]\}$ (note that by Lemma \ref{lem: bdry Tu} the tile $\mathbb{T}^{m,\ba}$ is the only tile with empty boundary). For the uniqueness let $\mathbb{T}_u^{m,\ba}$, $u\in A^n$ with $n\in\N_0$, be a tile such that $[1^{(\infty)}]\in\mathbb{T}_u^{m,\ba}$ and $[112^{(\infty)}]\in\partial\mathbb{T}_u^{m,\ba}$. That implies that $u=1^{(n)}$. Also, by Lemma \ref{lem: bdry Tu} $\partial\mathbb{T}_u^{m,\ba}\subset \{[u1^{(\infty)}],[u2^{(\infty)}]\}=\{[1^{(\infty)}], [1^{(n)}2^{(\infty)}]\}$. Hence either $[112^{(\infty)}]=[1^{(\infty)}]$ or $[112^{(\infty)}]=[1^{(n)}2^{(\infty)}]$. For the first case, that would imply that 
$$0=\rho_{A,\ba}(112^{(\infty)},1^{(\infty)})={\ba(1)}^2\rho_{A,\ba}(1^{(\infty)},2^{(\infty)})>0,$$
where the last follows from Corollary \ref{cor:dist of 1^oo to 2^oo}. For the second case we have
$$0=\rho_{A,\ba}(112^{(\infty)},1^{(n)}2^{(\infty)})=\ba(1)^2\rho_{A,\ba}(2^{(\infty)},1^{(n-2)}2^{(\infty)})>0$$
where the last inequality holds for $n>2$ since the only words that belong to the same class are of the form $u12^{(\infty)},uj1^{(\infty)}$ from Lemma \ref{lem: branch points belong in the same class}. Hence, $n=2$ and $u=11$. The second statement is similar and we omit the details.
\end{proof}
	
\begin{lem}\label{lem:neighbortiles}
Let $\ba$ be a weight, let $n\in\N$, and let $w,u \in A^n$ be distinct with $\T^{m,\ba}_w\cap \T^{m,\ba}_u \neq \emptyset$. Then
\begin{equation}\label{eq:neighbortiles}
2\min_{i=1,\dots,m}\ba(i) \leq \frac{\D_{\ba}(w)}{\D_{\ba}(u)} \leq \left(2\min_{i=1,\dots,m}\ba(i)\right)^{-1}.
\end{equation} 
\end{lem}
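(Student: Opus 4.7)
The strategy is to invoke Lemma \ref{lem: branch points belong in the same class} to describe $w$ and $u$ explicitly in terms of a common prefix $s$, and then compute the ratio $\D_{\ba}(w)/\D_{\ba}(u)$ directly.

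First, I would fix a point $p \in \T^{m,\ba}_w \cap \T^{m,\ba}_u$ and write $p = [wv] = [uv']$ for some $v, v' \in A^{\N}$. Since $w \neq u$ have equal length, $wv$ and $uv'$ are distinct as infinite words, so Lemma \ref{lem: branch points belong in the same class} produces $s \in A^*$ with
$$\{wv,\ uv'\} \subset \{s12^{(\infty)}\} \cup \{sk1^{(\infty)} : k \in A\setminus\{1\}\}.$$
Because $w \neq u$, one checks that $|s| \leq n-1$; otherwise the first $n$ letters of both representatives would coincide with $s(n)\in A^n$, forcing $w = u$. Set $r := n - |s| - 1 \geq 0$.

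The proof then divides into two cases, according to whether both or only one of the chosen representatives of $p$ ends in $1^{(\infty)}$. In the first case, $wv = sj_1 1^{(\infty)}$ and $uv' = sj_2 1^{(\infty)}$ with $j_1 \neq j_2 \in A\setminus\{1\}$, so reading off the first $n$ letters gives $w = sj_1 1^{(r)}$ and $u = sj_2 1^{(r)}$, hence
$$\frac{\D_{\ba}(w)}{\D_{\ba}(u)} = \frac{\ba(j_1)}{\ba(j_2)}.$$
Since $\min_i \ba(i) \leq \ba(j_\ell) \leq \tfrac12$ for $\ell=1,2$, this ratio lies in $[2\min_i \ba(i),\ (2\min_i \ba(i))^{-1}]$. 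In the second case, without loss of generality $uv' = s12^{(\infty)}$ and $wv = sj1^{(\infty)}$ for some $j \geq 2$, whence $u = s12^{(r)}$ and $w = sj1^{(r)}$; using $\ba(1)=\ba(2) = 1/2$ a direct computation yields $\D_{\ba}(w)/\D_{\ba}(u) = 2\ba(j) \in [2\min_i \ba(i),\ 1]$, and the reverse labeling produces the reciprocal, which sits in $[1,\ (2\min_i \ba(i))^{-1}]$. In every case the ratio lies in the asserted interval.

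There is no real obstacle here: once Lemma \ref{lem: branch points belong in the same class} is available, the lemma reduces to bookkeeping. The only mild subtlety is remembering that in the case $wv = s12^{(\infty)}$ the block of $2$'s has length $r$ (not $r+1$), so that $\ba(1) \cdot \ba(2)^{r} = 2^{-(r+1)}$ cancels cleanly against $\ba(j)\ba(1)^{r} = \ba(j) 2^{-r}$ to give the factor of $2$ in front of $\ba(j)$.
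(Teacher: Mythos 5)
Your proof is correct, but it takes a genuinely different route from the paper. The paper argues by induction on $n$: if $w,u$ share their first letter it applies the inductive hypothesis after cancelling that letter, and if they lie in different first-letter cylinders it uses Lemma \ref{lem: preimage 12^00}(i) together with Remark \ref{rem:equivalent classes of 12^oo} to force $w=12^{(n)}$, $u=j1^{(n)}$ (or $w=i1^{(n)}$, $u=j1^{(n)}$ when neither starts with $1$), and then computes the ratio. You avoid the induction entirely by picking two distinct infinite-word representatives of a common point of $\T^{m,\ba}_w\cap\T^{m,\ba}_u$ and invoking Lemma \ref{lem: branch points belong in the same class}; the longest-common-prefix structure that the paper recovers step by step through the induction is handed to you at once by that lemma, and the rest is the same bookkeeping (your Cases 1 and 2 correspond exactly to the paper's ``neither letter is $1$'' and ``one letter is $1$'' alternatives). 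One small point to be aware of: you use the lemma in the form ``both words lie in $\{s12^{(\infty)}\}\cup\{sk1^{(\infty)}:k\in A\setminus\{1\}\}$,'' which is slightly stronger than the two-element set in its literal statement; this stronger form is what the lemma's proof actually establishes (via Lemma \ref{lem: preimage 12^00}(ii)), and it is genuinely needed for your Case 1 (e.g.\ $n=1$, $w=2$, $u=3$), so it would be worth a sentence in a final write-up citing Lemma \ref{lem: preimage 12^00}(ii) or Remark \ref{rem:equivalent classes of 12^oo} alongside it.
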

	
\begin{proof}
The proof is by induction on the length $n$. If $n=1$, then $\T^{m,\ba}_i\cap \T^{m,\ba}_j \neq \emptyset$ for all $i,j\in A$ and \eqref{eq:neighbortiles} is clear. Assume now that for some $n\in\N$, \eqref{eq:neighbortiles} is true whenever $w,u \in A^n$ are distinct and $\T^{m,\ba}_w\cap \T^{m,\ba}_u \neq \emptyset$. Fix now distinct $w,u \in A^{n+1}$ such that $\T^{m,\ba}_w\cap \T^{m,\ba}_u \neq \emptyset$. If there exists $i\in A$ such that $w = iw'$ and $u = iu'$, then \eqref{eq:neighbortiles} follows by the inductive hypothesis and the fact that $\D_{\ba}(w)/\D_{\ba}(u) = \D_{\ba}(w')/\D_{\ba}(u')$. Assume now that there exist distinct $i,j\in A$ such that $w\in A_{i}^{n+1}$ and $u\in A_{j}^{n+1}$. Without loss of generality, we may assume that $i=1$. By Lemma \ref{lem: preimage 12^00}(i), 
\[\T^{m,\ba}_w\cap \T^{m,\ba}_u \subset \T^{m,\ba}_1\cap \T^{m,\ba}_j =\{[12^{(\infty)}]\}. \]
Since $[12^{(\infty)}]=[j1^{(\infty)}]$ (by Remark \ref{rem:equivalent classes of 12^oo}) and $w\in A_1^{n+1}$, we have that $w=12^{(n)}$ and $u=j1^{(n)}$ and
\[ \frac{\D_{\ba}(w)}{\D_{\ba}(u)} = \frac{2^{-n-1}}{\ba(j)2^{-n}} = \frac{1}{2\ba(j)}\]
and \eqref{eq:neighbortiles} follows. We work similarly if $j=1$, or if neither of $i,j$ is equal to 1.
\end{proof}

\section{Uniform 4-branching of the Vicsek fractal}\label{sec: Vicsek unif branching}
Recall that the Vicsek fractal $\mathbb{V}$ is defined as the attractor of the iterated function system $\mathcal{F} = \{\phi_1,\dots,\phi_5\}$ on $\mathbb{C}$ where
\[\phi_1(z) = \tfrac13(z-2-2i), \quad \phi_2(z) = \tfrac13(z-2+2i), \quad \phi_3(z) = \tfrac13(z+2+2i),\]
\[\phi_4(z) = \tfrac13(z+2-2i), \quad \phi_5(z) = \tfrac13z.\]
The purpose of this section is to show the following proposition for $\V$.
	
\begin{prop}\label{prop: Vicsek universal}
The Vicsek fractal $\V$ is a uniformly $4$-branching quasiconformal tree.  
\end{prop}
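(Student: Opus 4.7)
The plan is to verify each property of a uniformly $4$-branching quasiconformal tree directly for $\V$, using the self-similar structure and the $\pi/2$-rotational symmetry of the IFS about $0$. For $w = i_1\cdots i_n \in \{1,\dots,5\}^*$, write $\phi_w := \phi_{i_1}\circ\cdots\circ\phi_{i_n}$ and $\V_w := \phi_w(\V)$ for the $n$-tile of address $w$; by the IFS structure, each $\V_w$ is a similar copy of $\V$ of diameter $3^{-n}\diam(\V)$. First, I would verify that $\V$ is a quasiconformal tree. Compactness, connectedness, local connectedness, and the absence of simple closed curves all follow by an inductive argument on level: at each level $n$, the $5^n$ tiles meet only at the images under the IFS of the four leaves $(\pm 1,\pm 1)$ of $\V$, and the tile-adjacency graph is itself a combinatorial tree. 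The open set condition (taking, e.g., $U = (-1,1)^2$) implies Ahlfors $s$-regularity for $s = \log 5/\log 3$ and hence doubling; bounded turning is established by joining any pair of points through a chain of $O(1)$ tiles of comparable diameter.

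The main technical content is the classification: the branch points of $\V$ are exactly $B := \{\phi_w(0) : w \in \{1,\dots,5\}^*\}$, each of valence $4$. For the origin, the arc in $\V$ joining any two distinct leaves passes through every nested iterate $\phi_5^n(\V)$, and these sets shrink to $\{0\}$, so the arc must contain $0$; this shows $\V \setminus \{0\}$ has at least four components (one per leaf). A reverse inclusion, using that every point $p \in \V \setminus \{0\}$ lies, via the self-similar structure of the ``arm'' of $\V$ containing it, in the same component as one of the four leaves, gives $\val(0,\V) = 4$. By the similarity $\phi_w$, each $\phi_w(0)$ is a valence-$4$ branch point of $\V_w$; and since $\phi_w(0)$ lies in the interior of $\V_w$ in $\V$ (the relative boundary of $\V_w$ in $\V$ is contained in images of the four leaves, not of $0$), this is also its valence in $\V$. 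Conversely, any branch point of $\V$ has a unique IFS address (since touching points have valence $2$), and tracking this address together with the characterization of $0$ as the only branch point of $\V$ that is not contained in a proper sub-tile forces the branch point to be of the form $\phi_w(0)$.

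The three uniform-branching properties now follow from the classification. Uniform branch growth is immediate from the $\phi_w$-conjugated $\pi/2$ symmetry, which makes the four branches at $\phi_w(0)$ isometric; the same reasoning yields $H_\V(\phi_w(0)) \simeq 3^{-|w|}$ uniformly in $w$. For uniform separation, if $p = \phi_w(0)$ and $q = \phi_u(0)$ are distinct and $v$ is their longest common prefix, then $p,q$ lie in distinct $(|v|+1)$-subtiles of $\V_v$ whose Euclidean distance is $\gtrsim 3^{-|v|}$, giving $|p-q| \gtrsim \min\{H_\V(p), H_\V(q)\}$. Uniform density is obtained by a recursive argument: for distinct $x,y \in \V$, let $\V_v$ be the smallest tile containing both; if $[x,y]$ passes through $\phi_v(0)$, its height $\simeq 3^{-|v|} \simeq |x-y|$ suffices. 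Otherwise $x,y$ lie in a pair of adjacent sub-tiles of $\V_v$ sharing a common valence-$2$ touching point, and one recurses into a deeper pair. Since $x\neq y$ the recursion terminates in finitely many steps, producing a branch point on $[x,y]$ with comparable height.

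The principal technical obstacle, in my view, is the ``at most $4$'' half of establishing $\val(0,\V) = 4$, together with the converse direction of the branch-point classification; both require careful combinations of self-similarity, the $\pi/2$-symmetry, and planar tree topology. Once those are handled, the remaining verifications are routine self-similarity computations.
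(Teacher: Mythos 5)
Your overall strategy is the same as the paper's: classify the branch points of $\V$ as the tile centers $\phi_w(0)$ (the points $[u5^{(\infty)}]$), show each has valence $4$ and height $\simeq 3^{-|w|}$, and then deduce growth, separation and density from self-similarity; your recursive density argument is sound (it correctly handles the delicate case of two points sitting near a common touching point of adjacent tiles). However, two of your steps are wrong as stated, and the heart of the matter is deferred. First, the claim that ``the $\phi_w$-conjugated $\pi/2$ symmetry makes the four branches at $\phi_w(0)$ isometric'' is false for $w\neq\epsilon$: that symmetry is a symmetry of the subtree $\V_w$ only, while $H_\V$ and uniform branch growth are defined via the branches of $\V$ at $\phi_w(0)$. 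The one or two branches of $\V_w$ that meet $\partial\V_w$ get enlarged by everything outside $\V_w$ (e.g.\ at $\phi_1(0)$ one branch contains $\V_2\cup\V_3\cup\V_4\cup\V_5$ and has diameter $\simeq\diam\V$, while two branches have diameter $\tfrac{1}{3\sqrt2}$). The true statement, $\diam B^3_\V(\phi_w(0))=\diam B^4_\V(\phi_w(0))\simeq 3^{-|w|}$, requires knowing that $\card\,\partial\V_w\le 2$ when $w$ does not end in $5$, and that the two branches of $\V_w$ avoiding $\partial\V_w$ are genuine branches of $\V$; this is exactly the inductive component analysis of Lemma \ref{lem: valenve of branch points of Vicsek}(ii)--(iii) and the second half of Lemma \ref{lem:branch pts of Vicsek}, and it is also where the ``at most $4$'' half of the valence claim and the converse classification — which you flag as the principal obstacle but do not carry out — are actually established. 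Without that analysis the proposal has no proof of its central classification statement nor of the height formula it relies on.

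Second, your separation argument rests on the claim that two distinct $(|v|+1)$-subtiles of $\V_v$ have Euclidean distance $\gtrsim 3^{-|v|}$; this is false, since each corner tile touches the central tile. Concretely, take $p=\phi_{13^{(k)}}(0)$ and $q=\phi_{51^{(k)}}(0)$: their longest common prefix is empty, yet both points lie within $\lesssim 3^{-k}$ of the touching point $-\tfrac13(1+i)$ of $\V_1$ and $\V_5$, so $|p-q|\lesssim 3^{-k}$ while $3^{-|v|}=1$. The desired inequality $|p-q|\gtrsim\min\{H_\V(p),H_\V(q)\}$ is still true, but the estimate must be run at the scale of the \emph{deeper} of the two branch points, as in the paper: after deleting the common prefix, $p$ is the center of the square $\phi_{w'}([-1,1]^2)$ of side $2\cdot 3^{-|w'|}$ and hence at distance $\ge 3^{-|w'|}$ from the level-one square containing $q$, giving $|p-q|\gtrsim 3^{-\max\{|w'|,|v'|\}}$, which matches $\min\{H_\V(p),H_\V(q)\}$. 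With these repairs (and the component analysis supplied), your outline does converge to the paper's proof.
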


Fix for the rest of this section $A=\{1,\dots,5\}$. Given $w=i_1\cdots i_k \in A^{k}$ define
\[ \phi_{w} = \phi_{i_1}\circ\cdots \circ \phi_{i_k}, \quad\text{and}\quad \mathbb{V}_w = \phi_w(\mathbb{V}).\]
	
In \cite[\textsection 6.2]{DV}, it was shown that $\mathbb{V}$ is a quasiconformal tree. Moreover, there exist an equivalence relation $\sim$ on $A^{\N}$, and  a bi-Lipschitz homeomorphism 
\[ f : (A^{\N}/\sim,d) \to \mathbb{V}, \quad f(A^{\N}_w/\sim) =\mathbb{V}_w\text{ for all $w\in A^*$}.\]
Henceforth, we identify points in $A^{\N}/\sim$ with their images in $\mathbb{V}$ under $f$. Also, the equivalence relation in $A^{\N}$ implies that
\[ [13^{(\infty)}]=[51^{(\infty)}], \quad [24^{(\infty)}]=[52^{(\infty)}], \quad [35^{(\infty)}]=[53^{(\infty)}], \quad [42^{(\infty)}]=[45^{(\infty)}].\]
	
The proof of the following is almost identical with that  in \cite[Proposition 4.2]{BT_CSST}, hence we omit the details. 
	
\begin{lem}
For each $n\in\N$, let $J_n=\bigcup_{w\in A^n}\phi_w(X)$ and $K_n=\bigcup_{w\in A^n}\phi_w([-1,1]^2)$, where $X\subset \R^2$ is the union of the diagonals of $[-1,1]^2$. Then, for all $n\in\N$, $J_{n}$ and $K_{n}$ are compact, and 
\[ J_{n} \subset J_{n+1}\subset \mathbb{V}\subset K_{n+1}\subset K_{n}.\] 
Moreover, $\overline{\bigcup_{n\in\N_0}J_n}=\mathbb{V}=\bigcap_{n\in\N_0}K_n$.
\end{lem}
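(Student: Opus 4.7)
The plan is a standard iterated-function-system computation in the spirit of \cite[Proposition 4.2]{BT_CSST}. Let $F(Y)=\bigcup_{i=1}^{5}\phi_i(Y)$ be the Hutchinson operator associated with $\mathcal{F}$, so that $J_n=F^n(X)$ and $K_n=F^n([-1,1]^2)$. The compactness of $J_n$ and $K_n$ is immediate: $X$ and $[-1,1]^2$ are compact, each $\phi_w$ is continuous, and a finite union of compact sets is compact.

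The whole chain of inclusions reduces to two one-step observations under $F$:
\begin{enumerate}[(a)]
\item $X \subset F(X)$,
\item $F([-1,1]^2) \subset [-1,1]^2$.
\end{enumerate}
Claim (b) is immediate from the explicit form of the maps, since each $\phi_i([-1,1]^2)$ is a closed sub-square of $[-1,1]^2$ sitting in the standard $3\times 3$ grid. For (a), write $D$ and $D'$ for the main and anti-diagonal of $[-1,1]^2$, so $X=D\cup D'$. A short calculation shows that $\phi_1(D)$, $\phi_5(D)$, $\phi_3(D)$ are precisely the sub-segments of $D$ with $t\in[-1,-1/3]$, $[-1/3,1/3]$, $[1/3,1]$ respectively, so $D=\phi_1(D)\cup\phi_5(D)\cup\phi_3(D)$; by the reflective symmetry of the IFS, $D'=\phi_2(D')\cup\phi_5(D')\cup\phi_4(D')$. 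Combining gives $X\subset F(X)$.

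Applying $F^n$ to (a) and (b) and combining with the self-similarity $\mathbb{V}=F(\mathbb{V})$ yields the chain $J_n\subset J_{n+1}\subset \mathbb{V}\subset K_{n+1}\subset K_n$. The only nontrivial link is $J_{n+1}\subset \mathbb{V}$: iterating (a) gives $X\subset F^n(X)$ for every $n$, and since $F^n(X)\to \mathbb{V}$ in Hausdorff distance by standard IFS theory and $\mathbb{V}$ is closed, one obtains $X\subset \mathbb{V}$; hence $\phi_w(X)\subset\phi_w(\mathbb{V})\subset \mathbb{V}$ for every $w\in A^*$.

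For the limit identities, $\overline{\bigcup_n J_n}\subset \mathbb{V}\subset \bigcap_n K_n$ is already in hand. The inclusion $\bigcap_n K_n\subset \mathbb{V}$ follows because $\dist_H(K_n,\mathbb{V})\to 0$ forces $\bigcap_n K_n$ to lie inside every $\varepsilon$-neighborhood of the closed set $\mathbb{V}$. For $\mathbb{V}\subset \overline{\bigcup_n J_n}$, given $p\in \mathbb{V}$ and $n\in\N$, pick $w\in A^n$ with $p\in \phi_w(\mathbb{V})$; since $(0,0)\in X$, the point $\phi_w(0,0)\in\phi_w(X)\subset J_n$ satisfies $|p-\phi_w(0,0)|\leq \sqrt{2}\cdot 3^{-n}$, which tends to $0$. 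The only genuinely geometric step is the explicit diagonal decomposition in (a); everything else is routine IFS bookkeeping.
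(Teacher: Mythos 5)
Your proof is correct and follows the same route the paper intends: the paper omits the argument, citing that it is ``almost identical'' to \cite[Proposition 4.2]{BT_CSST}, which is exactly the Hutchinson-operator scheme you use (the one-step inclusions $X\subset F(X)$ and $F([-1,1]^2)\subset[-1,1]^2$, iterated and combined with $\mathbb{V}=F(\mathbb{V})$ and Hausdorff convergence). The only blemish is the constant in the last estimate: $\diam\phi_w([-1,1]^2)=2\sqrt{2}\cdot 3^{-n}$, so the bound should read $|p-\phi_w(0,0)|\leq 2\sqrt{2}\cdot 3^{-n}$, which of course still tends to $0$ and does not affect the argument.
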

	
The above lemma implies that $\diam \V=\sqrt{2}$. We also need to classify the branch points of $\V$, and the boundary points of the subtrees $\V_w$.
	
\begin{lem}\label{lem: valenve of branch points of Vicsek}
Let $w\in A^*$ .
\begin{enumerate}[(i)]
\item The set $\mathbb{V}\setminus \mathbb{V}_{w5}$ has exactly $4$ components, and
\[ (\overline{\mathbb{V}\setminus \mathbb{V}_{w5}}) \cap \mathbb{V}_{w5} = \{[w51^{(\infty)}],[w52^{(\infty)}],[w53^{(\infty)}],[w54^{(\infty)}]\}. \]
\item If $i\in \{1,2,3,4\}$, then $\mathbb{V}\setminus \mathbb{V}_{wi}$ has either $1$ or $2$ components. 
\begin{enumerate}
\item If $\mathbb{V}\setminus \mathbb{V}_{wi}$ has one component, then there exists $j\in\{1,2,3,4\}$ such that $(\overline{\mathbb{V}\setminus \mathbb{V}_{wi}}) \cap \mathbb{V}_{wi}=\{[wij^{(\infty)}]\}$.
\item If $\mathbb{V}\setminus \mathbb{V}_{wi}$ has two components, then $(\overline{\mathbb{V}\setminus \mathbb{V}_{wi}}) \cap \mathbb{V}_{wi}$ is either the set $\{[wi1^{(\infty)}],[wi3^{(\infty)}]\}$, or the set $\{[wi2^{(\infty)}],[wi4^{(\infty)}]\}$.
\end{enumerate}
\item If w is not ending with 5, then 
\begin{enumerate}
\item either the closures of the components of $\mathbb{V}\setminus \mathbb{V}_{w5}$ are $\mathbb{V}_{w1},\mathbb{V}_{w3}$ and $\V_{w2}$,$\V_{w4}$ lie in different connected components,
\item or the closure of the components of $\mathbb{V}\setminus \mathbb{V}_{w5}$ are  $\mathbb{V}_{w2},\mathbb{V}_{w4}$ and $\V_{w1},\V_{w3}$ lie in different connected components.
\end{enumerate}
\end{enumerate}
\end{lem}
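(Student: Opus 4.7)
The plan is to pin down, by induction on $|w|$, the relative boundary $\partial\V_w$ of each subtree $\V_w$ in $\V$; once this is known, all three parts follow from the standard metric-tree fact that the components of $T\setminus K$ (for $K$ a subtree of a metric tree $T$) are in bijection with $\partial K$, each component's closure meeting $K$ in exactly one boundary point (see \cite[Lemma 3.2(iii)]{BM22}). I introduce the involution $\sigma\colon\{1,2,3,4\}\to\{1,2,3,4\}$ defined by $\sigma(1)=3$, $\sigma(3)=1$, $\sigma(2)=4$, $\sigma(4)=2$, so that the identifications in the preamble give $\V_i\cap\V_5=\{[5i^{(\infty)}]\}=\{[i\sigma(i)^{(\infty)}]\}$ for $i\in\{1,2,3,4\}$ and $\V_i\cap\V_j=\emptyset$ for distinct $i,j\in\{1,2,3,4\}$. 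This yields the base case: $\partial\V_5=\{[5k^{(\infty)}]:k=1,2,3,4\}$ has one point in each corner subtile $\V_{5k}$, while $\partial\V_i=\{[i\sigma(i)^{(\infty)}]\}\subset\V_{i\sigma(i)}$, with $\sigma(i)$ in the same ``parity class'' ($\{1,3\}$ or $\{2,4\}$) as $i$.

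For $w=w'l\in A^*\setminus\{\e\}$, one has
\[
\partial\V_w \;=\; (\text{boundary of } \V_w \text{ inside } \V_{w'}) \,\cup\, (\partial\V_{w'}\cap\V_w),
\]
where the first piece, obtained by transporting the base case through $\phi_{w'}$, equals $\{[wk^{(\infty)}]:k=1,2,3,4\}$ when $l=5$ and $\{[w\sigma(l)^{(\infty)}]\}$ when $l\in\{1,2,3,4\}$. By induction on $|w|$ I then prove: if $w$ ends with $5$, $\partial\V_w$ is exactly this four-point set with one point in each corner subtile of $\V_w$; if $w$ ends with $l\in\{1,2,3,4\}$, $\partial\V_w$ has one or two points, all lying in corner subtiles $\V_{wk}$ with $k$ in the parity class of $l$, and in distinct subtiles when there are two. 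The crux of the step is that inherited points live in corner subtiles of $\V_{w'}$: when $l=5$, none of them can lie in the central $\V_{w'5}=\V_w$, so no inheritance occurs; when $l\in\{1,2,3,4\}$, an inherited point lies in $\V_{w'l}=\V_w$ only if its host subtile of $\V_{w'}$ has index $l$, which by the induction hypothesis forces $l$ to share the parity class of $w'$'s last letter, and the resulting inherited point is then $[wl^{(\infty)}]\in\V_{wl}$.

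Granting this description, part (i) is immediate, since $\partial\V_{w5}$ has four distinct points in the four distinct corner subtiles of $\V_{w5}$. Part (ii) follows since $\partial\V_{wi}$ has one or two points for $i\in\{1,2,3,4\}$; in the two-point case they are $[wi\sigma(i)^{(\infty)}]$ and $[wi^{(\infty)}]$, both of the form $[wis^{(\infty)}]$ with $s\in\{i,\sigma(i)\}\subset\{1,3\}$ or $\{2,4\}$, matching exactly the dichotomy in (ii)(2). Part (iii) follows since, when $w$ does not end with $5$, every point of $\partial\V_w$ lies in a corner subtile $\V_{wk}$ with $k$ in the parity class of $w$'s last letter $l$; hence the two corner subtiles $\V_{wi}$ with $i$ in the complementary parity class form whole components of $\V\setminus\V_{w5}$, each with closure exactly $\V_{wi}$, and the other two corner subtiles lie in distinct components (they attach to $\V_{w5}$ at two different local boundary points). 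This produces alternative (a) when $l\in\{2,4\}$, (b) when $l\in\{1,3\}$, with both holding vacuously when $w=\e$.

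The main obstacle will be the inductive bookkeeping, especially verifying the parity-class preservation: that the subtiles containing $\partial\V_w$ always sit inside a single parity class within $\{1,2,3,4\}$. This rests on $\sigma$ preserving the partition $\{1,3\}\sqcup\{2,4\}$, which in turn encodes the geometric fact that the four attachment points of $\V_5$ to $\V_1,\V_2,\V_3,\V_4$ lie on the two diagonals of $\V_5$'s bounding square.
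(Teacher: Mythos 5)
Your inductive computation of the relative boundaries is essentially the paper's own argument in different clothing: the recursion $\partial\V_{w'l}=(\text{boundary of }\V_{w'l}\text{ in }\V_{w'})\cup(\partial\V_{w'}\cap\V_{w'l})$, the base case at level one, and the diagonal/parity bookkeeping via $\sigma$ track exactly the sets $(\overline{\V\setminus\V_{wi}})\cap\V_{wi}$ that the paper follows case by case, and your deduction of (iii) (complementary-parity corner subtiles are whole components, same-parity ones attach at distinct corners of $\V_{w5}$) parallels the paper's two-case analysis. (Your identifications $[31^{(\infty)}]=[53^{(\infty)}]$ and $[42^{(\infty)}]=[54^{(\infty)}]$ differ from two of the displayed relations in the paper but are the ones consistent with the similarities defining $\V$, so that is not an issue.)

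The genuine gap is the opening claim that, for a subtree $K$ of a metric tree $T$, the components of $T\setminus K$ are \emph{in bijection} with $\partial K$. What is true is only half of this: the closure of each component meets $K$ in exactly one point of $\partial K$, and every point of $\partial K$ is such an attachment point; but several components may attach at the same boundary point (take $K$ to be an arc whose endpoint is a branch point of $T$ with three or more branches). Thus $\card\,\partial K$ bounds the number of components from below, not from above, and your proof invokes the false converse precisely where the lemma has content: to get \emph{exactly} four components in (i), \emph{at most} two in (ii), and the two-point boundary description in the two-component case of (ii). Closing the gap requires showing that at each corner where $\V_w$ meets the rest of $\V$ only one component of the complement attaches, i.e.\ that such a corner has a single branch outside the tile. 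This is true for $\V$ but not free: it is essentially what the paper's induction establishes by writing $\V\setminus\V_{wi}$ as the union of the components of $\V\setminus\V_{w'}$ (for the parent word $w'$) with the transported base-case complement $\phi_{w'}(\V\setminus\V_i)$ and counting how these merge; and you cannot import it from the valence classification in Lemma \ref{lem:branch pts of Vicsek}, since that lemma is deduced from the present one. Either reproduce this component bookkeeping alongside your boundary recursion, or prove directly that the corner points of each tile are leaves of that tile and that the two adjacent tiles cover a neighborhood of their common corner; as written, the exact counts are unjustified.
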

	
\begin{proof}
For (i), note first that
\[ \overline{\mathbb{V}\setminus \mathbb{V}_{5}} = \overline{\mathbb{V}\setminus \phi_5([-1,1]^2)} = \mathbb{V}\cap\bigcup_{i=1}^4\phi_{i}([-1,1]^2) = \mathbb{V}_1\cup \cdots\cup \mathbb{V}_4.\] 
Moreover, if $i\in \{1,2,3,4\}$, (say $i=1$), then
\[ \{[13^{(\infty)}]\} = \{[51^{(\infty)}]\} \subset \mathbb{V}_1\cap \mathbb{V}_5 \subset \phi_1([-1,1]^2)\cap \phi_5([-1,1]^2),\]
where the latter set has exactly one point. Therefore,
\[ (\overline{\mathbb{V}\setminus \mathbb{V}_{5}}) \cap \mathbb{V}_{5} = \{[51^{(\infty)}],[52^{(\infty)}],[53^{(\infty)}],[54^{(\infty)}]\}. \]
		
Fix now $w\in A^*$. On the one hand,
\[ \mathbb{V}_w \setminus \mathbb{V}_{w5} = \phi_w(\mathbb{V} \setminus \mathbb{V}_5),\]
which has 4 components. On the other hand, $\mathbb{V}$ is a tree and $\mathbb{V}_w$ is a subtree, so $\mathbb{V} \setminus \mathbb{V}_{w5}$ has as many components as those of $\mathbb{V}_w \setminus \mathbb{V}_{w5}$. Moreover,
\[ (\overline{\mathbb{V}\setminus \mathbb{V}_{w5}})\cap \mathbb{V}_{w5} = (\overline{\mathbb{V}_w\setminus \mathbb{V}_{w5}})\cap \mathbb{V}_{w5} = \{[w51^{(\infty)}],[w52^{(\infty)}],[w53^{(\infty)}],[w54^{(\infty)}]\}.\]
		
The proof of (ii) is by induction on $|w|$. If $w=\epsilon$, and if $i\in \{1,2,3,4\}$, then
\[\overline{\mathbb{V}\setminus \mathbb{V}_i} = \mathbb{V}_5\cup \bigcup_{j\in\{1,2,3,4\}\setminus\{i\}}\mathbb{V}_j, \]
which is a connected set. For the second part of (ii), assume without loss of generality that $i=1$, and the proof is similar in other cases. Then,
\[ \{[13^{(\infty)}]\}=\{[51^{(\infty)}]\} \subset (\overline{\mathbb{V}\setminus \mathbb{V}_{1}})\cap \V_1 \subset \phi_1([-1,1]^2) \cap \phi_5([-1,1]^2).\]
with the latter set having only one element. Therefore, $(\overline{\mathbb{V}\setminus \mathbb{V}_{1}})\cap \V_1 = \{13^{(\infty)}\}$.
		
Suppose now that (ii) is true for some $w\in A^*$ and let $j\in A$. By the inductive hypothesis and by (i), there are three possible cases.
		
\emph{Case 1:} Suppose that $j=5$. Then, by (i),
\[ (\overline{\mathbb{V}\setminus \mathbb{V}_{w5}}) \cap \mathbb{V}_{w5} = \{[w51^{(\infty)}], [w52^{(\infty)}], [w53^{(\infty)}], [w54^{(\infty)}]\}.\]
Assume that $i=1$, and the case $i\in\{2,3,4\}$ is similar. Note that
\[ \mathbb{V}_{w51} \cap (\overline{\mathbb{V}_{w5} \setminus \mathbb{V}_{w51}}) = \{[w513^{(\infty)}]\},\quad\text{and}\quad \mathbb{V}_{w51} \cap (\overline{\mathbb{V} \setminus \mathbb{V}_{w5}}) = \{[w51^{(\infty)}]\}.\]
Therefore, $\mathbb{V}\setminus \mathbb{V}_{w51}$ has $2$ components. 
		
\emph{Case 2:} Suppose that $j\in\{1,2,3,4\}$ and $\mathbb{V} \setminus \mathbb{V}_{wj}$ has $2$ components. By the inductive hypothesis, either
\begin{equation}\label{eq: Vic branch case 1}
\overline{\V\setminus\V_{wj}}\cap\V_{wj}=\{[wj1^{(\infty)}],[wj3^{(\infty)}]\},
\end{equation}
or
\begin{equation}\label{eq: Vic branch case 2}
\overline{\V\setminus\V_{wj}}\cap\V_{wj}= \{[wj2^{(\infty)}],[wj4^{(\infty)}]\}.
\end{equation}
Assume that $i=1$ (the case $i\in\{2,3,4\}$ is similar). In the  case of \eqref{eq: Vic branch case 1}, we have
\[ \mathbb{V}_{wj1} \cap (\overline{\mathbb{V} \setminus \mathbb{V}_{wj}}) = \{[wj1^{(\infty)}]\},\]
and $\mathbb{V}\setminus \mathbb{V}_{wj1}$ has $2$ components. In the case of \eqref{eq: Vic branch case 2}, we have $\mathbb{V}_{wj1} \cap (\overline{\mathbb{V} \setminus \mathbb{V}_{wj}}) =\emptyset$, which implies
\[ \mathbb{V}_{wj1} \cap (\overline{\mathbb{V}_{wj} \setminus \mathbb{V}_{wj1}}) = \{[wj13^{(\infty)}]\},\]
and $\mathbb{V}\setminus \mathbb{V}_{wj1}$ has $1$ component.
		
\emph{Case 3:} Suppose that $j\in\{1,2,3,4\}$ and $\mathbb{V} \setminus \mathbb{V}_{wj}$ has $1$ component. By the inductive hypothesis, there exists $k\in\{1,2,3,4\}$ such that $\overline{\V\setminus\V_{wj}}\cap\V_{wj}=\{[wk1^{(\infty)}]\}$.
		
Assume that $i=1$ (the case $i\in\{2,3,4\}$ is similar). If $k=1$, then
\[\mathbb{V}_{wj1} \cap (\overline{\mathbb{V} \setminus \mathbb{V}_{wj}})=\{[wj1^{(\infty)}]\},\] 
while if $k=2,3,4$, then
\[\mathbb{V}_{w'1} \cap (\overline{\mathbb{V} \setminus \mathbb{V}_{w'}})=\emptyset, \]
and as before we have
\[ \mathbb{V}_{wj1} \cap (\overline{\mathbb{V}_{wj} \setminus \mathbb{V}_{wj1}}) = \{[wj13^{(\infty)}]\}.\]
Hence, $\mathbb{V}\setminus \mathbb{V}_{wj1}$ has $2$ components when $k=1$, and $1$ component when $k=2,3,4$.
		
We now show (iii). Suppose that $w=ui$, with $u\in A^*$ and $i\in\{1,2,3,4\}$. By (ii), $\V\setminus\V_w$ has either $1$ or $2$ components, and there are two cases to consider. 
		
\emph{Case 1: } Suppose that $\V\setminus\V_w$ has 1 component. Let $U = \overline{\V\setminus\V_w}$, and let $B_1,\dots,B_4$ be the closures of the components of $\V\setminus\V_{w5}$. On the one hand, since $\overline{\V_w\setminus\V_{w5}}=\bigcup_{i=1}^4\V_{wi}$, we may assume that $\V_{wi}\subset B_i$ for all $i$, potentially by relabeling if necessary. 
		
On the other hand, there exists $i\in\{1,2,3,4\}$ such that $\overline{\V\setminus\V_w}\cap\V_w=\{wi^{(\infty)}\}$. Since $\overline{\V\setminus\V_w}\subset\overline{\V\setminus\V_{w5}}$, we have that $U\cup\V_{wi}\subset B_i$. Moreover, note that 
\[\overline{\V_w\setminus\V_{w5}} = \overline{(\V\setminus\V_{w5})\setminus(\V\setminus\V_w)},\]
which implies that 
\[\V_{w1}\cup\V_{w2}\cup\V_{w3}\cup\V_{w4}=\overline{B_i\setminus U}\cup \bigcup_{l\in \{1,2,3,4\}\setminus \{i\}}B_l.\]
The fact that $\V_{wi}\subset B_i$ and that $\V_{wl}$, $B_l$ are connected components, imply that, after relabeling, $B_i=\V_{wi}\cup U$ and $\V_{wl}=B_l$, for $l\in\{1,2,3,4\}\setminus\{i\}$.
		
\emph{Case 2: } Suppose that $\V\setminus\V_w$ has $2$ components. Denote these components by $U_1$, $U_2$. Denote again by $B_i$, $i\in\{1,2,3,4\}$, the closures of the components of $\V\setminus\V_{w5}$, and assume without loss of generality that $\overline{\V\setminus\V_w}\cap\V_w=\{[w1^{(\infty)}],[w3^{(\infty)}]\}$. We may further assume that $[w1^{(\infty)}]\in U_1$ and that $[w3^{(\infty)}]\in U_2$. Working as in Case 1, it follows that $U_1\cup\V_{w1}\subset B_1$ and  $U_2\cup\V_{w3}\subset B_2$. Furthermore, note again that
\[\overline{\V_w\setminus\V_{w5}} = \overline{(\V\setminus\V_{w5})\setminus(\V\setminus\V_w)},\]
which implies that 
\[\V_{w1}\cup\V_{w2}\cup\V_{w3}\cup\V_{w4}=\overline{B_1\setminus U_1}\cup \overline{B_2\setminus U_2}\cup B_3\cup B_4.\]
Since $\V_{w1}\subset B_1$, $\V_{w3}\subset B_2$, and all $\V_{wi}$, $B_i$ are connected components, after relabeling, we may assume that $B_1=\V_{w1}\cup U_1$, $B_2=\V_{w3}\cup U_2$ and $B_3=\V_{w2}$, $B_4=\V_{w4}$.
\end{proof}
	
The next lemma provides us with a complete classification of the branch points of $\V$ and their heights.
	
\begin{lem}\label{lem:branch pts of Vicsek}
A point $[w]$ is a branch point of $\V$ if, and only if, $w=u5^{(\infty)}$ for some $u\in A^*$. Moreover, every branch point $[u5^{(\infty)}]$ has valence $4$, and 
$$H_{\V}([u5^{(\infty)}])=\diam B_{\V}^4([u5^{(\infty)}])=\tfrac1{\sqrt{2}}3^{-|u|}.$$
\end{lem}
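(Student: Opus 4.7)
The plan is to split the proof into three tasks: showing that every $[u5^{(\infty)}]$ with $u\in A^*$ is a branch point of valence~$4$, showing that these are the only branch points of $\V$, and computing the height.

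For the first task, I would iterate Lemma~\ref{lem: valenve of branch points of Vicsek}(i). Applying it with $w=u5^{(k-1)}$ for each $k\geq 1$ yields that $\V\sm\V_{u5^{(k)}}$ has exactly four connected components. Since $\V_{u5^{(k+1)}}\subset\V_{u5^{(k)}}$, each component of $\V\sm\V_{u5^{(k)}}$ sits inside a unique component of $\V\sm\V_{u5^{(k+1)}}$, and counting forces a bijection; relabeling consistently, I obtain four nested sequences $C_i^k\subset C_i^{k+1}$, $i\in\{1,\dots,4\}$. Because the similarity $\phi_{u5^{(k)}}$ has scaling factor $3^{-|u|-k}$, $\diam\V_{u5^{(k)}}\to 0$, and $\bigcap_k\V_{u5^{(k)}}=\{[u5^{(\infty)}]\}$, so
\[\V\sm\{[u5^{(\infty)}]\}=\bigcup_k(\V\sm\V_{u5^{(k)}})=C_1\cup C_2\cup C_3\cup C_4, \qquad C_i:=\bigcup_k C_i^k.\]
Each $C_i$ is connected as a nested union of connected sets, the $C_i$ are pairwise disjoint, and this exhibits exactly four branches of $[u5^{(\infty)}]$.

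For the second task, I would argue by contradiction, closely following the proof of Lemma~\ref{lem:branch points are 12oo}. Suppose $p=[w]$ is a branch point of $\V$ and no representative of $p$ in $A^\N$ has tail $5^{(\infty)}$. Pick three distinct branches $B_1,B_2,B_3$ of $p$ and $x_i\in B_i$. Choose $n$ large enough that $\diam\V_{w(n)}<\min_i d(p,x_i)$ and $w(n)$ ends in a letter $j\in\{1,2,3,4\}$; the latter is possible by hypothesis. Then each $B_i$ must exit $\V_{w(n)}$ through a point of $\partial\V_{w(n)}$, but Lemma~\ref{lem: valenve of branch points of Vicsek}(ii) gives $|\partial\V_{w(n)}|\leq 2$, so by pigeonhole two branches $B_i,B_j$ share the same exit point $q$. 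Then the unique arc in $\V$ from $p$ to $q$ (minus the endpoint $p$) lies in both $B_i$ and $B_j$, contradicting disjointness of distinct branches.

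For the third task, I would combine the branch description from task one with self-similarity. The four branches of $[u5^{(\infty)}]$ split into ``internal'' ones fully contained in $\V_u$ and ``external'' ones extending through a boundary point of $\V_u$ into $\V\sm\V_u$. By Lemma~\ref{lem: valenve of branch points of Vicsek}(ii) applied to the letter preceding the final $5^{(\infty)}$, $\V_u$ has at most two boundary points in $\V$, so at least two branches are internal; each external branch contains a scaled copy of an internal branch together with a nontrivial attachment outside $\V_u$, hence has diameter at least that of the internal ones. The similarity $\phi_u$ identifies the four internal branches at $[u5^{(\infty)}]$ with scaled copies of the four branches at $[5^{(\infty)}]$ in $\V$, each of diameter $\tfrac{1}{\sqrt{2}}\,3^{-|u|}$ (the Euclidean center-to-corner distance within the arm inside $\V_u$). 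Since the third and fourth largest branch diameters are always among the internal ones, $H_\V([u5^{(\infty)}])=\diam B_\V^4([u5^{(\infty)}])=\tfrac{1}{\sqrt{2}}\,3^{-|u|}$.

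The main obstacle I foresee is the second task: one must carefully handle the equivalence relation on $A^\N$ so that the hypothesis ``no representative has tail $5^{(\infty)}$'' gives a valid choice of $w(n)$ ending in a non-$5$ letter, and one must also verify that $p$ lies in the interior of $\V_{w(n)}$ rather than on its boundary; the latter should reduce to the observation that boundary points of tiles have the form $[vk^{(\infty)}]$, which are leaves or double points, not branch points.
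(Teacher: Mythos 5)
Your Task 1 argument matches the paper's: iterate Lemma~\ref{lem: valenve of branch points of Vicsek}(i) to get a nested (increasing) family of four components of $\V\setminus\V_{u5^{(k)}}$, observe $\bigcap_k\V_{u5^{(k)}}=\{[u5^{(\infty)}]\}$, and conclude that the four nested unions are the branches. This is fine.

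Your Task 2 argument has a genuine gap, and it is the one you flag yourself, but the proposed fix does not close it. You transplant the contradiction/pigeonhole argument of Lemma~\ref{lem:branch points are 12oo}, and for that you need $p$ to lie in the \emph{interior} of $\V_{w(n)}$ so that the three exit points of the branches through $\partial\V_{w(n)}$ are distinct from $p$. In the abstract setting this was automatic from Lemma~\ref{lem: bdry Tu}, which says every boundary point of a tile $\T^{m,\ba}_u$ has the form $[v12^{(\infty)}]$: the hypothesis $[w]\ne[v12^{(\infty)}]$ for all $v$ immediately gives interiority. There is no Vicsek analogue of that classification. The boundary points of $\V_{w(n)}$ are corner points $[w(n)j^{(\infty)}]$ with $j\in\{1,2,3,4\}$, and a word $w$ with tail $j^{(\infty)}$ for $j\ne 5$ is \emph{allowed} by your hypothesis yet gives $p=[w]\in\partial\V_{w(n)}$ for every $n$ (e.g.\ $w=13^{(\infty)}$). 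In that case one of the pigeonholed exit points can be $p$ itself, the arc $[p,q]$ with $q=p$ is degenerate, and no contradiction is obtained. Your proposed repair --- that boundary points of tiles ``are leaves or double points, not branch points'' --- is precisely a special case of the ``only if'' direction you are proving, so invoking it is circular unless you supply an independent argument for it, which you do not. The paper avoids this difficulty altogether by arguing \emph{directly} rather than by contradiction: write $w=u_1i_1u_2i_2\cdots$ with all $i_k\ne 5$, set $w_n=u_1i_1\cdots u_ni_n$, and use Lemma~\ref{lem: valenve of branch points of Vicsek}(ii) to see that $\V\setminus\V_{w_n}$ has exactly one or two components for every $n$; a monotonicity argument then shows $\V\setminus\{[w]\}$ is a nested union with one or two components, so $[w]$ is a leaf or a double point. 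That argument handles the corner points uniformly with no interiority step.

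Your Task 3 idea (self-similarity plus comparison of ``internal'' arms inside $\V_u$ with ``external'' branches passing through $\partial\V_u$) is on the right track and would yield the same number, but it is less explicit than the paper's: the paper uses Lemma~\ref{lem: valenve of branch points of Vicsek}(iii) to identify the two smallest branches precisely as $\bigcup_{n\geq 0}\V_{u5^{(n)}i}$ and $\bigcup_{n\geq 0}\V_{u5^{(n)}j}$ for an antipodal pair $(i,j)$, and sums the geometric series of tile diameters, which both computes the value and shows these are indeed the third- and fourth-largest branches. Your version needs to justify the claim that external branches have diameter at least that of an internal arm, and to pin down the internal arm's diameter, before the ``third and fourth largest are internal'' conclusion can be drawn.

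In summary, Task 1 agrees with the paper, Task 3 is a reasonable sketch that could be completed, but Task 2 as written has an unresolved gap at the corner points, and the remedy you suggest is circular; the paper's direct case analysis via Lemma~\ref{lem: valenve of branch points of Vicsek}(ii) is the cleaner route.
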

	
\begin{proof}
Assume first that $w=u5^{(\infty)}$. We may assume that $u$ is not ending with 5. Note that $[w]$ is the unique element in $\bigcap_{n=1}^{\infty}\V_{u5^{(n)}}$. Therefore, $\V\setminus\{[w]\}=\bigcup_{n=1}^{\infty}(\overline{\V\setminus\V_{u5^{(n)}}})$. For each $n\in\N$, denote by $B_i^n$, $i\in\{1,2,3,4\}$, the closures of the components of $\V\setminus\V_{u5^{(n)}}$ given by Lemma \ref{lem: valenve of branch points of Vicsek}(i). Note that $(\overline{\V\setminus\V_{u5^{(n)}}})_{n\in \N}$ is an increasing sequence, so we may assume that $B_i^{n+1}\subset B_i^{n}$. It follows that 
\[\V\setminus\{[w]\}=\bigcup_{i\in\{1,2,3,4\}}\bigcup_{n=1}^{\infty}B_i^n.\]
Since the sets $\bigcup_{n=1}^{\infty}B_i^n$ are connected for each $i\in\{1,2,3,4\}$ and disjoint for $i\neq j$, they are the components of $\V\setminus [w]$. Thus, $[w]$ is a branch point of valence $4$.
		
For the converse, let $w=u_1i_1u_2i_2\dots \in A^{\N}$ where $u_k\in A^*$ and $i_k\in \{1,2,3,4\}$ for $k\in \N$. Set $w_n=u_1i_1\dots u_ni_n$. By Lemma \ref{lem: valenve of branch points of Vicsek}(ii), $\V\setminus\V_{w_n}$ has either $1$ or $2$ components. We consider two cases.
		
\emph{Case 1.} Suppose that for all $n\in\N$, $\V\setminus\V_{w_n}$ has $1$ component denoted by $K_n$. Since $\V_{w_{n+1}}\subset\V_{w_n}$ it follows that $K_n\subset K_{n+1}$. Note, also, that $\{[w]\}=\bigcap_{n=1}^{\infty}\V_{w_n}$. Thus, $\V\setminus\{[w]\}=\bigcup_{n=1}^{\infty}K_n$, and the latter set is connected, hence is a leaf.
		
\emph{Case 2.} Suppose that there exists $n_0\in\N$ such that $\V\setminus\V_{w_{n_0}}$ has $2$ components, denoted by  $K_{n_0}^1,K_{n_0}^2$. We claim that for all $n\geq n_0$, $\V\setminus\V_{w_n}$ has $2$ components $K_n^1$, $K_n^2$, such that $K_{n_0}^1\subset K_n^1$ and $K_{n_0}^2\subset K_n^2$. Assume towards contradiction that there exists $m\geq n_0$ such that the claim doesn't hold, and let $U_m=\overline{\V\setminus\V_{w_m}}$. By Lemma \ref{lem: valenve of branch points of Vicsek}(ii), $U_m$ has $1$ or $2$ components. If it has $1$ component, then $K_{n_0}^1\cup K_{n_0}^2\subset U_m$, since $(\overline{\V\setminus\V_{w_n}})_{n\in\N}$ is an increasing sequence. This is a contradiction, since $U_m$ is a connected set, and $K_{n_0}^1$, $K_{n_0}^2$ are disjoint. If $U_m$ has $2$ components, say $U_m^1,U_m^2$, then, similarly, $K_{n_0}^1\cup K_{n_0}^2$ would have to lie in only one of them. But then, $U_m$ would have at least three components, which is  contradiction. Moreover, let $U_k=\overline{\V\setminus\V_{w_k}}$ for $k\in\{1,\dots, n_0-1\}$. Note that $(U_k)_{k\in \N}$ is an increasing sequence of  connected sets, since $n_0$ is minimal. Hence the set $\bigcup_{k=1}^{n_0-1}U_k$ lies in either $K_{n_0}^1$ or $K_{n_0}^2$. It follows that 
$$\V\setminus \{[w]\}=\bigcup_{n=n_0}^{\infty}K_n^1\cup\bigcup_{n=n_0}^{\infty}K_n^2.$$ 
Since $K^i:=\bigcup_{n=n_0}^{\infty}K_n^i$, $i\in\{1,2\}$, are connected and disjoint, it follows that $K^1,K^2$ are  all the components of $\V\setminus\{[w]\}$, which implies that $[w]$ is a double point. 
		
For the last claim, we show that two of the components (in fact the ones with the smallest diameter) are $\bigcup_{n=0}^{\infty}\V_{u5^{(n)}i}$ and $\bigcup_{n=0}^{\infty}\V_{u5^{(n)}j} $, where $(i,j)\in\{(1,3),(2,4)\}$. By Lemma \ref{lem: valenve of branch points of Vicsek}(iii), one of the components of $\overline{\V\setminus\V_{w5}}$ is $\V_{wi}$ for $i\in\{1,2,3,4\}$. Suppose $\V_{wi}=\V_{w1}$, and other cases are similar. Let $B$ be the component of $\overline{\V\setminus\V_{w55}}$ for which $\V_{w1}\subset B$. Moreover, the fact that the set $\V_{w51}$ is a component of $\overline{\V_{w5}\setminus\V_{w55}}$, along with $\V_{w1}\cap\V_{w51}=\{[w13^{(\infty)}]\}$, imply that $\V_{w1}\cup\V_{w51}\subset B$. On the other hand, note that 
\[\overline{\V_{w5}\setminus\V_{w55}}=\overline{(\V\setminus\V_{w55})\setminus(\V\setminus\V_{w5})},\]
and that one of the components of the latter set is $\overline{B\setminus\V_{w1}}$. Since $\V_{w51}\subset B$, it follows that $B=\V_{w1}\cup\V_{w51}$. The rest of the claim follows similarly by applying Lemma \ref{lem: valenve of branch points of Vicsek}(i) and the fact that for each $n\in\N$, $\V_{u5^{(n)}}\setminus\V_{u5^{(n+1)}}=\bigcup_{i\in\{1,2,3,4\}}\V_{u5^{(n)}i}$. Note that by Lemma \ref{lem: valenve of branch points of Vicsek}(iii), it can be similarly shown that the rest of the components have larger diameter. Hence, 
\[H_{\V}(p)=\diam B_{\V}^4(p)=\sqrt{2}\sum_{n=0}^{\infty}3^{-|u|-1-n}= 3^{-|u|-1}\sqrt{2}\sum_{n=0}^{\infty}3^{-n}= \tfrac1{\sqrt{2}}3^{-|u|}. \qedhere\]
\end{proof}
	
\begin{proof}[Proof of Proposition \ref{prop: Vicsek universal}]
By Lemma \ref{lem:branch pts of Vicsek}, it follows that every branch point of $\V$ has valence $4$, and that $\V$ has uniform branch growth. It remains to show uniform branch separation and uniform density. 
		
To show uniform branch separation, fix $v,w\in A^*$ with $v\neq w$, and let $[v5^{(\infty)}]$, $[w5^{(\infty)}]$ be two branch points of $\V$. We may assume that $v,w$ are not ending with 5. Let $u\in A^*$ be the longest common initial word of $v$ and $w$. Then $v=uv'$ and $w=uw'$ with $w',v'\in A^*$ and $w'(1)\neq v'(1)$. Without loss of generality, assume that $|w'|\geq |v'|$, and that $w'\neq\epsilon$. Since $\phi_{w'}([5^{(\infty)}])\in\phi_{w'}([-1,1]^2)$ and $\phi_{v'}([5^{\infty)}])\in\phi_{v'}([-1,1]^2)$, an elementary geometric estimate shows that 
\[ |\phi_{v'}([5^{(\infty)}])-\phi_{w'}([5^{(\infty)}])|\geq \tfrac123^{-|w'|}.\] 
Hence, we have
\begin{align*}
|\phi_v([5^{(\infty)}]) - \phi_w([5^{(\infty)}])| &= |\phi_u(\phi_{v'}([5^{(\infty)}]) - \phi_u(\phi_{w'}([5^{(\infty)}]))| \\
&= 3^{-|u|} |\phi_{v'}([5^{(\infty)}]) - \phi_{w'}([5^{(\infty)}])| \\
&\geq \tfrac12 3^{-|u|}3^{-|w'|}\\
&\gtrsim \min \{ H_{\V}([v5^{(\infty)}]), H_{\V}([w5^{(\infty)}]) \}.
\end{align*}
		
To show uniform density, let $[w],[v]\in \mathbb{V}$ with $[w]\neq [v]$. Let $u\in A^*$ be the longest common initial word of $v$ and $w$, that is, $w=uw'$ and $v=uv'$ with $w',v'\in A^{\N}$ and $w'(1)\neq v'(1)$. Hence, $[w]\in \mathbb{V}_{uw'(1)}$ and $[v]\in \mathbb{V}_{uv'(1)}$. Note that for each $n\in\N$, the unique combinatorial arc that connects $uw'(n)$ with $uv'(n)$ passes through $u5^{(n)}$. Therefore, by \cite[Claim 3.19]{DV}, the unique arc in $\V$ that joins $[w]$ with $[v]$ contains the branch point $p=[u5^{(\infty)}]$, which satisfies
\begin{align*}
H_{\mathbb{V}}(p) =\tfrac1{\sqrt{2}}3^{-|u|}\gtrsim 3^{-|u|}|[w']-[v']|=|[w]-[v]|.
\end{align*}
This completes the proof of the proposition.
\end{proof}

\section{Ahlfors regularity and dimensions}\label{sec:dimension}
	
In this section we study the dimensions of trees $\T^{m,\ba}$ where $m\in\{2,3,\dots\}\cup\{\infty\}$. Recall that a metric space $X$ is \emph{$s$-Ahlfors regular} if there exists a measure $\mu$ on $X$ and a constant $C>1$ such that 
\[ C^{-1}r^s \leq \mu(B(x,r)) \leq C r^s, \qquad \text{for all $x\in X$ and $r\in (0,\diam{X})$.}\]
	
\begin{prop}\label{prop:doubling}
Let $\ba$ be a weight. For each $m\in\{2,3,\dots\}$ define $\Psi_{m,\ba}(t) = \ba(1)^t + \cdots + \ba(m)^t$. 
\begin{enumerate}[(i)]
\item If $m\in \{2,3,\dots\}$, then $\mathbb{T}^{m,\ba}$ is Ahlfors $s$-regular where $s$ is the unique solution of the equation $\Psi_{m,\ba}(s) = 1$.
\item Suppose that there exists $s>0$ such that $\lim_{m\to\infty}\Psi_{m,\ba}(s) < 1$. Then the Hausdorff dimension of $\T^{\infty,\ba}$ is at most $s$.
\end{enumerate} 
\end{prop}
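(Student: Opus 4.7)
The plan is to prove the two parts separately: part (i) by constructing an explicit $s$-Ahlfors regular measure via the self-similar structure of $\T^{m,\ba}$, and part (ii) by a direct Hausdorff covering estimate. For (i), first observe that $\Psi_{m,\ba}$ is continuous and strictly decreasing on $(0,\infty)$, with $\Psi_{m,\ba}(0^+) = m > 1$ and $\lim_{t\to\infty}\Psi_{m,\ba}(t) = 0$, so there is a unique $s > 0$ with $\Psi_{m,\ba}(s) = 1$. Writing $A = \{1,\dots,m\}$, let $\mu$ be the pushforward under the quotient map $\pi : A^{\N} \to \T^{m,\ba}$ of the Bernoulli probability measure on $A^{\N}$ with weights $(\ba(i)^s)_{i \in A}$. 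Since the equivalence classes under $\sim$ are finite by Lemma \ref{lem: branch points belong in the same class} and the Bernoulli measure is atomless, one obtains $\mu(\T^{m,\ba}_w) = \D_{\ba}(w)^s = \diam(\T^{m,\ba}_w)^s$ for every $w \in A^*$, using Corollary \ref{cor: diameters of tiles}.

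It then remains to show $\mu(B(x,r)) \simeq r^s$ for $x \in \T^{m,\ba}$ and $r \in (0,1]$. For the lower bound, write $x = [u]$ and let $k \geq 1$ be the smallest integer with $\D_{\ba}(u(k)) \leq r/2$; then $\D_{\ba}(u(k)) \geq \ba(m) r/2$ and $x \in \T^{m,\ba}_{u(k)} \subset B(x,r)$, so $\mu(B(x,r)) \geq (\ba(m)/2)^s r^s$. For the upper bound, consider the stopping-level family $\mathcal{W}(r) = \{w \in A^* : \D_{\ba}(w) \leq r < \D_{\ba}(w(|w|-1))\}$; its tiles cover $\T^{m,\ba}$ and each has diameter in $[\ba(m) r, r]$. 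The key claim is that the cardinality of $\{w \in \mathcal{W}(r) : \T^{m,\ba}_w \cap B(x,r) \neq \emptyset\}$ is bounded by a constant $N = N(m, \ba)$, from which $\mu(B(x,r)) \leq N r^s$ follows upon summing the tile measures. This cardinality estimate rests on the doubling property of $\T^{m,\ba}$ (provided by Lemma \ref{lem:doubling} together with the uniform branch separation of Proposition \ref{prop:valence}) combined with Lemma \ref{lem:neighbortiles}, so that distinct tiles in $\mathcal{W}(r)$ can meet only at branch points of height $\simeq r$, and only boundedly many such points fit inside $B(x, 2r)$.

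For part (ii), since $\ba(i) \leq 1/2$ for every $i \in \N$, we have $\diam(\T^{\infty,\ba}_w) = \D_{\ba}(w) \leq 2^{-k}$ for every $w \in \N^k$, and the family $\{\T^{\infty,\ba}_w\}_{w \in \N^k}$ covers $\T^{\infty,\ba}$. Consequently,
\[ \sum_{w \in \N^k}\diam(\T^{\infty,\ba}_w)^s = \left(\sum_{i=1}^{\infty} \ba(i)^s\right)^k = \bigl(\lim_{m\to\infty}\Psi_{m,\ba}(s)\bigr)^k, \]
and under the hypothesis this tends to $0$ as $k \to \infty$. Hence $\mathcal{H}^s_{2^{-k}}(\T^{\infty,\ba}) \to 0$, which gives $\mathcal{H}^s(\T^{\infty,\ba}) = 0$ and $\dim_H \T^{\infty,\ba} \leq s$.

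The main obstacle I expect is the upper bound in part (i): because the tile intersections $\T^{m,\ba}_i \cap \T^{m,\ba}_j$ are non-empty single points rather than empty (the open set condition holds, but strong separation fails), one cannot invoke classical Moran--Hutchinson Ahlfors regularity theorems off the shelf in this metric setting, and the above cardinality bound must be extracted carefully from the combinatorial and geometric structure established in Sections \ref{sec:construction}--\ref{sec:branching}.
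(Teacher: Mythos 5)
Your proposal is correct, and in part (i) it largely mirrors the paper: the measure is the same pushforward of the Bernoulli measure with weights $\ba(i)^s$, the identity $\pi\#\mu(\T^{m,\ba}_w)=\D_{\ba}(w)^s$ is obtained the same way (what you really need here is that the set of non-injective words is \emph{countable}, which is what the explicit description in Lemma \ref{lem: branch points belong in the same class} gives, cf.\ Lemma \ref{lem:injfull}; finiteness of classes plus atomlessness alone is not quite the statement), and your lower bound is the paper's lower bound. The upper bound is where you genuinely diverge: the paper fixes the level $n$ with $\D_{\ba}(w(n+1))\le (2c)^{-1}r<\D_{\ba}(w(n))$ and bounds $\pi\#\mu(B(x,r))$ by the measure of $\T^{m,\ba}_{w(n)}$ and its at most $2m$ same-level neighbours (via Lemma \ref{lem: bdry Tu} and Lemma \ref{lem:neighbortiles}), while you use the stopping-time antichain $\mathcal{W}(r)$ plus a packing argument; your route is legitimate and not circular, since doubling of $\T^{m,\ba}$ already follows from Lemma \ref{lem:doubling} together with Propositions \ref{prop:geodesic} and \ref{prop:valence}. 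Two points in your sketch should be tightened, though neither is fatal: (a) distinct tiles of $\mathcal{W}(r)$ meet at branch points of height $\gtrsim r$, not $\simeq r$ --- the common point is $[v12^{(\infty)}]$ for the longest common prefix $v$, whose height $\ba(3)\D_{\ba}(v)$ can be much larger than $r$ (only the lower bound is used, so this is harmless); and (b) counting such branch points in $B(x,2r)$ does not by itself count tiles: you should add that every tile of $\mathcal{W}(r)$ meeting $B(x,r)$ and not containing $x$ has a boundary point on the geodesic from $x$ into the tile, hence a branch point of height $>\ba(3)r$ lying in $B(x,r)$ (Lemmas \ref{lem: bdry Tu} and \ref{lem:height formula}), that these points are $\gtrsim r$-separated by uniform branch separation and hence boundedly many by doubling, and that each such point lies in at most $m$ tiles of the antichain because its class consists of the $m$ words $v12^{(\infty)},v21^{(\infty)},\dots,vm1^{(\infty)}$; Lemma \ref{lem:neighbortiles}, which compares diameters of adjacent \emph{same-level} tiles, is not really the relevant tool for the mixed-level family $\mathcal{W}(r)$.

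For part (ii) your argument is a genuinely different and slightly cleaner route: you cover $\T^{\infty,\ba}$ directly by the countably many level-$k$ tiles of the infinite alphabet and get $\mathcal{H}^s(\T^{\infty,\ba})=0$, whereas the paper approximates $\T^{\infty,\ba}$ by $\T^{m,\ba}$ in the Hausdorff distance (Lemma \ref{lem:isom-embed}) and inflates finite-alphabet tiles to balls, concluding only $\mathcal{H}^s(\T^{\infty,\ba})\le 4^s<\infty$; both yield $\dim_H\T^{\infty,\ba}\le s$, and your version avoids the approximation step at no cost.
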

	
We start with the second claim of the proposition.
	
\begin{proof}[{Proof of Proposition \ref{prop:doubling}(ii)}]
Fix $\d>0$. By Lemma \ref{lem:isom-embed} and our assumption, there exist integers $m,n\geq 2$ such that $2^{-n}<\delta/4$, $\dist_H(\T^{m,\ba},\T^{\infty,\ba}) < \d/2$, and $\Psi_{m,\ba}(s)<1$. Fix for each $w\in\{1,\dots,m\}^n$ a point $x_w\in \T^{m,\ba}$. Since $\T^{m,\ba}_w \subset B(x_w,2\D_{\ba}(w))$ we have that
\[ \{ B(x_w,2\D_{\ba}(w)) : w\in\{1,\dots,m\}^n \}\]
is an open cover of $\T^{\infty,\ba}$ by balls with diameters $4\D_{\ba}(w) \leq 4(1/2)^n < \delta$. Therefore,
\begin{align*}
\mathcal{H}^s_{\delta}(\T^{\infty,\ba}) \leq \sum_{w\in \{1,\dots,m\}^n} (4\D_{\ba}(w))^s&= 4^s\sum_{i_1,\dots,i_n \in \{1,\dots,m\}}\ba(i_1)^s\cdots\ba(i_n)^s \\
&=4^s \left( \Psi_{m,\ba}(s) \right)^n\\ 
&< 4^s.
\end{align*}
Thus, as $\delta\to 0$, we get that $\mathcal{H}^s(\T^{\infty,\ba})\leq 4^s < \infty$, so the Hausdorff dimension of $\T^{\infty,\ba}$ is at most $s$.
\end{proof}
	
We now turn to the proof of Proposition \ref{prop:doubling}(i). Fix for the rest of this section an integer $m\in \{2,3,\dots\}$ and set $A=\{1,\dots,m\}$. Let $s$ be the unique solution of the equation $\Psi_{m,\ba}(t) = 1$. Then for each $n\in \N$
\[ \sum_{w\in A^n}\D_{\ba}(w)^s = \sum_{i_1,\dots,i_n \in \{1,\dots,m\}}\ba(i_1)^s\cdots\ba(i_n)^s = \left( \Psi_{m,\ba}(s)\right)^n =1.\]
	
Denote by $\Sigma$ the $\sigma$-algebra generated by the cylinders $A^{\N}_{w}$ where $w\in A^{*}$. There exists a unique probability measure $\mu:\Sigma \to [0,1]$ such that for all $w\in A^{*}$,
\[ \mu(A^{\N}_{w})= \D_{\ba}(w)^s, \]
(e.g. see \cite[\textsection 3.1]{Str93}). Define also the projection map $\pi : A^{\N} \to \T^{m,\ba}$ given by $\pi(w) = [w]$.
	
\begin{lem}\label{lem: non-injective words}
The $\sigma$-algebra generated by the sets $\T^{m,\ba}_w$ is the same as the Borel $\sigma$-algebra on $\T^{m,\ba}$.
\end{lem}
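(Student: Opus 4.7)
The plan is to prove the two inclusions of $\sigma$-algebras separately, using on one hand that each tile is compact, and on the other hand that tile diameters shrink uniformly to zero.

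First, I would establish the easy inclusion: the $\sigma$-algebra generated by $\{\T^{m,\ba}_w : w\in A^*\}$ is contained in the Borel $\sigma$-algebra. This follows because each tile $\T^{m,\ba}_w = \phi_w^{A,\ba}(\T^{m,\ba})$ is the continuous image of a compact metric space (by Lemma~\ref{lem:ss} and the compactness of $\T^{m,\ba}$ established in Lemma~\ref{lem:metrictree}), hence compact and in particular closed, hence Borel.

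For the reverse inclusion, it suffices to show that every open subset of $\T^{m,\ba}$ lies in the $\sigma$-algebra generated by the tiles. The key observation is that $A^{*}$ is countable and that $\diam \T^{m,\ba}_w = \Delta_{\ba}(w)\leq 2^{-|w|}$ by Corollary~\ref{cor: diameters of tiles} and the definition of a weight. Given an open set $U\subset \T^{m,\ba}$ and a point $x\in U$, I would pick some representative $v\in A^{\N}$ with $[v]=x$, choose $r>0$ with $B(x,r)\subset U$, and then select $n$ large enough that $\Delta_{\ba}(v(n))<r$. Since $x\in \T^{m,\ba}_{v(n)}$ and $\diam \T^{m,\ba}_{v(n)}<r$, we obtain $\T^{m,\ba}_{v(n)}\subset B(x,r)\subset U$. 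Therefore
\[ U = \bigcup \{ \T^{m,\ba}_w : w\in A^*,\ \T^{m,\ba}_w \subset U\},\]
which is a countable union of generators and hence lies in the generated $\sigma$-algebra. Since the Borel $\sigma$-algebra on $\T^{m,\ba}$ is generated by the open sets, this completes the reverse inclusion.

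I do not expect a serious obstacle: both directions are short, and the argument only relies on (i) compactness of each tile, which was already proved, and (ii) the uniform decay $\diam \T^{m,\ba}_w\to 0$ as $|w|\to \infty$, which is recorded in Corollary~\ref{cor: diameters of tiles}. The closest thing to a subtlety is making sure that a given $x\in U$ actually has some representative $v\in A^{\N}$ for which the initial subwords $v(n)$ yield tiles that witness $x\in \T^{m,\ba}_{v(n)}$; but this is immediate from the definition of $\pi$ and $\T^{m,\ba}_w=\pi(A^{\N}_w)$, so any representative $v$ of $x$ works.
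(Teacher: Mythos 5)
Your proof is correct and follows essentially the same route as the paper: the forward inclusion because tiles are compact (hence Borel), and the reverse inclusion by writing an open set as the countable union of the tiles it contains, using the countability of $A^*$ and the fact that $\diam \T^{m,\ba}_{w(n)}\to 0$. The only cosmetic difference is that the paper carries out this argument for balls $B(x,r)$ (which generate the Borel $\sigma$-algebra) rather than for arbitrary open sets.
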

	
\begin{proof}
Let $\mathcal{A}$ denote the $\sigma$-algebra generated by the sets $\T^{m,\ba}_w$, and let $\mathcal{B}(\T^{m,\ba})$ be the Borel $\sigma$-algebra on $\T^{m,\ba}$. That $\mathcal{A}\subset \mathcal{B}(\T^{m,\ba})$ is clear, as each $\T^{m,\ba}_w$ is a Borel set. For the reverse inclusion, fix $x\in \T^{m,\ba}$ and $r>0$. Let $W(x,r)=\{w\in A^*: \T^{m,\ba}_w\subset B(x,r)\}$ and note that $W(x,r)$ is a countable set since $A^*$ is countable, and so $\bigcup_{w\in W(x,r)} \T^{m,\ba}_w\in \mathcal{A}$. Furthermore for each point $y\in B(x,r)\cap \T^{m,\ba}$ there is a word $w\in A^\N$ with $[w]=y$, and since $B(x,r)\cap \T^{m,\ba}$ is an open set and $\lim_{n\to\infty}\diam(\T^{m,\ba}_{w(n)})=0$ there is some $n\in \N$ large enough that $w(n)\in W(x,r)$. Therefore, $\bigcup_{w\in W(x,r)} \T^{m,\ba}_w=B(x,r)\cap \T^{m,\ba}$.
\end{proof}
	
We say a word $w\in A^\N$ is \emph{non-injective} if there exists $u\in A^{\N}\setminus \{w\}$ such that $[w]= [u]$. 
	
\begin{lem}\label{lem:injfull}
The set $\mathcal{N}$ of non-injective words is a $\mu$-null set.
\end{lem}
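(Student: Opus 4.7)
The plan is to invoke Lemma \ref{lem: branch points belong in the same class}, which gives a complete description of the non-injective words: $w \in A^{\N}$ is non-injective precisely when $w$ has one of the forms $u12^{(\infty)}$ with $u \in A^*$, or $uj1^{(\infty)}$ with $u \in A^*$ and $j \in \{2,\dots,m\}$. Thus
\[ \mathcal{N} = \{u12^{(\infty)} : u\in A^*\} \cup \bigcup_{j=2}^{m}\{uj1^{(\infty)} : u\in A^*\}. \]
Since $m$ is finite and $A^*$ is countable, $\mathcal{N}$ is a countable union of singletons in $A^{\N}$.

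It therefore suffices to show that each singleton $\{w\} \subset A^{\N}$ satisfies $\mu(\{w\}) = 0$. For any $n \in \N$ we have $\{w\} \subset A^{\N}_{w(n)}$, so by the defining property of $\mu$,
\[ \mu(\{w\}) \leq \mu(A^{\N}_{w(n)}) = \D_{\ba}(w(n))^s \leq 2^{-ns}, \]
using that $\ba(i) \leq 1/2$ for every $i \in A$. Letting $n \to \infty$ gives $\mu(\{w\}) = 0$, and countable subadditivity then yields $\mu(\mathcal{N}) = 0$.

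The argument contains no real obstacle: the work has already been done in Lemma \ref{lem: branch points belong in the same class}, which tightly controls when two distinct infinite words can represent the same point of $\T^{m,\ba}$, and in the construction of $\mu$ via cylinder masses. The essential point is simply that the set of coincidences is countable while $\mu$ is atomless.
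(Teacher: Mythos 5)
Your proof is correct and follows essentially the same route as the paper: both arguments reduce the lemma to the observation that every non-injective word lies in the countable set $\{u12^{(\infty)} : u\in A^*\}\cup\{uj1^{(\infty)} : u\in A^*,\ j\geq 2\}$ (you via Lemma \ref{lem: branch points belong in the same class}, the paper directly via Lemma \ref{lem: preimage 12^00}) and then conclude by countable subadditivity, with your explicit estimate $\mu(\{w\})\leq \D_{\ba}(w(n))^s\leq 2^{-ns}$ merely spelling out the atomlessness step the paper leaves implicit.
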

	
\begin{proof}
Suppose that $w$ is a non-injective word. Then there exists $u\in A^{\N}\setminus \{w\}$ such that $[w]=[u]$. There exist $v\in A^*$ and $j,j' \in A$ such that $j\neq j'$, $w\in A^{\N}_{vj}$, and $u\in A^{\N}_{vj'}$. Since $[w]=[u]$, it follows that $[w] \in \T^{m,\ba}_{vj}\cap \T^{m,\ba}_{vj'} \neq \emptyset$. However, by Lemma \ref{lem: preimage 12^00}(i) we have that $[w]=[v12^{(\infty)}]$ and by Lemma \ref{lem: preimage 12^00}(ii) we have that $w\in \{v12^{(\infty)}, v21^{(\infty)}, \dots, vm1^{(\infty)}\}$. Therefore, 
\[\mathcal{N} \subset \{v12^{(\infty)} : v\in A^*\}\cup\{vj1^{(\infty)} : v\in A^*, \, j\in A\} \]
which implies that $\mathcal{N}$ is countable. Therefore, $\mu(\mathcal{N})=0$.
\end{proof}
	
We now prove the first claim of Proposition \ref{prop:doubling}.
	
\begin{proof}[{Proof of Proposition \ref{prop:doubling}(i)}]
Set $c=\min_{i=1,\dots,m}\ba(i)$. We claim that there exists $C>1$ depending only on $m$, $c$, $s$ such that the pushforward measure $\pi\#\mu$ defined on $\mathcal{B}(\T^{m,\ba})$ by Lemma \ref{lem: non-injective words} satisfies
\begin{equation}\label{eq:reg}
C^{-1}r^{s} \leq \pi\#\mu(B(x,r)) \leq C r^{s}, \qquad \text{for all $x\in \T^{m,\ba}$ and $r\in (0,2c)$.}
\end{equation}
		
First, for any $w\in A^*$, we have $A^\N_{w} \subset \pi^{-1}(\T^{m,\ba}_w)\subset\mathcal{N}\cup A^\N_{w}$, where $\mathcal{N}$ is the set of non-injective words in $A^{\N}$. Hence, by Lemma \ref{lem:injfull} for any $w\in A^*$,
\[ \pi\#\mu(\T^{m,\ba}_w)=\D_{\ba}(w)^{s}.\]
Fix for the rest of the proof a point $x\in \T^{m,\ba}$, a radius $r\in (0,2c)$, and a word $w\in A^\N$ such that $[w]=x$. 
		
For the upper bound of \eqref{eq:reg}, let $n\in\N$ such that 
\[\D_{\ba}(w(n+1)) \leq (2c)^{-1}r < \D_{\ba}(w(n)).\]
By Lemma \ref{lem: preimage 12^00}(ii) and Lemma \ref{lem: bdry Tu}, if $w_1,\dots,w_k$ are exactly the words in $A^n\setminus \{w(n)\}$ such that $\T^{m,\ba}_{w}\cap \T^{m,\ba}_{w_i}\neq \emptyset$, then $k\leq 2m$. Moreover, by \eqref{eq:neighbortiles}, for each $i=1,\dots,k$,
\[ r < 2c\diam{\T^{m,\ba}_{w(n)}} \leq \diam{\T^{m,\ba}_{w_i}} \leq (2c)^{-1} \diam{\T^{m,\ba}_{w(n)}}.\]
Therefore, since $r> (2c)\D_{\ba}(w(n+1)) > (2c^2)\D_{\ba}(w(n))$,
\begin{align*}
\pi\#\mu(B(x,r)) \leq \pi\#\mu(\T^{m,\ba}_{w(n)}) + \sum_{i=1}^k\pi\#\mu(\T^{m,\ba}_{w_i})&\leq (1+2m(2c)^{-s})\D_{\ba}(w(n))^s\\
&\leq (1+2m(2c)^{-s})(2c^2)^{-s}r^s.
\end{align*}
		
For the lower bound of \eqref{eq:reg}, let $n\in\N$ such that 
\[\D_{\ba}(w(n+1)) < r \leq \D_{\ba}(w(n)).\]
Then,
\[\pi\#\mu(B(x,r)) \geq \pi\#\mu(\T^{m,\ba}_{w(n+1)}) = \D_{\ba}(w(n+1))^{s} \geq c^{s}r^s .\qedhere\]
\end{proof}
	
\section{Quasisymmetric uniformization of uniformly $m$-branching trees}\label{sec:unif}
	
In this section we prove the following quantitative version of Theorem \ref{thm:uniformization}. 
	
\begin{thm}\label{thm:uniformization2}
Let $m\in\{2,3,\dots\}$ and let $\ba$ be a weight. A metric space $T$ is a uniformly $m$-branching quasiconformal tree if and only if it is quasisymmetrically equivalent to $\mathbb{T}^{m,\ba}$.
\end{thm}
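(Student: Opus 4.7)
\emph{Strategy.} The two implications should be handled separately. The ``only if'' direction is a direct compilation of quasisymmetric invariance results already proven in the paper and in \cite{BM22}. The ``if'' direction is the substantive one; my plan is to follow the quasi-visual subdivision machinery of \cite{BM22}: produce isomorphic quasi-visual subdivisions of $T$ and of $\T^{m,\ba}$, and then invoke Proposition \ref{prop: qs match}.

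\emph{Only if.} Suppose $T$ is quasisymmetrically equivalent to $\T^{m,\ba}$. By Proposition \ref{prop:valence}, $\T^{m,\ba}$ is uniformly $m$-branching. Doubling and bounded turning are quantitatively preserved under quasisymmetries, and valence is a topological invariant, so $T$ is a quasiconformal tree whose branch points all have valence $m$. Uniform branch separation and uniform branch density are quasisymmetric invariants by \cite[Lemmas 4.3, 4.5]{BM22}, and uniform branch growth is preserved by Lemma \ref{lem:qs maps preserve comparability of branches}. Hence $T$ is uniformly $m$-branching.

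\emph{If.} Assume $T$ is uniformly $m$-branching and set $A=\{1,\dots,m\}$. For $\T^{m,\ba}$ the natural candidate is $\mathbf{Y}^n = \{\T^{m,\ba}_w : w\in A^n\}$. The subdivision axioms of Definition \ref{Subdivison} are immediate from Lemma \ref{lem:ss}, while the quasi-visual axioms of Definition \ref{def: qv approx} follow from Corollary \ref{cor: diameters of tiles} (giving $\diam \T^{m,\ba}_w = \D_{\ba}(w)$), Lemma \ref{lem:neighbortiles} (giving diameter comparability among touching tiles at the same level and across consecutive levels), and the elementary estimate $\D_{\ba}(wi) \leq \tfrac12 \D_{\ba}(w)$ (giving geometric decay with $\lambda=1/2$ and $k_0=1$). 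For $T$, I would construct $(\mathbf{X}^n)$ inductively in the spirit of \cite[Section 8]{BM22}. Starting with $\mathbf{X}^0=\{T\}$, for each tile $X\in\mathbf{X}^n$ uniform branch density supplies a branch point $p_X\in X$ with $H_T(p_X)\simeq \diam X$; since $T$ is $m$-valent, $p_X$ splits $X$ into exactly $m$ subcontinua, which become the children of $X$. These are labeled $1,\dots,m$ with the two largest assigned $1,2$ and the remaining $m-2$ branches ordered by decreasing diameter, so as to mirror the canonical ordering of tiles of $\T^{m,\ba}$ around the branch point $[w12^{(\infty)}]$ in Lemma \ref{lem:height formula}. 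Uniform branch growth pins the relative diameters of the smaller branches to ratios comparable to $\ba(i)/\ba(j)$, and uniform branch separation guarantees axiom (ii) of Definition \ref{def: qv approx}. By construction $(\mathbf{X}^n)$ and $(\mathbf{Y}^n)$ are isomorphic subdivisions, and Proposition \ref{prop: qs match} then produces the desired quasisymmetric homeomorphism $F: T \to \T^{m,\ba}$.

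\emph{Main obstacle.} The delicate step will be verifying the geometric decay axiom (iv) of Definition \ref{def: qv approx} for the subdivision of $T$: even with $H_T(p_X)\simeq \diam X$, one of the two ``large'' branches at $p_X$ could a priori still carry diameter essentially equal to $\diam X$, preventing any single refinement from strictly shrinking tiles. Overcoming this requires iterating the refinement for a bounded number $k_0$ of generations and leveraging uniform branch separation together with bounded turning to force strict shrinkage; this is precisely the kind of argument carried out for the CSST in \cite{BT_CSST} and \cite{BM22}, and the present setting requires the additional uniform branch growth hypothesis to handle the $m-2$ smaller branches uniformly when $m\geq 4$. A secondary technical point is checking that the combinatorial isomorphism between $(\mathbf{X}^n)$ and $(\mathbf{Y}^n)$ respects incidences of sibling tiles; this should follow from the tree structure of $T$ together with Lemma \ref{lem: preimage 12^00} and Lemma \ref{lem:height formula}.
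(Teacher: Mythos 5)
Your ``only if'' direction is correct and coincides with the paper's. The converse, however, is where you diverge from the paper, and as written your plan has genuine gaps. The paper never uses the self-similar level-$n$ tiles of $\T^{m,\ba}$ as its subdivision: it decomposes $T$ by the height-threshold vertex sets $\textbf{V}^n=\{p:\ H_T(p)\ge\delta^n\}$, so that \emph{every} level-$n$ tile automatically has diameter $\simeq\delta^n$ (Proposition \ref{prop: quasi-visual subdivision}), and then builds an isomorphic quasi-visual subdivision of $\T^{m,\ba}$ — with the special weight $\ba\equiv\tfrac12$, which is permissible since all weights give uniformly $m$-branching trees — via tile-homeomorphisms whose image tiles have controlled word-length (Proposition \ref{prop:isomorphic subd}), before invoking Proposition \ref{prop: qs match}. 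Your scheme inverts this: you freeze the standard tiles on $\T^{m,\ba}$ and subdivide $T$ by one branch point per tile per generation. The main gap is that your construction of $(\mathbf{X}^n)$ is underspecified, and for an arbitrary admissible choice of $p_X$ the axioms (i), (iii) and especially (iv) of Definition \ref{def: qv approx} simply fail. Uniform density only guarantees that a branch point of height $\gtrsim\diam X$ exists \emph{on a prescribed arc}; nothing in your rule forces $p_X$ to lie on an arc joining nearly diametral points of $X$. If $x,y\in X$ realize $\diam X$ and $p_X\notin[x,y]$, then both $x$ and $y$ land in a single child, whose diameter equals $\diam X$, and this can recur at every generation; hence no bounded number $k_0$ of refinements produces the decay factor $\lambda<1$, and your proposed remedy (``iterate $k_0$ generations, use separation and bounded turning'') cannot work without a concrete selection rule such as ``branch point of maximal height on the arc between the marked leaves/boundary points'' as in \cite{BT_CSST}. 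Even with such a rule, axiom (i) is not automatic: two level-$n$ tiles that meet at a vertex created at level $k<n$ descend through different parents, and their diameters can drift apart by a factor exponential in $n-k$; controlling this is exactly what the paper's $\delta^n$-threshold decomposition is designed to make trivial, at the cost of a variable number of children per tile, which is then absorbed on the $\T^{m,\ba}$ side by allowing image tiles of variable level with the bookkeeping properties (C), (D) in the proof of Proposition \ref{prop:isomorphic subd}.

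Two further points would also need repair. First, your labeling rule ``the two largest children get labels $1,2$'' is incompatible with the isomorphism you want: in $\T^{m,\ba}$ the children of $\T_w$ carrying its boundary points are precisely $\T_{w1}$ and $\T_{w2}$ (Lemma \ref{lem: bdry Tu}), so in $T$ the labels $1,2$ must go to the two children of $X$ containing $\partial X$ (the marked leaves), and these need not be the two branches of largest diameter at $p_X$; labeling by size alone can destroy the required incidence pattern with neighboring tiles. Second, for a general weight, axiom (ii) of Definition \ref{def: qv approx} for $\mathbf{Y}^n=\{\T^{m,\ba}_w:\ |w|=n\}$ — disjoint same-level tiles separated proportionally to the \emph{larger} diameter — does not follow from Corollary \ref{cor: diameters of tiles} or Lemma \ref{lem:neighbortiles}, which you cite; it requires its own argument (e.g., via Lemma \ref{lem:dist of branch points} together with the boundary structure in Lemma \ref{lem: bdry Tu}), since same-level tiles can have wildly different diameters when $\ba(m)$ is small. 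In short, the route you sketch is closer to the Bonk--Tran treatment of the CSST than to the paper's proof, and it could likely be carried out, but only after specifying the branch-point selection and marked-leaf labeling and proving the quasi-visual axioms for both subdivisions; none of these is supplied by the lemmas you invoke.
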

	
By Proposition \ref{prop:geodesic} and Proposition \ref{prop:valence}, $\mathbb{T}^{2,\ba}$ is a geodesic metric arc, hence isometric to the (Euclidean) unit interval $[0,1]$. Therefore, the case $m=2$ in Theorem \ref{thm:uniformization2} follows by the quasisymmetric uniformization of quasi-arcs by Tukia and V\"ais\"al\"a \cite{TV80}. Thus, we may assume for the rest of this section that $m\geq 3$. 
	
One direction of Theorem \ref{thm:uniformization2} is simple and we record it as a lemma.
	
\begin{lem}
Suppose that $T$ is quasisymmetrically equivalent to $\mathbb{T}^{m,\ba}$ for some $m\in\{3,4,\dots\}$ and some weight $\ba$. Then, $T$ is a uniformly $m$-branching quasiconformal tree.
\end{lem}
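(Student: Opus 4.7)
The plan is to observe that every property in the definition of ``uniformly $m$-branching quasiconformal tree'' is preserved, quantitatively, under quasisymmetric maps, and then to apply this to the model space $\mathbb{T}^{m,\ba}$, whose membership in this class has already been established earlier in the paper.

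More concretely, I would first record that $\mathbb{T}^{m,\ba}$ itself satisfies all the required properties: it is bounded turning by Lemma \ref{lem:metrictree}, it is doubling (indeed Ahlfors regular) by Proposition \ref{prop:doubling}(i), and it is uniformly $m$-branching by Proposition \ref{prop:valence}. Hence $\mathbb{T}^{m,\ba}$ is itself a uniformly $m$-branching quasiconformal tree.

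Next, given a quasisymmetric homeomorphism $f : \mathbb{T}^{m,\ba} \to T$, I would invoke, in turn, the quasisymmetric invariance of each property. The doubling and bounded turning conditions are preserved quantitatively under quasisymmetric maps (this is standard, see \cite[Chapters 10--11]{Heinonen}), so $T$ is again doubling and bounded turning, i.e., a quasiconformal tree. For the branching conditions, uniform relative separation and uniform relative density of branch points are preserved by \cite[Lemmas 4.3 and 4.5]{BM22}, while uniform branch growth is preserved by Lemma \ref{lem:qs maps preserve comparability of branches}. Finally, the valence $\mathrm{val}(p, T)$ at a point is a purely topological invariant (it equals the number of components of the complement of $p$), so since $f$ is in particular a homeomorphism, every branch point of $T$ has valence exactly $m$, matching that of its preimage in $\mathbb{T}^{m,\ba}$.

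This is essentially a bookkeeping argument; the only ``step'' is to verify that the list of four branching properties is covered by the invariance lemmas already in the paper. There is no real obstacle, since each ingredient has been set up in earlier sections precisely to support this direction of Theorem \ref{thm:uniformization2}. The harder implication (the ``only if'' direction), which requires constructing a quasisymmetric map from an abstract uniformly $m$-branching tree onto $\mathbb{T}^{m,\ba}$ via matching quasi-visual subdivisions and Proposition \ref{prop: qs match}, is handled separately in Section \ref{sec:unif}.
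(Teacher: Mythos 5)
Your proposal is correct and follows essentially the same route as the paper: verify that $\mathbb{T}^{m,\ba}$ enjoys all the required properties (Lemma \ref{lem:metrictree}, Propositions \ref{prop:valence} and \ref{prop:doubling}), then transfer each to $T$ via quasisymmetric invariance, citing the same invariance results (\cite[Lemmas 4.3, 4.5]{BM22} and Lemma \ref{lem:qs maps preserve comparability of branches}) and noting that valence is a topological invariant. No discrepancy with the paper's argument.
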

	
\begin{proof}
By Lemma \ref{lem:metrictree} and Proposition \ref{prop:valence}, $\mathbb{T}^{m,\ba}$ is an $m$-valent metric tree. Since $T$ is homeomorphic to $\mathbb{T}^{m,\ba}$, it follows that $T$ is an $m$-valent metric tree as well.
		
Moreover, by Lemma \ref{lem:metrictree} and Proposition \ref{prop:doubling}, $\mathbb{T}^{m,\ba}$ is doubling and bounded turning, and since both these properties are quasisymmetrically invariant, it follows that $T$ is bounded turning and doubling, hence a quasiconformal tree.
		
By Proposition \ref{prop:valence}, $\mathbb{T}^{m,\ba}$ is uniformly $m$-branching. By \cite[Lemma 4.3]{BM22}, $T$ has uniformly relatively separated branches and by \cite[Lemma 4.5]{BM22} $T$ has uniformly dense branch points. To show that $T$ is uniformly $m$-branching is to show that $T$ has uniform branch growth, which follows by Lemma \ref{lem:qs maps preserve comparability of branches}.
\end{proof}
	
Note that it is enough to show Theorem \ref{thm:uniformization2} for a fixed weight $\ba$ since $\T^{m,\ba}$ is uniformly $m$-branching for all choices of weights $\ba$ by Proposition \ref{prop:valence}.
	
For the rest of Section \ref{sec:unif}, we fix $m\in \{3,4,\dots\}$, we fix a uniformly $m$-branching tree $T$, we set $A=\{1,\dots,m\}$, and we fix a weight $\ba$ such that $\ba(i)=1/2$ for all $i\in A$. By rescaling the metric on $T$ if necessary, we may assume that $\diam(T)=1$ for convenience. We show below that $T$ is quasisymmetrically equivalent to $\T^{m,\ba}$. The proof follows closely the proof of \cite[Theorem 1.4]{BM22}, which is the special case $m=3$. Since the proof and techniques are almost identical, we mostly sketch the steps.
	
\subsection{Quasi-visual subdivision of $T$}\label{sec:qvs of T}
By \cite[Section 3]{BM22} a finite set $\textbf{V}\subset T$ that does not contain leaves of $T$ decomposes $T$ into a set of tiles $\textbf{X}$ (in the sense of Definition \ref{def: qv approx}). These tiles are subtrees of $T$. We want to map them to tiles of $\mathbb{T}^{m,\ba}$, i.e., sets of the form $\mathbb{T}^{m,\ba}_w$, $w\in A^*$. By Lemma \ref{lem: bdry Tu} each tile $\mathbb{T}^{m,\ba}_w$ with $|w|>0$ has one or two boundary points. For this reason, we are interested in decompositions such that every tile $X\in \textbf{X}$ has one or two boundary points. 
	
We call $\textbf{X}$ an \textit{edge-like decomposition} of $T$ if $\textbf{V}=\emptyset$ and $\textbf{X}=\{T\}$ (as a degenerate case), or if $\textbf{V}\neq \emptyset$ and $\card(\partial X)\leq 2$ for each $X\in \textbf{X}$. Note that in the latter case $1\leq \card(\partial X)\leq 2$ by \cite[Lemma 3.3(iii)]{BM22}. We say that a tile $X\in\textbf{X}$ in an edge-like decomposition $\textbf{X}$ of $T$ is a \textit{leaf-tile} if $\card(\partial X)=1$ and an \textit{edge-tile} if $\card(\partial X)=2$.
	
We now set $\textbf{V}^{0}=\emptyset$, fix $\delta\in (0,1)$, and for $n\in\N$ define
\begin{equation}\label{eq:V^n}
\textbf{V}^n=\{p\in T:p \ \text{is a branch point of}\ T\ \text{with}\ H_T(p)\geq \delta^n\},
\end{equation}
where the height $H_T$ is as defined in \eqref{eq:height}. Note that $(\textbf{V}^n)_{n\in \N_0}$ is an increasing nested sequence and none of the sets $\textbf{V}^n$ contains a leaf of $T$. Denote by $\textbf{X}^n$ the set of tiles in the decomposition of $T$ induced by $\textbf{V}^n$. Then the sequence $(\textbf{X}^n)$ forms a subdivision of $T$ in the sense of Definition \ref{Subdivison} (see also the discussion in \cite{BM22} after (3.3)). The next proposition is a direct analogue of \cite[Proposition 6.1]{BM22}.
	
\begin{prop}\label{prop: quasi-visual subdivision}
Let $\delta\in(0,1)$, and $(\textbf{V}^n)_{n\in \N_0}$ be as in \eqref{eq:V^n}. Then the following statements are true:
\begin{enumerate}[(i)]
\item[(i)] $\textbf{V}^n$ is a finite set for each integer $n\geq 0$.
\item[(ii)]\label{prop: qs sbd 2} $(\textbf{X}^n)_{n\in\N_0}$ is a quasi-visual subdivision of $T$.
\end{enumerate}
Let $n\geq 0$, $X\in\textbf{X}^n$, and $\textbf{V}_X^{n+1}:=\textbf{V}^{n+1}\cap\interior(X)$. Then we have:
\begin{enumerate}[(i)]
\item[(iii)]\label{prop: qs sbd 3} $\card(\partial X)\leq 2$, and if we denote by $\textbf{X}_X^{n+1}$ the decomposition of $X$ induced by $\textbf{V}_X^{n+1}$, then $\textbf{X}_X^{n+1}$ is edge-like.
\item[(iv)] There exists $N\in\N$ depending on $\delta$, but independent of $n$ and $X$, such that $\card(\textbf{V}_X^{n+1})\leq N$. 
\end{enumerate}
If $\delta\in(0,1)$ is sufficiently small (independent of $n$ and $X$), then we also have:
\begin{enumerate}[(i)]
\item[(v)] $\card(\textbf{V}_X^{n+1})\geq 2$.
\item[(vi)] If $\card(\partial X)=2$ and $\partial X=\{u,v\}\subset T$, then $(u,v)\cap \textbf{V}_X^{n+1}$ contains at least three elements.
\end{enumerate}
\end{prop}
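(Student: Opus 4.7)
The plan is to mirror the scheme of the proof of \cite[Proposition 6.1]{BM22}, which handles the trivalent case, replacing specific features of valence $3$ by the uniform branching hypotheses. Claim (i) is immediate: by uniform branch separation with constant $C$, any two distinct $p, q \in \textbf{V}^n$ satisfy $d(p,q) \geq C^{-1}\delta^n$, so $\textbf{V}^n$ is a finite $(C^{-1}\delta^n)$-separated subset of the compact space $T$.

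For (ii), I would first establish that $\diam(X) \simeq \delta^n$ for every $X \in \textbf{X}^n$, with constants depending only on the branching data. The upper bound uses uniform branch density: if two points in $X$ were at distance $K\delta^n$ for $K$ large, the arc between them would contain a branch point of height $\gtrsim \delta^n$, which would lie in $\textbf{V}^n$ and contradict the interior of $X$ being a component of $T\setminus \textbf{V}^n$. The lower bound uses uniform branch separation together with uniform branch growth: a tile meeting $v\in \partial X$ must contain a branch of $v$ of diameter comparable to $H_T(v) \gtrsim \delta^n$, and the growth condition is what yields uniform comparability when $v$ has valence greater than $3$. The four axioms of Definition \ref{def: qv approx} then follow by bookkeeping: comparable diameters of neighboring tiles at the same or consecutive levels from the two-sided bound $\diam(X) \simeq \delta^n$, separation of non-adjacent tiles from uniform branch separation, and the shrinking condition by choosing $k_0$ with $\delta^{k_0}$ less than the desired $\lambda$.

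For (iii), I would induct on $n$, the base case $\textbf{X}^0 = \{T\}$ being vacuous. The inductive step rests on the following combinatorial claim: a sub-tile $Y \in \textbf{X}_X^{n+1}$ cannot have $|\partial Y| \geq 3$, because the tripod formed by three boundary points in $Y$ would have a central branch point $c \in \interior(Y)$; by (ii) and uniform branch growth, $c$ would then have height $\geq \delta^{n+1}$ and hence $c \in \textbf{V}^{n+1}$, contradicting $c\in\interior(Y)$. Verifying this claim carefully is the main obstacle, since one must quantitatively track the heights of the three branches at $c$ through their comparability to $\diam(X)$. For (iv), the doubling property of $T$ (Lemma \ref{lem:doubling}) combined with uniform branch separation bounds $|\textbf{V}_X^{n+1}|$ in terms of $\diam(X)/\delta^{n+1} \lesssim \delta^{-1}$. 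For (v) and (vi), uniform branch density applied on $X$ (respectively on subarcs of $(u,v)$) yields the required branch points of height $\geq \delta^{n+1}$, provided $\delta$ is small enough depending on the density constant.
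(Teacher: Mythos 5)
Your overall architecture is the same as the paper's: the paper imports \cite[Proposition 6.1]{BM22} essentially wholesale, noting that the value $m=3$ enters only in the lower diameter bound in (ii), which is exactly where you also place uniform branch growth; your treatment of (i), (iv), (v), (vi) matches the intended one (for (iv) you do not even need Lemma \ref{lem:doubling} -- $T$ is doubling by definition of a quasiconformal tree). However, two of your steps are not right as written. For the lower bound in (ii), the claim that a tile $X$ with $v\in\partial X$ ``must contain a branch of $v$'' fails when $\card(\partial X)=2$: then $X$ is a proper piece of the branch of $v$ containing it and need not contain any complete branch. The correct case split (and the paper's) is: if $\partial X=\{p\}$, then $X$ is the closure of a single branch of $p$ by \cite[Lemma 3.3(vi)]{BM22}, so either $X$ is one of the two largest branches, giving $\diam X\geq H_T(p)\geq\delta^n$, or uniform branch growth gives $\diam X\simeq H_T(p)\geq\delta^n$; if $\partial X$ contains two distinct points $u,v\in\textbf{V}^n$, then uniform branch separation (not growth) gives $\diam X\geq d(u,v)\gtrsim\min\{H_T(u),H_T(v)\}\geq\delta^n$. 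Note also that the paper proves (iii) before (ii) precisely so this case analysis is available, whereas you prove (ii) first and then invoke (ii) inside (iii); the circularity can be avoided (the dichotomy $\card(\partial X)=1$ versus $\geq 2$ suffices for the lower bound without (iii)), but you must make the logical order explicit.

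Second, the ``main obstacle'' you flag in (iii) dissolves once you bound $H_T(c)$ by the right quantity: you need neither (ii), nor uniform branch growth, nor any comparability with $\diam(X)$. If $u_1,u_2,u_3\in\partial Y\subset\textbf{V}^{n+1}$ are distinct (hence leaves of $Y$, so the tripod center $c$ is none of them) then, for each $i$, the branch of $T$ at $c$ containing $u_i$ contains every branch of $T$ at $u_i$ except possibly the one containing $c$; since $u_i\in\textbf{V}^{n+1}$ has at least three branches of diameter $\geq\delta^{n+1}$, at least two of these lie inside that branch of $c$, so each of the three distinct branches of $c$ toward $u_1,u_2,u_3$ has diameter $\geq\delta^{n+1}$. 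Hence $H_T(c)\geq\delta^{n+1}$, i.e.\ $c\in\textbf{V}^{n+1}\cap\interior(Y)$, the desired contradiction; this is the argument from \cite{BM22} that the paper cites verbatim, and it is where your sketch currently relies on a mechanism that does not quite work.
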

	
\begin{proof}
The proof of this proposition is almost identical to the proof of \cite[Proposition 6.1]{BM22} so we only sketch it. The specific value $m=3$ is only implicitly used in the proof of (ii). Hence, we only focus on that part, since (i) and (iii)-(vi) above follow in the exact same way as in the $m=3$ case. Note that (iii) is proven before (ii) in \cite[Proposition 6.1]{BM22}, so we may assume $\card(X)\leq 2$ for all $X\in\textbf{X}^n$.
		
To show that $\textbf{X}^n$ is a quasi-visual subdivision of $T$, by \cite[Lemma 2.4]{BM22}, it suffices to verify the following two conditions: 
\begin{enumerate}[(a)]
\item $\diam{X} \simeq \delta^n$ for all $n\geq 0$ and $X \in \textbf{X}^n$,
\item $\dist(X,Y) \gtrsim \d^n$ for all $n\geq 0$ and $X,Y\in \textbf{X}^n$ with $X\cap Y = \emptyset$.
\end{enumerate}
Relations $\diam(X)\lesssim \delta^n$ and (b) can be proved for $m\geq 3$ verbatim as in the case $m=3$ in \cite[Proposition 6.1(ii)]{BM22}. 
		
For the relation $\diam(X)\gtrsim \delta^n$, assume that $\partial X\neq\emptyset$, otherwise $X=T$ and so $\diam(X)=1\geq \delta^n$. If $\partial X$ consists of one point $p\in T$, then $p\in \partial X\subset\textbf{V}^n$ (due to \cite[Lemma 3.3(ii)]{BM22} and so $p$ is a branch point of $T$ with $H_T(p)\geq \delta^n$. In this case, $X$ is a branch of $p$ in $T$ (see \cite[Lemma 3.3(vi)]{BM22}). Hence, either $X=B_j$, $j\in \{1, 2\}$, which implies $\diam(X)\geq H_T(p)$, or
$$\diam(X)\simeq H_T(p)\geq \delta^n,$$
since $T$ has uniform branch growth. If $\partial X$ consists of two distinct points $u,v\in\textbf{V}^n$, then we have 
$$\diam(X)\geq |u-v|\gtrsim\min\{H_T(u),H_T(v)\}\geq \delta^n$$
where the second inequality follows from the fact that $T$ has uniformly relatively separated branch points.
\end{proof}
	
\subsection{Quasi-visual subdivision of $\T^{m,\ba}$}
	
The following proposition, which is the direct analogue of \cite[Proposition 7.1]{BM22}, gives us a quasi-visual subdivision of the metric tree $\mathbb{T}^{m,\ba}$ that is isomorphic to that of $T$ constructed in \textsection\ref{sec:qvs of T}.
	
\begin{prop}\label{prop:isomorphic subd}
Let $(\textbf{V}^n)_{n\in \N_0}$ be as in \eqref{eq:V^n}, with $\delta\in (0,1)$ small enough so that, for all $n\in \N_0$, all the statements in Proposition \ref{prop: quasi-visual subdivision} are true for the decomposition $\textbf{X}^n$ of $T$ induced by the set $\textbf{V}^n$. Then there exists a quasi-visual subdivision $(\textbf{Y}^n)_{n\in \N_0}$ of $\mathbb{T}^{m,\ba}$ that is isomorphic to the quasi-visual subdivision $(\textbf{X}^n)_{n\in \N_0}$ of $T$. 
\end{prop}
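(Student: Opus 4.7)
The plan is to mimic the inductive construction in \cite[Proposition 7.1]{BM22}, building bijections $F^n : \textbf{X}^n \to \textbf{Y}^n$ one level at a time and exploiting the self-similar $m$-valent structure of $\mathbb{T}^{m,\ba}$ to reproduce the combinatorial branching pattern of $(\textbf{X}^n)$. The base case is clear: $\textbf{Y}^0 = \{\mathbb{T}^{m,\ba}\}$ and $F^0(T) = \mathbb{T}^{m,\ba}$. For the inductive step, fix $n \geq 0$, a tile $X \in \textbf{X}^n$, and set $Y = F^n(X)$; by the inductive construction $Y$ will be of the form $\mathbb{T}^{m,\ba}_w$ for some $w \in A^*$, hence $|\partial Y| = |\partial X| \in \{0,1,2\}$ (matching via the two leaves $[w1^{(\infty)}], [w2^{(\infty)}]$ guaranteed by Lemma \ref{lem: bdry Tu}). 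We then need to subdivide $Y$ into sub-tiles of the form $\mathbb{T}^{m,\ba}_{wv}$ in a way that is combinatorially isomorphic to the subdivision of $X$ by $\textbf{V}^{n+1}_X$.

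To do this, view the subdivision of $X$ abstractly as a decorated combinatorial tree $\Gamma_X$: the vertices are the points of $\partial X \cup \textbf{V}^{n+1}_X$, two vertices are adjacent if one of the sub-tiles has exactly those two boundary points, and each vertex $p \in \textbf{V}^{n+1}_X$ carries a label encoding which of the $m$ branches of $T$ at $p$ meet which adjacent sub-tile. By Proposition \ref{prop: quasi-visual subdivision}(iv), this tree has a uniformly bounded number of vertices. Since in $\mathbb{T}^{m,\ba}$ the branch points are exactly $\{[u12^{(\infty)}] : u \in A^*\}$, all have valence $m$, and are dense (Proposition \ref{prop:valence}), one can find, inside the interior of $Y = \mathbb{T}^{m,\ba}_w$, branch points $q_1, \dots, q_k$ whose tree structure (under the natural $\mathbb{T}^{m,\ba}$-arc structure) is isomorphic to $\Gamma_X$ and whose induced sub-tiles are of the form $\mathbb{T}^{m,\ba}_{wv_i}$. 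Concretely, one chooses depths $|v_i|$ so that the resulting heights $2^{-|wv_i|-1}$ sit in the band $[\delta^{n+1}, \delta^n)$ appropriate for the heights $H_T(p_i)$, which can be arranged if $\delta$ is chosen as a sufficiently small negative power of $2$.

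Once the combinatorial match is made, we define $F^{n+1}$ on the new sub-tiles of $X$ by sending each to the corresponding sub-tile $\mathbb{T}^{m,\ba}_{wv_i}$ of $Y$; the isomorphism conditions (nesting and adjacency) hold by construction. It remains to verify that $(\textbf{Y}^n)$ is a quasi-visual subdivision, which by \cite[Lemma 2.4]{BM22} reduces to checking (a) $\diam Z \simeq 2^{-k(n)}$ for all $Z \in \textbf{Y}^n$ and some level-dependent scale $k(n)$, and (b) $\dist(Z,Z') \gtrsim \diam Z$ whenever $Z,Z' \in \textbf{Y}^n$ are disjoint. Property (a) follows immediately from Corollary \ref{cor: diameters of tiles} together with the uniform bound on the depths $|v_i|$ used at each step (this is where the finiteness in Proposition \ref{prop: quasi-visual subdivision}(iv) and the band structure of heights are essential). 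Property (b) follows from Lemma \ref{lem:dist of branch points} and the fact that disjoint sub-tiles of $\mathbb{T}^{m,\ba}_w$ with roughly comparable diameters are separated by at least a constant multiple of their diameters.

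The main obstacle is the combinatorial matching in the second paragraph: one must certify that \emph{any} abstract $m$-valent decorated subtree arising from a tile of $T$ with $|\partial X|\leq 2$ is realized by sub-tiles of some $\mathbb{T}^{m,\ba}_w$. The crucial flexibility comes from (i) $\mathbb{T}^{m,\ba}$ and $T$ both being $m$-valent, so the local labelings of the $m$ branches at each branch point can be matched permutationally, and (ii) the heights in $\mathbb{T}^{m,\ba}$ form a dense scale of the form $2^{-k-1}$, allowing one to choose sub-tiles that realize any prescribed height band, provided $\delta$ is a sufficiently small dyadic rational. This replaces, in a uniform-in-$m$ way, the ad-hoc combinatorial arguments of \cite[Section 7]{BM22} used for $m=3$.
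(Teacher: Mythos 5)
Your overall strategy (inductive tile-homeomorphisms level by level, realizing the combinatorics of $\textbf{X}^n$ by tiles $\T^{m,\ba}_{wv}$) is the same as the paper's, but the step where you actually verify the quasi-visual property contains a genuine gap. You claim one can choose the depths $|v_i|$ so that every sub-tile of $Y=\T^{m,\ba}_w$ created at generation $n+1$ has diameter in the band $[\delta^{n+1},\delta^n)$, and then invoke \cite[Lemma 2.4]{BM22} as if all $n$-tiles of $(\textbf{Y}^n)$ had diameter comparable to a single scale. This is impossible. If a nonempty set of branch points of $Y$ induces a decomposition of $Y$ whose pieces are all of the form $\T^{m,\ba}_{wv}$, then the center $[w12^{(\infty)}]$ must be one of the chosen vertices (otherwise the closure of the component containing the center meets the interiors of all first-level tiles $\T^{m,\ba}_{wi}$ and cannot be a tile), and any branch of the pattern hanging at that vertex which is a single sub-tile of $X$ (no further vertices of $\textbf{V}^{n+1}_X$ inside it --- the typical situation, since each vertex has $m-2\ge 1$ off-arc branches of height about $\delta^{n+1}$) must be sent to some $\T^{m,\ba}_{wj}$ of diameter $\tfrac12\diam Y\simeq\tfrac12\delta^n$, which is far outside your band once $\delta$ is small. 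More generally, since the tiles $\T^{m,\ba}_{wv_i}$ form a section of $A^*$, shallow tiles are forced, so the image tiles at a fixed generation simply do not have uniformly comparable diameters, and condition (a) of your reduction fails; no choice of a dyadic $\delta$ repairs this, because $\delta$ is constrained by Proposition \ref{prop: quasi-visual subdivision} and the obstruction is combinatorial, not arithmetic.

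The paper (following \cite[Proposition 7.1]{BM22}) avoids this entirely: it never matches diameters to the scales $\delta^n$. Instead it builds the maps $F^n$ with the level-control properties --- compatibility $F^{n-1}(X)=F^n(X)$, the two-sided bound $\texttt{L}(F^{n-1}(X))+1\le \texttt{L}(F^n(Y))\le \texttt{L}(F^{n-1}(X))+N$, and, crucially, $\texttt{L}(F^n(Y))=\texttt{L}(F^{n-1}(X))+2$ for children meeting $\partial X$ --- obtained from the marked-leaf bookkeeping of Lemma \ref{lem:tile homeo} (items (ii)--(iv)), which in turn needs Proposition \ref{prop: quasi-visual subdivision}(v),(vi). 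With the weight $\ba\equiv 1/2$, diameters are $2^{-\texttt{L}}$, and the four conditions of Definition \ref{def: qv approx} are then checked directly: adjacent tiles at the same or consecutive levels share a vertex lying on the boundary of their parents, so the exact ``$+2$'' rule gives comparability of their diameters, while the ``$+1$'' lower bound gives the geometric decay. Your proposal also glosses over the realization step itself (which boundary point of $X$ goes to $[w1^{(\infty)}]$ versus $[w2^{(\infty)}]$, and why the induced decomposition consists only of tiles $\T^{m,\ba}_{wv}$ with the stated level bounds); that is exactly the content of Lemma \ref{lem:tile homeo} and cannot be dismissed as a density-of-branch-points observation. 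To fix the proof, drop the diameter-band matching and replace your verification of quasi-visuality by the level-control argument.
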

	
The proof of Proposition \ref{prop:isomorphic subd} follows that of \cite[Proposition 7.1]{BM22} very closely.
	
By Proposition \ref{prop: quasi-visual subdivision}(iii) the decompositions $\textbf{X}^n$ of $T$  are edge-like. The points in $\partial X$ are leaves of $X$ and the only points where $X$ intersects other tiles of the same level (see \cite[Lemma 3.3(iii), (iv)]{BM22}). We follow the definition of \textit{marked leaves} as in the proof of Lemma \ref{lem:homeo of leaves} with the difference that points in $\partial X$ do not carry a sign here. Moreover, $X\in\textbf{X}^n$ (viewed as a tree) has an edge-like decomposition $\textbf{X}_X^{n+1}$ induced by $\textbf{V}_X^{n+1}=\textbf{V}^{n+1}\cap\interior(X)$.
	
We want to find a decomposition of $\mathbb{T}^{m,\ba}$ into tiles $\T^{m,\ba}_w$ that is isomorphic to $\textbf{X}_X^{n+1}$ and respects the marked leaves. As in \cite{BM22}, we consider $[1^{(\infty)}]$ or $[2^{(\infty)}]$ as a marked leaf of $\mathbb{T}^{m,\ba}$ if $\card(\partial X)=1$, or both $[1^{(\infty)}]$ and $[2^{(\infty)}]$ as marked leaves of $\mathbb{T}^{m,\ba}$ if $\card(\partial X)=2$. Since $\mathbb{T}^{m,\ba}$ is uniformly $m$-branching, for fixed leaves $p_1, p_2\in T$, by Lemma \ref{lem:homeo of leaves} there exists a homeomorphism $f:T\to\mathbb{T}^{m,\ba}$ such that $f(p_1)=[1^{(\infty)}]$ and $f(p_2)=[2^{(\infty)}]$. 
	
If $w\in A^*$, then we say that the \textit{level} of the tile $\mathbb{T}_w^{m,\ba}$ (denoted by $\texttt{L}(\mathbb{T}_w^{m,\ba})$) is equal $|w|$. We also say that a homeomorphism $F:T\to\mathbb{T}^{m,\ba}$ is a \textit{tile-homeomorphism} (for $\textbf{X})$ if $F(X)$ is of the form $\T_w^{m,\ba}$ for each $X\in\textbf{X}$. Then $F$ maps tiles in $T$ to tiles $\mathbb{T}^{m,\ba}$ and so the level $\texttt{L}(F(X))$ (of $F(X)$ as a tile of $\mathbb{T}^{m,\ba}$) is defined for each $X\in\textbf{X}$. 
	
The following lemma is a useful characterization of decompositions of $\mathbb{T}^{m,\ba}$ of the form $\mathbb{T}_w^{m,\ba}$ where $w\in A^*$.
\begin{lem}\label{lem: level of tiles}
Let $V\subset\mathbb{T}^{m,\ba}$ be a finite set of branch points of $\mathbb{T}^{m,\ba}$, and let $\mathbb{X}$ be the set of tiles in the decomposition of $\mathbb{T}^{m,\ba}$ induced by $V$. If $\mathbb{X}$ is of the form $\mathbb{T}_w^{m,\ba}$ for $w\in A^*$, then the levels of tiles in $\mathbb{X}$ satisfy 
$$\texttt{L}(X)\leq \card(V)\ \text{for each}\ X\in\mathbb{X}.$$
If, in addition, $V\neq \emptyset$, then we have $\texttt{L}(X)\geq 1$ for each $X\in\mathbb{X}$.
\end{lem}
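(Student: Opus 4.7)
The plan is to associate to the decomposition $\mathbb{X}$ the set of indexing words $W := \{w \in A^* : \mathbb{T}_w^{m,\ba} \in \mathbb{X}\}$ and analyze its rooted prefix-tree structure. The case $V = \emptyset$ is trivial, since then $\mathbb{X} = \{\mathbb{T}^{m,\ba}\}$ and $\texttt{L}(\mathbb{T}^{m,\ba}) = 0 = \card(V)$, so we assume $V \neq \emptyset$. First I would verify that $W$ is an \emph{antichain} in the prefix order: if $w \subsetneq w'$ were both in $W$, then $\mathbb{T}_{w'}^{m,\ba} \subsetneq \mathbb{T}_w^{m,\ba}$ would violate the fact that distinct tiles in a decomposition meet only along boundary points in $V$.

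Next, consider the prefix closure $T_W := \{v \in A^* : v \text{ is a prefix of some } w \in W\}$, regarded as a finite rooted subtree of the infinite $m$-ary word tree on $A^*$. The central structural claim is that $T_W$ is a \emph{full} $m$-ary tree, meaning every non-leaf vertex has all $m$ children in $T_W$. To prove this for $v \in T_W \setminus W$ and $i \in A$, I would pick the injective infinite word $u := v i 3^{(\infty)}$ (which is injective because its tail of $3$'s excludes it from the non-injective words classified in Lemma \ref{lem:injfull}; here we use $m \geq 3$, and the case $m = 2$ is vacuous as $\mathbb{T}^{2,\ba}$ has no branch points). Then $[u] \in \mathbb{T}_{w'}^{m,\ba}$ for some $w' \in W$, and injectivity of $u$ forces $w'$ to be a prefix of $v i 3^{(\infty)}$. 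The antichain property together with $v \notin W$ rule out $|w'| \leq |v|$, so $|w'| > |v|$ and $vi$ is a prefix of $w'$, giving $vi \in T_W$.

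Let $k$ denote the number of internal vertices of $T_W$. Since every root-to-leaf path in $T_W$ passes through distinct internal vertices (the proper prefixes of the leaf, none of which lie in $W$ by the antichain property), every leaf $w \in W$ satisfies $|w| \leq k$. It remains to show $\card(V) \geq k$. To each internal vertex $v$ of $T_W$ I would assign the branch point $p_v := [v 1 2^{(\infty)}]$; these are distinct for distinct $v$ by Lemma \ref{lem: branch points belong in the same class}, and each lies in the interior of $\mathbb{T}_v^{m,\ba}$ by Lemma \ref{lem: bdry Tu}. I would then show $p_v \in V$ by contradiction: if $p_v \notin V$, then $p_v$ lies in the interior of some tile $\mathbb{T}_{w'}^{m,\ba} \in \mathbb{X}$. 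Using the complete list of pre-images $\{v 1 2^{(\infty)}\} \cup \{v j 1^{(\infty)} : j \geq 2\}$ of $p_v$ from Lemma \ref{lem: preimage 12^00}(ii) together with the boundary description in Lemma \ref{lem: bdry Tu}, a short case analysis shows that the only tiles containing $p_v$ in their interior are those $\mathbb{T}_s^{m,\ba}$ with $s$ a prefix of $v$; but $s = v$ contradicts $v \notin W$ and $s \subsetneq v$ violates the antichain property.

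Combining these steps, $\texttt{L}(X) \leq k \leq \card(V)$ for every $X \in \mathbb{X}$. For the lower bound when $V \neq \emptyset$: any branch point in $V$ disconnects $\mathbb{T}^{m,\ba}$, so $\mathbb{T}^{m,\ba}$ itself cannot be a tile in $\mathbb{X}$, forcing $|w| \geq 1$ for every $\mathbb{T}_w^{m,\ba} \in \mathbb{X}$. The main obstacle I anticipate is the interior-versus-boundary case analysis for $p_v$: one must enumerate the prefixes of all finitely many pre-images of $p_v$ and use Lemma \ref{lem: bdry Tu} to verify that whenever $s$ is not a prefix of $v$, the point $p_v$ coincides with one of the boundary points $[s 1^{(\infty)}]$ or $[s 2^{(\infty)}]$ of $\mathbb{T}_s^{m,\ba}$, so $p_v$ cannot lie in its interior.
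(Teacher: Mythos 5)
Your proof is correct, and its core mechanism agrees with what the paper intends: the paper's own proof simply defers to \cite[Lemma 5.5]{BM22} for $m=3$, remarking only that the CSST's central point $0\in\mathbb{C}$ corresponds to $[12^{(\infty)}]\in\mathbb{T}^{m,\ba}$ here, which is precisely your assignment $v\mapsto p_v=[v12^{(\infty)}]$ on internal prefix-tree vertices. Since the paper does not reproduce the argument, a few brief remarks on your self-contained version: the ``full $m$-ary'' structural claim you prove is not actually needed for the bound (the antichain property alone guarantees that the proper prefixes of each leaf $w\in W$ are distinct internal vertices of $T_W$, giving $|w|\le k$ directly); the injective-word device $vi3^{(\infty)}$ correctly avoids the non-injective words classified after Lemma~\ref{lem:injfull} and relies harmlessly on $m\ge 3$, which is in force throughout Section~\ref{sec:unif}; and the case analysis showing $p_v\in V$ is sound once one notes, as you do implicitly, that whenever $|s|>|v|$ the point $p_v$ becomes a leaf of $\mathbb{T}_s^{m,\ba}$ (equal to $[s1^{(\infty)}]$ or $[s2^{(\infty)}]$), and a branch point of the ambient tree cannot be a leaf of a subtree except at the subtree's boundary, which rules out $p_v$ lying in the interior of $\mathbb{T}_s^{m,\ba}$. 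One small presentational gap is that you take for granted the standard facts from \cite[Section~3]{BM22} that the induced tiles cover $\mathbb{T}^{m,\ba}$ and meet only along $V$; this is the basis for both the antichain step and the step that places $p_v\notin V$ in the interior of a unique tile, and it would be worth citing explicitly.
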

	
\begin{proof}
The proof is identical to that of \cite[Lemma 5.5]{BM22} for $m=3$. We solely point out that the point $0\in\mathbb{C}$ of the CSST corresponds in our setup to the point $[12^{(\infty)}]\in\mathbb{T}^{m,\ba}$.
\end{proof}
	
The following lemma is an analogue of \cite[Lemma 7.3]{BM22}.
	
\begin{lem}\label{lem:tile homeo}
Let $S$ be an $m$-valent metric tree whose branch points are dense in $S$. Suppose $\textbf{V}\subset S$ is a finite set of branch points of $S$ that induces an edge-like decomposition of $S$ into the set of tiles $\textbf{X}$.
		
Assume that either $S$ has no marked leaves, or $S$ has exactly one marked leaf $p$, or $S$ has exactly two marked leaves $p,q\in S$. If $\textbf{V}\neq \emptyset$, we also assume that the marked leaf $p$ (if it exists) lies in a leaf-tile $P\in\textbf{X}$, and the other marked leaf $q$ (if it exists) lies in a leaf-tile $Q\in\textbf{X}$ distinct from $P$.
		
Then there exists a tile-homeomorphism $F:S\to\mathbb{T}^{m,\ba}$ for $\textbf{X}$ such that the following statements are true.
\begin{enumerate}[(i)]
\item[(i)]\label{lem: tile homeo 1} $F(p)=[1^{(\infty)}]$, or alternatively, $F(p)=[2^{(\infty)}]$, if $S$ has one marked leaf $p$; or $F(p)=[1^{(\infty)}]$ and $F(q)=[2^{(\infty)}]$, if $S$ has two marked leaves $p$ and $q$.
\item[(ii)]\label{lem: tile homeo 2} If $S$ has one marked leaf $p$ and $\card(\textbf{V})\geq 2$, then we may also assume  that $F$ satisfies $\texttt{L}(F(P))=2$.
\item[(iii)]\label{lem: tile homeo 3} If $S$ has two marked leaves $p,q\in S$ and $[p,q]\cap \textbf{V}$ contains at least three points, then we may also assume that $F$ satisfies $\texttt{L}(F(P))=\texttt{L}(F(Q))=2$.
\item[(iv)]\label{lem: tile homeo 4} For each $X\in\textbf{X}$ we have $\texttt{L}(F(\textbf{X}))\leq\card(\textbf{V})$, and if $\textbf{V}\neq\emptyset$, then $\texttt{L}(F(X))\geq 1$.
\end{enumerate}
\end{lem}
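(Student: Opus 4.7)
The proof proceeds by strong induction on $n := \card(\textbf{V})$, following the scheme of \cite[Lemma 7.3]{BM22} for $m = 3$ with the natural adjustments for $m$ branches at each branch point. In the base case $n = 0$ we have $\textbf{X} = \{S\}$, and $F$ is required to be a single homeomorphism $S \to \mathbb{T}^{m,\ba}$ sending the marked leaves to $[1^{(\infty)}]$ and $[2^{(\infty)}]$. Since both $S$ and $\mathbb{T}^{m,\ba}$ are $m$-valent trees with dense branch points (the latter by Proposition \ref{prop:valence}), Lemma \ref{lem:homeo of leaves} supplies such an $F$ after supplementing with auxiliary leaves when fewer than two of the leaves of $S$ are marked. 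The conditions (ii)--(iii) are vacuous in the base case since their hypotheses force $\textbf{V} \neq \emptyset$, and (iv) is immediate.

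For the inductive step I pick a distinguished branch point $v \in \textbf{V}$, split $S$ at $v$ into its $m$ closed branches $B_1, \dots, B_m$, and apply the inductive hypothesis to each. Every $B_i$ is $m$-valent with dense branch points, carries the decomposition induced by $\textbf{V}_i := \textbf{V} \cap \interior(B_i)$ with $\card(\textbf{V}_i) < n$, and inherits $v$ as a new marked leaf along with any original marked leaves it contains. The induction gives tile-homeomorphisms $G_i : B_i \to \mathbb{T}^{m,\ba}$, and I combine them by choosing a bijection $\sigma : \{1, \dots, m\} \to A$ and setting $F|_{B_i} := \phi_{\sigma(i)} \circ G_i$. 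The identifications $\phi_1([2^{(\infty)}]) = [12^{(\infty)}] = \phi_j([1^{(\infty)}])$ for $j \geq 2$ from Lemma \ref{lem: preimage 12^00}, together with the fact that distinct $\mathbb{T}^{m,\ba}_j$ meet only at $[12^{(\infty)}]$, guarantee that $F$ is a well-defined homeomorphism whenever clause (i) of the inductive hypothesis is invoked so that $G_i(v) = [1^{(\infty)}]$ when $\sigma(i) \geq 2$ and $G_i(v) = [2^{(\infty)}]$ when $\sigma(i) = 1$.

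The choice of $v$ and $\sigma$ is tailored to each case. In case (A) both are arbitrary. In case (C) the arc $[p,q]$ meets $\textbf{V}$ (since $p, q$ lie in distinct leaf-tiles), so I pick $v \in \textbf{V} \cap [p,q]$; then $p$ and $q$ fall into distinct branches $B_{i_p}, B_{i_q}$, and I set $\sigma(i_p) = 1$ and $\sigma(i_q) = 2$. In case (B) with $\card(\textbf{V}) = 1$ I take $v = v_P$. For the remaining sub-cases targeting (ii) and (iii)---namely case (B) with $\card(\textbf{V}) \geq 2$, and case (iii) with $\card([p,q] \cap \textbf{V}) \geq 3$---I pick $v$ so that $v_P$ (respectively, $v_P$ and $v_Q$) lies in $\interior(B_{i_0})$ (respectively, in the interiors of two distinct branches of $v$). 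In these sub-cases I unfold the construction of $G_{i_0}$ by one extra level: I choose $v_P$ as the distinguished branch point in the recursion on $B_{i_0}$, split $B_{i_0}$ at $v_P$ so that $P$ (a leaf-tile with $\partial P = \{v_P\}$) appears as one of the resulting sub-branches, and assign $P$ the index $1$ in the second-level bijection. Then $G_{i_0}(P) = \mathbb{T}^{m,\ba}_1$, and with $\sigma(i_0) = 1$ we obtain $F(P) = \phi_1(\mathbb{T}^{m,\ba}_1) = \mathbb{T}^{m,\ba}_{11}$, the unique level-$2$ tile containing $[1^{(\infty)}]$ by Lemma \ref{lem: tiles}; the treatment of $Q$ in case (iii) is analogous, producing $F(Q) = \mathbb{T}^{m,\ba}_{22}$.

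The level bound (iv) is transparent: each recursive layer raises image-tile levels by exactly one, so the recursion depth (at most $\card(\textbf{V})$) controls the maximum level. The main obstacle is the combinatorial bookkeeping required to simultaneously satisfy (i) and the level conditions (ii), (iii): the recursive choices of distinguished branch points and gluing bijections must be orchestrated at two consecutive levels so that the marked leaves continue to land at $[1^{(\infty)}]$ and $[2^{(\infty)}]$ while the leaf-tiles $P$ and $Q$ attain level exactly $2$, all consistent with the characterization of level-$2$ tiles provided by Lemma \ref{lem: tiles}.
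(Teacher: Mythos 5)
Your proposal is correct and follows essentially the same route as the paper: the paper proves this lemma by deferring to the inductive argument of \cite[Lemma 7.3]{BM22} (with $0,-1,1$ of the CSST replaced by $[12^{(\infty)}],[1^{(\infty)}],[2^{(\infty)}]$ and with Lemma \ref{lem:homeo of leaves} and Lemma \ref{lem: level of tiles} in place of the corresponding BM22 results), and your induction on $\card(\textbf{V})$ -- splitting at a chosen vertex, applying the hypothesis to the $m$ branch closures, gluing via the similarities $\phi_{\sigma(i)}$, and unfolding one extra level at $v_P$ (and $v_Q$) to force $\texttt{L}(F(P))=\texttt{L}(F(Q))=2$ -- is exactly that argument adapted to $m$ branches. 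You simply spell out explicitly the bookkeeping that the paper leaves to the citation.
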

	
\begin{proof}
The proof is almost identical to that of \cite[Lemma 7.3]{BM22} (the case $m=3$) with slight adjustments to notation and lemmas employed. Namely, the points $0, -1, 1\in \mathbb{C}$ of the CSST in the proof of \cite[Lemma 7.3]{BM22} correspond to $[12^{(\infty)}], [1^{(\infty)}], [2^{(\infty)}]\in \T^{m,\ba}$, respectively. Lemma \ref{lem:homeo of leaves} and Lemma \ref{lem: level of tiles} are used for the case $m\geq 3$ in place of \cite[Theorem 7.2]{BM22} and \cite[Lemma 5.5]{BM22}, respectively. Lastly, \cite[Lemma 3.3(viii)]{BM22} (which states that tiles of $m$-valent trees with dense branch points are also $m$-valent trees with dense branch points) is stated and proved for $m=3$, but the proof for $m\geq 3$ is verbatim.
\end{proof}

We are now ready to prove Proposition \ref{prop:isomorphic subd}. The desired quasi-visual subdivision $(\textbf{Y}^n)$ will be constructed inductively from auxiliary tile-homeomorphisms $F^n:T\to\mathbb{T}^{m,\ba}$ for $\textbf{X}^n$, $n\in\N_0$. Set 
$$\textbf{Y}^n:=F^n(\textbf{X}^n)=\{F^n(X): X\in\textbf{X}^n\}.$$
	
\begin{proof}[Proof of Proposition \ref{prop:isomorphic subd}]
Suppose $T$ is an uniformly $m$-branching quasiconformal tree, and let the set $\textbf{V}^n$ be as in \eqref{eq:V^n}, for $n\in\N_0$, with $\delta\in(0,1)$ small enough for Proposition \ref{prop: quasi-visual subdivision} to hold for edge-like decompositions $\textbf{X}^n$ of $T$ induced by $\textbf{V}^n$. Note that $\textbf{V}^{0}=\emptyset$, the sequence of sets $(\textbf{V}^n)$ consists of sets of branch points,  is increasing, and the induced sequence $(\textbf{X}^n)$ is a subdivision of $T$. Proposition \ref{prop: quasi-visual subdivision}(v) implies that $\textbf{V}^n\neq\emptyset$ for $n\in\N$.
		
Following the proof of \cite[Proposition 7.1]{BM22}, we can construct homeomorphisms $F^n:T\to\mathbb{T}^{m,\ba}$ for all $n\in \N_0$ with the following properties:
\begin{enumerate}
\item[(A)] For each $n \in \N_0$ the map $F^n: T \to \T^{m,\ba}$ is a tile-homeomorphism for $\textbf{X}^n$, i.e., $F^n$ is a homeomorphism from $T$ onto $\T^{m,\ba}$ such that $F^n(X)$ is of the form $\T_w^{m,\ba}$, with $w \in A^*$, for $X \in \textbf{X}^n$.
\item[(B)] For all $n\in \N$, the maps $F^{n-1}$ and $F^n$ are compatible, in the sense that 
\begin{equation*}
F^{n-1}(X) = F^n(X), \quad \text{for all $X\in \textbf{X}^{n-1}$.}
\end{equation*} 
\item[(C)] For all $n\in \N$, if $X\in \textbf{X}^{n-1}$ and $Y \in \textbf{X}^n$ with $Y \subset X$, then
\begin{equation*}
\texttt{L}(F^{n-1}(X)) + 1 \leq \texttt{L}(F^n(Y)) \leq \texttt{L}(F^{n-1}(X)) + N,
\end{equation*}where $N \in \N$  is independent of $n$ and $X$.
\item[(D)] For all $n\in \N$, if $X\in \textbf{X}^{n-1}$, $Y \in \textbf{X}^n$ with $Y \subset X$ and $Y \cap \partial X \neq \emptyset$, then
\begin{equation*}
\texttt{L}(F^n(Y)) = \texttt{L}(F^{n-1}(X)) + 2.
\end{equation*}
\end{enumerate}
The construction of $F^n$ for $m\geq 3$ is identical to that for $m=3$ and we note that Lemma \ref{lem:homeo of leaves},  Proposition \ref{prop: quasi-visual subdivision} and Lemma \ref{lem:tile homeo} are used for the former.
		
It is enough to show that $(F^n(\textbf{X}^n))$ is a quasi-visual subdivision of $\mathbb{T}^{m,\ba}$ isomorphic to $(\textbf{X}^n)$. Due to the choice of weights $\ba(i)=\frac{1}{2}$ for all $i\in A$, this follows by the above properties similarly to the case $m=3$ in \cite[Proposition 7.1]{BM22}. In particular, the fact that $(F^n(\textbf{X}^n))$ is a subdivision of $\mathbb{T}^{m,\ba}$ and that the subdivisions $(\textbf{X}^n)$ and $(F^n(\textbf{X}^n))$ are isomorphic follow from properties (A) and (B) of the homeomorphisms $F^n(\textbf{X}^n)$. To show that $(F^n(\textbf{X}^n))$ is a quasi-visual approximation of $\mathbb{T}^{m,\ba}$, the conditions (i)-(iv) in Definition \ref{def: qv approx} need to be verified. This follows by properties (C) and (D) of the homeomorphisms $F^n(\textbf{X}^n)$, as well as the geodesicity  of $\T^{m,\ba}$ (instead of the quasiconvexity of the CSST used in\cite[Proposition 7.1]{BM22}).
\end{proof}
	
We finish this section by proving the second direction of Theorem \ref{thm:uniformization2}.
	
\begin{proof}[{Proof of Theorem \ref{thm:uniformization2}}]
Recall that we have fixed $m\in \{3,4,\dots\}$, a uniformly $m$-branching tree $T$ with $\diam{T}=1$, and a weight $\ba$ such that $\ba(i)=1/2$ for all $i\in A$. 
		
There exists $\delta>0$ small enough so that the subdivision $(\textbf{X}^n)$ induced by the sets $\textbf{V}^n$ as defined in \eqref{eq:V^n} satisfies properties of Proposition \ref{prop: quasi-visual subdivision}. In particular, $(\textbf{X}^n)$ is a quasi-visual subdivision of $T$. By Proposition \ref{prop:isomorphic subd} there exists a quasi-visual subdivision $(\textbf{Y}^n)$ of $\mathbb{T}^{m,\ba}$ isomorphic to $(\textbf{X}^n)$. By Proposition \ref{prop: qs match} there exists a quasisymmetric homeomorphism $F:T\to\mathbb{T}^{m,\ba}$ that induces the isomorphism between $(\textbf{X}^n)$ and $(\textbf{Y}^n)$.
\end{proof}

\section{Bi-Lipschitz and quasisymmetric embedding into $\R^2$}\label{sec:BLembed}
	
In this section we show the following proposition.
	
\begin{prop}\label{prop: bi-Lip embeding}
For all $m\geq 2$ there exists a weight $\ba$ such that $\mathbb{T}^{m,\ba}$ bi-Lipschitz embeds into $\R^2$ with the embedded image being a quasiconvex subset of $\mathbb{R}^2$. 
\end{prop}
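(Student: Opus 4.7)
For $m = 2$, $\T^{2,\ba}$ is isometric to $[0,1]$ by Proposition \ref{prop:valence} and embeds into $\R^2$ as a convex set. For $m \geq 3$, the plan is to realize $\T^{m,\ba}$ as the attractor of an iterated function system of similarities in $\R^2 \cong \mathbb{C}$ having the same scaling ratios as the IFS $\{\phi^{A,\ba}_k\}_{k \in A}$ that defines $\T^{m,\ba}$, and to show the tautological coding map between the two self-similar sets is a bi-Lipschitz homeomorphism.

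Concretely, fix pairwise distinct angles $\theta_3,\ldots,\theta_m \in (0,2\pi)\setminus\{0,\pi\}$ and contraction ratios $\tfrac12 \geq r_3 \geq \cdots \geq r_m > 0$, and set $\ba(1) = \ba(2) = 1/2$ and $\ba(k) = r_k$ for $k \geq 3$. Define $\psi_k \colon \mathbb{C}\to\mathbb{C}$ by
\[
\psi_1(z) = \tfrac{z}{2}, \qquad \psi_2(z) = \tfrac{z+1}{2}, \qquad \psi_k(z) = \ba(k)\, e^{\mathbf{i}\theta_k}\, z + \tfrac12 \quad (k \geq 3),
\]
where $\mathbf{i} = \sqrt{-1}$. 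Each $\psi_k$ is a similarity with ratio $\ba(k)$, so $\{\psi_k\}_{k\in A}$ has a unique nonempty compact attractor $K \subset \mathbb{C}$. By choosing the $r_k$ sufficiently small (relative to the angular separation of the $\theta_k$) one can produce a bounded invariant closed set $C \supset K$ satisfying the strong separation condition
\[
\psi_k(C) \cap \psi_l(C) = \{1/2\}\qquad \text{for all distinct } k, l \in A.
\]
Because $\psi_1(1) = \psi_l(0) = 1/2$ for all $l \geq 2$, this ensures inductively that $K$ is a metric tree whose combinatorial tile structure mirrors that of $\T^{m,\ba}$.

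Define $F\colon \T^{m,\ba} \to K$ by $F([w]) := \lim_n \psi_{w(n)}(0)$. The identities $\psi_1(1) = \psi_l(0) = 1/2$, together with Lemma \ref{lem: branch points belong in the same class}, show that $F$ is well-defined on equivalence classes, continuous, and intertwines $\phi^{A,\ba}_k$ with $\psi_k$. Injectivity follows from the strong separation above. For the upper Lipschitz bound, cover the arc from $F([w])$ to $F([w'])$ in $K$ by the chain of tiles $\psi_v(K)$ along the combinatorial path $P_{A,n}(w(n),w'(n))$; the triangle inequality together with Lemma \ref{lem: geodesicity} and Corollary \ref{cor: diameters of tiles} yields
\[
|F([w]) - F([w'])| \leq \diam(K)\cdot d_{\T^{m,\ba}}([w],[w']).
\]
For the lower Lipschitz bound, self-similarity reduces the problem to the case where $w,w'$ begin with distinct letters $k \neq l$; strong separation together with compactness of the pertinent sub-attractors secures a uniform constant $c > 0$ with $|F([w]) - F([w'])| \geq c\,d_{\T^{m,\ba}}([w],[w'])$, and Lemma \ref{lem:dist of branch points} and Proposition \ref{prop:valence} propagate the estimate across all scales. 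Once $F$ is bi-Lipschitz, quasiconvexity of $K$ in $\R^2$ is immediate: for any $x, y \in K$ the arc $F$ sends the geodesic in $\T^{m,\ba}$ between $F^{-1}(x)$ and $F^{-1}(y)$ to a path in $K$ whose Euclidean length is at most the upper Lipschitz constant times $d_{\T^{m,\ba}}(F^{-1}(x), F^{-1}(y))$, hence at most a fixed constant multiple of $|x - y|$ by the lower Lipschitz bound.

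The main obstacle is the lower Lipschitz inequality, which amounts to the quantitative claim that the sub-attractors $\psi_k(K)$ emanating from the central point $1/2$ spread out at genuinely positive angles, with this spreading preserved under every further iteration. One has to exclude the possibility that the Euclidean distance $|\psi_k(z) - \psi_l(z')|$ is arbitrarily smaller than the corresponding $\T^{m,\ba}$-distance; were this to fail, strong separation plus geodesicity would force both points to lie deep inside a single sub-tile whose further iterations contract to $\{1/2\}$, yielding a contradiction by self-similarity. Making this rigorous, either through a compactness argument on $K \setminus B(1/2,\varepsilon)$ or through an explicit angle-of-incidence estimate at each branch point of $K$, is the bulk of the work.
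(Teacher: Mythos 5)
Your overall strategy (realize $\T^{m,\ba}$ as a planar self-similar set with matching ratios and show the coding map is bi-Lipschitz) is reasonable, but as written the proposal has a genuine gap precisely at its central step: the lower Lipschitz bound is never proved, and the sketch you give for it does not work. After reducing by self-similarity to points $x\in\psi_k(K)$, $y\in\psi_l(K)$ with $k\neq l$, compactness of $K\setminus B(1/2,\varepsilon)$ gives you nothing, because the problematic pairs are exactly those with $x,y\to 1/2$: disjointness away from the junction does not prevent $|x-y|$ from being much smaller than $|x-1/2|+|y-1/2|$, which is what the tree metric forces (the geodesic from $[w]$ to $[w']$ passes through $[12^{(\infty)}]$). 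What is needed is a quantitative transversality statement: each \emph{entire} sub-attractor $\psi_k(K)$ (not just its first-level spine) must lie in a cone with vertex $1/2$, these cones must be pairwise angularly separated, and the same must hold at every branch point of every tile, uniformly over all scales. This is a nontrivial constraint coupling the angles $\theta_k$ and the ratios $r_k$ — ``pairwise distinct angles and sufficiently small ratios'' is not obviously sufficient, since the branches of $K$ inside $\psi_k(K)$ themselves swing through the angles $\theta_j$ and widen the cone that must contain $\psi_k(K)$. The same unproved separation also underlies your assertions that an invariant set $C$ with $\psi_k(C)\cap\psi_l(C)=\{1/2\}$ exists and that $K$ is a metric tree ``inductively''; in the paper these facts are not free, they come from an explicitly constructed invariant rhombus $R$ with $\psi_j(R)\subset R$ and a double-cone separation (Lemma \ref{lem:geometry of rhombuses}), which forces the very specific weight $\ba(j)=\tfrac12\sin\theta$, $\theta=\pi/(3m-3)$. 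So the proposal, by your own admission in the last paragraph, defers ``the bulk of the work,'' and that deferred work is exactly the content of the proposition.

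It is also worth noting that the paper's proof avoids ever establishing a bi-Lipschitz estimate for a coding map. Instead, it shows the planar attractor $\mathcal{T}^m$ is a quasiconvex tree (Lemma \ref{lem:quasiconvex}, using the rhombus/double-cone geometry), proves that the finite skeleta $(J_n,\rho)$ with their internal geodesic metric embed \emph{isometrically} into $\T^{m,\ba}$ (Lemma \ref{lem: isometric embeding of J_n}), and then concludes by a Gromov--Hausdorff approximation argument that $(\mathcal{T}^m,\rho)$ is isometric to $\T^{m,\ba}$; bi-Lipschitz equivalence with the Euclidean metric then follows from quasiconvexity alone. If you want to salvage your direct approach, you would need to prove a uniform cone condition at all branch points and scales (for instance by exhibiting an invariant region playing the role of the paper's rhombus), at which point the lower bound follows by the angle estimate $|x-y|\geq c_\alpha(|x-1/2|+|y-1/2|)$ together with an induction giving $|x-\psi_u(1/2)|\gtrsim d_{A,\ba}([w],[u12^{(\infty)}])$; alternatively, adopt the paper's skeleton-plus-limit argument, which trades the pointwise estimate for an isometry statement.
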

	
In conjunction with Theorem \ref{thm:uniformization2}, Proposition \ref{prop: bi-Lip embeding} implies that all trees $\T^{m,\ba}$ quasisymmetrically embed in $\R^2$ for all weights $\ba$.
	
\begin{cor}
For any $m\in\{2,3,\dots\}$ and any weight $\ba$, there exists a quasiconvex tree in $\R^2$ that is quasisymmetric equivalent to $\T^{m,\ba}$.
\end{cor}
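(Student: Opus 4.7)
The plan is to reduce the general case to the special case handled by Proposition \ref{prop: bi-Lip embeding} by invoking the quasisymmetric uniformization theorem. Fix $m\geq 2$ and an arbitrary weight $\ba$. First I would note that $\T^{m,\ba}$ is a uniformly $m$-branching quasiconformal tree: Lemma \ref{lem:metrictree} gives the bounded turning (in fact $1$-bounded turning), Proposition \ref{prop:doubling}(i) gives Ahlfors regularity and hence the doubling property, and Proposition \ref{prop:valence} supplies uniform $m$-branching.

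Next I would invoke Proposition \ref{prop: bi-Lip embeding} to choose a specific weight $\ba_0$ and a bi-Lipschitz embedding $f:\T^{m,\ba_0}\to\R^2$ whose image $T_0=f(\T^{m,\ba_0})$ is a quasiconvex subset of $\R^2$. Since $f$ is bi-Lipschitz, $T_0$ is itself a metric tree homeomorphic to $\T^{m,\ba_0}$; the quasiconvexity statement from Proposition \ref{prop: bi-Lip embeding} says that $T_0$ inherits a quasiconvex metric from $\R^2$.

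By the same reasoning as in the first paragraph (applied with the specific weight $\ba_0$), the tree $\T^{m,\ba_0}$ is also a uniformly $m$-branching quasiconformal tree. Theorem \ref{thm:uniformization2} therefore produces a quasisymmetric homeomorphism $g:\T^{m,\ba}\to\T^{m,\ba_0}$. The composition $f\circ g:\T^{m,\ba}\to T_0\subset\R^2$ is then a quasisymmetric homeomorphism onto the quasiconvex tree $T_0$, which is exactly what the corollary demands.

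There is no substantial obstacle here; the work has already been done in Proposition \ref{prop: bi-Lip embeding} (the planar bi-Lipschitz embedding for some specific weight) and in Theorem \ref{thm:uniformization2} (which shows that the particular choice of weight is irrelevant up to quasisymmetry once the tree is uniformly $m$-branching). The only thing to keep an eye on is ensuring that quasiconvexity survives the construction, and it does trivially since the target $T_0\subset\R^2$ is the same fixed quasiconvex set regardless of the original weight $\ba$.
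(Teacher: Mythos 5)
Your proposal is correct and follows essentially the same route as the paper: the corollary is obtained by combining Proposition \ref{prop: bi-Lip embeding} (the bi-Lipschitz planar embedding for one specific weight) with Theorem \ref{thm:uniformization2}, using that every $\T^{m,\ba}$ is a uniformly $m$-branching quasiconformal tree so that the choice of weight is irrelevant up to quasisymmetry. The composition of the resulting quasisymmetry with the bi-Lipschitz embedding is exactly the paper's (implicit) argument.
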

	
If $m=2$, then by Proposition \ref{prop:valence}, $\mathbb{T}^{2,\ba}$ is a  metric arc and by Proposition \ref{prop:geodesic} and Corollary \ref{cor: diameters of tiles}, it is a geodesic space of diameter 1. Therefore,  $\mathbb{T}^{2,\ba}$ is isometric to the unit interval $[0,1]$.
	
Fix for the rest of this section an integer $m\geq 3$. Let $A=\{1,\dots,m\}$, let $\theta = \frac{\pi}{3m-3}$ and let $\ba$ be a weight with $\ba(1)=\ba(2)=1/2$, $\ba(j) = \frac{1}{2}\sin\theta$ for all $j\in\{3,\dots,m\}$.
	
Let $R\subset \mathbb{R}^2$ be the convex hull of the 4 points $\{\pm\frac{1}{2}, \pm\frac{i}{2}\tan\theta\}$ and let $\{\psi_j: \mathbb{C} \to \mathbb{C}\}_{j=1}^{m}$ be contracting similarities given (in complex notation) by
\[ \psi_1(z) = \ba(1)(\overline{z} - \tfrac12), \quad \psi_2(z) = \ba(2)(z + \tfrac12), \]
and
\[\psi_j(z) = \ba(j)\left(z + \tfrac12\right)e^{i(3j-6)\theta}\quad \text{for $j\in\{3,\dots,m\}$}.\]
See Figure \ref{fig:IFS-m=4} for the case $m=4$.
	
	\begin{figure}[h]
		\centering
		\includegraphics[width=\textwidth]{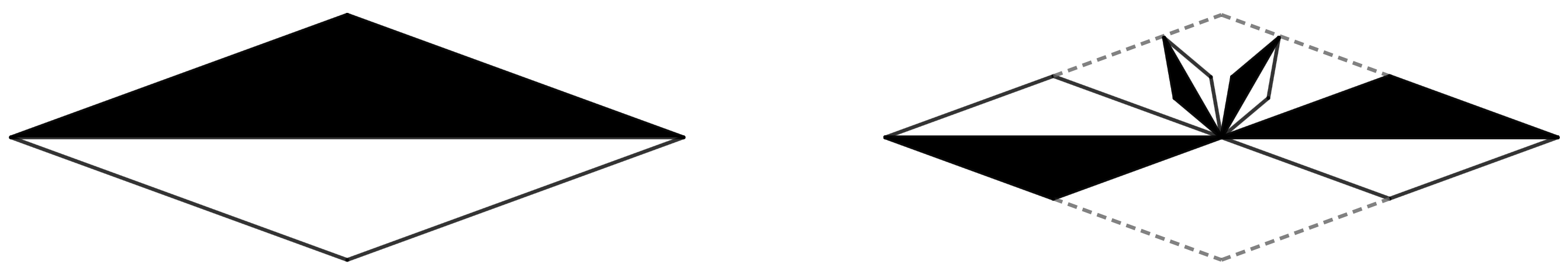}
		\caption{The images of $R$ under similarities $\psi_1,\dots,\psi_4$. Here we have chosen $m=4$.}
		\label{fig:IFS-m=4}
	\end{figure}

For each $w=j_1\cdots j_n\in A^{*}$, define 
$$\psi_w:=\psi_{j_1}\circ \psi_{j_2}\circ \cdots \circ \psi_{j_n}$$
with the convention that $\psi_{\e}$ is the identity map on $\mathbb{C}$. Note that the scaling factor of $\psi_w$ is $\Delta_{\ba}(w)=\ba(j_1)\cdots\ba(j_n)$. For each $w\in A^*$, set $R_w = \psi_w(R)$. It is elementary to show that the choice of the angle $\theta$ implies that the rhombuses $R_j$, $j\in A$, lie in $R$ and are well separated in the following sense.
	
\begin{lem}\label{lem:geometry of rhombuses}
For each distinct $j,j' \in A$, $R_j \subset R$, $R_j \cap R_{j'} = \{0\}$, and there exists a double cone centered at 0 and of angle $\theta$ that separates the interior of $R_j$ from the interior of $R_{j'}$.
\end{lem}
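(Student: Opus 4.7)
The plan is to locate each $R_j$ explicitly inside a narrow sector emanating from $0$, and then read off the three conclusions from elementary angle bookkeeping.

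First I would analyze $R$ itself: the rhombus $R$ has interior vertex angle $2\theta$ at each horizontal vertex $\pm\tfrac12$, since the two edges at $\tfrac12$ go to $\pm\tfrac{i}{2}\tan\theta$ and thus, as vectors based at $\tfrac12$, point in directions $\pi\pm\theta$. The similarities $\psi_j$ are designed so that one horizontal vertex is carried to $0$ and the resulting rhombus is rotated to a prescribed direction: one checks directly that $\psi_1(\tfrac12)=0$, $\psi_2(-\tfrac12)=0$, and $\psi_j(-\tfrac12)=0$ for $j\geq 3$, while the factor $e^{i(3j-6)\theta}$ sends the opposite vertex to direction $(3j-6)\theta$. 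Consequently each $R_j$ has a vertex at $0$ with opening angle $2\theta$ and is contained in the closed sector $S_j$ from $0$ with central direction $\pi$ (for $j=1$), $0$ (for $j=2$), and $(3j-6)\theta$ (for $j\geq 3$), each of angular half-width $\theta$.

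To verify $R_j\subset R$, I would use convexity of $R$ and check that each of the four vertices of $R_j$ satisfies the defining inequality $2|x|+2|y|\cot\theta\leq 1$ for $R$. For $R_1,R_2$ the vertices can be listed directly (those of $R_1$ are $0$, $-\tfrac12$, and $-\tfrac14\pm\tfrac{i}{4}\tan\theta$) and all lie on $\partial R$. For $j\geq 3$ the vertices of $R_j$ are $0$, $\tfrac{\sin\theta}{2}e^{i(3j-6)\theta}$, and $\tfrac{\tan\theta}{4}e^{i((3j-6)\theta\pm\theta)}$; the estimate at the far vertex reduces to the identity $\sin\theta|\cos\phi|+\cos\theta|\sin\phi|=|\sin(\phi\pm\theta)|\leq 1$, and the estimate at the two side vertices reduces to the same identity together with the bound $\cos\theta\geq\tfrac12$, which holds since $\theta=\pi/(3m-3)\leq\pi/6$.

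For the remaining two conclusions, I would sort the sectors $S_j$ by central direction: $0,\,3\theta,\,6\theta,\,\dots,\,(3m-6)\theta=\pi-3\theta,\,\pi$. These are equally spaced at $3\theta$ and each $S_j$ has half-width $\theta$, so the angular gap between any two adjacent sectors is exactly $\theta$, and $\mathrm{int}(S_j)\cap\mathrm{int}(S_{j'})=\emptyset$ for $j\neq j'$. Since $R_j\subset S_j$ and $0\in R_j\cap R_{j'}$, this gives $R_j\cap R_{j'}=\{0\}$. For the separating double cone, given distinct $j,j'$ I pick any direction $\alpha$ in the angular gap between $S_j$ and $S_{j'}$ and consider the double cone
\[
C_\alpha=\{re^{i\phi}:|\phi-\alpha|<\theta/2\}\cup\{re^{i\phi}:|\phi-\alpha-\pi|<\theta/2\}
\]
of opening angle $\theta$; by the angular separation, $C_\alpha$ meets neither $\mathrm{int}(R_j)$ nor $\mathrm{int}(R_{j'})$, and its axis (the line through $0$ at angle $\alpha$) separates them. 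The main obstacle is purely bookkeeping: carrying through the conjugation, translation, rotation, and scaling in $\psi_j$ consistently across the cases $j\in\{1,2\}$ and $j\geq 3$, and checking that the vertex estimates remain valid at the extreme case $m=3$ where $\theta=\pi/6$.
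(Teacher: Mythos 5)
Your verification is correct, and in fact the paper offers no argument here at all: it simply asserts that the lemma is ``elementary to show,'' so your computation supplies exactly the omitted check. The vertex lists for the $R_j$, the reduction of the containment $R_j\subset R$ to the inequality $2|x|+2|y|\cot\theta\le 1$ via $\sin\theta|\cos\phi|+\cos\theta|\sin\phi|\le 1$ and $\cos\theta\ge\tfrac12$, and the sector bookkeeping (centers $0,3\theta,\dots,\pi-3\theta,\pi$, half-width $\theta$, gaps of width exactly $\theta$) all check out. One small imprecision: for \emph{adjacent} sectors the gap has width exactly $\theta$, so ``any direction $\alpha$ in the gap'' does not work -- an off-center choice makes the open double cone of opening $\theta$ overlap the neighboring open sector and hence $\mathrm{int}(R_j)$ near the origin; you must take $\alpha$ to be the bisector of the gap (and similarly choose $\alpha$ at angular distance at least $\theta/2$ from both sectors in the general case), after which the separation by the axis goes through as you describe. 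Also, the vertex $0$ of $R_1$ lies in the interior of $R$, not on $\partial R$, but this does not affect the convexity argument.
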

	
By Lemma \ref{lem:geometry of rhombuses}, $\psi_w(R)\subset R$ for all $w\in A^{*}$. By \cite{Hu81}, the iterated function system $\{\psi_j\}_{j\in A}$ has an attractor $\mathcal{T}^m$. That is, $\mathcal{T}^m$ is the unique nonempty compact set such that $\mathcal{T}^m = \bigcup_{j\in A}\psi_j(\mathcal{T}^m)$. For each $w\in A^*$, we set $\mathcal{T}^m_w = \psi_w(\mathcal{T}^m)$. See Figure \ref{fig:attractor(m=4)} for the case $m=4$.\footnote{Figure \ref{fig:attractor(m=4)} was generated using the IFS construction kit in \href{https://adelapo.github.io/ifs.html}{https://adelapo.github.io/ifs.html}.}
	
	\begin{figure}[h]
		\centering
		\includegraphics[width=\textwidth]{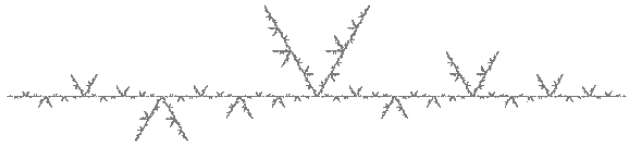}
		\caption{The attractor $\mathcal{T}^4$ of the iterated function system $\{\psi_1,\psi_2,\psi_3,\psi_4\}$.}
		\label{fig:attractor(m=4)}
	\end{figure}
	
The proof of the following is almost identical with that  in \cite[Proposition 4.2]{BT_CSST}, hence, we omit the details. 
\begin{lem}\label{lem:tree-skeletons}
Let $J_0=[-\frac{1}{2},\frac{1}{2}]\subset \mathbb{C}$. For $n\geq 0$ define 
$$J_n=\bigcup_{w\in A^{n}}\psi_w(J_0) \quad \text{and} \quad K_n=\bigcup_{w\in A^{n}}\psi_w(R).$$
Then the sets $J_n$ and $K_n$ are compact for all integers $n\geq 0$ and
\begin{equation*}
J_n\subset J_{n+1}\subset \mathcal{T}^m\subset K_{n+1}\subset K_n.
\end{equation*}
\end{lem}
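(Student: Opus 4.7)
The statement splits into two claims: compactness of each $J_n$ and $K_n$, and the chain of inclusions $J_n\subset J_{n+1}\subset \mathcal{T}^m\subset K_{n+1}\subset K_n$. Compactness is immediate: since the alphabet $A$ is finite and each similarity $\psi_w$ is continuous, each of $J_n$ and $K_n$ is a finite union of images of the compact sets $J_0=[-\tfrac12,\tfrac12]$ and $R$.

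For the $K$-side of the chain, I would first use Lemma \ref{lem:geometry of rhombuses} to record the base inclusion $\psi_j(R)\subset R$ for every $j\in A$. Applying $\psi_w$ for $w\in A^n$ and taking the union over $w$ then yields $K_{n+1}\subset K_n$. For $\mathcal{T}^m\subset K_{n+1}$, the plan is to use the standard IFS fact that the attractor is contained in any compact set sent into itself by each map; with $R$ this gives $\mathcal{T}^m\subset R=K_0$. Then apply $\psi_w$ for $w\in A^{n+1}$ and combine with the self-similar identity $\mathcal{T}^m=\bigcup_{w\in A^{n+1}}\psi_w(\mathcal{T}^m)$ to conclude.

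For the $J$-side, the key computation is that $\psi_1(J_0)=[-\tfrac12,0]$ and $\psi_2(J_0)=[0,\tfrac12]$, so $J_0=\psi_1(J_0)\cup\psi_2(J_0)\subset J_1$; applying $\psi_w$ for $w\in A^n$ and taking a union gives $J_n\subset J_{n+1}$. The inclusion $J_{n+1}\subset \mathcal{T}^m$ reduces, by applying $\psi_w$ and using $\psi_w(\mathcal{T}^m)\subset \mathcal{T}^m$, to the single statement $J_0\subset \mathcal{T}^m$.

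The main (modest) obstacle is this last inclusion. The plan is to show by induction on $k$ that $J_0\subset K_k$ for every $k\geq 0$: the base case is $J_0\subset R=K_0$, and the inductive step uses the same decomposition $J_0=\psi_1(J_0)\cup\psi_2(J_0)\subset \psi_1(K_k)\cup\psi_2(K_k)\subset K_{k+1}$. Because each contraction ratio satisfies $\ba(j)\leq \tfrac12$, the diameters of the component rhombuses $\psi_w(R)$ shrink to zero as $|w|\to\infty$, which together with $K_{k+1}\subset K_k$ and $\mathcal{T}^m\subset K_k$ yields $\mathcal{T}^m=\bigcap_{k\geq 0}K_k$. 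Hence $J_0\subset \bigcap_k K_k=\mathcal{T}^m$, completing the proof.
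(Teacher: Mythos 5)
Your proof is correct, and it takes essentially the same route as the paper, which omits the details here and defers to the analogous argument in \cite[Proposition 4.2]{BT_CSST}: the base inclusions $\psi_j(R)\subset R$ (Lemma \ref{lem:geometry of rhombuses}) and $J_0=\psi_1(J_0)\cup\psi_2(J_0)$, propagated by applying the maps $\psi_w$, together with the standard identification $\mathcal{T}^m=\bigcap_{k\geq 0}K_k$ to obtain $J_0\subset\mathcal{T}^m$. Nothing further is needed.
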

	
Since $J_0\subset \mathcal{T}^m\subset R$ and $\diam(J_0)=\diam(R)=1$, we have $\diam(\mathcal{T}^m)=1$. Therefore, $\diam{\mathcal{T}_w^{m}}=\Delta(w)$ for all $w\in A^*$.
	
Observe that $0=\psi_1(\frac{1}{2})=\psi_2(-\frac{1}{2})=\cdots = \psi_m(-\frac12)$, so $0\in \mathcal{T}_k^m\subset R_k$ for all $k\in A$. Therefore, if $j,j'\in\{1,\dots, m\}$ are distinct, Lemma \ref{lem:geometry of rhombuses} implies that $\mathcal{T}_k^m\cap\mathcal{T}_l^m=\{0\}$.
	
\begin{lem}\label{lem: J_n connected}
For all $n\in\N_0$, $J_n$ is connected.
\end{lem}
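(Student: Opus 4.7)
The plan is to proceed by induction on $n$, exploiting the observation, recorded just before the lemma, that the single point $0\in\mathbb{C}$ is fixed by the fact that $\psi_1(\tfrac12)=0$ and $\psi_j(-\tfrac12)=0$ for every $j\in\{2,\dots,m\}$. For the base case $n=0$, the set $J_0=[-\tfrac12,\tfrac12]$ is a line segment, hence connected.

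For the inductive step, assume $J_n$ is connected. The key step is to regroup the defining union by the first letter of each word in $A^{n+1}$ to obtain
\[
J_{n+1}=\bigcup_{w\in A^{n+1}}\psi_w(J_0)=\bigcup_{j\in A}\psi_j\!\left(\bigcup_{u\in A^n}\psi_u(J_0)\right)=\bigcup_{j\in A}\psi_j(J_n).
\]
Each $\psi_j(J_n)$ is then connected as a continuous image of a connected set. Next, from Lemma \ref{lem:tree-skeletons} we have $J_0\subset J_n$ (or one can observe it directly by unfolding the inclusion $J_0=\psi_1(J_0)\cup\psi_2(J_0)$ $n$ times), so in particular $\pm\tfrac12\in J_n$. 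Applying $\psi_j$ then yields $0=\psi_1(\tfrac12)\in\psi_1(J_n)$ and $0=\psi_j(-\tfrac12)\in\psi_j(J_n)$ for $j\in\{2,\dots,m\}$. Thus every $\psi_j(J_n)$ contains the common point $0$, and by the standard fact that a union of connected sets sharing a common point is connected, $J_{n+1}$ is connected.

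There is no significant obstacle: the only subtle point is choosing the right inductive decomposition (peeling off the first letter rather than the last, so that all the subpieces meet at $\psi_j$-images of the fixed endpoints of $J_0$), which makes the common pivot $0$ visible. Once that regrouping is made, the rest is immediate from elementary connectedness facts.
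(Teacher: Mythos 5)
Your proposal is correct and follows essentially the same route as the paper: induction on $n$, regrouping $J_{n+1}=\bigcup_{j\in A}\psi_j(J_n)$ by the first letter, and noting that all pieces contain the common point $0=\psi_1(\tfrac12)=\psi_j(-\tfrac12)$ (via $J_0\subset J_n$ from Lemma \ref{lem:tree-skeletons}), so the union of connected sets with a common point is connected. No gaps; this matches the paper's argument.
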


\begin{proof}
The proof is done by induction. For $n=0$ it's clear. Assume that $J_n$ is connected for $n\geq 1$. Then,
$$J_{n+1}=\bigcup_{w\in A^{n+1}}\psi_w(J_0)=\bigcup_{i=1}^m\bigcup_{w'\in A^n}\psi_{iw'}(J_0)=\bigcup_{i=1}^m\psi_i\big(\bigcup_{w'\in A^n}\psi_{w'}(J_0)\big)=\bigcup_{i=1}^m\psi_i(J_n).$$
The sets $\psi_i(J_n)$ are connected by continuity of  $\psi_i$ for each $i\in\{1,\dots,m\}$. Moreover, $0\in \psi_i(J_n)$ for all $i$, since $\psi_i(J_0)\subset\psi_i(J_n)$ by Lemma \ref{lem:tree-skeletons} and $0\in \psi_i(J_0)$ by the above observation. Hence, $J_{n+1}$ is connected.
\end{proof}

\begin{lem}\label{lem:quasiconvex}
The set $\mathcal{T}^m$ is a quasiconvex metric tree in $\mathbb{C}$.
\end{lem}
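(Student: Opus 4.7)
The plan is to identify $\mathcal{T}^m$ with $\T^{m,\ba}$ via a natural address map, and then to prove quasiconvexity using a self-similar fixed-point argument on arc lengths to the leaves $\pm 1/2$.

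\emph{Step 1 (metric tree via homeomorphism).} I will define $\pi \colon \T^{m,\ba} \to \mathcal{T}^m$ by $\pi([w]) = \bigcap_{n \geq 0} R_{w(n)}$, which is a single point since $\diam R_{w(n)} \leq \Delta_{\ba}(w(n)) \to 0$. The map is well-defined on equivalence classes: by Lemma \ref{lem: branch points belong in the same class} the only nontrivial identifications are $u12^{(\infty)} \sim uj1^{(\infty)}$ for $j \geq 2$, and both yield $\psi_u(0)$ because $\psi_1(1/2) = 0 = \psi_j(-1/2)$. Continuity and surjectivity follow from the nested structure in Lemma \ref{lem:tree-skeletons}. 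Injectivity follows from Lemma \ref{lem:geometry of rhombuses}: distinct equivalence classes diverging at some level land in distinct rhombuses that meet in at most one corner, which by the above address computation corresponds precisely to the equivalence identifications. As a continuous bijection between compact Hausdorff spaces, $\pi$ is a homeomorphism, so $\mathcal{T}^m$ inherits the metric tree structure of $\T^{m,\ba}$ from Lemma \ref{lem:metrictree}.

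\emph{Step 2 (quasiconvexity).} Write $d_I$ for the intrinsic (arc length) metric on the tree $\mathcal{T}^m$. The key step is to bound
\[
S := \sup\Bigl\{ \tfrac{d_I(z, e)}{|z-e|} : z \in \mathcal{T}^m \setminus \{e\},\ e \in \{-1/2,\, 1/2\} \Bigr\} < \infty.
\]
For $z \in \mathcal{T}^m_j$ with $j \geq 2$ and $e = -1/2$, the unique arc from $z$ to $e$ passes through $0$, so self-similarity gives $d_I(z, -1/2) = \ba(j)\, d_I(\psi_j^{-1}(z), -1/2) + 1/2 \leq S|z| + 1/2$; symmetric analyses handle the remaining combinations of $j$ and $e$. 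A direct computation using $|z-(-1/2)|^2 - |z|^2 = \Re z + 1/4$, combined with bounds on $\Re(\psi_j(\tilde z))$ from the geometry of $R$ and the choice $\theta = \pi/(3m-3)$, yields a uniform constant $c_\theta > 0$ with $|z-e| - |z| \geq c_\theta$; inserting this into the above bound closes the recursion and forces $S \leq 1/(2c_\theta) < \infty$ (a priori finiteness of $d_I(z,e)$ is obtained by approximating $z$ by points on the skeleta $J_n$ of Lemma \ref{lem:tree-skeletons}, whose arcs are rectifiable). Granted $S < \infty$, for distinct $x, y \in \mathcal{T}^m$ let $v \in A^*$ be the longest common prefix so that $x \in \mathcal{T}^m_{vi_x}$ and $y \in \mathcal{T}^m_{vi_y}$ with $i_x \neq i_y$; the sub-tiles meet only at $p = \psi_v(0)$, and self-similarity applied through $\psi_{vi_x}$ and $\psi_{vi_y}$ gives $d_I(x, p) \leq S|x-p|$ and $d_I(y, p) \leq S|y-p|$. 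Lemma \ref{lem:geometry of rhombuses} yields $|x-y| \geq (\sin\theta)\max(|x-p|, |y-p|) \geq \tfrac{\sin\theta}{2}(|x-p| + |y-p|)$, and combining gives $d_I(x,y) \leq (2S/\sin\theta)|x-y|$, the desired quasiconvexity.

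\emph{Main obstacle.} The crux is establishing the uniform lower bound $|z-e| - |z| \geq c_\theta > 0$ on every branch $\mathcal{T}^m_j$ not containing $e$. This is a delicate geometric estimate because the rotated small branches ($j \geq 3$) may point ``away'' from $e$; however, the specific choice $\theta = \pi/(3m-3)$ was made so that the product $\ba(j)\cdot|\tilde z + 1/2|$ is small enough, and the shape of $R$ (half-width $\tfrac{1}{2}\tan\theta$) restricts the imaginary extent of $\tilde z$, so that $\Re(\psi_j(\tilde z)) + 1/4$ remains strictly above zero. Once this geometric fact is secured, the fixed-point inequality defining $S$ closes and the remainder of the argument is formal.
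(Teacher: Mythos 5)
Your route is genuinely different from the paper's: the paper proves Lemma \ref{lem:quasiconvex} by invoking the argument of \cite[Proposition 1.4]{BT_CSST} essentially verbatim, with the double-cone separation of Lemma \ref{lem:geometry of rhombuses} as the only new input, whereas you give a self-contained proof via a coding homeomorphism with $\T^{m,\ba}$ plus a self-similar estimate of arc length to the two leaves $\pm\tfrac12$. Step 1 is fine (the identifications of addresses of $0$ do match the equivalence classes, so the coding map is a homeomorphism and the tree property transfers), and the final reduction in Step 2 (longest common prefix, arc through $p=\psi_v(0)$, and the law-of-cosines bound $|x-y|\ge \sin\theta\,\max(|x-p|,|y-p|)$ from the cone separation) is correct.

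The gap is in the fixed-point step. For $z$ in a first-level branch not containing $e$ you get $d_I(z,e)/|z-e|\le (S|z|+\tfrac12)/(|z|+c_\theta)$, and taking suprema yields $S\le\max\{1/(2c_\theta),\,(S/2+\tfrac12)/(\tfrac12+c_\theta)\}$; this forces $S\le 1/(2c_\theta)$ only if one already knows $S<\infty$. Your parenthetical justifies finiteness of each individual $d_I(z,e)$, which is weaker than finiteness of the supremum of the ratios; if $S=\infty$ the inequality is vacuous and the recursion does not close. The repair is available inside your framework: for any address $w$ of $z$, concatenating the arcs joining $\psi_{w(k)}(0)$ to $\psi_{w(k+1)}(0)$ (each of length $\tfrac12\Delta_{\ba}(w(k+1))$) gives $d_I(z,0)\le\tfrac12$, hence $d_I(z,e)\le 1$; together with $|z-e|\ge c_\theta$ in the different-branch configuration, and with the same-branch configuration handled by exact scale invariance of the ratio (note this is not a ``symmetric'' instance of your displayed estimate but a separate reduction, which your definition of $S$ silently relies on), this yields $S\le 1/c_\theta<\infty$ outright, making the fixed-point inequality unnecessary. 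A second, smaller point: the bound $\Re(\psi_j(\tilde z))+\tfrac14\ge c'>0$ cannot be obtained from $|\psi_j(\tilde z)|\le\ba(j)\le\tfrac12\sin\theta$ alone, since for $m=3$ that bound equals $\tfrac14$ exactly; one needs the angular control you allude to, namely $|\arg(\tilde z+\tfrac12)|\le\theta$ and rotation angles between $3\theta$ and $(3m-6)\theta$, so that $\arg\psi_j(\tilde z)\in[2\theta,\pi-2\theta]$ and $\Re\psi_j(\tilde z)\ge -|\psi_j(\tilde z)|\cos 2\theta$, which does leave a positive margin. With these two repairs your argument is a valid alternative to the paper's citation-based proof.
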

	
\begin{proof}
That $\mathcal{T}^m$ is a metric tree, can be proved using verbatim the same techniques as in \cite[Proposition 1.4]{BT_CSST} (see also \cite[Lemma 4.3, Lemma 4.4 and Lemma 4.5]{BT_CSST}). The fact that $\mathcal{T}^m$ is quasiconvex follows from the existence of double cones separating distinct $R_j,R_{j'}$ in Lemma \ref{lem:geometry of rhombuses}, and applying the techniques of \cite[Proposition 1.4]{BT_CSST} verbatim. 
\end{proof}
	
Define a geodesic metric $\rho$ on $\mathcal{T}^m$ by setting $\rho(a,b)=\text{length}(\gamma)$ for $a,b\in\mathcal{T}^m$, where $\gamma$ is the unique arc in $\mathcal{T}^m$ joining $a$ and $b$. Then, by Lemma \ref{lem:quasiconvex}, the metric space $(\mathcal{T}^m,\rho)$ is bi-Lipschitz equivalent to $\mathcal{T}^m$ (equipped with the Euclidean metric).  
	
\begin{lem}\label{lem: isometric embeding of J_n}
For each $n\geq 0$, the metric space $(J_n,\rho)$ isometrically embeds into $\mathbb{T}^{m,\ba}$. 
\end{lem}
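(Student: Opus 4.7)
The plan is to build the isometric embedding $f_n\colon J_n\to \mathbb{T}^{m,\ba}$ by induction on $n$, exploiting the parallel self-similar structure of the two spaces.

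For the base case $n=0$, the set $J_0=[-\tfrac12,\tfrac12]$ is a Euclidean segment of length $1$ on which $\rho$ reduces to the Euclidean metric. By Proposition \ref{prop:geodesic} and Corollary \ref{cor:dist of 1^oo to 2^oo}, $\mathbb{T}^{m,\ba}$ contains a unique geodesic of length $1$ joining $[1^{(\infty)}]$ and $[2^{(\infty)}]$, so I define $f_0$ as the isometric parametrization of that geodesic sending $-\tfrac12\mapsto[1^{(\infty)}]$ and $\tfrac12\mapsto[2^{(\infty)}]$.

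For the inductive step, assume $f_{n-1}$ has been constructed. Since $J_n=\bigcup_{j\in A}\psi_j(J_{n-1})$, I define $f_n$ on each piece by
\[ f_n\big|_{\psi_j(J_{n-1})} := \phi_j^{A,\ba}\circ f_{n-1}\circ\psi_j^{-1},\]
where $\phi_j^{A,\ba}$ is the similarity from Lemma \ref{lem:ss}. Well-definedness requires checking consistency on overlaps: by Lemma \ref{lem:geometry of rhombuses}, distinct pieces $\psi_j(\mathcal{T}^m)$ and $\psi_{j'}(\mathcal{T}^m)$ meet only at $0\in J_0\subset J_{n-1}$, and the identities $\psi_1(\tfrac12)=0$ and $\psi_j(-\tfrac12)=0$ for $j\geq 2$ yield $\phi_1(f_{n-1}(\tfrac12))=\phi_1([2^{(\infty)}])=[12^{(\infty)}]$ and $\phi_j(f_{n-1}(-\tfrac12))=\phi_j([1^{(\infty)}])=[j1^{(\infty)}]$ for $j\geq 2$. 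All of these coincide in $\mathbb{T}^{m,\ba}$ by Remark \ref{rem:equivalent classes of 12^oo}, so $f_n(0)=[12^{(\infty)}]$ is unambiguously defined.

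To verify that $f_n$ is an isometry, I take $x_1,x_2\in J_n$ and split into two cases. If both lie in the same $\psi_j(J_{n-1})$, then the unique arc joining them in $\mathcal{T}^m$ stays inside the closed subtree $\psi_j(\mathcal{T}^m)$, so $\rho(x_1,x_2)=\ba(j)\cdot\rho(\psi_j^{-1}(x_1),\psi_j^{-1}(x_2))$ because $\psi_j$ is a Euclidean similarity of factor $\ba(j)$; since $\phi_j^{A,\ba}$ is also a similarity with factor $\ba(j)$ (Lemma \ref{lem:ss}), the inductive hypothesis gives $d_{A,\ba}(f_n(x_1),f_n(x_2))=\rho(x_1,x_2)$. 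If $x_1,x_2$ lie in different pieces $\psi_j(J_{n-1})$ and $\psi_{j'}(J_{n-1})$, the unique arc in $\mathcal{T}^m$ joining them must pass through $0$, so $\rho(x_1,x_2)=\rho(x_1,0)+\rho(0,x_2)$; analogously, by Lemma \ref{lem: preimage 12^00}(i), the unique arc in $\mathbb{T}^{m,\ba}$ between $f_n(x_1)$ and $f_n(x_2)$ passes through $[12^{(\infty)}]=f_n(0)$, and geodesicity (Proposition \ref{prop:geodesic}) splits the corresponding distance in the same way. Applying the same-piece case to $(x_1,0)$ and $(0,x_2)$ closes the induction.

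The one technical point to verify carefully is the "same-piece" computation: one needs that the restriction of $\rho$ to $\psi_j(\mathcal{T}^m)$ is precisely $\ba(j)$ times the pullback of $\rho$ under $\psi_j^{-1}$. This follows because $\psi_j(\mathcal{T}^m)$ is a closed subtree of the tree $(\mathcal{T}^m,\rho)$, so geodesics between its points stay within it, and $\psi_j$ is a Euclidean similarity of factor $\ba(j)$ which rescales segment lengths by the same factor. Everything else amounts to bookkeeping on the two self-similar codings.
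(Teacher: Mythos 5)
Your proof is correct and follows essentially the same route as the paper: define the base case on the segment $J_0$, then in the inductive step piece together $f_n$ from $\phi_j\circ f_{n-1}\circ\psi_j^{-1}$ on each $\psi_j(J_{n-1})$, check well-definedness at the common point $0$ using the identifications $[12^{(\infty)}]=[j1^{(\infty)}]$, and verify the isometry by the same two-case split (same piece via the matching similarity factors, distinct pieces via geodesicity through $0$ and $[12^{(\infty)}]$). The only minor difference is one of presentation — the paper names the target $\mathcal{J}_n=\bigcup_{w\in A^n}\phi_w(\gamma(I))$ explicitly and packages the same-piece step by declaring the restriction $g_{n,j}$ an isometry outright, whereas you spell out the subtree-plus-similarity reason why that holds — but the mathematical content is identical.
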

	
\begin{proof}
First, for each integer $n\geq 0$, $J_n$ is closed (since it's compact by Lemma \ref{lem:tree-skeletons}) and connected by Lemma \ref{lem: J_n connected}; hence, as a  subset of $\mathcal{T}^m$, it is itself a geodesic metric tree \cite[Lemma 3.3]{BT_CSST}. 
		
Let $\gamma : I\rightarrow \mathbb{T}^{m,\ba}$ be the unique arc connecting $[1^{(\infty)}]$ with $[2^{(\infty)}]$, where $I$ denotes the unit interval. Define for each integer $n\geq 0$, 
\[ \mathcal{J}_n := \bigcup_{w\in A^{n}}\phi_w(\gamma(I))\subset \mathbb{T}^{m,\ba}.\] 
We claim that for all integers $n\geq 0$ there exists an isometric map $f_n : J_n \to \mathcal{J}_n$ satisfying $f_n(-\frac{1}{2})=[1^{(\infty)}]$ and $f_n(\frac{1}{2})=[2^{(\infty)}]$. We prove the claim by induction on $n$. 
		
The case $n=0$ is trivial, since $(\gamma(I),\rho_{\ba})$ and $(J_0,\rho)$ are both isometric to $I$, due to $\T^{m,\ba}$ being geodesic. 
		
Assume now that for some integer $n\geq 0$ there exists an isometric homeomorphism $f_n : J_n \to \mathcal{J}_n$ with $f_n(-\frac{1}{2})=[1^{(\infty)}]$ and $f_n(\frac{1}{2})=[2^{(\infty)}]$. For each $j\in A$, set $J_{n,j} = \psi_j(J_n)$ and $\mathcal{J}_{n,j}= \phi_j(\mathcal{J}_n)$. Since $\psi_j$ has the same scaling factor as $\phi_j$, we have that the map
\[g_{n,j}=\phi_j \circ f_n \circ(\psi_j|J_{n,j})^{-1}:J_{n,j} \to \mathcal{J}_{n,j}\]
is an isometric homeomorphism. Note that $-\frac{1}{2}\in J_{n,1}$ and $\frac{1}{2}\in J_{n,2}$ since $-\frac{1}{2}=\psi_1(\psi_{1^{(n)}}(-\frac{1}{2}))$ and $\frac{1}{2}=\psi_2(\psi_{2^{(n)}}(\frac{1}{2}))$. Hence, 
\[ g_{n,1}(-\tfrac{1}{2})=g_{n,1}(\psi_1(-\tfrac{1}{2}))=\phi_1(f_n(-\tfrac{1}{2}))=\phi_1([1^{(\infty)}])=[1^{(\infty)}].\] 
Similarly, we get $g_{n,2}(\frac{1}{2})=[2^{(\infty)}]$. Note that by $-\frac{1}{2}, \frac{1}{2}\in J_{n}$ for all $n$, we also have $0\in J_{n,j}$ for all $j\in A$. As a result, for distinct $j,j' \in A$ we have
\begin{equation}\label{eq: 0 is J_n,j intersection}
\{0\}\subset J_{n,j}\cap J_{n,j'}=\psi_j(J_n)\cap \psi_{j'}(J_n)\subset \psi_j(\mathcal{T}^m)\cap\psi_{j'}(\mathcal{T}^m)\subset \{0\}.
\end{equation}
Hence the sets $J_{n,j}$ have only 0 as common point.  Similarly, the sets $\mathcal{J}_{n,j}$ have only $[12^{(\infty)}]$ as common point (see Lemma \ref{lem: preimage 12^00}). Observe that
\[ J_{n+1}= \bigcup_{w\in A^{n+1}}\psi_w(J_0)=\bigcup_{j\in A}\bigcup_{w\in A^n}\psi_j(\psi_w(J_0))=\bigcup_{j\in A} J_{n,j}.\]
Similarly, $\mathcal{J}_{n+1}= \bigcup_{w\in A^{n+1}}\phi_w(\gamma(I))=\bigcup_{j=1}^m \mathcal{J}_{n,j}$. 
		
Define $f_{n+1}:(J_{n+1},\rho) \to (\mathcal{J}_{n+1},\rho_{\ba})$ by $f_{n+1}|J_{n,j} = g_{n,j}$. By \eqref{eq: 0 is J_n,j intersection} the map $f_{n+1}$ is well defined, since $f_{n+1}(0)=g_{n,j}(0)=[12^{(\infty)}]$ for all $j\in\{1,\dots, m\}$, $f_{n+1}(-\frac{1}{2})=g_{n,1}(-\frac{1}{2})=[1^{(\infty)}]$, and $f_{n+1}(\frac{1}{2})=g_{n,2}(\frac{1}{2})=[2^{(\infty)}]$. 
		
To finish the inductive step, we show that $f_{n+1}$ is an isometry. To this end, fix $x,y \in J_{n+1}$. The case $x,y\in J_{n,j}$ for some $j\in A$ follows by the fact that each $g_{n,j}$ is an isometry. Assume now that $x\in J_{n,j}$ and $y\in J_{n,j'}$ for two distinct $j,j'\in A$. By the geodesicity of $\T^{m,\ba}$ and $(J_{n+1},\rho)$,
\begin{align*}
d_{\mathbf{a}}(f_{n+1}(x),f_{n+1}(y)) &= d_{\mathbf{a}}(g_{n,j}(x),[12^{(\infty)}]) + d_{\mathbf{a}}([12^{(\infty)}], g_{n,j'}(y))\\ 
&= \rho(x,0) + \rho(0,y)\\ 
&= \rho(x,y). \qedhere
\end{align*}
\end{proof}
	
\begin{proof}[{Proof of Proposition \ref{prop: bi-Lip embeding}}]
Recall the definitions of sets $(J_n)_{n\in \N_0}$, $(\mathcal{J}_n)_{n\in \N_0}$, and isometric homeomorphisms $f_n: (J_n,\rho) \to \mathcal{J}_n$ from the proof of Lemma \ref{lem: isometric embeding of J_n}.
		
By Lemma \ref{lem:tree-skeletons}, each geodesic tree $(J_n,\rho)$ is a subset of $(\mathcal{T}^m,\rho)$. By Lemma \ref{lem: isometric embeding of J_n} we have 
\begin{align*}
\dist_H(\mathbb{T}^{m,\ba}, \mathcal{J}_n)=\sup_{[v]\in \mathbb{T}^{m,\ba}}d_{\ba}([v],\mathcal{J}_n) &\leq \sup_{[v]\in \mathbb{T}^{m,\ba}}\inf_{w\in A^n}d_{\ba}([v],[w12^{(\infty)}])\\ 
&\leq \sup_{w\in A^n}\diam{\T^{m,\ba}_w}\\ 
&\leq 2^{-n}
\end{align*}
where in the third inequality we used the fact that $\{[w12^{(\infty)}]:w\in A^n\} \subset \mathcal{J}_n$.
		
Therefore, we have
\begin{equation}\label{eq: d_GH of Tma and Jn}
\lim_{n\to\infty}\dist_{GH}(\mathbb{T}^{m,\ba},(J_n,\rho)) \leq \lim_{n\to\infty}\dist_H(\mathbb{T}^{m,\ba}, \mathcal{J}_n) =0.
\end{equation}
		
Fix $n\geq 1$ and $x\in \mathcal{T}^m$. Then there exists $w(n) \in A^n$ such that $x \in \mathcal{T}^m_{w(n)}$. By Lemma \ref{lem:quasiconvex}, $\mathcal{T}^m$ is $L$-bi-Lipschitz homeomorphic to $(\mathcal{T}^m,\rho)$ for some $L\geq 1$. Therefore,
		
\begin{align*}
\rho(x,\psi_{w(n)}(0)) \leq \diam_{\rho}{\mathcal{T}^m_{w(n)}} \leq L \diam{\mathcal{T}^m_{w(n)}} \leq L\Delta_{\ba}(w(n)) \leq L 2^{-n},
\end{align*}
which implies that
\begin{equation}\label{eq: d_GH of calT_m and Jn}
\lim_{n\to \infty} \dist_H((\mathcal{T}^m,\rho), (J_n,\rho)) \leq \lim_{n\to \infty} \sup_{x\in \mathcal{T}^m}\inf_{w\in A^n}\rho(x,\psi_w(0)) =0.
\end{equation}
		
Hence, by triangle inequality and \eqref{eq: d_GH of Tma and Jn}, \eqref{eq: d_GH of calT_m and Jn}, the Gromov-Hausdorff distance of $(\mathcal{T}^m,\rho)$ and $\mathbb{T}^{m,\ba}$ is zero, which means that they are isometric to each other. As a result, $\mathbb{T}^{m,\ba}$ is bi-Lipschitz homeomorphic to $\mathcal{T}^m$ (with the Euclidean metric).
\end{proof}

\section{Preliminaries on geodesic trees}\label{sec:glue}

For the proof of Theorem \ref{thm:main}, we require two preliminary notions for geodesic trees.

\subsection{Geodesic gluing}

We start by defining a notion of ``gluing" for geodesic metric spaces.

\begin{definition}\label{def:geodesic gluing}
Let $(X,d_X,\bx_I)$, $(Y_i,d_{Y_i},y_i)$ be pointed geodesic metric spaces, where $I$ is a countable subset of $\N$, $x_i\in X$, $y_i\in Y_i$ for all $i\in I$, and $\bx_I:=(x_1, x_2, \dots)$. In what follows, we identify $X\times\{0\}$ with $X$, and for given $i\in I$ we identify $Y_i\times\{i\}$ with $Y_i$.
		
Define a pseudo-metric on $(X\times \{0\})\cup\bigcup_{i\in I} (Y_i\times \{i\}) = X\cup\bigcup_{i\in I} Y_i$ by
$$
\rho(z,w)= 
\begin{cases}
d_X(z,w) & \text{if $z,w \in X$}\\
d_{Y_i}(z,w) & \text{if $z,w \in Y_i$ for some $i\in I$}\\
d_X(z,x_i)+d_{Y_i}(y_i,w) & \text{if $z\in X$, $w\in Y_i$ for some $i\in I$}, \\
d_X(w,x_i)+d_{Y_i}(y_i,z) & \text{if $z\in Y_i$, $w\in X$ for some $i\in I$}, \\
d_{Y_i}(z,y_i)+d_X(x_i,x_j)+d_{Y_j}(y_j,w) & \text{if $z\in Y_i$, $w\in Y_j$ for distinct $i,j\in I$}.\\
\end{cases} 
$$
		
The pseudometric $\rho$ gives rise to a metric $d$ on the quotient space $(X\cup\bigcup_{i\in I}Y_i)/\sim$ where $w_1\sim w_2$ if $\rho(w_1,w_2)=0$. Define now the \textit{geodesic gluing} of $X$ and $Y_i$ (at $\bx_I$ and $y_i$)
$$(X,d_X,\bx_I)\bigvee_{i\in I} (Y_i,d_{Y_i},y_i): =((X\cup\bigcup_{i\in I}Y_i)/\sim, d).$$ 
\end{definition}
	
It is easy to see that the geodesic gluing of geodesic spaces is itself a geodesic space. Intuitively, we are simply identifying $x_i$ with $y_i$ for all $i\in I$. It is clear that $X$ and $Y_i$ isometrically embed into the above geodesic gluing. We make the convention that points of $X$ and $Y_i$ are treated as points of a ``subspace" in the geodesic gluing metric space, in order to ease the notation.
	
\begin{lem}\label{lem:gluing is m.tree}
Let $X$, $Y_i$ be geodesic metric trees, for all $i\in I$. Assume that, either $\card I<\infty$, or $\card I =\infty$ and $\diam Y_i\to 0$ as $i\to\infty$. Then, the geodesic gluing $(X,d_X,\bx_I)\bigvee_{i\in I} (Y_i,d_{Y_i},y_i)$ is a geodesic metric tree.
\end{lem}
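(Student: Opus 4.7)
The plan is to verify the four properties needed for a metric space to be a geodesic metric tree: geodesicity, compactness, connectedness together with local connectedness, and uniqueness of arcs. Throughout, I would keep in mind that the intuition behind the construction is that we are attaching each $Y_i$ to $X$ at the single point obtained by identifying $y_i$ with $x_i$, so the resulting space looks locally like a wedge of trees at each identification point.

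Geodesicity is essentially built into the construction of $d$: given two points $z, w \in T$, I would write down an explicit candidate geodesic by concatenating the relevant geodesics inside $X$ and the $Y_i$'s depending on which pieces contain $z$ and $w$, and observe that its length matches $d(z,w)$. Connectedness is immediate, since every $Y_i$ is attached to $X$ at $x_i = y_i$, so $T = X \cup \bigcup_{i\in I}Y_i$ is a union of connected sets all meeting $X$. For local connectedness, I would describe an arbitrary ball $B_T(p,r)$ as the union of $B_X(p,r)$ and the balls $B_{Y_i}(y_i, r - d(p,x_i))$ for those $i$ with $d(p, x_i) < r$, and use that each of these pieces is connected in its own geodesic tree and shares the point $x_i = y_i$ with $B_X(p,r)$.

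For uniqueness of arcs, I would exploit the fact that each $Y_i \setminus \{y_i\}$ is separated from the rest of $T$ by the single point $x_i = y_i$. Given $z,w \in T$ and any arc $\alpha$ from $z$ to $w$, if $\alpha$ visits $Y_i \setminus \{y_i\}$ it must enter and exit through $x_i$, and injectivity forces the entry to coincide with the exit. This reduces $\alpha$ to a finite concatenation of arcs each lying entirely in one of the trees $X$ or $Y_i$ between forced endpoints; uniqueness in each component (since they are metric trees) then pins $\alpha$ down.

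The main obstacle, and the place where the hypothesis on $I$ is essential, is compactness when $\card I = \infty$. Given a sequence $(z_n) \subset T$, if cofinitely many $z_n$ lie in $X$ or in a single $Y_i$, I extract a convergent subsequence from the compactness of that piece. Otherwise, after passing to a subsequence I may assume $z_n \in Y_{i_n}$ with all $i_n$ distinct. The assumption $\diam Y_i \to 0$ (interpreted as: only finitely many $i$ satisfy $\diam Y_i > \varepsilon$ for any given $\varepsilon > 0$) forces $d(z_n, x_{i_n}) \leq \diam Y_{i_n} \to 0$; passing to a subsequence so that $x_{i_n}$ converges in the compact space $X$ to some $x$, the triangle inequality yields $z_n \to x$ in $T$. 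Assembling these four properties, $T$ is a geodesic metric tree.
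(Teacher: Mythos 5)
Your proposal is correct and follows essentially the same route as the paper: geodesicity directly from the gluing construction, uniqueness of arcs by noting that any arc meeting $Y_i\setminus\{y_i\}$ would have to pass twice through the identification point $x_i=y_i$ unless an endpoint lies there, and compactness by the same case analysis on a sequence, using $\diam Y_{i_n}\to 0$ along distinct indices together with compactness of $X$. The only cosmetic difference is that you argue local connectedness via a ball decomposition, whereas the paper simply observes that geodesicity already implies it.
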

	
\begin{proof}
Denote by $(Z,d)$ the geodesic gluing of $X$ and $Y_i$ at $\bx_I$ and $y_i$.  The fact that $(Z,d)$ is a geodesic metric space follows by Definition \ref{def:geodesic gluing}. We prove the case where $I$ is infinite, and the case $\card I<\infty$ is similar. We first show that $Z$ is compact.  Let $(z_n)_{n\in \N}$ be a sequence in $Z$. If $z_n\in X$ for infinitely many $n$, then there is an accumulation point in $X$, and, hence, a convergent subsequence in $X\subset Z$. Similarly, if $z_n\in Y_i$ for some $i\in I$ and infinitely many $n$, then there is an accumulation point in $Y_i$, and a convergent subsequence in $Y_i$. For the last case, suppose that $z_n\in Y_n$ for all $n\in\N$, by passing to a subsequence if necessary. Consider the sequence $(x_n)_{n\in \N}$ in $Z$, where $x_n$ is the term of $\bx_I$ at which we glue the metric space $Y_n$. Since $X$ is compact, there exists a convergent subsequence $(x_{n_j})_{j\in \N}$ that converges to a point $z\in X$. We have
$$d(z_{n_j},z)\leq d(z_{n_j},x_{n_j})+d(x_{n_j},z)=d(z_{n_j},y_{n_j})+d(y_{n_j},z)\leq \diam Y_{n_j} +d(x_{n_j},z).$$
By letting $j\to \infty$, it follows that $z_{n_j}\to z\in Z$. Therefore, the arbitrary sequence $(z_n)_{n\in \N}$ in $Z$ always has a convergent subsequence, proving that $Z$ is compact. 
		
We show that there exists a unique arc between any two distinct points in $Z$; hence, $Z$ is connected. Let $x,y\in X$ be distinct. There exists a unique arc $[x,y]$ in $X$, which is also an arc in $Z$. Assume towards contradiction that there exists another arc $\alpha\neq [x,y]$ in $Z$ that connects $x$ and $y$. There is a fixed $i\in I$ for which $\alpha\cap(Y_i\setminus\{y_i\})\neq\emptyset$, because otherwise $\alpha\subset X$, which would contradict the uniqueness of $[x,y]$ in $X$. Let $z\in \alpha\cap(Y_i\setminus\{y_i\})$. We can write $X\cap Y_i=\{y_i\}$, due to $x_i\sim y_i$ and an abuse of notation that we follow henceforth, identifying $x_i$ with $y_i$ as points in $Z$. This implies that
$$\alpha=[x,z]\cup (z,y]=[x,y_i]\cup(y_i,z]\cup(z,y_i]\cup (y_i,y],$$
which shows that $\alpha$ is not an arc. If $x,y\in Y_i$ for some $i\in I$, the proof is similar. Suppose $x\in X$ and $y\in Y_i$ for some $i\in I$. The arc $\beta=[x,y_i]\cup(y_i,y]$ connects $x$ and $y$, but due to $X\cap Y_i=\{y_i\}$, any other possible arc connecting these two points must intersect $y_i$. Similarly to the previous case, it can be shown that $[x,x_i]\subset X$ and $[y_i,y]\subset Y$ are unique arcs in $Z$. Hence, by the identification $[x,x_i]=[x,y_i]$ in $Z$, we have that $\beta$ is the unique arc in $Z$ that connects $x$ and $y$. Suppose that $x\in Y_i$ and $y\in Y_j$ for $i\neq j$. It can be shown similarly that the arc $[x,y_i]\cup(y_i,y_j]\cup(y_j,y]$, which connects $x$ and $y$ in $Z$, is unique.
		
The geodesicity of $Z$ implies that $Z$ is locally connected. Therefore, $Z$ is a geodesic metric tree.    
\end{proof}
	
\subsection{A new notion of height}
	
Recall the notion of height $H_T(p)$ from \eqref{eq:height}. For most of this section, we need to replace it by a new notion. Given a metric tree $T$, a branch point $p$ in $T$, and a branch $B$ of $T$ at $p$, define $h_T (p,B)$ to be the maximal distance from $p$ within $B$, i.e.,
\begin{equation}\label{eq:height of a branch}
h_T (p,B) := \max_{x\in B} d(x,p).
\end{equation}
	
We need the following relation between branches of distinct branch points, in order to establish certain properties involving the above maximal distance.
	
\begin{lem}\label{lem: brances subset of branch}
Let $T$ be a tree, let $p\in T$, let $B_T^i(p)$ be a branch of $T$ at $p$, for some $i\in \{1,\dots,\val(p)\}$, and let $q\in B_T^i(p)$ that is not a leaf. If $B_T^j(q)$ is a branch of $q$ such that $p\in B_T^j(q)$, $j\in \{ 1, \dots, \val(q)\}$, then $B_T^k(q)\subset B_T^i(p)$ for all $k\neq j$.
\end{lem}
	
\begin{proof}
Note that if $B$ is a connected component of $T\setminus\{ p \}$, then $\partial B=\{ p \}$ by \cite[Lemma 3.2(ii)]{BT_CSST}. If $p$ is a leaf, then the result holds due to $B_T^i(p)=T\setminus\{p\}$ and $p\notin B_T^k(q)$ for all $k\neq j$. Suppose that $p$ is not a leaf. Let $z\in B_T^k(q)$ for $k\neq j$. Since $p\in B_T^j(q)$, it follows that $q\in [z,p]$ by \cite[Lemma 3.2(iii)]{BT_CSST}. If $z\in B_T^{\ell}(p)$ for $\ell\neq i$, then the arc $[z,p)$ lies in $B_T^{\ell}(p)$. But $[z,p]=[z,q]\cup (q,p]$, and due to $q\in B_T^i(p)$ we have that $[z,p]\cap B_T^i(p)\neq\emptyset$. This contradicts the fact that $B_T^i(p)$ and $B_T^{\ell}(p)$ are distinct connected components of $T\setminus\{p\}$. Hence, $z\in B_T^i(p)$. 
\end{proof}
	
If $T$ is a geodesic tree, then the maximum in \eqref{eq:height of a branch} occurs at leaves of $T$, i.e.,
\begin{equation}\label{eq:height of a branch2}
h_T (p,B) = \max \{d(x,p): \text{$x$ leaf on $B$}\}
\end{equation}
Indeed, assume towards contradiction that $T$ is a geodesic tree, and that the maximum in \eqref{eq:height of a branch} occurs at a point $x\in B$  that is not a leaf. This implies that there exists a branch $B_T(x)$ of $T$ at $x$ that does not contain $p$. By Lemma \ref{lem: brances subset of branch}, we have $B_T(x)\subset B$. Let $q\in B_T(x)$. Since $q$ and $p$ lie on different branches at $x$, we have $x\in[p,q]$ by \cite[Lemma 3.2(iii)]{BT_CSST}. Therefore, by geodesicity we have
$$ d(p,q)=d(p,x)+d(x,q). $$
This implies that $d(p,q)> d(p,x)$ for $q\in B_T(p)$, which contradicts the assumption. Hence, the maximum cannot be achieved at $x$, unless $x$ is a leaf.
	
Let $T$ be a geodesic tree, let $p\in T$ be a branch point, and let $B_T^1(p),B_T^2(p), \dots $ be an enumeration of the branches of $T$ at $p$ with $h_T (p,B_T^1(p)) \geq h_T (p,B_T^2(p)) \geq\dots$. We define the \textit{new height} of $p$ to be
	\begin{equation}\label{eq:new height}
		h_T(p) := h_T (p,B_T^3(p)) = \max \{d(x,p): \text{$x$ leaf on $B_T^3(p)$}\}.
	\end{equation}
To simplify the notation henceforth, we also denote $h_T (p,B_T^j(p))$ by $h_T (p,B_j)$, for all $j$.
	
The two notions of height are related in a way that allows us to interchange them in certain cases. We say that a metric tree $T$ is \textit{uniformly $n$-branching with respect to the new height $h_T$}, for some integer $n\geq 3$, if every branch point of $T$ has valence $n$, and $T$ has uniform branch growth, uniformly relatively separated branch points, and uniformly relative dense branch points \textit{with respect to $h_T$}, i.e., Definitions \ref{def:unifgrowth}, \ref{def:unifsep}, and \ref{def:unifden} are true for $T$ with $H_T(p)$ and $\diam B_T^i(p)$ replaced by $h_T(p)$ and $h_T(p,B_i(p))$, respectively. The following lemma shows that the new height and the one defined at \eqref{eq:height} are comparable at every branch point.
	
\begin{lem}\label{lem: comparable heights}
Given a geodesic tree and a branch point $p\in T$, 
\begin{equation}\label{eq:relation of two heights}
h_T(p)\simeq H_T(p),
\end{equation}
where $H_T(p)$ is defined at \eqref{eq:height}, and the constant $C(\simeq)$ is independent of the metric tree $T$ and the branch point $p$.
		
Moreover, $T$ is uniformly $n$-branching with respect to $h_T$, if, and only if, $T$ is uniformly $n$-branching with respect to $H_T$.
\end{lem}

\begin{proof}
Fix a branch point $p\in T$. We have two different enumerations of the branches of $T$ at $p$. We denote by $B_1^H,B_2^H,\dots$ the enumeration of the branches at $p$ in non-increasing order with respect to their diameters, and by $B_1^h,B_2^h,\dots$ the enumeration of the branches of $p$ in non-increasing order with respect to the maximal distances defined in \eqref{eq:height of a branch2}. We need to show that $\diam B_3^H\simeq h_T(p,B_3^h)$. Fix  a branch $B_k^h$ at $p$. If $x,y\in B_k^h$, then $d(x,y)\leq d(x,p)+d(p,y)\leq 2h_T(p, B_k^h)$, which implies that $\diam B_k^h\leq 2 h_T(p,B_k^h)$. On the other hand, if $x\in B_k^h$, then $d(x,p)\leq \diam \overline{B_k^h}=\diam B_k^h$. Therefore, we have shown that
\begin{equation}\label{eq: height on the same branch}
\diam B_k^h\simeq h_T(p,B_k^h).
\end{equation}
		
Set $B_3^h=B_i^H$ and $B_3^H=B_j^h$ for some $i,j\in\{1,\dots,\val(p)\}$. The rest of the proof is a case study on $i,j$.
		
If $i=3$, then the heights are comparable by \eqref{eq: height on the same branch}.
		
If $i\in \{1,2\}$, we have
$$\diam B_3^H\leq \diam B_i^H=\diam B_3^h\simeq h_T(p).$$
For the other side of \eqref{eq:relation of two heights}, note that there is integer $k\geq 3$ such that either $B_1^h=B_{k}^H$, or $B_2^h=B_{k}^H$. Suppose that $B_1^h=B_{k}^H$, and the proof in the other case is similar. We then have
$$\diam B_3^H\geq \diam B_{k}^H=\diam B_1^h\simeq h_T(p,B_1^h)\geq h_T(p),$$
and \eqref{eq:relation of two heights} follows in this case.
		
Suppose that $i> 3$. We have that
$$\diam B_3^H\geq \diam B_i^H=\diam B_3^h\simeq h_T(p).$$
For the other side of the inequality, if $j\geq 3$ then 
$$\diam B_3^H=\diam B_j^h\simeq h_T(p,B_j^h)\leq h_T(p).$$
If $j\in \{1,2\}$, then either $B_1^H=B_{\ell}^h$, or $B_2^H=B_{\ell}^h$, for some integer $\ell\geq 3$. Suppose that $B_1^H=B_{\ell_1}^h$, and the proof in the other case is similar. It follows that
$$\diam B_3^H\leq \diam B_1^H=\diam B_{\ell_1}^h\simeq h_T(p, B_{\ell_1}^h)\leq h_T(p).$$
Therefore \eqref{eq:relation of two heights} holds.
		
It remains to show that the uniform $n$-branching property with respect to $h_T$ is equivalent to that with respect to $H_T$. We prove one direction, namely that the uniform $n$-branching property with respect to $h_T$ implies that with respect to $H_T$, and the other direction is similar. Suppose that $T$ is uniformly $n$-branching with respect to $h_T$. The uniform branch separation and uniform density of branch points with respect to $H_T$ follow immediately by \eqref{eq:relation of two heights}. It remains to show the uniform branch growth of $T$ with respect to $H_T$. We assume that $\val(T)=n\geq 4$, since the case $\val(T)=3$ is trivial. Fix a branch point $p\in T$ with $\val(T,p)=m\in\{4,\dots,n\}$. We need to show that $\diam B_m^H\gtrsim \diam B_3^H$. Let $B_m^H=B_i^h$ for some $i\in \{1,\dots,m\}$. If $i\in\{1,2\}$, then 
$$\diam B_m^H=\diam B_i^h\geq h_T(p,B_i^h)\geq h_T(p,B_3^h)=h_T(p)\simeq H_T(p)=\diam B_3^H.$$
If $i\geq 3$, then 
$$\diam B_m^H=\diam B_i^h\geq h_T(p,B_i^h)\simeq h_T(p,B_3^h)=h_T(p)\simeq H_T(p)=\diam B_3^H,$$
by the uniform branch growth of $T$ with respect to $h_T$. Therefore, $T$ has uniform branch growth with respect to $H_T$.
\end{proof}
	
\section{Quasisymmetric embeddings}\label{sec:QSembed}
In this section, we show the following proposition. 
	
\begin{prop}\label{prop:qsembed}
If $T$ is a quasiconformal tree with $\val(T)=n$ and uniform branch separation, then $T$ quasisymmetrically embeds into a uniformly $n$-branching quasiconformal tree.
\end{prop}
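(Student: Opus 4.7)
The plan is to decompose the embedding into three successive quasisymmetric (in fact, isometric) enlargements, following the outline sketched in the introduction. Starting from $T$, a quasiconformal tree with $\val(T)\leq n$ and uniform branch separation, I will first embed $T$ into a tree $T_1$ with the same valence bound, uniform branch separation, and uniform branch growth; then embed $T_1$ into a tree $T_2$ in which additionally every branch point has valence exactly $n$; and finally embed $T_2$ into a uniformly $n$-branching quasiconformal tree $T_3$. The composite inclusion $T\hookrightarrow T_3$ will then be the desired quasisymmetric embedding. Thanks to the Bonk--Meyer theorem \cite{BM20}, I may assume at the outset that $T$ is geodesic; each step will produce a geodesic tree from a geodesic one, and Lemma \ref{lem:doubling} will guarantee that the doubling constant stays controlled throughout.

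For the first step, at every branch point $p$ of $T$, I enumerate the branches so that $\diam B^1(p)\geq\diam B^2(p)\geq\cdots$ and glue onto each $B^i(p)$ with $i\geq 3$ and $\diam B^i(p) < H_T(p)$ a geodesic arc $\alpha_p^i$ attached at a leaf of $\overline{B^i(p)}$ other than $p$, of length chosen so that the enlarged branch $B^i(p)\cup \alpha_p^i$ has diameter comparable to $H_T(p)$. Because the arcs are glued at leaves and the arcs themselves contain no branch points, the valence of every original branch point is unchanged and no new branch points are created, so the height function $H_{T_1}$ agrees with $H_T$ on $T$. Uniform branch separation for $T_1$ is then inherited from $T$, uniform branch growth at every branch point of $T_1$ holds by construction, and the doubling and bounded turning properties transfer via Lemma \ref{lem:doubling} and geodesicity. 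In the second step, at each branch point $p$ of $T_1$ of valence $k<n$, I glue $n-k$ additional geodesic arcs at $p$, each of length $\simeq H_{T_1}(p)$, so that $\val(p,T_2) = n$ and $H_{T_2}(p)\simeq H_{T_1}(p)$; uniform branch growth and separation are preserved by essentially the same argument as before, together with a quick verification that the newly attached arcs do not come too close to existing branches.

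The third step---installing uniform branch density without destroying anything previously gained---is the heart of the argument. I will employ the tiling of quasiconformal trees from \cite{BM20} to obtain a scale-indexed decomposition of $T_2$ into pieces of comparable diameter at each scale. Whenever such a tile $X$ fails to contain a branch point of $T_2$ on an arc joining two prescribed points at the required scale, I will insert a new branch point of valence $n$ inside $X$ by gluing at an interior double point of $X$ a small rescaled copy of $\T^{n,\ba}$ (for a fixed weight $\ba$ given by Proposition \ref{prop:valence}) whose diameter matches the scale of $X$. Since the inserted block is itself uniformly $n$-branching and is attached at a single double point of $T_2$, its branch points automatically have the valence and branch-growth properties required, and their heights and separations are dictated by the scale of $X$.

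The principal difficulty lies in Step 3: after possibly infinitely many insertions, I must verify that the resulting tree $T_3$ is still doubling and bounded turning, and that the freshly created branch points together with the old ones satisfy uniform branch separation and uniform branch growth. Quantitatively, the scale of each insertion must be chosen so that branch points inserted into nearby tiles are not too close relative to their heights, and so that no inserted branch point is dwarfed by another at a comparable scale. The scale-by-scale combinatorics provided by the \cite{BM20} tiling is what makes this control tractable, but keeping track of the constants through the iterated insertion procedure is where the most care will be needed.
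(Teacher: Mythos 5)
Your three-step outline coincides with the paper's strategy (first uniform branch growth, then uniform valence, then uniform density via the decomposition of \cite{BM20}), but Step 1 as you describe it has a genuine gap. You glue an arc at an \emph{arbitrary} leaf of a deficient branch $B^i(p)$, with length making the enlarged branch comparable to $H_T(p)$, and you assert that no new branch points are created and that $H_{T_1}$ agrees with $H_T$, so that separation is inherited. Neither assertion is justified, and the second is false for some admissible choices: the chosen leaf may sit inside a \emph{small} branch of an intermediate branch point $q$ lying between $p$ and that leaf, and then the glued arc (of length $\approx H_T(p)$, possibly much larger than every branch of $q$ except the one containing $p$) inflates that small branch, so $H_{T_1}(q)$ can jump from the old third-largest diameter at $q$ up to the old second-largest, i.e.\ up to a scale unrelated to $H_T(q)$. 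Concretely (for $n\geq 4$, the only case where Step 1 is nontrivial): let $p$ have two spine branches of diameter $\simeq 1$, one arc of length $\epsilon$, and a deficient branch consisting of a path of length $\tau$ to a point $q$ carrying one arc of length $M$ and two arcs of length $\leq\tau$, with $\tau\ll M\ll\epsilon$. Then $T$ has uniformly separated branch points, but if your arc of length $\approx\epsilon$ is attached at the tip of one of the $\tau$-arcs, then $H_{T_1}(q)\simeq M$ while $d(p,q)=\tau$, so uniform branch separation fails in $T_1$ with no uniform constant. Moreover, if the leaves chosen for two different branch points coincide, the attachment point becomes a new branch point, contradicting your claim. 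This is precisely where the paper's proof does its work: it switches to the height $h_T$ built from maximal distances within branches (Lemma \ref{lem: comparable heights}), attaches each segment at a leaf \emph{realizing} $h_T(p,B_j)$ with length exactly $h_T(p)-h_T(p,B_j)$ (so the enlargement is absorbed by the branch of any intermediate $q$ that was already deepest), proves the chosen leaf sets $\mathcal{L}_p$ are pairwise disjoint, and then verifies case by case that the heights of all branch points are unchanged. Without some version of this selection and verification, Step 1 fails.

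Step 2 is essentially the paper's construction and is fine (gluing at a branch point only enlarges, by a bounded factor, branches that already contain large branches of that point, so heights stay comparable). For Step 3 you identify the right plan—glue rescaled copies of $\T^{n,\ba}$ at double points supplied by the \cite{BM20} decomposition—but you explicitly defer the verification that separation, growth, density and doubling survive infinitely many insertions, and that verification is the bulk of the paper's argument. The paper glues $n-2$ copies of $c\delta^n\T^{n,\ba}$ at \emph{every} vertex of $\mathbf{V}^n\setminus\mathbf{V}^{n-1}$ (not conditionally, tile by tile), and the estimates rest on the quantitative properties of these vertex sets recorded in Lemma \ref{lem:tile_properties}: each vertex is a double point at distance $\gtrsim\min\{h_T(p),\delta^n\}$ from every branch point $p$, vertices of the same level are $\delta^n$-separated, and tiles have diameter $\simeq\delta^n$ with boundedly many children. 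Your sketch does not invoke these properties, and without them the "constant tracking" you flag as the main difficulty cannot be carried out. In short: the strategy is the paper's, but the two places where the real content lies—the extremal-leaf selection and height-preservation argument in Step 1, and the quantitative verification in Step 3—are respectively incorrect as stated and missing.
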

	
For the rest of this section, we employ the new height $h_T$ defined in \eqref{eq:new height}, and the enumeration of branches at a given branch point is with respect to the non-increasing order induced by the maximal distance within each branch defined in \eqref{eq:height of a branch2} (and not the one induced by the diameter of each branch). In addition, we often refer to the uniform $n$-branching property without specifying the employed height notion, wherever that is clear from the context.
	
The proof of Proposition \ref{prop:qsembed} splits into three steps. At each step, we construct a quasisymmetric  embedding of a given quasiconformal tree $T$ into a tree $T'$ with one extra property. In particular, given a tree $T_0 \in \mathscr{QCT}^*(n)$,
\begin{itemize}
\item in \textsection\ref{sec:step1} we show that $T_0$ quasisymmetrically embeds into a tree $T_1\in \mathscr{QCT}^*(n)$ that has uniform branch growth,
\item in \textsection\ref{sec:step2} we show that $T_1$ quasisymmetrically embeds into an $n$-valent tree $T_2\in \mathscr{QCT}^*(n)$ that has uniform branch growth,
\item in \textsection\ref{sec:step3} we show that $T_2$ quasisymmetrically embeds into an $n$-valent tree $T_3\in \mathscr{QCT}^*(n)$ that has uniform branch growth and uniform branch density (hence $T_3$ is uniformly $n$-branching).
\end{itemize}
	
In each case, applying a uniformization result of Bonk and Meyer \cite{BM20}, we may assume that all trees are geodesic and our embeddings will be in fact isometric. The isometric embedding discussed in Definition \ref{def:geodesic gluing} allows us to view each $T_i$ ($i=0,1,2$) as a subspace of the geodesic gluing $T_{i+1}$. Henceforth, we do not address the aforementioned isometric embeddings directly, and we focus on simply constructing $T_{i+1}$ ``on top" of $T_{i}$. 
	
\subsection{Step 1: uniform branch growth}\label{sec:step1}
The first step in the proof of Proposition \ref{prop:qsembed} is the following proposition.
	
\begin{prop}\label{prop:step 1}
Let $T$ be a quasiconformal tree with $\val(T)=m$, and uniformly separated branch points. Then $T$ quasisymmetrically embeds into a doubling geodesic tree $T'$ with $\val(T')=m$,  uniformly separated branch points, and uniform branch growth.
\end{prop}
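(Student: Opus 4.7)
The plan is to construct $T'$ by isometrically enlarging $T$ with additional geodesic segments so that, at each branch point, every ``too-short'' branch is lengthened to become comparable to the third-largest branch. I will first invoke the Bonk--Meyer uniformization \cite[Theorem 1.3]{BM20} to replace $T$ with a quasisymmetrically equivalent doubling geodesic tree; by \cite[Lemma 4.3]{BM22} this preserves both the valence bound and uniform branch separation. Henceforth I may assume $T$ itself is geodesic.

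For each branch point $p$ of $T$ of valence $k_p\geq 4$ and each index $i\in\{4,\ldots,k_p\}$ with $h_T(p,B_i(p))<h_T(p,B_3(p))$ (a ``defect''), I will fix a leaf $x_{p,i}\in B_i(p)$ with $d(p,x_{p,i})=h_T(p,B_i(p))$, and I will select a point $y_{p,i}$ in the open arc $(p,x_{p,i})$ chosen so that $y_{p,i}$ is past every branch point of $T$ on $[p,x_{p,i}]$, is not itself a branch point of $T$, and is distinct from every other chosen $y_{q,j}$. I then glue to $y_{p,i}$ a fresh geodesic segment $\sigma_{p,i}$ of length $h_T(p,B_3(p))-d(p,y_{p,i})$, whose free end is a new leaf at distance exactly $h_T(p,B_3(p))$ from $p$. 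Let $T'$ denote the geodesic gluing (Definition~\ref{def:geodesic gluing}) of $T$ with all the $\sigma_{p,i}$.

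By uniform branch separation and doubling of $T$, for each $\epsilon>0$ only finitely many branch points of $T$ have height at least $\epsilon$, and hence only finitely many defects produce an extension of length $\geq\epsilon$; the glued segments can therefore be enumerated with diameters tending to $0$, and Lemma~\ref{lem:gluing is m.tree} yields that $T'$ is a compact geodesic metric tree in which $T$ embeds isometrically. Each $y_{p,i}$ becomes a branch point of $T'$ of valence exactly $3$ (the two sides of $[p,x_{p,i}]$ together with $\sigma_{p,i}$), and no other valence changes, so $\val(T')=m$. I will then verify: (i) $h_T(p,B_3(p))\leq h_{T'}(p,B'_j)\leq C\,h_T(p,B_3(p))$ for $j\geq 3$ at every original branch point $p$, with $C$ depending only on the branch separation constant of $T$, yielding uniform branch growth there; (ii) uniform branch growth at every new branch point $y_{p,i}$ is automatic since its valence equals $3$; (iii) uniform branch separation is preserved at old branch points since their heights only increase, and it holds at the new $y_{p,i}$ by construction. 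Finally, $T'$ will be doubling by Lemma~\ref{lem:doubling}.

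The hardest step will be the upper bound in (i): an extension at a defect $(q,i)$ can, through its glued segment, raise the height of some branch at a \emph{different} branch point $p$. The key mechanism for controlling this cascade is that placing $y_{p,i}$ inside the branch-point-free subarc past every branch point of $T$ on $[p,x_{p,i}]$ localises each individual extension, while the inequality $h_T(p,B_3(p))\leq h_T(p,B_{\ell^*}(p))$, with $\ell^*$ the largest branch of $p$ not containing the target ancestor, ensures the new leaf does not exceed the pre-existing maximum distance from any such ancestor. The remaining cascades into descendant branch points are then controlled by a geometric decay estimate coming from the uniform branch separation of $T$, which forces the heights of nested branch points to shrink quickly enough that the combined effect of all extensions on any fixed $h_{T'}(p,B'_j)$ remains bounded by a multiple of $h_T(p,B_3(p))$.
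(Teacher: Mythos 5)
Your overall strategy --- pass to a geodesic model via Bonk--Meyer and then lengthen each deficient branch by gluing a segment so that it reaches the third-largest height --- is the same idea as the paper's proof. But the specific gluing step you propose has a genuine gap: you require a point $y_{p,i}\in(p,x_{p,i})$ lying \emph{past every branch point of $T$ on $[p,x_{p,i}]$}, and such a point need not exist. Uniform branch separation only forces the heights of branch points along an arc to decay; it does not prevent infinitely many branch points from accumulating at (or being dense up to) the leaf $x_{p,i}$. For instance, starting from a tree with uniformly dense branch points (such as the CSST) and attaching small fourth branches of wildly varying relative size, one obtains a tree in $\mathscr{QCT}^*(4)$ with defects whose arcs to the relevant deepest leaves contain branch points in every terminal subarc; then no admissible $y_{p,i}$ exists and your construction cannot even begin. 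The paper sidesteps this entirely by gluing the segment \emph{at the deepest leaf $q_p^j$ itself}: the leaf becomes a double point, no new branch points are created, and consequently $\val(T')=m$ and the branch points of $T$ and $T'$ coincide. The price is a well-definedness argument (distinct branch points of $T$ select disjoint sets of leaves, the claim $\mathcal{L}_p\cap\mathcal{L}_{p'}=\emptyset$) and a case analysis showing that a segment glued for one branch point never raises the height of any third-or-lower branch at another branch point --- which is where the real work of the proof lies, and which your sketch only gestures at via an unspecified ``geometric decay estimate.''

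Two further points in your verification plan are not sound as stated. First, your new points $y_{p,i}$ are valence-$3$ branch points of $T'$, so uniform branch separation must be checked between each $y_{p,i}$ and all original branch points of $T$ (which may lie arbitrarily close to $y_{p,i}$, on or off the arc) and between distinct $y_{p,i},y_{q,j}$; asserting this holds ``by construction'' is not a proof, and nothing in your choice of $y_{p,i}$ controls these distances. Second, your justification of separation at old branch points --- ``since their heights only increase'' --- is backwards: increasing $h_{T'}(p)$ makes the inequality $d(p,q)\geq C^{-1}\min\{h(p),h(q)\}$ \emph{harder} to satisfy. What one actually needs (and what the paper proves, via its Cases (i)--(iii)) is that the third-largest height at every old branch point is \emph{unchanged}: direct extensions raise each short branch to exactly $h_T(p,B_3)$, and one must show extensions made for other branch points never push any branch beyond a pre-existing maximal distance in the same branch. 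If you replace your interior gluing points by the leaves themselves and carry out that height-preservation analysis, your argument essentially becomes the paper's proof.
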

	
\begin{proof}
Any quasiconformal tree is quasisymmetrically equivalent to a geodesic metric tree by \cite[Theorem 1.2]{BM20}. Hence, we may assume  that the tree $(T,d)$ is a geodesic doubling metric tree with uniformly separated branch points and $\val(T)=m$ (since these properties are quasisymmetrically invariant). 
		
Let $S=\{p\in T : p \ \text{is a branch point of}\ T\ \text{with} \val(T,p)\geq 4\}$. If $S=\emptyset$, then $T$ trivially has uniform branch growth. Suppose that $S\neq\emptyset$. For all $p$ in $S$, let $i_p\in\{3,\dots,\val(T,p)\}$ be the maximal index such that $h_T(p)=h_T(p,B_{i_p})$.  The branches at branch points where $i_p=\val(T,p)$ satisfy the uniform branch growth property. Consider the set $S'=\{p\in S:i_p\in\{3,\dots,\val(T,p)-1\}\}$. Similarly, we may assume that $S'\neq\emptyset$. Fix a branch point $p$ in $S'$. By \eqref{eq:height of a branch2}, for every $j\in\{i_p+1,\dots,\val(T,p)\}$ there exists a leaf $q_p^j\in B_T^j(p)$ with $h_T(p,B_j)=d(p,q_p^j)$.  Note that for a given $j$, there could be uncountably many $q_p^j$ on the branch $B_T^j(p)$. Hence, for every $j\in\{i_p+1,\dots,\val(T,p)\}$ we fix some $q_p^j\in B_T^j(p)$ with $h_T(p,B_j)=d(p,q_p^j)$, and we define
$$\mathcal{L}_p=\{q_p^j\in T:\  h_T(p,B_j)=d(p,q_p)\ \text{for all}\ j\in\{i_p+1,\dots,\val(T,p)\}\}.$$ 
Note that $\card\mathcal{L}_p\leq\val(T,p)-3$, and suppose there exists $p\in T$ branch point with $\mathcal{L}_p\neq\emptyset$, because otherwise $T$ has uniform branch growth. Set $\mathcal{L}^*=\bigcup_{p\in S'}\mathcal{L}_p$. By \cite[Chapter V, (1.3)(iv)]{Whyburn_book} the branch points of any metric tree are countable. Therefore, $\mathcal{L}^*$ is also a countable set, i.e. $\mathcal{L}^*=\{x_1,x_2,\dots\}$. Let $I$ be an index set with $\card\mathcal{L}^*=\card I$. Suppose that $\card I=\infty$, set $\bx_I=(x_1,\dots)$, and the proof is similar in the case $\card I<\infty$. We construct the tree $T'$ into which we isometrically embed $T$ (seen as a subspace of $T'$), by gluing a Euclidean line segment of suitable length on $T$ at each $q_p^j$, for every $p\in S'$. This ensures that $T'$ has uniform branch growth, even for branches at $p$, where the property does not necessarily hold for $T$.  
		
For every $x\in \mathcal{L}^*$ there exists a $p\in S'$ such that $x=q_p^j$, for some $j\in\{i_p+1,\dots,\val(T,p)\}$. Let $\ell_x$ be a Euclidean line segment with $\diam\ell_x= h_T(p)-h_T(p,B_j)$, and fix $y_x$ to be one of the endpoints of $\ell_x$.  We first need to make sure that for every $x\in \mathcal{L}^*$, the line segment $\ell_x$ is well-defined, i.e., we have to show that there is a unique branch point $p\in S'$ such that $x=q_p^j$. It is enough to verify that $\mathcal{L}_p\cap\mathcal{L}_{p'}=\emptyset$ for all $p,p'\in S'$ distinct. 
		
To this end, suppose that $p\in B_T^i(p')$ for $i\in\{1,\dots,\val(T,p')\}$ and that $p'\in B_T^j(p)$ for $j\in\{1,\dots,\val(T,p)\}$. By Lemma \ref{lem: brances subset of branch},
\begin{align}
\quad & B_T^m(p')\subset B_T^j(p) \quad \text{for all } m\neq i, \label{eq:p'subsetp} \\
\quad & B_T^n(p)\subset B_T^i(p') \quad \text{for all } n\neq j. \label{eq:psubsetp'}
\end{align}
Assume towards contradiction that there is a leaf $q\in \mathcal{L}_p\cap\mathcal{L}_{p'}$. There are three possible cases to consider, depending on the intersection of the arcs $[p,q]$ and $[p',q]$. Note that $\{q\}\subsetneq [p,q]\cap [p', q]$ by the unique arc property of $T$.
		
\emph{Case 1.} Suppose $[p,q]\cap [p', q]= [p,q]$. Since $[p,q]\subset [p', q]$, by \eqref{eq:p'subsetp}, \eqref{eq:psubsetp'} we have $q\in B_T^i(p')$. Due to $q\in \mathcal{L}_p$, there is $j_q\in \{4, \dots, \val(T,p)\}$ with $h_T(p,B_{j_q})=d(p,q)$, and there are at least three branches $B_T^{j_1}(p)$, $B_T^{j_2}(p)$, $B_T^{j_3}(p)$, $j_1, j_2, j_3 \in \{1, \dots \val(T,p)\}$ , with 
$$\min\{h_T(p, B_{j_1}), h_T(p, B_{j_2}),  h_T(p, B_{j_3})\}> h_T(p, B_{j_q}),$$
by definition of $\mathcal{L}_p$. Hence, there exists $j'\in \{1, \dots \val(T,p)\}$ with $h_T(p, B_{j'})> h_T(p, B_{j_q})=d(p,q)$ and $B_T^{j'}(p)\neq B_T^{j}(p)$. By \eqref{eq:psubsetp'} we have $B_T^{j'}(p)\subset B_T^i(p')$, which due to $q\in B_T^i(p')$ and $q\in \mathcal{L}_{p'}$ implies
$$h_T(p',B_i)=d(p', q)=d(p',p)+d(p,q)<d(p,p')+h_T(p,B_{j'})=d(p,p')+d(p,q_{j'}),$$ 
for some leaf $q_{j'}\in B_T^{j'}(p)\subset B_T^i(p')$. The right hand side in the above relation is at most  $d(p', q_{j'})\leq h_T(p',B_i)$, which leads to a contradiction.
		
\emph{Case 2.} Suppose $[p,q]\cap [p', q]= [p',q]$. The proof is similar to that of Case 1.
		
\emph{Case 3.} Suppose $[p,q]\cap [p', q]= [r,q]$, for some $r\in T\setminus\{p,p'\}$. This implies by \cite[Lemma 3.2(iii)]{BT_CSST} that $p', q, r$ lie on the same branch of $T$ at $p$. Hence, $q\in B_T^j(p)$, and by $q\in \mathcal{L}_p$ we have
\begin{equation}\label{eq: h_T(q) bigger than ell 1}
h_T(p,B_j)=d(p,q)\geq d(p,\ell),
\end{equation} 
for any leaf $\ell\in B_T^j(p)$. By $p'\in B_T^j(p)$ and \eqref{eq:p'subsetp},  for any $m\in \{1,\dots, \val(T,p')\}$ with $m\neq i$ we have that
\begin{equation}\label{eq: h_T(q) bigger than ell 2}
d(p,\ell)=d(p,r)+d(r,p')+d(p',\ell),
\end{equation}
for any leaf $\ell\in B_T^m(p')$. Fix some $B_T^m(p')$ for $m\neq i$, and a leaf $\ell \in B_T^m(p')\subset B_T^j(p)$. By \eqref{eq: h_T(q) bigger than ell 1}, \eqref{eq: h_T(q) bigger than ell 2}, and $d(p,r)+d(r,q)=d(p,q)$, we have
$$d(p,r)+d(r,q)\geq d(p,r)+d(r,p')+d(p',\ell).$$ 
Since $r\neq p'$, the above implies that $d(r,q)>d(p',\ell)$. But $[r,q]\subset [p',q]$, so 
$$d(p',\ell)< d(r,q)\leq d(p',q).$$ 
The branch $B_T^m(p')$ and the leaf $\ell\in B_T^m(p')$ are arbitrary, which means that the above inequality shows that $h_T(p',B)< d(p',q)$, for all branches $B\neq B_T^i(p')$ of $T$ at $p'$. This contradicts the fact that $q\in \mathcal{L}_{p'}$, since there is no need to glue a line segment at $q$ to increase a branch of $T$ at $p'$.
		
Recall that $\diam\ell_x>0$ for all $x\in\mathcal{L}^*$, since $x$ is a leaf in $\mathcal{L}_p$ for some $p\in S'$, and by definition of $S'$. We define the geodesic gluing $(T',d'):=(T,d,\bx_I)\bigvee_{i\in I}(\ell_{x_i},d_i,y_{x_i})$, where $d_i$ is the Euclidean metric on $\ell_{x_i}$. By  Lemma \ref{lem: comparable heights}, \cite[Lemma 3.9]{BT_CSST},  and $\diam\ell_{x_i}\leq h_T(p_i)$ for all $i\in I$, we have that $\diam\ell_{x_i}\to 0$ as $i\to\infty$. By Lemma \ref{lem:gluing is m.tree} this implies that $(T',d')$ is a geodesic metric tree. Note that the gluing is between $T$ at leaves and line segments at endpoints, which ensures that the branch points of $T$ and of $T'$ are in one-to-one correspondence. Therefore, $\val(T')=m$. 
		
Addressing uniform branch separation and uniform branch growth requires to show that the height of branch points in $T$ is the same as in $T'$. Fix a branch point $p$ of $T$ and let $B_{T}^k(p)$ be a branch of $T$ at $p$ for $k\in\{1,\dots,\val(T,p)\}$. The definition of $T'$, the convention $T\subset T'$, and the fact that $T'$ is a tree imply that there exists a unique $\tilde{n}_k\in\{1,\dots,\val(T',p)\}$ such that $B_{T}^k(p)\subset B_{T'}^{\tilde{n}_k}(p)$. Recall that $i_p\in\{3,\dots,\val(T,p)-1\}$, and $h_T(p,B_3)=h_T(p,B_{i_p})$. We  show that $h_T(p)=h_{T'}(p,B_j)$ for all $j\in \{3, \dots, \val(T',p)\}$, by considering three cases.
		
\emph{Case (i).} Suppose that $k\in\{i_p+1,\dots,\val(T,p)\}$. Let $q_k\in B_T^k(p)\cap\mathcal{L}_p$ such that $h_T(p,B_k)=d(p,q_k)$. Let $x\in B_{T'}^{\tilde{n}_k}(p)$. If $x\in T$, then 
$$d'(p,x)=d(p,x)\leq d(p,q_k)=h_T(p,B_k)< h_T(p).$$
If $x\in T'\setminus T$, then $x\in \ell_{r}$ for some leaf $r\in B_T^k(p)$ that lies in $\mathcal{L}^*$. If $r=q_k\in \mathcal{L}_p$, then 
$$d'(p,x)=d'(p,q_k)+d'(q_k,x)\leq {h_T(p,B_k)+\diam\ell_{q_k}=h_T(p)}.$$
If $r\neq q_k$, there is $p'\in T$ branch point with $h_T(p',B_j)=d(p',r)$, for some $j\in\{i_{p'}+1,\dots,\val(T,p')\}$. Note that $p'\in B_T^k(p)$. Suppose otherwise that $p'\in B_T^{k'}(p)$ for some $k'\neq k$, say $k'=1$. Then, by Lemma \ref{lem: brances subset of branch}, $B_T^2(p)\subset B_T^j(p')$. Furthermore, let $q_2\in B_T^2(p)$ be such that $d'(p,q_2)=h_T(p,B_2)$. By $k\neq 2$ and \cite[Lemma 3.2(iii)]{BT_CSST} $p\in[p'q_2]\cap[p',r]$. Hence, by geodesicity of $T$
\[d'(p',q_2)=d'(p',p)+d'(p,q_2)=d'(p',p)+h_T(p,B_2)>d'(p',p)+d'(p,r)=d'(p',r)\]
which is contradiction since $q_2\in B_T^k(p)$ and $r\in\mathcal{L}^*$. Repeating Lemma \ref{lem: brances subset of branch} we have $B_T^i(p')\subset B_T^k(p)$ for all $i\neq n$, where $B_T^n(p')$ is the branch of $T$ at $p'$ that contains $p$. By $k\neq 1$ and Lemma \ref{lem: brances subset of branch} we have $B_T^1(p)\subset B_T^n(p')$. This implies $n=1$ due to $k\geq i_p+1$, since otherwise $h(p, B_k)\geq h(p, B_1)$, leading to contradiction. Thus, $x,p$ lie in different components of $p'$, implying $p'\in[p,x]$ by \cite[Lemma 3.2(iii)]{BT_CSST}. Similarly, $p'\in[r',p]$, where $r'\in B_T^2(p')\subset B_T^k(p)$ such that $h_T(p',B_2)=d(p',r')$. It follows that
\begin{align*}
d'(p,x)&=d'(p,p')+d'(p',x) \\&=d'(p,p')+d'(p',r)+d'(r,x)\\ 
&\leq d'(p,p')+h_T(p',B_j)+\diam\ell_{r} \\
&\leq d'(p,p')+d'(p',r') \\
&=d'(p,r')\\
&\leq h_T(p,B_k)\\
&\leq h_T(p).
\end{align*}
Therefore, since $x\in B_{T'}^{\tilde{n}_k}(p)$ is arbitrary, we have shown that $h_{T'}(p,B_{\tilde{n}_k})\leq h_T(p)$. For the other side of the inequality note that if $y\in B_T^k(p)$ is an endpoint of $\ell_{q_k}$ distinct from $q_k$, then $$d'(p,y)=d'(p,q_k)+d'(q_k,y)=h_T(p,B_k)+\diam\ell_{q_k}=h_T(p).$$ 
		
\emph{Case (ii).} Suppose that $k\in\{3,\dots,i_p\}$ and that $h_T(p,B_2)> h_T(p,B_3)$. Let $q_k\in B_T^k(p)$ with $h_T(p)=d(p,q_k)$. We claim that no line segment is glued on the leaf $q_k\in T$. Then it follows with similar arguments as in Case (i) that $h_{T'}(p,B_{\tilde{n}_k})=h_T(p)$. Assume towards contradiction that $\ell_{q_k}$ is a line segment glued on the leaf $q_k$. Hence, there exists $n\in\{1,\dots,\val(T,p)\}$ and $p''\in B_T^n(p)\cap S'$ branch point such that $h_T(p'',B_m)=d(p'', q_k)$ for some $m\in\{i_{p''}+1,\dots,\val(T,p'')\}$. If $n=k$, then $p'',q_k\in B_T^k(p)$. By Lemma \ref{lem: brances subset of branch} and similar arguments as in Case (i), we have $p\in B_T^1(p'')$, and the rest of the branches of $T$ at $p''$ are subsets of $B_T^k(p)$. Let $r\in B_T^2(p'')$ such that $h_T(p'',B_2)=d(p'',r)$. Similarly to Case (i), it can be shown that $p''\in [p,r]\cap [p,q_k]$. Hence, $d'(p'',r)>d'(p'',q_k)$ due to $p''\in S'$. Adding $d'(p,p'')$ to each side results in $d'(p,r)>h_T(p,B_k)$, which is a contradiction by $r\in B_T^k(p)$. The case where $n\in\{2,3,\dots,i_p\}\setminus\{k\}$ follows by Lemma \ref{lem: brances subset of branch} and \eqref{eq:height of a branch2}. Lastly, if $n=1$, then $p\in [p'',q_k]\cap[p'',q_2]$, since they lie on different branches of $p$ in $T$. Therefore, $d'(p,q_2)>d'(p,q_k)$ due to $h_T(p,B_2)>h_T(p,B_3)$. This implies $d'(p'',q_2)>d'(p'',q_k)=h_T(p'',B_k)$ which is contradiction. 
		
\emph{Case (iii).} Suppose that $k\in\{3,\dots,i_p\}$, $q_k\in B_T^k(p)$ with $h_T(p)=d(p,q_k)$, and that $h_T(p,B_2)= h_T(p,B_3)$. This case is almost identical to Case (ii), with the exception of the case $n=1$, where the fact that $h_T(p,B_2)>h_T(p,B_3)$ was crucial. We may assume that $\ell_{q_k}$ is a line segment glued on the leaf $q_k$ (since otherwise we are in Case (ii)), and $p''\in B_T^1(p)\cap S'$ is a branch point with $h_T(p'',B_m)=d(p'', q_k)$ for some $m\in\{i_{p''}+1,\dots,\val(T,p'')\}$. We claim that $ \mathcal{L}_{p''}\cap\{q_p^2,\dots,q_p^{i_p}\}=\{q_k\}$, where $q_p^a\in B_T^a(p)$ with $h_T(p,B_a)=d(p,q_k^a)$ for all $a\in\{2,\dots,i_p\}$. Without loss of generality assume that $i_p=3$. Suppose there are $q_{s_1},q_{s_2}\in\mathcal{L}^*$ with $s_1,s_2\in S'$ and  $q_{s_1}=q_p^2,q_{s_2}=q_p^3$. Similarly to Case (ii), we may assume that $s_1,s_2\in B_T^1(p)$. By Lemma \ref{lem: brances subset of branch} we have $q_{s_1},q_{s_2}\in B_T^{i_{s_1}+1}(s_1)\cap B_T^{i_{s_2}+1}(s_2)$ where $i_{s_1},i_{s_2}\geq 3$. Without loss of generality we may assume that $i_{s_1}=i_{s_2}=3$. If $s_2\in[s_1,p]$,   by Lemma \ref{lem: brances subset of branch} the branches $B_T^2(s_2),B_T^4(s_2)$ are contained in $B_T^4(s_1)$. Let $z\in B_T^2(s_2)$ with $h_T(s_2,B_2)=d'(s_2,z)$. Then, $d'(s_2,z)>d'(s_2,q_{s_2})$, which by  $h_T(p,B_2)=h_T(p,B_3)$ and $d'(s_1,q_{s_1})=h_T(s_1,B_4)$ implies
$$d'(s_1,z)>d'(s_1,q_{s_2})=d'(s_1,p)+d'(p,q_{s_2})=d'(s_1,p)+d'(p,q_{s_1})=h_T(s_1,B_4),$$
which is a contradiction. The case $s_1\in[s_2,p]$ is similar. If $s_1\notin[s_2,p]$ and $s_2\notin[s_1,p]$, then $s_2\in B_T^4(s_1)$ and $s_1\in B_T^4(s_2)$ by \cite[Lemma 3.2(iii)]{BT_CSST}, since $p\in B_T^4(s_1)\cap B_T^4(s_2)$. Then $B_T^1(s_2)\subset B_T^4(s_1)$ and $B_T^1(s_1)\subset B_T^4(s_2)$ by Lemma \ref{lem: brances subset of branch}. Hence, we have
$$h_T(s_2,B_1)\leq h_T(s_1,B_4)<h_T(s_1,B_1)\leq h_T(s_2,B_4),$$ 
which is a contradiction by $s_1, s_2\in S'$. Hence, there is no line segment glued on $q_p^2\in B_T^2(p)$, for which it's true that $d(p,q_p^2)=h_T(p,B_2)=h_T(p,B_3)$. It follows with similar arguments as in Case (i) that $h_{T'}(p,B_{\tilde{n}_k})=h_T(p)$.
		
Therefore, for any branch point $p\in T'$, we have shown $h_{T'}(p,B_{T'}^j(p))=h_T(p)$, for any branch $B_{T'}^j(p)$ of $T'$ at $p$. The uniform branch growth follows trivially, and the uniform branch separation follows by the uniform separation of $T$ (Lemma \ref{lem: comparable heights}).  Lastly, by Lemma \ref{lem:doubling} we have that $T'$ is doubling, completing the proof.
\end{proof}

\subsection{Step 2: uniform valence}\label{sec:step2}
The second step in the proof of Proposition \ref{prop:qsembed} is the next proposition.
	
\begin{prop}\label{prop: step 2}
Let $(T,d)$ be a quasiconformal tree with $\val(T)=m$, uniformly separated branch points, and uniform branch growth. Then, $T$ quasisymmetrically embeds into $(T',d')$, where $T'$ is an $m$-valent geodesic doubling tree with uniform branch separation and uniform branch growth. 
\end{prop}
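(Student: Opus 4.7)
My plan is to adapt the construction in the proof of Proposition \ref{prop:step 1}: reduce to a geodesic tree via \cite[Theorem 1.2]{BM20} and then use geodesic gluing to attach Euclidean line segments at each branch point of deficient valence, chosen of a length that both boosts the valence to $m$ and preserves every height.

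First I apply \cite[Theorem 1.2]{BM20} to assume $(T,d)$ is a geodesic doubling tree with $\val(T)\leq m$, uniformly separated branch points, and uniform branch growth. Let $\mathcal{B}^<$ be the (countable) set of branch points of $T$ with $\val(T,p)<m$, and for each such $p$ fix $n_p:=m-\val(T,p)\geq 1$ labeled Euclidean segments $\ell_{p,1},\dots,\ell_{p,n_p}$, each of length $h_T(p)$, with chosen endpoints $y_{p,j}$. Uniform branch separation and the doubling property of $T$ force $\{p:h_T(p)>\varepsilon\}$ to be finite for every $\varepsilon>0$, so I can enumerate the segments in a sequence of diameters tending to $0$ and form the geodesic gluing
\[ T':=(T,d,\bx_I)\bigvee_{i\in I}(\ell_i,d_i,y_i), \]
which by Lemma \ref{lem:gluing is m.tree} is a geodesic metric tree containing $T$ isometrically. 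Since interior points of the new segments have valence $2$ and their free endpoints are leaves, the branch points of $T'$ coincide with those of $T$, and by design $\val(T',p)=\val(T,p)+n_p=m$ for each $p\in\mathcal{B}^<$ (while $\val(T',p)=m$ is trivial at branch points not in $\mathcal{B}^<$).

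The main technical step -- and the main obstacle -- is to verify that for every branch point $p$ of $T$ and every branch $B_T^i(p)$, the corresponding enlarged branch $B_{T'}^{i'}(p)\supset B_T^i(p)$ of $T'$ at $p$ satisfies $h_{T'}(p,B_{T'}^{i'}(p))=h_T(p,B_T^i(p))$. The key estimate is that for every $q\in B_T^i(p)\cap\mathcal{B}^<$,
\[ d(p,q)+h_T(q)\leq h_T(p,B_T^i(p)). \]
This follows from Lemma \ref{lem: brances subset of branch}: every branch of $q$ other than the one containing $p$ is contained in $B_T^i(p)$, and for $x$ on such a branch $d(p,x)=d(p,q)+d(q,x)$; since at most one branch of $q$ contains $p$, the third-largest branch height $h_T(q)$ is realized on a branch not containing $p$ and hence is bounded by $h_T(p,B_T^i(p))-d(p,q)$. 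Consequently each new segment glued inside $B_T^i(p)$ stays within distance $h_T(p,B_T^i(p))$ from $p$, so no old branch height at $p$ is increased.

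With this in hand, the remaining properties are immediate. Since the two largest branches at $p$ in $T$ already have heights at least $h_T(p)$, and each new segment attached at $p$ has height exactly $h_T(p)$, we obtain $h_{T'}(p)=h_T(p)$ at every branch point. Uniform branch separation of $T'$ then follows from $d'|_{T\times T}=d$ together with uniform branch separation of $T$. Uniform branch growth at $p$ holds because the smallest branch height at $p$ in $T'$ equals $\min(h_T(p,B_T^{\val(T,p)}(p)),h_T(p))$, which is $\gtrsim h_T(p)=h_{T'}(p)$ by the uniform branch growth hypothesis on $T$. Finally, since $T'$ is a geodesic $m$-valent tree with uniformly separated branch points, Lemma \ref{lem:doubling} gives that $T'$ is doubling.
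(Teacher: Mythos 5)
Your proposal is correct and takes essentially the same route as the paper: reduce to a geodesic tree via \cite[Theorem 1.2]{BM20}, geodesically glue Euclidean segments at each branch point of deficient valence, check (using Lemma \ref{lem: brances subset of branch}) that all branch heights $h_T$ are unchanged so that uniform separation and growth persist, and get doubling from Lemma \ref{lem:doubling}. The only immaterial differences are that the paper glues segments of length $h_T(p,B_{\val(T,p)})$ (the smallest branch height) rather than $h_T(p)$ and first assembles the segments at a common point into a star $Y_k$ before gluing, while your single estimate $d(p,q)+h_T(q)\le h_T\bigl(p,B_T^i(p)\bigr)$ packages the paper's case analysis cleanly.
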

	
Note that given the pointed metric spaces $(Y_i,d_{Y_i},y_i)$, for $i\in I$, we can define the geodesic gluing  $\bigvee_{i\in I}(Y_i,d_{Y_i},y_i)$ by identifying $y_i$ and $y_j$ for all $i,j\in I$. More specifically, denote by $X_0=\{0\}$ the trivial singleton metric space, and define $\bigvee_{i\in I}(Y_i,d_{Y_i},y_i):= (X_0,d_0,\bx_I)\bigvee_{i\in I} (Y_i,d_{Y_i},y_i)$ as in Definition \ref{def:geodesic gluing}, where $x_i=0$ for all $i\in I$. We make use of this specific type of gluing in what follows.
	
\begin{proof}[{Proof of Proposition \ref{prop: step 2}}]
Similarly to Step 1, by applying \cite[Theorem 1.2]{BM20}, we may assume that the tree $(T,d)$ is a geodesic doubling metric tree with uniformly separated branch points, uniform branch growth and $\val(T)=m$ (since these properties are quasisymmetrically invariant).
		
Let $S=\{p\in T:\val(T,p)\in\{3,\dots,m-1\}\}$. Unlike Step 1, we glue Euclidean line segments on branch points of $T$, and, specifically, on every  $p\in S$.
		
Let $\{p_1,p_2,\dots\}$ be a fixed enumeration of $S$. Set $n_k=\val(T,p_k)$, and  let $I_{p_k}$ be an index set with $\card I_{p_k}=m-n_k$, for all $k$.  For $p_k\in S$, let $\ell_k^i$ be Euclidean line segments with $\diam\ell_k^i=h_T(p_k,B_{n_k})$, for all $i\in I_{p_k}$. Fix $y_k^i$ to be one of the endpoints of $\ell_k^i$. For each $k\in\N$, define $Y_k=\bigvee_{i\in I_{p_k}}(\ell_k^i,d_i,y_k^i)$, where $d_i$ is the Euclidean metric on $\ell_k^i$.
		
Set $(T',d'):=(T,d,\mathbf{p}_I)\bigvee_{k\in I}(Y_k,d_{Y_k}, z_k)$, where $I$ is an index set with $\card I=\card S$, $\mathbf{p}_I=(p_1, p_2, \dots)$, and  $z_k\sim y_k^i$ for all $i\in I_{p_k}$, i.e., for each $k\in I$ the point $z_k$ is the ``identification" of all points $y_k^i\in\ell_k^i$. Suppose $\card I=\infty$, and the proof is similar in the case $\card I<\infty$. By \cite[Lemma 3.9]{BT_CSST}, we have $\diam Y_k\to 0$ as $k\to\infty$. Hence, $(T',d')$ is a geodesic metric tree  by Lemma \ref{lem:gluing is m.tree}. 
		
It remains to show that $T'$ is $m$-valent, with uniform branch separation and uniform branch growth. Note that the gluing is between $T$ at branch points and line segments at endpoints, which implies that the branch points of $T$ and those of $T'$ are in one-to-one correspondence. Therefore, by the choice of $n_k$ and $Y_k$, we have  $\val(T',p)=m$ for all branch points of $T'$.  
		
Let $p$ be a branch point of $T'$. Then $p$ is a branch point of $T$, and for every $j\in \{1,\dots, \val(T,p)\}$ there is a unique $\tilde{n}_j$ such that $B_T^j(p)\subset B_{T'}^{\tilde{n}_j}(p)$. Let $x\in B_{T'}^{\tilde{n}_j}(p)$. If $x\in B_T^j(p)$, then
$$d'(p,x)=d(p,x)\leq h_T(p,B_j).$$ 
Suppose that $x \notin B_T^j(p)$, which implies that $x\in \ell_k^i$ for some $p_k\in S\cap B_T^j(p)$ and $i\in I_{p_k}$. Then, by $\diam \ell_k^i=h_T(p_k,B_{n_k})$ we have
\begin{align*}
d'(p,x)&=d(p,p_k)+d_i(p_k,x)\\ 
&\leq d(p,p_k)+\diam \ell_k^i\\ 
&= d(p,p_k)+h_T(p_k,B_{n_k})\\ 
&= d(p,p_k)+ d(p_k,q_k),
\end{align*}
for some leaf $q_k\in B_T^{n_k}(p_k)$. Note that $B_T^{n_k}(p_k)\subset B_T^j(p)$, by $p_k\in B_T^j(p)$ and Lemma \ref{lem: brances subset of branch}. Hence, the above implies $d'(p,x)\leq h_T(p,B_j)$. Therefore, since $x\in B_{T'}^{\tilde{n}_j}(p)$ was arbitrary, we showed that $h_{T'}(p, B_{\tilde{n}_j})\leq h_T(p,B_j)$, and the other inequality follows by  $B_T^j(p)\subset B_{T'}^{\tilde{n}_j}(p)$. As a result, the maximal distances of branches of $T$ are the same as those of branches of $T'$ at every branch point. The uniform growth and separation for $T'$ follow by those of $T$, and the fact that $T'$ is doubling follows by Lemma \ref{lem:doubling}.
\end{proof}

\subsection{Step 3: uniform density of branch points}\label{sec:step3}
The final step in the proof of Proposition \ref{prop:qsembed} is the following proposition.
	
\begin{prop}\label{prop: step 3}
Let $(T,d)$ be an $m$-valent quasiconformal tree with uniform branch separation and uniform branch growth with $m\geq 3$. Then, $(T,d)$ quasisymmetrically embeds into a geodesic uniformly $m$-branching quasiconformal tree $(T',d')$.
\end{prop}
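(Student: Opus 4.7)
The strategy is to enlarge $T$ by geodesically gluing small ``$(m-2)$-stars'' at carefully chosen double points of $T$ at every dyadic scale, thereby creating new branch points of valence exactly $m$ that fill in the missing uniform density while preserving $m$-valence, uniform separation, and uniform growth.

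By \cite[Theorem 1.2]{BM20} we may assume $T$ is geodesic. Let $c, C \geq 1$ be the constants witnessing uniform branch separation and uniform branch growth for $T$, and fix a small $\delta \in (0,1)$ depending on $m, c, C$. The core step is a scale-by-scale greedy selection: for each $n \in \mathbb{N}$, define $V^n$ to be a maximal collection of double points $z$ of $T$ subject to
\begin{enumerate}[(a)]
\item $d(z, z') \geq \delta^n$ for distinct $z, z' \in V^n$;
\item $d(z, p) \geq \delta^n$ for every branch point $p$ of $T$;
\item $d(z, w) \geq \delta^n$ for every $w \in V^1 \cup \cdots \cup V^{n-1}$;
\item each of the two connected components of $T \setminus \{z\}$ has diameter at least $\delta^n$.
\end{enumerate}
Maximality implies that every arc $[a,b] \subset T$ of length $\geq 3\delta^n$ whose interior contains no old branch point of height $\geq \delta^n$ must meet some $V^k$ with $k \leq n$.

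Set $V = \bigcup_n V^n$ and, for $z \in V^n$, let $r(z) = \delta^n/2$. Let $S_z$ be the ``$(m-2)$-star'': the geodesic gluing at a common basepoint $o_z$ of $m - 2$ Euclidean intervals of length $r(z)$. Form
\[
T' := (T, d, (z)_{z \in V}) \bigvee_{z \in V} (S_z, d_{S_z}, o_z)
\]
in the sense of Definition \ref{def:geodesic gluing}. Since $\diam S_z \to 0$, Lemma \ref{lem:gluing is m.tree} shows that $T'$ is a geodesic metric tree containing $T$ isometrically (hence quasisymmetrically).

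Verification of the four defining properties of uniform $m$-branching proceeds property by property. For $m$-valence, each $z \in V$ acquires $m-2$ star-branches in addition to its two $T$-branches, both of which are nontrivial by (d); old branch points retain valence $m$. For uniform branch separation, new branch points have height $r(z) = \delta^n/2$ at scale $n$, and conditions (a)--(c) give pairwise distances $\geq \delta^n$ between all relevant pairs, so separation holds with constant depending only on $m, c, C, \delta$. For uniform branch growth at $z \in V^n$, the $m-2$ star-branches have length $r(z)$ and the two $T$-branches have length $\geq \delta^n/2$ by (d), so the heights of the third through $m$-th largest branches are all comparable to $r(z)$. For uniform density in $T'$, given $x,y \in T'$ with $d_{T'}(x,y) = L \in [\delta^n, \delta^{n-1})$, the arc $[x,y]$ either passes through an old branch point of height $\gtrsim L$, or by the maximality statement through some $z \in V^k$ with $k \leq n$, giving a branch point of height $\geq \delta^n/2 \gtrsim L$. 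Doubling of $T'$ then follows from Lemma \ref{lem:doubling}.

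The main obstacle is reconciling condition (d) with the density requirement: we need (d) to guarantee uniform branch growth at new branch points, yet we also need (d) not to obstruct placing a new branch point on every arc that genuinely requires one at scale $n$. This is resolved by observing that (d) only excludes points within $\delta^n$ of a ``small-diameter side'' of $T$, i.e., points close to a leaf-like end of $T$, where the neighboring arc is already short enough not to require a branch point at scale $n$. A careful bookkeeping of which scales contribute branch points to which regions of $T$ completes the verification.
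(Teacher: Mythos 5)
There is a genuine gap, and it is fatal to the construction as written. Your glued pieces are $(m-2)$-stars built from Euclidean segments, and these segments never receive any further branch points (your vertex sets $V^n$ consist of double points of $T$ only). Consequently, if $x\neq y$ are two interior points of the same prong of some star $S_z$, the arc $[x,y]$ in $T'$ lies inside that prong and contains no branch point of $T'$ whatsoever, so the uniform density condition (which demands a branch point $p\in[x,y]$ with $H_{T'}(p)\gtrsim d'(x,y)$ for \emph{every} pair $x\neq y$) fails outright: uniform density forces the branch points to be topologically dense, which a glued Euclidean segment can never satisfy. Your density verification only treats arcs that are essentially arcs of $T$, so it misses exactly these pairs. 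This is precisely why the paper glues scaled copies of $\T^{m,\ba}$ itself (which is already uniformly $m$-branching, with constants independent of the scaling by Remark \ref{rem:constants of T}) at the chosen double points, rather than segments: arcs inside the glued pieces then inherit density from $\T^{m,\ba}$. A fix within your framework would require either gluing uniformly branching trees in place of stars, or iterating the star-gluing at all scales inside the previously glued segments, neither of which your proposal does.

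A second, independent problem is your selection rule (b), which requires the new vertices to be at distance $\geq\delta^n$ from \emph{every} branch point of $T$, regardless of its height. Uniform branch separation only separates branch points of comparable height, so $T$ may contain a long arc (length $\sim\delta^{n-1}$) carrying densely packed branch points of tiny height (say spaced $\delta^{2n}$ apart with $h_T\sim\delta^{2n}$); such a tree satisfies all your hypotheses. On that arc no point satisfies (b) at any scale $k\leq n$, so no $V^k$ with $k\leq n$ meets it, and the arc contains neither old branch points of height $\gtrsim\delta^{n-1}$ nor new ones; hence your claimed consequence of maximality is false and uniform density fails even for pairs $x,y\in T$. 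The decomposition of \cite[Section 5]{BM20} used in the paper avoids this by only requiring $d(v,p)\gtrsim\min\{h_T(p),\delta^n\}$ (Lemma \ref{lem:tile_properties}(ii)), so vertices may be placed near low branch points; if you want a self-contained greedy selection you must weaken (b) in the same way and then re-prove the covering statement you attribute to maximality.
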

	
For the rest of this section fix an integer $m\geq 3$, an alphabet $A=\{1,\dots,m\}$, and a weight $\ba$. For some fixed constant $c\in(0,1)$ denote by $c\T^{m,\ba}$ the metric space $(A^{\N}/\sim,c d_{A,\ba})$, i.e., a rescaled copy of the tree $\T^{m,\ba}$ of diameter $c$. 
	
As with the previous steps, we construct $T'$ ``on top" of $T$ with appropriate geodesic gluing. By rescaling the metric, we may assume that $\diam T=1$. 
	
For this step, we use the decomposition of the tree $T$ as in \cite[Section 5]{BM20}. Namely, there exist finite sets $\textbf{V}^n\subset T$, for all $n\in\N$, consisting of double points of $T$ with $\textbf{V}^1\subset\textbf{V}^2\subset\dots$. Each point $v\in\textbf{V}^n$ is called an \textit{$n$-vertex}. The closure of a component of $T\setminus\textbf{V}^n$ is called an \textit{$n$-tile}, and the set of all $n$-tiles is denoted by $\textbf{X}^n$ of \textit{level}  $n$. It is also convenient to denote $\textbf{V}^0=\emptyset$, $\textbf{X}^0=\{T\}$ with $T$ being the only $0$-tile. This specific collection $\textbf{V}^1, \textbf{V}^2, \dots$ satisfies certain properties for some fixed small enough constants $\beta\geq 1$, $\gamma>0$, and $\delta\in (0,1/3\beta)$.
	
\begin{lem}\label{lem:tile_properties}
\begin{enumerate}[(i)]
\item For each $v\in\textbf{V}^n$, $h_T(v,B_2)\geq c\beta\delta^n$, where $c$ is a constant that solely depends on the comparability constant from \eqref{eq:relation of two heights}.
\item For each $v\in \textbf{V}^n$ and branch point $p\in T$, $d(v,p)\geq c\gamma\min\{h_T(p),c \delta^n\}$.
\item For $u,v\in\textbf{V}^n$ distinct, $d(u,v)\geq\delta^n$.
\item For each $n$-tile $X$, $\diam(X)\simeq\delta^n$. In particular, $\delta^n\leq\diam(X)\leq 3\beta\delta^n$.
\item Each $n$-tile $X$ is equal to the union of all $(n+ 1)$-tiles $X_0$ with $X_0\subset X$.
\item Each $n$-tile $X$ contains at least three $(n + 1)$-tiles.
\item Each $n$-tile $X$ is a subtree of $T$ with $\partial X\subset \mathbf{V}_n$.
\item If $v$ is an $n$-vertex and $X$ an $n$-tile with $v\in X$, then $v\in \partial X$. Moreover, there exists precisely one $(n + 1)$-tile $X_0\subset X$ with $v\in X_0$.
\item There is a constant $K\in \N$ such that for each $n\in \N_0$ and each $n$-tile $X$, the number of $(n + 1)$-tiles contained in $X$ are at most $K$.
\end{enumerate}
\end{lem}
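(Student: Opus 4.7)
The plan is to invoke the tile decomposition constructed by Bonk and Meyer in \cite[Section 5]{BM20} essentially verbatim, and then carefully translate between their height notion $H_T$ and the maximal-distance height $h_T$ we use in this section. In \cite{BM20}, Bonk and Meyer produce, for every quasiconformal tree $T$ with $\diam T=1$, finite sets $\mathbf{V}^1\subset\mathbf{V}^2\subset\cdots$ of double points and associated tile families $(\mathbf{X}^n)$, depending on parameters $\beta\geq 1$, $\gamma>0$, and $\delta\in(0,1/(3\beta))$, satisfying the analogues of (i)--(ix) with $H_T$ in place of $h_T$. The properties (iii)--(ix) of our lemma make no reference to the height, so they follow immediately from the corresponding items in \cite[Section 5]{BM20}, once the parameters $\beta,\gamma,\delta$ are chosen small enough in terms of the bounded turning constant, the doubling constant, and the uniform branch separation constant of $T$.

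The only genuine content left is to verify (i) and (ii). For (i), the Bonk--Meyer construction guarantees (in their original formulation) that for each $v\in \mathbf{V}^n$ we have $H_T(v)\geq \beta\delta^n$; since $v$ is always a double point with $h_T(v,B_1)\geq h_T(v,B_2)$, the quantity that actually appears on the right-hand side of our (i) is $h_T(v,B_2)$, which by Lemma \ref{lem: comparable heights} satisfies $h_T(v,B_2)\simeq H_T(v)$ uniformly in $v$, with comparability constant $c$ depending only on the comparability constant in \eqref{eq:relation of two heights}. This gives (i) after absorbing the comparability constant into the statement. For (ii), the analogous statement in \cite{BM20} reads $d(v,p)\geq \gamma\min\{H_T(p),\delta^n\}$ for every $v\in\mathbf{V}^n$ and branch point $p\in T$; applying Lemma \ref{lem: comparable heights} once more yields (ii) with the constant $c$.

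The main thing to be careful about is that the construction of $\mathbf{V}^n$ in \cite{BM20} actually produces double points rather than branch points, so that $h_T(v,B_2)$ is really the maximal distance from $v$ in the smaller of its two branches; this is why the estimate in (i) is the right thing to prove (and why, unlike in (ii), no comparison to $\delta^n$ is necessary). The main obstacle, if any, is just bookkeeping: one must check that none of the lemmas in \cite[Section 5]{BM20} actually uses the dimension-like content of $H_T$ rather than merely its comparability with $h_T$. Since all of their arguments use $H_T$ only through estimates of the form $H_T(p)\gtrsim \delta^n$ or $H_T(p)\lesssim \delta^n$, the translation via Lemma \ref{lem: comparable heights} costs only a multiplicative constant, which can be absorbed by shrinking $\beta$ and $\gamma$ and re-choosing $\delta$. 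Once this translation is made, (i)--(ix) follow line by line from the corresponding statements in \cite[Section 5]{BM20}.
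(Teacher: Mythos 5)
Your proposal is correct and follows essentially the same route as the paper: properties (iii)--(ix) are quoted directly from the corresponding statements in \cite[Section 5]{BM20}, and (i)--(ii) are obtained from the Bonk--Meyer estimates (their (5.5) and (5.6)) by converting diameters of branches to maximal distances via the comparability established in Lemma \ref{lem: comparable heights}, at the cost of the constant $c$. The only cosmetic caveat is that for a double point $v$ the quantity controlled in \cite{BM20} is the diameter of the smaller of its two branches (not $H_T(v)$, which is defined through a third branch), so the relevant translation is the per-branch comparability $\diam B \simeq h_T(v,B)$ proved inside Lemma \ref{lem: comparable heights}, exactly as you in effect use it.
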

	
\begin{proof}
Claims (i), (ii) are immediate by \cite[Equations (5.5), (5.6)]{BM20}, Lemma \ref{lem: comparable heights} and the fact that $0<c\leq 1$. 
		
Claims (iii), (iv) are proved in  \cite[Equations (5.2), (5.3)]{BM20}. Claim (v) is proved in \cite[Lemma 5.1 (viii)]{BM20}. Claim (vi) is proved in \cite[Lemma 5.4 (i)]{BM20}. Claim (vii) is proved in \cite[Lemma 5.1 (i)]{BM20}. Claim (viii) is proved in \cite[Lemma 5.1 (ix)]{BM20}. Claim (ix) is proved in \cite[Lemma 5.7 (ii)]{BM20}.
\end{proof}
	
We also need to determine the new height of branch points of $\T^{m,\ba}$.
	
\begin{lem}\label{lem: new height of T^m}
Let $w\in A^*$ and $p=[w12^{(\infty)}]$ be a branch point of $\T^{m,\ba}$. Then $h_{\T^{m,\ba}}(p)=\ba(3)\Delta(w)$, and $[1^{(\infty)}]$ lies either on $B_{\T^{m,\ba}}^1(p)$, or on $B_{\T^{m,\ba}}^2(p)$. 
\end{lem}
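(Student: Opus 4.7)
Plan: By Lemma \ref{lem:height formula}, the branches of $p=[w12^{(\infty)}]$ in $\T^{m,\ba}$ can be labeled $C_1,\dots,C_m$ with $\T_{w1}^{m,\ba}\subset C_1$, $\T_{w2}^{m,\ba}\subset C_2$, and $C_j=\T_{wj}^{m,\ba}$ for $j\in\{3,\dots,m\}$. The idea is to (i) compute or bound $h_{\T^{m,\ba}}(p,C_i)$ for each $i$, (ii) reorder the branches by non-increasing maximal distance (the convention used throughout Section \ref{sec:QSembed}), and (iii) locate $[1^{(\infty)}]$.

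For each $j\in\{3,\dots,m\}$, the branch $C_j=\T_{wj}^{m,\ba}=\phi_{wj}(\T^{m,\ba})$ is a similar copy of $\T^{m,\ba}$ of scaling factor $\Delta_{\ba}(wj)=\ba(j)\Delta_{\ba}(w)$ by Lemma \ref{lem:ss}, and by Remark \ref{rem:equivalent classes of 12^oo} we have $p=[wj1^{(\infty)}]=\phi_{wj}([1^{(\infty)}])$. Using geodesicity (Proposition \ref{prop:geodesic}) together with Corollary \ref{cor:dist of 1^oo to 2^oo}, the maximal distance in $\T^{m,\ba}$ from $[1^{(\infty)}]$ equals $d_{A,\ba}([1^{(\infty)}],[2^{(\infty)}])=1$; transporting by $\phi_{wj}$ yields $h_{\T^{m,\ba}}(p,C_j)=\ba(j)\Delta_{\ba}(w)$. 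An analogous scaling argument, using $\T_{wi}^{m,\ba}\subset C_i$ together with $p=\phi_{w1}([2^{(\infty)}])=\phi_{w2}([1^{(\infty)}])$, gives the lower bound $h_{\T^{m,\ba}}(p,C_i)\geq \Delta_{\ba}(w)/2$ for $i\in\{1,2\}$ (only a lower bound is needed, since $C_1,C_2$ may strictly contain $\T_{w1},\T_{w2}$). Because $\ba$ is non-increasing with $\ba(2)=1/2$ and $\ba(3)\geq\ba(j)$ for $j\geq 3$, reordering the branches by non-increasing $h_{\T^{m,\ba}}(p,\cdot)$ puts $C_1,C_2$ in the first two slots (in some order) and $C_3$ in the third, so $h_{\T^{m,\ba}}(p)=\ba(3)\Delta_{\ba}(w)$.

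For the second claim, Lemma \ref{lem: branch points belong in the same class} tells us that $[1^{(\infty)}]$ has $1^{(\infty)}$ as its unique representative. Membership of $[1^{(\infty)}]$ in $\T_{wj}^{m,\ba}=C_j$ for $j\geq 3$ would require $wj$ to be a prefix of $1^{(\infty)}$, which fails since $j\geq 3\neq 1$; hence $[1^{(\infty)}]\notin C_j$ for any $j\geq 3$. Since $[1^{(\infty)}]\neq p$ (a leaf by Remark \ref{rem:leaves} versus a branch point) and the branches partition $\T^{m,\ba}\setminus\{p\}$, we conclude $[1^{(\infty)}]\in C_1\cup C_2 = B_{\T^{m,\ba}}^1(p)\cup B_{\T^{m,\ba}}^2(p)$ under the new enumeration. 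No serious obstacle is anticipated: all the ingredients — the similarity structure from Lemma \ref{lem:ss}, the uniqueness of representatives, geodesicity, and monotonicity of $\ba$ — are already in hand.
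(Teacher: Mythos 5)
Your proof is correct, and for the height identity it follows essentially the paper's route: identify the branches through Lemma \ref{lem:height formula}, show $h_{\T^{m,\ba}}(p,\T^{m,\ba}_{wj})=\ba(j)\Delta_{\ba}(w)$ for $j\geq 3$, bound the two branches containing $\T^{m,\ba}_{w1},\T^{m,\ba}_{w2}$ below by $\tfrac12\Delta_{\ba}(w)$, and read off the third-largest value (the paper obtains the exact branch heights via the two-sided estimate $d_{A,\ba}([wj1^{(\infty)}],[wj2^{(\infty)}])\leq h\leq \diam\T^{m,\ba}_{wj}$, you via transporting the maximal distance from $[1^{(\infty)}]$ by the similarity $\phi_{wj}$ — same content). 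For the location of $[1^{(\infty)}]$ your argument is genuinely different and arguably cleaner: the paper writes $w=1^{(n)}v$ with $v(1)\neq 1$ and compares the quantitative lower bound $2^{-n-1}$ for the branch containing $[1^{(\infty)}]$ with the upper bound $\ba(j)\Delta_{\ba}(w)\leq 2^{-|w|-1}$ for the branches $\T^{m,\ba}_{wj}$, $j\geq 3$, whereas you observe that $1^{(\infty)}$ is the unique representative of its class (Lemma \ref{lem: branch points belong in the same class}), so $[1^{(\infty)}]\notin \T^{m,\ba}_{wj}$ for $j\geq 3$, and it must therefore lie in one of the two large branches; this avoids the prefix decomposition entirely. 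The only caveat is the tie case $\ba(3)=\tfrac12$ with $h_{\T^{m,\ba}}(p,C_1)=\tfrac12\Delta_{\ba}(w)$ exactly (e.g.\ $w=1^{(k)}$), where the non-increasing enumeration is not unique, so the assertion that $C_1,C_2$ occupy the first two slots must be read up to a choice of labeling among tied branches; the paper's own proof has exactly the same imprecision, and the identity $h_{\T^{m,\ba}}(p)=\ba(3)\Delta_{\ba}(w)$ is unaffected, so this is not a gap attributable to you.
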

	
\begin{proof}
Let $w \in A^*$, then $p = [w12^{(\infty)}]$ is a branch point of $\T_w^{m,\ba}$ with $m$ distinct branches $\T_{w1}^{m,\ba}, \dots, \T_{wm}^{m,\ba}$ in $\T_w^{m,\ba}$.
		
Observe that if $i\neq 1$, then by Remark \ref{rem:equivalent classes of 12^oo}
$$h_{\T^{m,\ba}}(p,\T_{wi}^{m,\ba})\geq d_{A,\ba}( [wi1^{(\infty)}],[wi2^{(\infty)}])=\ba(i)\Delta(w),$$ 
and similarly $h_{\T^{m,\ba}}(p,\T_{w1}^{m,\ba})\geq\ba(1)\Delta(w)$. On the other hand, for each $i\in A$
$$h_{\T^{m,\ba}}(p,\T_{wi}^{m,\ba})\leq\diam\T_{wi}^{m,\ba}=\ba(i)\Delta(w).$$ 
Hence, $h_{\T^{m,\ba}}(p,\T_{wi}^{m,\ba})=\ba(i)\Delta(w)$ for all $i\in\{1,\dots,m\}$. By Lemma \ref{lem:height formula} it follows that $h_{\T^{m,\ba}}(p)=\ba(3)\Delta(w)$ (note that Lemma \ref{lem:height formula} is stated for an arbitrary order of the branches at $p$). For the second part of the statement, suppose that $[1^{(\infty)}]\in B_{\T^{m,\ba}}^i$. Let $n\geq 0$ be a maximal integer such that $w=1^{(n)}v$, $v\in A^{|w|-n}$, $v(1)\neq 1$. We have shown that 
$$h_{\T^{m,\ba}}(p,B_j)=\diam \T_{wj}^{m,\ba}=\Delta(wj)=\ba(j)\Delta(w)\leq 2^{-|w|-1},$$ for all $j\in\{3,\dots,m\}$. On the other hand,
\begin{align*}
h_{\T^{m,\ba}}(p,B_i) &\geq \rho([1^{(\infty)}],[1^{(n)}v12^{(\infty)}])\\ 
&= 2^{-n} \rho([1^{(\infty)}],[v12^{(\infty)}]) \notag \\
&= 2^{-n} \left(\rho([1^{(\infty)}],[12^{(\infty)}]) + \rho([12^{(\infty)}],[v12^{(\infty)}])\right) \notag \\
&\geq 2^{-n-1},
\end{align*}
where we used the geodesicity of $\T^{m,\ba}$ and the fact that $[12^{(\infty)}]$ is on the arc that connects $[1^{(\infty)}],[v12^{(\infty)}]$. Due to $n\leq|w|$, it follows that $i\in\{1,2\}$. 
\end{proof}
	
\begin{proof}[{Proof of Proposition \ref{prop: step 3}}]
Similarly to the previous steps, applying \cite[Theorem 1.2]{BM20}, we may assume that the tree $(T,d)$ is a geodesic doubling $m$-valent metric tree with uniformly separated branch points and uniform branch growth (since these properties are quasisymmetrically invariant).
		
If $T$ has uniformly dense branch points, the statement is trivially true. Suppose $T$ does not have uniformly dense branch points. We use the decomposition $\textbf{V}^1\subset \textbf{V}^2\subset\dots$ of $T$ as defined in \cite[Section 5]{BM20}, which provides a  countable set of double points dense in $T$, where we plan on gluing branches of sufficient (new) height, so that the resulting space is a geodesic, doubling, uniformly $m$-branching metric tree.
		
Let $\{v_1,v_2,\dots\}$ be a fixed enumeration of $\textbf{V}^*:=\bigcup_{n=1}^{\infty}\textbf{V}^n$. By Lemma \ref{lem:tile_properties} (vi) and (vii), each tile $X\in\textbf{X}^n$ contains at least 3 tiles of the next level $\textbf{X}^{n+1}$ and $\partial X\subset \textbf{V}^n$. Hence, the sequence $\textbf{V}^1\subset \textbf{V}^2\subset\dots$ is a strictly increasing sequence. Therefore, we can write $\textbf{V}^*=\bigcup_{n=1}^{\infty} W_n$, where the sets $W_n:=\mathbf{V}^n\setminus\mathbf{V}^{n-1}$ are disjoint, have cardinality $M_n:=\card W_n\in \N$, and assume that the aforementioned enumeration of $\mathbf{V}^*$ is the union of consecutive enumerations of each $W_n$, i.e., $\mathbf{V}^*=\{v_1^1,\dots, v^1_{M_1}, v_1^2, \dots, v_{M_2}^2, \dots\}$. Thus, if $x\in \textbf{V}^*$, then there exist unique $n\in\N$ and $i\in \{1,\dots, M_n\}$ for which $x=v_i^n\in W_n$. 
		
We plan on gluing $m-2$ isometric copies of $c\delta^n\T^{m,\ba}$ at $[1^{(\infty)}]$ and each double point $x\in W_n$. Here, $c$ is a constant that solely depends on the comparability constant from \eqref{eq:relation of two heights}. Thus, for every $n\in \N$, and every $i\in \{1,\dots,M_n\}$,  set 
$$S_0=0,\ S_n:=M_1+\dots +M_n,$$ 
$$\Omega_n^i=\{(S_{n-1}+i-1)(m-2)+1,\dots, (S_{n-1}+i)(m-2)\},$$ 
and $\Omega_n:= \bigcup_{i}\Omega_n^i= \{S_{n-1}(m-2)+1, \dots, S_n(m-2)\}$. We define
$$(Y_k^n,d_{k,n}):=c\delta^n\T^{m,\ba}\times \{k\},$$ 
for all $k\in \Omega_n$. Set $y_k^n:=([1^{(\infty)}],k)\in Y_k^n$, which we also denote by $[1^{(\infty)}]_k^n$, and write $[2^{(\infty)}]_k^n:=([2^{(\infty)}],k)\in Y_k^n$. Note that to define $Y_k^n$, we briefly used the more detailed notation from Definition \ref{def:geodesic gluing}, in order to point out how at every double point $x\in W_n$ we need isometric, but distinct copies of the scaled metric tree $c\delta^n\T^{m,\ba}$. Henceforth, we return to the convention where we identify points in $Y_k^n$ with points in $c\delta^n\T^{m,\ba}$. In addition, it is understood that if $n$ is fixed, then $k$ is used to denote integers in $\Omega_n$, and we write $Y_k=Y_k^n$, $d_k=d_{k,n}$,  $y_k=[1^{(\infty)}]_k=y_k^n$, and $[2^{(\infty)}]_k=[2^{(\infty)}]_k^n$. On the other hand, if $k$ is an arbitrary positive integer, then the corresponding $n=n_k$ is the unique integer such that $k\in \Omega_n$. For every $k\in \N$, set $u_k=v_i^n\in \mathbf{V}^*$, for all $i=i_k$ and the unique $n=n_k$ with $k\in \Omega_n^i$. Set $\mathbf{u}_\N:=(u_1, u_2,\dots)$, where the terms are not necessarily distinct, due to the choice of $u_k$. We claim that 
$$(T',d'):= (T,d,\mathbf{u}_\N)\bigvee_{k\in \N}(Y_k^n,d_{k,n},y_k^n)$$ 
is the space with the desired properties, into which $T$ isometrically embeds.
		
By Lemma \ref{lem: comparable heights}, \cite[Lemma 3.9]{BT_CSST}, and Lemma \ref{lem:gluing is m.tree}, it can be shown that $(T',d')$ is a geodesic metric tree (similarly to the proofs of Propositions \ref{prop:step 1}, \ref{prop: step 2}). By Lemma \ref{lem: comparable heights}, we can assume that $T$ has uniform branch separation and uniform branch growth with respect to $h_T$. In order to address uniform branch growth and uniform branch separation for $T'$, we need to determine the height $h_{T'}$ of branch points of $T'$. To this end, let $p\in T'$ branch point. If $p=v_i^n\in W_n$ for some $n\in\N$, $i\in \{1,\dots, M_n\}$, then, by Lemma \ref{lem:tile_properties} (i) and $\beta \geq 1$, the height of $p$ is determined by the distances within the corresponding $Y_k=Y_k^n$, $k\in \Omega_n^i$, which are glued at $p$. In particular,  since all the glued trees $Y_k$ are isometric, we have for any $k\in \Omega_n^i$ and $j=j_k=k-(S_{n-1}+i-1)(m-2)+2\in\{3,\dots,m\}$ that
$$ h_{T'}(p,B_j)=h_{T'}([1^{(\infty)}]_k,Y_k)\geq d_k([1^{(\infty)}]_k,[2^{(\infty)}]_k)=c\delta^n.$$
On the other hand, $h_{T'}([1^{(\infty)}]_k,Y_k)\leq c\delta^n \diam \T^{m,\ba}$, which with the above implies $h_{T'}(p,B_j)=c\delta^n$. If $p$ is a branch point of $T$, then again by Lemma \ref{lem:tile_properties} (i), and similar arguments as in the proofs of Propositions \ref{prop:step 1}, \ref{prop: step 2}, we have that $h_{T'}(p)=h_T(p)$. Lastly, if $p$ is a branch point of $Y_k^n$ for some $n\in\N$ and $k\in \Omega_n$, i.e., $p=[w12^{(\infty)}]_k$ for some $w\in A^*$, then, by Lemma \ref{lem: new height of T^m}, 
$$h_{T'}(p)=c\delta^n \ba(3)\Delta(w)\leq c\delta^n.$$
		
Note that the only branch points of $T'$ that are not branch points on $T$ or on some $Y_k^n$ are the points $v_i^n\in \mathbf{V}^*$. Since $T$ and $Y_k^n$ have uniform branch growth, and $h_{T'}(v_i^n,B_j)=c\delta^n$ for all $j\in\{3,\dots,m\}$, it follows that $T'$ also has  uniform branch growth (note that by Remark \ref{rem:constants of T}, the uniform branch growth constant is independent of $\diam \T^{m,\ba}$ and, hence, independent of $\diam Y_k^n$).
		
Let $p_1,p_2$ be two distinct branch points of $T'$, and let $c_1,c_2$ be the uniform branch separation constants of $T$ and $Y_k^n$, respectively (note that by  Remark \ref{rem:constants of T}, $c_2$ is independent of $\diam Y_k^n$ and, thus, independent of $k,n$). Suppose that $p_1\in T$. If $p_2\in T\setminus \mathbf{V}^*$, then
$$d'(p_1,p_2)=d(p_1,p_2)\geq c_1 \min\{h_T(p_1),h_T(p_2)\}= c_1 \min\{h_{T'}(p_1),h_{T'}(p_2)\}.$$
If $p_2\in W_n$ for some $n\in\N$, then, by Lemma \ref{lem:tile_properties}(ii),
$$d'(p_1,p_2)\geq c\gamma\min\{h_{T'}(p_1),c\delta^n\}= c\gamma\min\{h_{T'}(p_1),h_{T'}(p_2)\}.$$ 
Similarly, if $p_2\in Y_k^n$ for some $n, k\in\N$, and $v=u_k\in W_n$ is the $n$-vertex at which we glue $Y_k^n$, it follows that
$$d'(p_1,p_2)\geq d'(p_1,v)\geq c\gamma\min\{h_{T'}(p_1),c\delta^n\}\geq c\gamma\min\{h_{T'}(p_1),h_{T'}(p_2)\}.$$ 
Suppose that $p_1\in W_n$ for some $n\in\N$. If $p_2\in W_{n'}$ for some $n'\in\N$, then, without loss of generality, assume that $n\geq n'$. By Lemma \ref{lem:tile_properties} (v), $p_2$ lies in some $n$-tile $X$. Hence, by Lemma \ref{lem:tile_properties}(iii), 
$$d'(p_1,p_2)\geq \delta^n\geq h_{T'}(p_1)=\min\{h_{T'}(p_1),h_{T'}(p_2)\}.$$ 
If $p_2\in Y_{k'}^{n'}$ for some $n', k'\in\N$, then let $v'=u_{k'}\in \mathbf{V}^*$ be the unique $n'$-vertex to which we glue $Y_{k'}^{n'}$. If $v'\neq p_1$, then, by geodesicity and similar arguments to the previous case, it follows that 
$$d'(p_1,p_2)\geq d'(p_1,v') \geq \delta^n\geq \min\{h_{T'}(p_1),h_{T'}(v')\}\geq \min\{h_{T'}(p_1),h_{T'}(p_2)\}.$$ 
If $v'=p_1$, by similar arguments to those in the proof of Lemma \ref{lem: new height of T^m}, 
$$d'(p_1,p_2)=d_{k',n'}([1^{(\infty)}]_{k'},p_2)\geq h_{Y_{k'}^{n'}}(p_2)=h_{T'}(p_2)=\min\{h_{T'}(p_1),h_{T'}(p_2)\}.$$ 
If $p_1\in Y_k^n$ and $p_2\in Y_{k'}^{n'}$, for $n,n',k,k'\in\N$, then 
$$d'(p_1,p_2)\geq c_2\min\{h_{T'}(p_1),h_{T'}(p_2)\},$$
which follows similarly to previous cases, so we omit the details. Therefore, $T'$ has uniformly separated branch points. By Lemma \ref{lem:doubling}, $T'$ is doubling.

It remains to show that $T'$ has uniformly dense branch points. Let $x,y\in T'$ with $x\neq y$, and let $c_3$ be the uniform density constant of $Y_k^n$ for all $n,k\in \N$ (note that by Remark \ref{rem:constants of T}, $c_3$ is independent of $k,n$). If $x,y\in T$, then let $n\geq 0$ be the maximal integer such that $x,y\in X$, where $X$ is an $n$-tile. Hence, by Lemma \ref{lem:tile_properties} (v), we have $x\in X_1$ and $y\in X_2$, where $X_1,X_2\subset X$ are $(n+1)$-tiles with $X_1\neq X_2$. Since $x,y$ cannot both lie on the boundary of the same $(n+1)$-tile, we may assume that $x\notin\partial X_1$. Note that by Lemma \ref{lem:tile_properties} (vii), $X$ is a subtree of $T'$, which implies $[x,y]\subset X$. We claim that the appropriate branch point of $T'$ in $[x,y]$ for the uniform density condition is the unique point $p$ in $\partial X_1\cap [x,y]$. Since $X_1$ is an $(n+1)$-tile, we have that $p\in \textbf{V}^{n+1}$, by Lemma \ref{lem:tile_properties} (vii). If $p\in \textbf{V}^n$, then $p\in\partial X$, by Lemma \ref{lem:tile_properties} (viii). Therefore, $[p,y]\cap (T'\setminus X)\neq\emptyset$, which is a contradiction, since $[p,y]\subset[x,y]\subset X$. Hence, $p\in\textbf{V}^{n+1}\setminus\textbf{V}^n=W_{n+1}$, which implies that  $h_{T'}(p)=c\delta^{n+1}$. Also, by geodesicity, the diameter of the tile $X$ is equal to the sum of diameters of all $(n+1)$-tiles contained in $X$. By Lemma \ref{lem:tile_properties} (iv) and (ix), this implies that 
$$d'(x,y)\leq K 3\beta\delta^{n+1}= 3K c^{-1}\beta h_{T'}(p),$$
for some constant $K$ depending only on the doubling constant of the tree $T$.
		
Suppose that $x\in T$ and $y\in Y_k^n$ for some $n,k\in\N$. Let $v=u_k\in W_n$ be the vertex to which we glue $Y_k^n$. Suppose first that $x\neq v$. Let $n'\geq 0$ be the maximal integer such that $x,v\in Z$, where $Z$ is a $n'$-tile.  If $n'< n$, by geodesicity, and by applying the previous case to $x,v$, we have
$$d'(x,y)=d'(x,v)+d'(v,y)\leq K3\beta\delta^{n'+1}+\diam Y_k^n\leq(3Kc^{-1}\beta+1)h_{T'}(p),$$ 
where $p$ is the branch point lying on the intersection of $[x,v]$ and the boundary of the $(n'+1)$-tile that contains $x$. On the other hand, if $n'\geq n$, we claim that $v$ is the desired branch point of $T'$. Indeed, by $h_{T'}(v)=c \delta^n$ and Lemma \ref{lem:tile_properties} (iv),
$$d'(x,y)=d'(x,v)+d'(v,y)\leq 3\beta\delta^{n'}+\diam Y_k^n\leq(3c^{-1}\beta+1)h_{T'}(v).$$
Suppose that $x=v$. Then $d'(x,y)=d'(v,y)\leq \diam Y_k^n=c\delta^n=h_{T'}(v)$.
		
Lastly, if $x\in Y_k^n$ and $y\in Y_{k'}^{n'}$, for $n,k,n',k'\in\N$, it can be shown that
$$d'(x,y)=d'(x,v)+d'(v,y)\leq\max\{c_3, (6Kc^{-1}\beta+1)\}h_{T'}(p),$$ 
for some branch point $p\in T'$, with similar case studies as in the previous cases.
		
It follows by all the above cases that $T'$ has uniformly dense branch points. Therefore, $T'$ is uniformly $m$-branching.
\end{proof}

\section{Proofs of Theorem \ref{thm:main} and Theorem \ref{thm:uniformization}}\label{sec:Finale}
We are finally able to give the proof of Theorem \ref{thm:main} and Theorem \ref{thm:uniformization}. Fix for the rest of this section a number $\e>0$. Fix also a weight $\ba$ such that $\sum_{n=1}^{\infty}\ba(n)^{1+\e} < 1$. 
	
Recall from \textsection\ref{sec:isomembed} our convention that $\T^{m,\ba} \subset \T^{n,\ba} \subset \T^{\infty,\ba}$ if $2 \leq n \leq m$ are integers. Set $\T^{n} = \T^{n,\ba}$ for $n\in\{2,3,\dots\}$ and $\T^{\infty}=\T^{\infty,\ba}$. 
	
By Lemma \ref{lem:ss}, and the argument following the lemma, each $\T^n$ is self-similar in the sense of \textsection\ref{sec:ss}. By Lemma \ref{lem:metrictree}, each $\T^n$ and $\T^{\infty}$ are metric trees, and by Proposition \ref{prop:geodesic} each $\T^n$ and $\T^{\infty}$ are geodesic trees. By Proposition \ref{prop:doubling}, each $\T^n$ is Ahlfors regular and the Hausdorff dimension of $\T^{\infty}$ is at most $1+\e$.
	
By Theorem \ref{thm:uniformization2}, a metric space $T$ is an uniformly $n$-branching quasiconformal tree if and only if $T$ is quasisymmetric to $\T^{n}$. This proves Theorem \ref{thm:uniformization}. 
	
By Proposition \ref{prop: bi-Lip embeding} each $\T^n$ bi-Lipschitz embeds into $\R^2$ with the embedded image being quasiconvex. By Proposition \ref{prop:geodesic} and Proposition \ref{prop:valence}, $\T^2$ is isometric to the Euclidean unit interval $[0,1]$, while the CSST is quasisymmetric to $\T^3$ by Theorem \ref{thm:uniformization} and the fact that the CSST is uniformly 3-branching \cite{BM22}. Lastly, by Proposition \ref{prop: Vicsek universal}, the Vicsek fractal is uniformly 4-branching, so it is quasisymmetric equivalent to $\T^4$. This proves Theorem \ref{thm:main}(i).
	
By Lemma \ref{lem:isom-embed}, spaces $\T^n$ converge to $\T^{\infty}$ as $n\to \infty$ in the Gromov-Hausdorff sense.  This proves Theorem \ref{thm:main}(ii).
	
Lastly, let $T$ be a quasiconformal metric tree in $\mathscr{QCT}^*(n)$. By Proposition \ref{prop:qsembed}, $T$ quasisymmetrically embeds into a uniformly $n$-branching quasiconformal tree $T'$. By Theorem \ref{thm:uniformization} $T'$ is quasisymmetrically equivalent to $\T^{n}$. Hence, $T$ quasisymmetrically embeds into $\T^{n}$. This proves Theorem \ref{thm:main}(iii).

\end{document}